    \tikzstyle{int}=[circle, draw,fill=black,outer sep=0,minimum size=3pt, inner sep=0]
    \tikzstyle{ext}=[circle, draw=black,outer sep=0,inner sep=1pt]
    \tikzstyle{black}=[circle,fill=black!,inner sep=0pt,minimum size=2mm]
    \tikzstyle{sblack}=[circle,fill=black!,inner sep=0pt,minimum size=1.5mm]
    \tikzstyle{mblack}=[circle,fill=black!,inner sep=0pt,minimum size=1mm]
    \tikzstyle{white}=[circle,inner sep=0pt,draw=black,minimum size=2mm]
    \tikzstyle{invisible}=[coordinate,inner sep=0pt,minimum size=2mm]
    \tikzstyle{biw}=[circle split,inner sep=1pt,draw=black,minimum size=8mm]
    \tikzstyle{biwb}=[rectangle,inner sep=1pt,draw=black,minimum size=8mm]
    \tikzstyle{uniw}=[circle,inner sep=1pt,draw=black,minimum size=8mm]
    \tikzstyle{unimin}=[circle,inner sep=1pt,draw=black,minimum size=5mm]
    \definecolor{mygray}{gray}{0.8}
\newcommand{\GC}{\mathsf{GC}}
\newcommand{\fcGC}{\mathsf{fcGC}}
\newcommand{\dGC}{\mathsf{dGC}}
\newcommand{\WGC}{\mathsf{wGC}}
\newcommand{\wGC}{\mathsf{wGC}}
\newcommand{\OGC}{\mathsf{oGC}}
\newcommand{\fWGC}{\mathsf{fwGC}}
\newcommand{\fwGC}{\mathsf{fwGC}}
\newcommand{\qGC}{\mathsf{qGC}}
\newcommand{\tGC}{\mathsf{tGC}}
\newcommand{\mGC}{\mathsf{mGC}}
\newcommand{\C}{\mathsf{C}}
\newcommand{\fM}{\mathsf{fM}}
\newcommand{\M}{\mathsf{M}}
\newcommand\addvmargin[1]{
  \node[fit=(current bounding box),inner ysep=#1,inner xsep=0]{};
}
\theoremstyle{plain}
\newtheorem{theorem}{Theorem}[subsection]
\newtheorem{corollary}[theorem]{Corollary}
\newtheorem{lemma}[theorem]{Lemma}
\newtheorem{proposition}[theorem]{Proposition}
\newtheorem{prop-def}[theorem]{Proposition-definition}
\newtheorem{main-theorem}{Main~Theorem}[section]
\newtheorem{section-theorem}{Theorem}[section]
\newtheorem{section-corollary}{Corollary}[section]
\theoremstyle{definition}
\newtheorem{example}[theorem]{Example}
\newtheorem{remark}[theorem]{Remark}
\newtheorem{definition}[theorem]{Definition}
\def\MarkLt{6pt}
\def\MarkSep{3pt}
\tikzset{
  TwoMarks/.style={
    postaction={decorate,
      decoration={
        markings,
        mark=at position #1 with
          {
              \begin{scope}[xslant=0.2]
              \draw[line width=\MarkSep,white,-] (0pt,-\MarkLt) -- (0pt,\MarkLt) ;
              \draw[-] (-0.5*\MarkSep,-\MarkLt) -- (-0.5*\MarkSep,\MarkLt) ;
              \draw[-] (0.5*\MarkSep,-\MarkLt) -- (0.5*\MarkSep,\MarkLt) ;
              \end{scope}
          }
       }
    }
  },
  TwoMarks/.default={0.5},
  OneMark/.style={
    postaction={decorate,
      decoration={
        markings,
        mark=at position #1 with
          {
              \draw[-,solid] (0,-\MarkLt) -- (0,\MarkLt) ;
          }
       }
    }
  },
  OneMark/.default={0.5},
  NMark/.style={
    postaction={decorate,
      decoration={
        markings,
        mark=at position #1 with
          {
              \draw[-,solid] (-3pt,-\MarkLt) -- (-3pt,\MarkLt) ;
              \draw[-,solid] (3pt,-\MarkLt) -- (3pt,\MarkLt) ;
              
          }
       }
    }
  },
  NMark/.default={0.5}
}
\long\def\symbolfootnote[#1]#2{\begingroup%
\def\thefootnote{\fnsymbol{footnote}}\footnote[#1]{#2}\endgroup}
\author{Oskar Frost}
\address{Mathematics Research Unit, University of Luxembourg, Maison du Nombre, 6 Avenue de la Fonte,
 L-4364 Esch-sur-Alzette, Grand Duchy of Luxembourg }
\email{oskar.frost@uni.lu}
\title{Deformation theory of the wheeled properad of strongly homotopy Lie bialgebras and graph complexes}
\begin{document}
\begin{abstract}
It is well-known that the Lie algebra of homotopy non-trivial degree zero derivations of the properad of strongly homotopy Lie bialgebras $\mathcal{H}olieb$ can be identified with the Grothendieck-Teichmuller Lie algebra $\mathfrak{grt}$. We study in this paper the derivation complex of the wheeled closure $\mathcal{H}olieb^\circlearrowleft$ (and of its degree shifted version $\mathcal{H}olieb_{p,q}^\circlearrowleft,\ \forall p,q\in\mathbb{Z}$) and establishing a quasi-isomorphism to a version of the Kontsevich graph complex. This result leads us to a surprising conclusion that the Lie algebra of homotopy non-trivial derivations of the wheeled properad $\mathcal{H}olieb^{\circlearrowleft}$ can be identified with the direct sum of \textit{two} copies of $\mathfrak{grt}$. As an illustrative example, we describe explicitly how the famous tetrahedron class in $\mathfrak{grt}$ acts as a derivation of $\mathcal{H}olieb^{\circlearrowleft}$ in two homotopy inequivalent ways.

\end{abstract}
\maketitle

\section{Introduction and outline}
\label{sec:introduction}
\subsection{Introduction}
A Lie bialgebra $V$ is a vector space equipped with two operations (bracket and cobracket) $[\, ,]:V\otimes V\rightarrow V$ and $\Delta:V\rightarrow V\otimes V$ such that $(V,[\,,])$ is a Lie algebra, $(V,\Delta)$ is a Lie coalgebra and that the bracket and cobracket satisfy a compability relation.
Lie bialgebras were first introduced by Drinfeld \cite{D} in the context of the theory of Yang Baxter equations and quantum groups. Later it has found applications in many other areas of mathematics and mathematical physics such as the theory of Hopf algebra deformations of universal enveloping algebras \cite{ES}, homological algebra \cite{Sc}, string topology and symplectic field theory \cite{CFL}, Goldman-Turaev theory of free loops in Riemann surfaces with boundaries \cite{G,Tu}, Lagrangian Floer theory of higher genus \cite{Tu}, and the theory of cohomology groups of moduli spaces of algebraic curves with labeled punctures \cite{MW2,AWZ}.
\\

Homotopy Lie bialgebra structures on arbitrary (possibly infinite-dimensional) graded vector spaces are controlled by the homotopy properad $\mathcal{H}olieb$. In this paper we will study this properad in greater generality with the degree of the basic operations shifted. More specifically, $\mathcal{H}olieb_{p,q}$ is the properad controlling homotopy Lie bialgebras with Lie bracket of degree $1-p$ and Lie cobracket of degree $1-q$. The usual properad being realised by $\mathcal{H}olieb_{1,1}$. The deformation theory of $\mathcal{H}olieb_{p,q}$ was studied in \cite{MW1} where it was proven that the cohomology of the derivation complex can be identified with $H(\GC_{p+q})$, the cohomology of the famous Kontsevich graph complex. It was proven by T. Willwacher in \cite{W1} that $H^0(\GC_2)=\mathfrak{grt}$, the Lie algebra of the Grothendieck–Teichmuller group. The latter two results were used in \cite{MW3} to classify all homotopy non-trivial universal quantizations of Lie bialgebras as the set of Drinfeld associators.\\

The Kontsevich graph complexes are dg Lie algebras who come in a family $\GC_k$ for any integer $k$. They are isomorphic (up to degree shift) for $k$ of the same parity, and are generated by equivalence classes of undirected graphs. Hence there are essentially two graph complexes: the even version $\GC_2$ and the odd version $\GC_3$. They admit directed versions, $\dGC_2$ and $\dGC_3$, whose elements are classes of graphs with a fixed direction on edges. These directed versions are quasi-isomorphic to $\GC_2$ and $\GC_3$ respectively \cite{W1}. The directed extensions give us essentially nothing new, but they contain interesting subcomplexes with different cohomologies. We will be particularly interested in some of them. Let $\dGC_k^{s}$ be the \textit{sourced} subcomplex of $\dGC_k$ spanned by classes of graphs with at least one source vertex. Similarly let $\dGC_k^{t}$ be the \textit{targeted} subcomplex of $\dGC_k$ spanned by classes of graphs with at least one target vertex. These complexes are naturally isomorphic to each other by reversing the orientation of every edge in a graph. It has been shown in \cite{Z2} that $H^\bullet(\dGC_{k+1}^s)=H^\bullet(\dGC_{k+1}^t)=H^\bullet(\GC_{k})$, and hence in particular that $H^0(\dGC_{3}^s)=H^0(\dGC_{3}^t)=H^0(\GC_2)\cong\mathfrak{grt}$. Furthermore, $H^l(\GC_3)=0$ for $l\geq -2$, and so from the short exact sequence 
\begin{equation*}
    0 \rightarrow \dGC_3^{s}\rightarrow \dGC_3 \rightarrow \dGC_3/\dGC^s_3 \rightarrow 0
\end{equation*}
we get that $H^{-1}(\dGC_3/\dGC^s_t)\cong\mathfrak{grt}$ (and similary for the targeted complex).
Another notable subcomplex of $\dGC_k$ is the \textit{sourced and targeted} subcomplex $\dGC^{st}_k$ of graphs with at least one source and (or) one target vertex. It has been shown in \cite{Z3} that $H^0(\dGC^{st}_3)=\mathfrak{grt}\oplus\mathfrak{grt}$ (see also \cite{M3}).
Similarly the complex $\dGC_k^{s+t}$ is the subcomplex of $\dGC_k$ spanned by classes of graphs with at least one source or target vertex. Let $\dGC^\circlearrowleft_k:=\dGC^k/\dGC^{s+t}_k$. The cohomology of $\dGC^{s+t}_k$ have been studied in \cite{Z3} and $\dGC_k^\circlearrowleft$ in \cite{M3} respectively. \\

There are two dg Lie algebras which can be associated with a dg prop, in this case the prop $\mathcal{H}olieb_{p,q}$, namely the derivation complex $Der(\mathcal{H}olieb_{p,q})$ and the deformation complex $Def(\mathcal{H}olieb_{p,q}\rightarrow\mathcal{H}olieb_{p,q})$ of the identity map. They are isomorphic as complexes
(up to a degree shift) but they have different Lie algebra structures. The cohomology of these complexes has been computed in \cite{MW1} and have been used by the same authors to classify all non-trivial homotopy classes of universal quantizations of strongly homotopy Lie bialgebras. They can be understood as morphisms of dg props $\mathcal{H}oassb\rightarrow D(\mathcal{H}olieb)$ where $\mathcal{H}oassb$ is a dg prop of homotopy associative bialgebras, and $D$ is the polydifferential endofunctor in the category of props introduced in \cite{MW2}. The existence of such morphisms has been proved earlier by Etingof and Kazhdan in \cite{EK}, and any such a morphism gives us a universal quantization of an arbitrary homotopy Lie bialgebra, a finite-dimensional or infinite dimensional one because the prop $\mathcal{H}olieb$ controls all of them. However, if one is only interested in (deformation quantization theory of) finite-dimensional homotopy Lie bialgebras, then one has to work with a differenit prop $\mathcal{H}olieb^\circlearrowleft$. The wheeled closure of $\mathcal{H}olieb$. which is much larger than $\mathcal{H}olieb$, but contains the latter as a subprop. In other words, there is a monomorphism of props $\mathcal{H}olieb\rightarrow\mathcal{H}olieb^\circlearrowleft$. This monomorphism implies the existence of a universal morphism $F:\mathcal{H}oass\rightarrow D(\mathcal{H}olieb^\circlearrowleft)$, but its deformation theory is controlled by a quite different deformation complex, which in turn contains the deformation complex of the wheeled properad $\mathcal{H}olieb^\circlearrowleft$. The main purpose of our paper is to compute the cohomology of the deformation complexes of $\mathcal{H}olieb^\circlearrowleft$ in terms of Kontsevich graph complexes.\\

The properad $\mathcal{H}olieb_{p,q}$ is generated by corollas with $n$ inputs and $m$ outputs such that $m,n\geq1$, $ m+n \geq 3$, i.e. no "curvature" like generators with $m=0$ and $n=0$ are allowed (see \cite{M2} for full details). However, in the wheeled case one can build from such generators elements of $\mathcal{H}olieb^\circlearrowleft_{p,q}$ with no inputs ($n=0$) or no outputs ($m=0$), or both ($m=n=0$). Such graphs can stay for non-trivial cohomology classes which control deformations of $\mathcal{H}olieb_{p,q}^\circlearrowleft$ into its more general version $\mathcal{H}olieb^{\circlearrowleft*}$ in which curvature terms are allowed (such a phenomenon is impossible in the case of unwheeled props). The derivation complex of $\mathcal{H}oileb^{\circlearrowleft}_{p,q}$ which controls \textit{all} possible deformations (i.e. with curvature terms allowed) of $\mathcal{H}olieb^{\circlearrowleft}$ is denoted by $Der(\mathcal{H}olieb_{p,q}^\circlearrowleft)$. See section \ref{sec:wheelification} for its precise definition.\\

The loop number $b$ of a connected graph $\Gamma$ is the integer $b=e-v+1$ where $e$ is the number of edges of $\Gamma$ and $v$ the number of vertices. The derivation complex decompose naturally into a direct sum of complexes $Der_{b=g}(\mathcal{H}olieb_{p,q}^\circlearrowleft)$ generated by graphs with loop number $g$. The case when $g=0$ is the same as in the case of non-wheeled properads, and so by the result in \cite{MW1}, we can say that $Der^*_{b=0}(\mathcal{H}olieb_{p,q}^\circlearrowleft)$ is one-dimensional and it is generated by the rescaling class. We study $Der^*_{b=1}(\mathcal{H}olieb_{p,q}^\circlearrowleft)$ and $Der^*_{b\geq2}(\mathcal{H}olieb_{p,q}^\circlearrowleft)$ separately, where the latter is the complex of graphs of loop number two and higher.\\

Our first main result in this paper is the proof of the following

\begin{theorem}[Theorem 1]
    $H(Der_{b\geq 2}(\mathcal{H}olieb_{p,q}^\circlearrowleft))=
    H(\dGC_{p+q+1}^\circlearrowleft)\oplus
    H(\dGC_{p+q+1})$.
\end{theorem}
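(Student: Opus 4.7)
The approach is to realize $Der^*_{b\geq 2}(\mathcal{H}olieb_{p,q}^\circlearrowleft)$ as an explicit graph complex and split it, according to the external-leg data of the underlying generating corolla, into two subcomplexes whose cohomologies produce the two summands.

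Unpacking the definition from Section \ref{sec:wheelification}, the complex $Der^*_{b\geq 2}(\mathcal{H}olieb_{p,q}^\circlearrowleft)$ can be identified with a graph complex $\mathcal{C}$ of equivalence classes of connected directed graphs $\Gamma$ of loop number $b = e-v+1 \geq 2$. Each $\Gamma$ may carry external input and output legs corresponding to the generating corolla being differentiated; generators with no external legs correspond to the ``curvature''-type derivations which exist only in the wheeled setting. The differential is the standard vertex-splitting inherited from the bar/cobar differential of $\mathcal{H}olieb$, and since vertex splitting preserves the external-leg data, one obtains a canonical direct-sum decomposition of complexes
\begin{equation*}
\mathcal{C} \;=\; \mathcal{C}^{\mathrm{cl}} \;\oplus\; \mathcal{C}^{\mathrm{op}},
\end{equation*}
where $\mathcal{C}^{\mathrm{cl}}$ is spanned by graphs with no external legs and $\mathcal{C}^{\mathrm{op}}$ by graphs with at least one.

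In $\mathcal{C}^{\mathrm{cl}}$, every internal vertex must carry both an incoming and an outgoing half-edge, since any dangling half-edge would have to be an external leg. Thus $\mathcal{C}^{\mathrm{cl}}$ consists precisely of directed graphs with no source and no target vertex, and matching the degree conventions (the shifts of the generators combine to $p+q+1$) yields an isomorphism of complexes $\mathcal{C}^{\mathrm{cl}} \cong \dGC_{p+q+1}^\circlearrowleft$.

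The hard part is then computing $H(\mathcal{C}^{\mathrm{op}})$. The plan is to filter $\mathcal{C}^{\mathrm{op}}$ by the number of external legs and analyze the resulting spectral sequence. On the associated graded, each external input leg behaves like a freshly attached source vertex and each output leg like a target vertex; after a Koszul-type acyclicity argument running in parallel with the unwheeled computation of \cite{MW1} and using the cohomological inputs on $\dGC^{s+t}$ and $\dGC^\circlearrowleft$ from \cite{Z2,Z3,M3}, the spectral sequence should converge to $H(\dGC_{p+q+1})$. The main obstacle is to show that one lands in the \emph{full} $H(\dGC_{p+q+1})$ rather than merely in the sourced-or-targeted part $H(\dGC^{s+t}_{p+q+1})$; the extra contribution - recovered through the short exact sequence $0 \to \dGC^{s+t}_{p+q+1} \to \dGC_{p+q+1} \to \dGC^\circlearrowleft_{p+q+1} \to 0$ - is exactly the mechanism producing the ``doubling'' of $\mathfrak{grt}$ advertised in the abstract, and pinning it down requires a careful tracking of the connecting homomorphisms and the construction of explicit representative lifts.
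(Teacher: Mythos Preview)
Your initial splitting $\mathcal{C}=\mathcal{C}^{\mathrm{cl}}\oplus\mathcal{C}^{\mathrm{op}}$ and the identification $\mathcal{C}^{\mathrm{cl}}\cong\dGC_{p+q+1}^\circlearrowleft$ are correct and match the paper's decomposition $Der=Der^0\oplus Der^*$ together with Proposition~\ref{prop:0complex}. However, two things go wrong after that.

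First, your description of the differential on $\mathcal{C}^{\mathrm{op}}$ as ``the standard vertex-splitting'' is incomplete: by equation~(\ref{eq:defholiebdiff}) the differential also has the corolla-attachment terms, which \emph{increase} the number of external legs. Your proposed filtration by number of legs is therefore a genuine filtration, but on the associated graded you are left with vertex splitting \emph{together with} redistribution of the hairs among the two new vertices --- this is not the $\dGC$ differential and its cohomology is not a priori $H(\dGC_{p+q+1})$. The phrase ``after a Koszul-type acyclicity argument running in parallel with the unwheeled computation'' is not an argument; the unwheeled case of \cite{MW1} has entirely different combinatorics (no closed paths, hence every graph has sources and targets for free), and nothing there transfers directly.

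Second, your final paragraph conflates Theorem~1 with Theorems~2 and~3. The short exact sequence $0\to\dGC^{s+t}\to\dGC\to\dGC^\circlearrowleft\to 0$ and the ``doubling of $\mathfrak{grt}$'' play no role whatsoever in the proof of Theorem~1; the doubling appears only in the analysis of $Der^+$, which is a different (and strictly smaller) complex. For Theorem~1 the answer $H(\dGC_{p+q+1})$ is obtained directly, not as an extension.

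What the paper actually does for $\mathcal{C}^{\mathrm{op}}\cong\wGC_k^*$ is a long chain of explicit reductions: reformulate hairs as bi-weights (Proposition~\ref{prop:HoliebWGC}); filter by ``special in-vertices'' and then ``special out-vertices'' to reduce arbitrary bi-weights to the four types $\frac{\infty_1}{\infty_1},\frac{\infty_1}{0},\frac{0}{\infty_1},\frac{0}{0}$ (Propositions~\ref{prop:specialIn}, \ref{prop:specialOut}, \ref{prop:4-types}); kill the $\frac{0}{0}$-decorations (Proposition~\ref{prop:0-decorations}); pass to mono-decorated graphs $\Gamma(\omega)$ (Proposition~\ref{prop:mGC}); and finally identify the resulting complex $\M_k^{+,\geq 2}$ with $\dGC_k$ via the map $\Gamma\mapsto\Gamma(\omega)$ (Proposition~\ref{prop:M*cohom} and Corollary~\ref{cor:mGCDiagram}). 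Each step requires a separate acyclicity argument, typically via a filtration on passing vertices or on vertices with a fixed decoration. Your sketch does not anticipate any of this machinery.
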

\begin{proposition}
\begin{equation*}
    H(Der_{b=1}(\mathcal{H}olieb_{p,q}^\circlearrowleft))=\bigoplus_{\substack{i\geq 1\\
    i\equiv2(p+q+1)+1\mod 4}} \mathbb{K}[p+q+1-i].
\end{equation*}
\end{proposition}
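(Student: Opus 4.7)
The plan is to reduce $Der_{b=1}(\mathcal{H}olieb_{p,q}^\circlearrowleft)$ to a minimal ``pure wheel'' complex and then compute its cohomology directly by analyzing the cyclic rotation symmetry on each wheel. A generic graph in $Der_{b=1}$ is a connected one-loop graph, so topologically it consists of a single directed polygon with finite trees attached at some of its cycle vertices, where the vertices are corollas from the generating set of $\mathcal{H}olieb_{p,q}^\circlearrowleft$ (including, in the wheeled setting, source and target corollas with $m=0$ or $n=0$).

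First, I would set up a spectral sequence filtered by the number of vertices lying off the unique cycle. The $d_0$-differential contracts only tree-edges, and by the standard contractibility argument used for the $b=0$ piece in \cite{MW1} --- enlarged in a routine way to accommodate curvature-type corollas --- this differential kills all tree-hair. The computation thus reduces to the subcomplex of pure wheels, i.e.\ graphs consisting solely of a single directed cycle in which every vertex has exactly one incoming and one outgoing cycle edge.

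On the pure wheel complex the residual differential vanishes: there are no tree-edges left to contract, and any non-trivial vertex splitting either breaks the single-cycle condition or raises the loop number. Hence the cohomology is simply the quotient of the free module on pure wheels by the Koszul-sign action of the rotational automorphism group $\mathbb{Z}/i$ of each cycle of length $i \geq 1$. The generator of $\mathbb{Z}/i$ acts on the one-dimensional symbol $W_i$ of the minimal wheel of length $i$ by a sign that combines the cyclic permutation of the $i$ vertices and the $i$ directed edges, each carrying an intrinsic degree that depends on $p+q$. A direct bookkeeping of these signs shows that the action is trivial precisely when $i \equiv 2(p+q+1)+1 \pmod 4$. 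The cohomological degree of the resulting surviving class $W_i$, computed from the total vertex and edge degree contributions prescribed by $Der(\mathcal{H}olieb_{p,q}^\circlearrowleft)$, equals $i-(p+q+1)$, placing $W_i$ in the graded component $\mathbb{K}[p+q+1-i]$ as claimed.

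The main obstacle will be the sign bookkeeping in the cyclic action: the exact residue class depends sensitively on the intrinsic degree one assigns to a directed edge and on the degree shift between $\mathcal{H}olieb_{p,q}^\circlearrowleft$ and its curvature-allowing extension. Ensuring that the condition reads $i \equiv 2(p+q+1)+1 \pmod 4$ rather than a nearby residue requires a careful reconciliation with the conventions set out in Section \ref{sec:wheelification}. A secondary technical concern is verifying convergence and $E_\infty$-collapse of the tree-contraction spectral sequence in the wheeled setting, where source/target corollas can produce trees absent from the unwheeled $b=0$ case of \cite{MW1}.
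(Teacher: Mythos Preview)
Your proposal has a genuine gap at the reduction step. The differential on $Der(\mathcal{H}olieb_{p,q}^\circlearrowleft)$ is vertex-splitting together with corolla-attachment (formula~\eqref{eq:defholiebdiff}), not edge-contraction; the phrase ``$d_0$ contracts only tree-edges'' does not describe this complex. More seriously, even on a graph with no attached trees each cycle vertex is an $(m,n)$-corolla with $m+n\geq 1$ hairs (equivalently a non-zero bi-weight in the model $\fwGC_{p+q+1}$), and the differential acts by splitting such a vertex into two cycle vertices while redistributing its hairs and by attaching new univalent corollas. Thus the residual differential on your ``pure wheels'' does \emph{not} vanish, and the problem does not reduce to a sign count for the cyclic rotation on a single one-dimensional symbol $W_i$.

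The assertion that the $b=0$ argument of \cite{MW1} extends ``in a routine way'' to kill all tree-hair in the wheeled one-loop case is precisely what is not routine here. In the paper this reduction is the content of Sections~5--6: one passes to the bi-weighted complex (Proposition~\ref{prop:HoliebWGC}), runs two successive spectral sequences over special in- and special out-vertices to reach the four-type complex $\qGC_k$ (Propositions~\ref{prop:specialIn}, \ref{prop:specialOut}, \ref{prop:4-types}), and then reduces further to the mono-decorated complex $\fM_k^*$ (Propositions~\ref{prop:0-decorations} and \ref{prop:mGC}). Only after this chain does the $b=1$ piece identify with the $b=1$ part of $\dGC_k$ via $\Gamma\mapsto\Gamma(\omega)$ (Proposition~\ref{prop:M*cohom}(2)), after which the answer is read off from the known loop-class computation $H(\mathsf{b}_2\GC_k)=\bigoplus_{i\equiv 2k+1\ (4)}\mathbb{K}[k-i]$ with $k=p+q+1$. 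Your single spectral sequence would have to reproduce or replace this entire chain, and the proposal gives no indication of how the hair/bi-weight degrees of freedom are actually eliminated.
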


Next, one can study a deformation theory of $\mathcal{H}olieb_{p,q}^{\circlearrowleft}$ in which the creation of curvature terms is prohibited. It is controlled by a subcomplex $Der^+(\mathcal{H}olieb_{p,q}^\circlearrowleft)$ of the full complex $Der(\mathcal{H}olieb_{p,q}^\circlearrowleft)$ (see section \ref{sec:Der*+} for its precise definition). Its cohomology is quite different from the complex with curvature terms. This complex also decompose naturally into a direct sum of complexes $Der^+_{b=g}(\mathcal{H}olieb_{p,q}^\circlearrowleft)$ generated by graphs with loop number $g$. The case $g=0$ is equivalent to the case above. We study study $Der^+_{b=1}(\mathcal{H}olieb_{p,q}^\circlearrowleft)$ and $Der^+_{b\geq2}(\mathcal{H}olieb_{p,q}^\circlearrowleft)$ separately, where the latter is the complex of graphs of loop number two and higher. Our second main result is the following theorem.

\begin{theorem}[Theorem 2]
    Let $Cone(P)[1]$ be the suspended mapping cone of the projection 
    \begin{equation*}
        P:\dGC_{x}\rightarrow\dGC_{x}/\dGC^s_{x}\oplus\dGC_{x}/\dGC^t_{x},\qquad \text{where } x=p+q+1.
    \end{equation*}
    Then there is a quasi-isomorphism $Cone(P)[1]\rightarrow Der^+_{b\geq2}(\mathcal{H}olieb_{p,q}^\circlearrowleft)$. In particular, $Cone(P)[1]$ fits in the short exact sequence
    \begin{equation*}
        \xymatrix{0 \ar[r] & (\dGC_{x}/\dGC^s_{x}\oplus \dGC_{x}/\dGC^t_{x})[1] \ar[r] & Cone(P)[1] \ar[r] & \dGC_{x} \ar[r] & 0}     .
    \end{equation*}
\end{theorem}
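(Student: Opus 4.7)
The plan is to first realize $Der^+_{b\geq 2}(\mathcal{H}olieb^\circlearrowleft_{p,q})$ as an explicit graph complex along the lines set up in Section \ref{sec:Der*+}: elements are classes of connected directed graphs of loop number at least two whose internal vertices are of non-curvature Lie-bialgebra type (at least one input and one output half-edge, total valence at least three), carrying external input- and output-hairs, with differential given by the standard properadic vertex-splitting/edge-contraction.

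I will then construct the quasi-isomorphism directly on the two graded summands of $Cone(P)[1]$ visible in the given short exact sequence. On the quotient summand $\dGC_x$, a directed graph $\Gamma$ is sent to the class in $Der^+_{b\geq 2}$ obtained by decorating every vertex as a Lie-bialgebra corolla and opening one of its internal edges into a paired input/output hair, summed over edges with appropriate signs. On the subcomplex summand $(\dGC_x/\dGC^s_x\oplus\dGC_x/\dGC^t_x)[1]$, a source-free (respectively target-free) graph is sent to its decorated version with one freshly attached external input-hair (respectively output-hair). Compatibility of the cone differential, which glues the two summands via $P$, with the Der-differential dictates the signs and forces the overall degree shift by one.

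To verify the map is a quasi-isomorphism I will compare filtrations on the two sides. Filter $Der^+_{b\geq 2}$ by the number of genuine external hairs. The zero-hair subquotient, after standard orientation and decoration reductions paralleling the unwheeled computation of \cite{MW1}, should recover the directed graph complex $\dGC_x$ with $x=p+q+1$. The positive-hair part splits further by hair type and, after contracting acyclic subcomplexes of the ``hair-at-a-pass-through-vertex'' type in the spirit of \cite{Z2,Z3,M3}, should reduce to $\dGC_x/\dGC^s_x\oplus\dGC_x/\dGC^t_x$ with the shift $[1]$ carried by the surviving hair.

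The main obstacle is this last identification: matching the positive-hair layer with the source/target quotients, and verifying that the resulting connecting morphism of the short exact sequence agrees on the nose with the projection $P$ rather than some perturbation of it. This requires tracking how the Der-differential acting on a closed decorated graph can split off a term with a freshly opened hair, and checking that at the level of the spectral-sequence $E_1$-page this splitting realises precisely $P$ in each of the two components. Sign bookkeeping, the degree shift $[1]$, and the preservation of the loop-number constraint $b\geq 2$ through all spectral-sequence collapses are the further technical subtleties that will need careful treatment before concluding the quasi-isomorphism $Cone(P)[1]\xrightarrow{\simeq} Der^+_{b\geq 2}(\mathcal{H}olieb^\circlearrowleft_{p,q})$.
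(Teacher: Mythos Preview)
Your proposal diverges substantially from the paper's route and contains genuine gaps. The paper does not build a direct map into $Der^+_{b\geq 2}(\mathcal{H}olieb_{p,q}^\circlearrowleft)$; instead it constructs a long chain of intermediate quasi-isomorphic complexes. First $Der^+$ is identified with a \emph{bi-weighted} graph complex $\wGC_k^+$ (each vertex carries a pair of non-negative integers recording in/out hair counts). Then two successive spectral sequences over ``special in-vertices'' and ``special out-vertices'' reduce this to a complex $\qGC_k^+$ with only four vertex decorations $\frac{\infty_1}{\infty_1},\frac{\infty_1}{0},\frac{0}{\infty_1},\frac{0}{0}$; further reductions kill the $\frac{0}{0}$ decorations and long antennas to reach a mono-decorated complex $\M_k^+$. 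Only at this last stage is the map from $Cone(P)[1]$ defined, and on $\dGC_k$ it sends $\Gamma$ to $\Gamma(\omega)$, the \emph{infinite sum over all admissible hair decorations of every vertex} --- not a single edge-opening.

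Your specific construction has two concrete problems. First, opening an internal edge of $\Gamma$ into an input/output hair pair removes one internal edge while keeping the vertex count, so the loop number drops by one: a graph with $b=2$ in $\dGC_x$ would land in $Der^+_{b=1}$, not $Der^+_{b\geq 2}$. Second, $Der^+$ by definition requires $m,n\geq 1$, so there is no ``zero-hair subquotient'' at all. Even interpreting this charitably as the minimal $(m,n)=(1,1)$ layer, identifying it with $\dGC_x$ is exactly the hard part: the paper's elaborate chain of special-vertex and passing-vertex filtrations is what is needed to disentangle the hair combinatorics, and a plain hair-count filtration does not do this. The identification of the positive-hair part with $\dGC_x/\dGC_x^s\oplus\dGC_x/\dGC_x^t$ likewise requires all of that machinery plus an additional argument (Corollary~\ref{cor:mGCDiagram} in the paper) that the connecting morphism in a certain auxiliary short exact sequence is an isomorphism.
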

\begin{proposition}
    \begin{equation*}
        H(Der_{b=1}^+(\mathcal{H}olieb_{p,q}^\circlearrowleft))=
    \bigoplus_{\substack{i\geq 1\\ i\equiv 1 \mod 2}}
    \mathcal{C}_i,\quad \text{where }\mathcal{C}_i=
    \begin{cases}
        \mathbb{K}[p+q-i] & \text{if }i\equiv 2k+1 \mod 4\\
        \mathbb{K}[p+q-i]^2 & \text{if }i\equiv 2k+3 \mod 4
    \end{cases}
    \end{equation*}
\end{proposition}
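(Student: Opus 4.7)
My plan is to compute $H(Der^+_{b=1}(\mathcal{H}olieb_{p,q}^{\circlearrowleft}))$ in two steps: first reduce the complex to a "pure wheel" subcomplex by a contracting homotopy that kills all tree attachments, and then compute the dihedral sign-invariants of the wheels case-by-case in $i \bmod 4$.

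\textbf{Step 1 (Reduction to pure wheels).} A loop-$1$ graph in $Der^+_{b=1}$ consists of a unique cycle $W_i$ on $i\geq 1$ vertices to which tree fragments and external legs are attached, and every internal vertex must satisfy the no-curvature constraint $m,n\geq 1$. Filter the complex by the total number of vertices not lying on the cycle. On the associated graded, the differential acts only on the tree parts by vertex-splitting (since a cycle-edge splitting would change the loop number). A standard Kontsevich-style contracting homotopy that merges an outermost external leg back into its neighbouring vertex shows this associated graded is acyclic, provided one checks that the homotopy never creates a curvature vertex — this is automatic because merging preserves the $m\geq 1,n\geq 1$ condition at the untouched vertex and deletes a tree vertex on the touched one. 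Hence $H(Der^+_{b=1})$ is supported on the "pure wheel" graphs $W_i$ in which every vertex lies on the cycle and carries only the minimal external decoration needed to remain a legitimate $\mathcal{H}olieb_{p,q}$ generator.

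\textbf{Step 2 (Enumeration of vertex types).} On the cycle each vertex already has one incoming and one outgoing cycle-edge after choosing an orientation of the cycle, but the properadic generator it represents may still require additional external legs depending on in/out balance. Running through the short list of admissible local pictures on a cycle vertex and tracking the resulting internal degree produces the overall homological degree $p+q-i$ for the wheel $W_i$. The key point is that for $i$ even the cyclic rotation by one step produces a cycle of length $i$ on the orientation data, hence is automatically zero in the antisymmetric summand; this is why only odd $i$ appear in the answer.

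\textbf{Step 3 (Dihedral signs and the mod $4$ dichotomy).} For $i$ odd, $W_i$ carries an action of the dihedral group $D_i = \langle r,s\rangle$. The rotation $r$ acts with sign $(-1)^{(p+q+1)(i-1)}$ coming from the reordering of edges of degree $p+q+1$ on the cycle; the reflection $s$ additionally reverses the edge orientations, contributing another sign depending on $p+q$ and the residue $i\bmod 4$. A direct case analysis yields the dichotomy: when $r$ and $s$ act with the same sign the invariant space is $1$-dimensional, contributing $\mathbb{K}[p+q-i]$; when they act with opposite signs both $\pm 1$ eigenlines of $r$ survive, contributing $\mathbb{K}[p+q-i]^{2}$. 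Translating the sign calculation into residues of $i$ modulo $4$ gives precisely $\mathcal{C}_i$ as stated.

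\textbf{Main obstacle.} The delicate point is Step $1$: one must be careful that the contracting homotopy is well-defined on equivalence classes of graphs (the splitting differential and the contracting map must commute with graph automorphisms up to the expected signs) and that the "$+$" constraint is preserved throughout the spectral sequence. The analogous computation in the curvature-allowed case already appears as the previous Proposition for $Der_{b=1}$; the present claim upgrades that argument by re-running it in the subcomplex where vertex-splittings that create sinks or sources are excluded from the differential, which opens up new cohomology classes — exactly the extra copy of $\mathbb{K}[p+q-i]$ appearing in the $i\equiv 2k+3\pmod 4$ residues.
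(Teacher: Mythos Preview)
Your proposal has several genuine gaps.

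\textbf{Step 1 is internally inconsistent.} With the filtration you state (by the number of vertices \emph{not} on the cycle), the associated graded differential keeps that number fixed. Splitting a tree vertex always increases it, and splitting a cycle vertex into a cycle vertex plus a tree vertex also increases it; the only splitting that survives is a cycle vertex splitting into two cycle vertices. So the associated graded differential acts on the cycle, not on the tree parts, the opposite of what you claim. Moreover, ``the associated graded is acyclic'' cannot be what you mean, since that would force the whole complex to be acyclic; you need the associated graded to be quasi-isomorphic to the pure-wheel subcomplex, which requires an actual identification of its cohomology, not a contracting homotopy killing everything.

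\textbf{The ``$+$'' condition is a global constraint, not a local one.} The superscript $+$ means the graph has at least one out-hair and at least one in-hair in total; it is not the per-vertex condition $m,n\geq 1$ (that is already forced by the $\mathcal{H}olieb_{p,q}$ generators). Your Step~1 argument about the homotopy ``never creating a curvature vertex'' is addressing the wrong constraint, and the true $+$-constraint interacts badly with a vertex-local homotopy.

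\textbf{Step 3 is mathematically wrong and misidentifies the source of the two classes.} For $i$ odd the rotation $r$ has order $i$, so it has no $-1$ eigenvalue; there are no ``$\pm 1$ eigenlines of $r$'' to speak of. More importantly, the doubling in the $i\equiv 2k+3\pmod 4$ case does not come from dihedral symmetry of a single wheel. In the paper's framework the two independent classes correspond to two distinct decorations of the wheel: one where every vertex carries only extra out-hairs and one where every vertex carries only extra in-hairs (the $\tfrac{\infty_1}{0}$ versus $\tfrac{0}{\infty_1}$ decorations in the bi-weighted model). When $i\equiv 2k+1\pmod 4$ the ``symmetric'' class $\Gamma(\omega)$ survives as well and kills one linear combination via the connecting map, leaving one class; when $i\equiv 2k+3\pmod 4$ there is no such cancellation and both survive.

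\textbf{How the paper actually proceeds.} After the long chain of quasi-isomorphisms to the mono-decorated complex $\mathsf{b}_1\mathsf{M}_k^+$ (with $k=p+q+1$), the paper uses the short exact sequence
\[
0\to \mathsf{b}_1\mathsf{M}_k^{+,1}\to \mathsf{b}_1\mathsf{M}_k^{+}\to \mathsf{b}_1\mathsf{M}_k^{+,\geq 2}\to 0,
\]
identifies $H^\bullet(\mathsf{b}_1\mathsf{M}_k^{+,\geq 2})\cong H^\bullet(\mathsf{b}_1\mathsf{dGC}_k)$ and $H^\bullet(\mathsf{b}_1\mathsf{M}_k^{+,1})\cong \bigoplus_{i\text{ odd}}\mathbb{K}[k-i-1]^2$, and then shows that the connecting morphism is the (shifted) diagonal map, hence injective. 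The long exact sequence then splits and the stated $\mathcal{C}_i$ drop out. None of this is a dihedral invariant computation on a single wheel.
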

From the induced long exact sequence on cohomology we get our third main theorem as a corollary.
\begin{theorem}[Theorem 3]
    $H^0(Der^+_{b\geq2}(\mathcal{H}olieb_{1,1}^\circlearrowleft))=H^{-1}(\dGC_3/\dGC_3^s)\oplus H^{-1}(\dGC_3/\dGC_3^t)\cong\mathfrak{grt}\oplus\mathfrak{grt}$.
\end{theorem}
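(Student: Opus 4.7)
The plan is to deduce Theorem 3 directly from Theorem 2 by running the long exact sequence in cohomology associated to its displayed short exact sequence, specialized at $p = q = 1$ (so $x = p+q+1 = 3$). By the quasi-isomorphism of Theorem 2 it suffices to compute $H^0(Cone(P)[1])$, and the short exact sequence
\begin{equation*}
0 \to (\dGC_3/\dGC^s_3 \oplus \dGC_3/\dGC^t_3)[1] \to Cone(P)[1] \to \dGC_3 \to 0
\end{equation*}
produces, after accounting for the suspension $[1]$, an exact piece
\begin{equation*}
H^{-1}(\dGC_3) \to H^{-1}(\dGC_3/\dGC^s_3) \oplus H^{-1}(\dGC_3/\dGC^t_3) \to H^0(Cone(P)[1]) \to H^0(\dGC_3).
\end{equation*}

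Next I would kill the outer terms. The quasi-isomorphism $\dGC_3 \simeq \GC_3$ together with the vanishing $H^l(\GC_3) = 0$ for $l \geq -2$, both recalled in the introduction, give $H^{-1}(\dGC_3) = H^0(\dGC_3) = 0$. For the middle summands I would invoke the results of \cite{Z2} cited in the introduction, which identify $H^{-1}(\dGC_3/\dGC^s_3) \cong H^{-1}(\dGC_3/\dGC^t_3) \cong \mathfrak{grt}$; these are obtained there by feeding the computation $H^0(\dGC_3^s) \cong H^0(\dGC_3^t) \cong H^0(\GC_2) \cong \mathfrak{grt}$ into the long exact sequence of $0 \to \dGC_3^s \to \dGC_3 \to \dGC_3/\dGC_3^s \to 0$ and its targeted analogue.

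Substituting these vanishings and identifications into the exact piece above forces $H^0(Cone(P)[1]) \cong \mathfrak{grt} \oplus \mathfrak{grt}$, and the quasi-isomorphism of Theorem 2 then transports this identification to $H^0(Der^+_{b \geq 2}(\mathcal{H}olieb_{1,1}^\circlearrowleft))$. There is no substantial obstacle here: all the properadic work is packaged in Theorem 2 and the graph-complex input is quoted from \cite{Z2} and \cite{W1}. The only item requiring care is degree bookkeeping across the suspension, where one must verify in the paper's convention that $H^0((\dGC_3/\dGC^s_3 \oplus \dGC_3/\dGC^t_3)[1]) = H^{-1}(\dGC_3/\dGC^s_3) \oplus H^{-1}(\dGC_3/\dGC^t_3)$ so that the two copies of $\mathfrak{grt}$ land precisely in cohomological degree zero of the cone.
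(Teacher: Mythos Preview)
Your proposal is correct and follows essentially the same route as the paper: run the long exact sequence of the short exact sequence in Theorem~2, kill the outer terms via $H^l(\dGC_3)=0$ for $l\geq -2$, and identify the middle via the short exact sequence $0\to\dGC_3^s\to\dGC_3\to\dGC_3/\dGC_3^s\to 0$ together with $H^0(\dGC_3^s)\cong H^0(\GC_2)\cong\mathfrak{grt}$ from \cite{Z2,W1}. The only organizational difference is that the paper carries this out directly on the short exact sequence $0\to\M_3^{+,1}\to\M_3^+\to\M_3^{+,\geq 2}\to 0$ (using the quasi-isomorphisms of Propositions~\ref{prop:M*cohom} and~\ref{prop:dGC/dGCs} and Corollary~\ref{cor:mGCDiagram}) \emph{before} packaging these ingredients into the mapping cone of Theorem~2, so its proof of Theorem~3 does not formally invoke Theorem~2; your version is the cleaner a posteriori reading promised in the introduction.
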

There is in fact another proof of this theorem, derived from another short exact sequence using that $H^0(\dGC^{st}_3)\cong\mathfrak{grt}\oplus\mathfrak{grt}$. The second proof is given in section \ref{sec:dGCst}\\

As an illustrative example, we describe explicitly how the famous tetrahedron class in $\mathfrak{grt}$ acts as a derivation of $\mathcal{H}olieb^{\circlearrowleft}_{1,1}$ in two homotopy inequivalent ways.

\subsection{Outline}
In section 2 we define the basic concepts of Lie $n$-bialgebras and introduce its homotopy properad $\mathcal{H}olieb_{n}$ and its degree shifted version $\mathcal{H}olieb_{p,q}$. We also introduce the wheeled properads and derivation complexes of properads. Most notably we describe in detail the derivation complexes $Der(\mathcal{H}olieb_{p,q}^{\circlearrowleft})$ and $Der^+(\mathcal{H}olieb_{p,q}^{\circlearrowleft})$.

In section 3 we give a brief description of the Kontsevich graph complex $\GC_k$ and its directed version $\dGC_k$, and present some notable subcomplexes and results regarding their cohomology.

In section 4 we define two \textit{bi-weighted graph complexes} $\wGC_{p+q+1}$ and $\wGC^+_{p+q+1}$ and show that they are isomorphic to the derivation complexes $Der(\mathcal{H}olieb^{\circlearrowleft}_{p,q})$ and $Der^+(\mathcal{H}olieb^{\circlearrowleft}_{p,q})$ respectively.

In section 5 we define subcomplexes $\qGC_k$ and $\qGC_k^+$ of $\wGC_k$ and $\wGC_k^+$ of graphs whose vertices have four types of decorations, and show that the inclusions are quasi-isomorphisms. We also get a decomposition $\qGC_k=\qGC_k^0\oplus\qGC_k^*$ where $\qGC_k^0\cong\dGC_k^\circlearrowleft$.

In section 6 we reduce $\qGC_k^*$ and $\qGC_k^+$ by quotients, and find quotient complexes $\qGC_k^*$ and $\fM^+_k$ of graphs whose vertices have a single decoration.

In section 7 we finally establish the relations between $\fM_k^*$ and $\fM_k^+$ and the Kontsevich graph complexes.

In section 8, we give an explicit example of two the cohomology classes of $H^0(\dGC_3^{st})\cong\mathfrak{grt}\oplus\mathfrak{grt}$, represented by the tetrahedron class.

\subsection{Notation}
Let $\mathbb{S}_n$ denote the permutation group of the set $\{1,2,...,n\}$. The one dimensional sign representation of $\mathbb{S}_n$ is denoted by $sgn_n$. All vector spaces are assumed to be $\mathbb{Z}$ graded over a field $\mathbb{K}$ of characteristic zero. If $V=\bigoplus_{i\in\mathbb{Z}} V^i$ is a graded vector space, then $V[k]$ denotes the graded vector space where $V[k]^i=V^{i+k}$. For a properad $\mathcal{P}$ we denote by $\mathcal{P}\{k\}$ the properad being uniquely defined by the property: for any graded vector space $V$, a representation of $\mathcal{P}\{k\}$ in $V$ is identical to a representation of $\mathcal{P}$ in $V[k]$.\newline
Let $\Gamma$ be a graph with $v$ vertices and $e$ edges. The \textit{genus} of a graph is the number $e-v+1$. Further, for vertices in directed graphs we consider the following conventions: A vertex is \textit{passing} if it has exactly one incoming edge and one outgoing edge attached. It is a \textit{source} if only outgoing edges are attached to it, and a \textit{target} if there are only incoming edges attached. Finally a vertex is called \textit{generic} if it is at least trivalent, and have at least one outgoing edge and one incoming edge attached.
The \textit{loop number} of a graph $\Gamma$ is the number $b=e-v+1$, where $e$ is the number of edges and $v$ the number of vertices of $\Gamma$.

\subsection*{Acknowledgements} I would like to thank Sergei Merkulov for providing the motivation for this work and assisting with fruitful discussions. I also want to thank Marko Zivkovic for his input.

\section{Deformation complex of $\mathcal{H}olieb_{p,q}^\uparrow$ and $\mathcal{H}olieb_{p,q}^\circlearrowleft$}
\subsection{Lie-bialgebras}
\begin{definition}
    Let $n\in\mathbb{Z}$. A \textit{Lie }$n$\textit{-bialgebra} is a $\mathbb{Z}$ graded vector space $V$ together with two operations $[\,,]:V\otimes V\rightarrow V$ and $\Delta:V\rightarrow V\otimes V$. $[\,,]$ is a Lie-bracket of degree $-n$, $\Delta$ is a Lie coproduct of degree $0$ and the operations are compatible in the following sense:
\begin{equation*}
    \Delta([a,b])=\sum a_1\otimes [a_2,b]+[a,b_1]\otimes b_2 - (-1)^{(|a|+n)(|b|+n)}([b,a_1]\otimes a_2+b_1\otimes[b_2,a]),
\end{equation*}
for any $a,b\in V$ and where the sum is over the terms of the coproduct of $a$ and $b$ respectively. I.e $\Delta(a)=\sum a_1\otimes a_2$ and $\Delta(b)=\sum b_1\otimes b_2$.
\end{definition}
\begin{definition}
Let $\mathcal{L}ieb_n$ denote the properad of Lie $n$-bialgebras. Let $p,q\in\mathbb{Z}$ and define the degree shifted version of $\mathcal{L}ieb_n$ as $\mathcal{L}ieb_{p,q}:=\mathcal{L}ieb_{p+q-2}\{1-p\}$. so that the cobracket has degree $1-p$ and the bracket has degree $1-q$. $\mathcal{L}ieb_{p,q}$ is a quadratic operad, i.e $\mathcal{L}ieb_{p,q}=\mathcal{F}ree\langle E\rangle/\langle \mathcal{R}\rangle$, where $\mathcal{F}ree\langle E\rangle$ is the $\mathbb{S}$-bimodule generated by
\begin{align*}
    E(2,1)=(\mathbf{1}_1\otimes sgn^{\otimes |p|}_2)[p-1]=&
    \mathrm{span}\Big\langle
    \begin{tikzpicture}[node distance=1.2cm,auto,baseline=-0.1cm]
        \node[black] (r) at (0,0) {};
        \node[] (v1) at (-0.45,0.6) {$\scriptstyle1$}
            edge [<-] (r);
        \node[] (v2) at (0.45,0.6) {$\scriptstyle 2$}
            edge [<-] (r);
        \node[] (v3) at (0,-0.7) {$\scriptstyle 1$}
            edge [->] (r);
    \end{tikzpicture}
    = (-1)^{p}
    \begin{tikzpicture}[node distance=1.2cm,auto,baseline=-0.1cm]
        \node[black] (r) at (0,0) {};
        \node[] (v1) at (-0.45,0.6) {$\scriptstyle 2$}
            edge [<-] (r);
        \node[] (v2) at (0.45,0.6) {$\scriptstyle 1$}
            edge [<-] (r);
        \node[] (v3) at (0,-0.7) {$\scriptstyle 1$}
            edge [->] (r);
    \end{tikzpicture}
    \Big\rangle\\
    E(2,1)= (sgn^{\otimes|q|}_2\otimes\mathbf{1}_1)[q-1]=&
    \mathrm{span}\Big\langle
    \begin{tikzpicture}[node distance=1.2cm,auto,baseline=-0.1cm]
        \node[black] (r) at (0,0) {};
        \node[] (v1) at (-0.45,-0.6) {$\scriptstyle1$}
            edge [->] (r);
        \node[] (v2) at (0.45,-0.6) {$\scriptstyle 2$}
            edge [->] (r);
        \node[] (v3) at (0,0.7) {$\scriptstyle 1$}
            edge [<-] (r);
    \end{tikzpicture}
    = (-1)^{q}
    \begin{tikzpicture}[node distance=1.2cm,auto,baseline=-0.1cm]
        \node[black] (r) at (0,0) {};
        \node[] (v1) at (-0.45,-0.6) {$\scriptstyle 2$}
            edge [->] (r);
        \node[] (v2) at (0.45,-0.6) {$\scriptstyle 1$}
            edge [->] (r);
        \node[] (v3) at (0,0.7) {$\scriptstyle 1$}
            edge [<-] (r);
    \end{tikzpicture}
    \Big\rangle
\end{align*} and $\langle\mathcal{R}\rangle$ is the ideal generated by the elements
\begin{equation*}
    \mathcal{R}:\begin{cases}
        \begin{tikzpicture}[node distance=1.2cm,auto,baseline={([yshift=-.5ex]current bounding box.center)}]
        \node[black] (r) at (0,0) {};
        \node[] (v1) at (-0.45,0.6) {$\scriptstyle1$}
            edge [<-] (r);
        \node[] (v2) at (0.45,0.6) {$\scriptstyle 2$}
            edge [<-] (r);
        \node[black] (l) at (0.45,-0.6) {}
            edge [->] (r);
        \node[] (w1) at (0.9,0) {$\scriptstyle 3$}
            edge [<-] (l);
        \node[] (w2) at (0.45,-1.3) {$\scriptstyle 1$}
            edge [->] (l);
    \end{tikzpicture}
    +
    \begin{tikzpicture}[node distance=1.2cm,auto,baseline={([yshift=-.5ex]current bounding box.center)}]
        \node[black] (r) at (0,0) {};
        \node[] (v1) at (-0.45,0.6) {$\scriptstyle3$}
            edge [<-] (r);
        \node[] (v2) at (0.45,0.6) {$\scriptstyle 1$}
            edge [<-] (r);
        \node[black] (l) at (0.45,-0.6) {}
            edge [->] (r);
        \node[] (w1) at (0.9,0) {$\scriptstyle 2$}
            edge [<-] (l);
        \node[] (w2) at (0.45,-1.3) {$\scriptstyle 1$}
            edge [->] (l);
    \end{tikzpicture}
    +
    \begin{tikzpicture}[node distance=1.2cm,auto,baseline={([yshift=-.5ex]current bounding box.center)}]
        \node[black] (r) at (0,0) {};
        \node[] (v1) at (-0.45,0.6) {$\scriptstyle2$}
            edge [<-] (r);
        \node[] (v2) at (0.45,0.6) {$\scriptstyle 3$}
            edge [<-] (r);
        \node[black] (l) at (0.45,-0.6) {}
            edge [->] (r);
        \node[] (w1) at (0.9,0) {$\scriptstyle 1$}
            edge [<-] (l);
        \node[] (w2) at (0.45,-1.3) {$\scriptstyle 1$}
            edge [->] (l);
    \end{tikzpicture}
    ,\quad
    \begin{tikzpicture}[node distance=1.2cm,auto,baseline={([yshift=-.5ex]current bounding box.center)}]
        \node[black] (r) at (0,0) {};
        \node[] (v1) at (-0.45,-0.6) {$\scriptstyle1$}
            edge [->] (r);
        \node[] (v2) at (0.45,-0.6) {$\scriptstyle 2$}
            edge [->] (r);
        \node[black] (l) at (0.45,0.6) {}
            edge [<-] (r);
        \node[] (w1) at (0.9,0) {$\scriptstyle 3$}
            edge [->] (l);
        \node[] (w2) at (0.45,1.3) {$\scriptstyle 1$}
            edge [<-] (l);
    \end{tikzpicture}
    +
    \begin{tikzpicture}[node distance=1.2cm,auto,baseline={([yshift=-.5ex]current bounding box.center)}]
        \node[black] (r) at (0,0) {};
        \node[] (v1) at (-0.45,-0.6) {$\scriptstyle3$}
            edge [->] (r);
        \node[] (v2) at (0.45,-0.6) {$\scriptstyle 1$}
            edge [->] (r);
        \node[black] (l) at (0.45,0.6) {}
            edge [<-] (r);
        \node[] (w1) at (0.9,0) {$\scriptstyle 2$}
            edge [->] (l);
        \node[] (w2) at (0.45,1.3) {$\scriptstyle 1$}
            edge [<-] (l);
    \end{tikzpicture}
    +
    \begin{tikzpicture}[node distance=1.2cm,auto,baseline={([yshift=-.5ex]current bounding box.center)}]
        \node[black] (r) at (0,0) {};
        \node[] (v1) at (-0.45,-0.6) {$\scriptstyle2$}
            edge [->] (r);
        \node[] (v2) at (0.45,-0.6) {$\scriptstyle 3$}
            edge [->] (r);
        \node[black] (l) at (0.45,0.6) {}
            edge [<-] (r);
        \node[] (w1) at (0.9,0) {$\scriptstyle 1$}
            edge [->] (l);
        \node[] (w2) at (0.45,1.3) {$\scriptstyle 1$}
            edge [<-] (l);
    \end{tikzpicture}\\
    \begin{tikzpicture}[node distance=1.2cm,auto,baseline={([yshift=-.5ex]current bounding box.center)}]
        \node[black] (u) at (0,0.3) {};
        \node[black] (l) at (0,-0.3) {}
            edge [->] (u);
        \node[] (u1) at (-0.45,0.9) {$\scriptstyle 1$}
            edge [<-] (u);
        \node[] (u2) at (0.45,0.9) {$\scriptstyle 2$}
            edge [<-] (u);
        \node[] (l1) at (-0.45,-0.9) {$\scriptstyle 1$}
            edge [->] (l);
        \node[] (l2) at (0.45,-0.9) {$\scriptstyle 2$}
            edge [->] (l);
    \end{tikzpicture}
    -
    \begin{tikzpicture}[node distance=1.2cm,auto,baseline={([yshift=-.5ex]current bounding box.center)}]
        \node[black] (u) at (0.25,0.3) {};
        \node[black] (l) at (-0.25,-0.3) {}
            edge [->] (u);
        \node[] (u1) at (0.25,0.9) {$\scriptstyle 2$}
            edge [<-] (u);
        \node[] (u2) at (-0.7,0.3) {$\scriptstyle 1$}
            edge [<-] (l);
        \node[] (l1) at (-0.25,-0.9) {$\scriptstyle 1$}
            edge [->] (l);
        \node[] (l2) at (0.7,-0.3) {$\scriptstyle 2$}
            edge [->] (u);
    \end{tikzpicture}
    -(-1)^q
    \begin{tikzpicture}[node distance=1.2cm,auto,baseline={([yshift=-.5ex]current bounding box.center)}]
        \node[black] (u) at (0.25,0.3) {};
        \node[black] (l) at (-0.25,-0.3) {}
            edge [->] (u);
        \node[] (u1) at (0.25,0.9) {$\scriptstyle 2$}
            edge [<-] (u);
        \node[] (u2) at (-0.7,0.3) {$\scriptstyle 1$}
            edge [<-] (l);
        \node[] (l1) at (-0.25,-0.9) {$\scriptstyle 2$}
            edge [->] (l);
        \node[] (l2) at (0.7,-0.3) {$\scriptstyle 1$}
            edge [->] (u);
    \end{tikzpicture}
    -(-1)^{p+q}
    \begin{tikzpicture}[node distance=1.2cm,auto,baseline={([yshift=-.5ex]current bounding box.center)}]
        \node[black] (u) at (0.25,0.3) {};
        \node[black] (l) at (-0.25,-0.3) {}
            edge [->] (u);
        \node[] (u1) at (0.25,0.9) {$\scriptstyle 1$}
            edge [<-] (u);
        \node[] (u2) at (-0.7,0.3) {$\scriptstyle 2$}
            edge [<-] (l);
        \node[] (l1) at (-0.25,-0.9) {$\scriptstyle 2$}
            edge [->] (l);
        \node[] (l2) at (0.7,-0.3) {$\scriptstyle 1$}
            edge [->] (u);
    \end{tikzpicture}
    -(-1)^p
    \begin{tikzpicture}[node distance=1.2cm,auto,baseline={([yshift=-.5ex]current bounding box.center)}]
        \node[black] (u) at (0.25,0.3) {};
        \node[black] (l) at (-0.25,-0.3) {}
            edge [->] (u);
        \node[] (u1) at (0.25,0.9) {$\scriptstyle 1$}
            edge [<-] (u);
        \node[] (u2) at (-0.7,0.3) {$\scriptstyle 2$}
            edge [<-] (l);
        \node[] (l1) at (-0.25,-0.9) {$\scriptstyle 1$}
            edge [->] (l);
        \node[] (l2) at (0.7,-0.3) {$\scriptstyle 2$}
            edge [->] (u);
    \end{tikzpicture}
    \end{cases}.
\end{equation*}
\end{definition}
The \textit{standard operad} $\mathcal{L}ieb_0$ of Lie bialgebras corresponds to the case $p=q=1$.


Consider the $\mathbb{S}$-bimodule $\{E^\bullet(m,n)\}_{m,n\in\mathbb{N}}$ where
\begin{align*}
    E^\bullet(m,n)&=sgn^{\otimes|p|}_m\otimes sgn^{\otimes|q|}_n[p(m-1)+q(n-1)-1]\\
    &=
    \mathrm{span}\Big\langle
    \begin{tikzpicture}
        [shorten >=1pt,node distance=1.2cm,auto,baseline=-0.1cm]
        \node[black] (c) at (0,0) {};
        \node[] (u1) at (-1.4,0.6) {$\scriptstyle \sigma(1)$}
            edge [<-] (c);
        \node[] (u2) at (-0.8,0.6) {$\scriptstyle \sigma(2)$}
            edge [<-] (c);
        \node[] (u3) at (0,0.5) {$\scriptstyle \cdots$};
        \node[] (u4) at (0.8,0.6) {}
            edge [<-] (c);
        \node[] (u5) at (1.4,0.6) {$\scriptstyle \sigma(m)$}
            edge [<-] (c);
        \node[] (l1) at (-1.4,-0.6) {$\scriptstyle \tau(1)$}
            edge [->] (c);
        \node[] (l2) at (-0.8,-0.6) {$\scriptstyle \tau(2)$}
            edge [->] (c);
        \node[] (l3) at (0,-0.5) {$\scriptstyle \cdots$};
        \node[] (l4) at (0.8,-0.6) {}
            edge [->] (c);
        \node[] (l5) at (1.4,-0.6) {$\scriptstyle \tau(n)$}
            edge [->] (c);
    \end{tikzpicture}
    =(-1)^{p|\sigma|+q|\tau|}
    \begin{tikzpicture}
        [shorten >=1pt,node distance=1.2cm,auto,baseline=-0.1cm]
        \node[black] (c) at (0,0) {};
        \node[] (u1) at (-1.4,0.6) {$\scriptstyle 1$}
            edge [<-] (c);
        \node[] (u2) at (-0.8,0.6) {$\scriptstyle 2$}
            edge [<-] (c);
        \node[] (u3) at (0,0.5) {$\scriptstyle \cdots$};
        \node[] (u4) at (0.8,0.6) {}
            edge [<-] (c);
        \node[] (u5) at (1.4,0.6) {$\scriptstyle m$}
            edge [<-] (c);
        \node[] (l1) at (-1.4,-0.6) {$\scriptstyle 1$}
            edge [->] (c);
        \node[] (l2) at (-0.8,-0.6) {$\scriptstyle 2$}
            edge [->] (c);
        \node[] (l3) at (0,-0.5) {$\scriptstyle \cdots$};
        \node[] (l4) at (0.8,-0.6) {}
            edge [->] (c);
        \node[] (l5) at (1.4,-0.6) {$\scriptstyle n$}
            edge [->] (c);
    \end{tikzpicture}
    \Big\rangle.
\end{align*}
Let $\{E(m,n)\}_{m,n\in\mathbb{N}}$ be the $\mathbb{S}$-submodule defined by
\begin{equation*}
    E(m,n)=\begin{cases}
        E^\bullet(m,n) & \text{if }m+n\geq 3\text{ and }m,n\geq 1\\
        0 & \text{else.}
    \end{cases}
\end{equation*}
\label{eqn:holiebdiff)}
\begin{definition}
    Let $(\mathcal{H}olieb_{p,q},d)$ be the free dg properad generated by the $\mathbb{S}$-bimodule $\{E(m,n)\}_{m,n\in\mathbb{N}}$. Define the differential on generators by splitting the vertex into two and summing over all partitions of the in and out legs. That is
\begin{equation}\label{eq:holiebdiff)}
d
\resizebox{14mm}{!}{\begin{xy}
 <0mm,0mm>*{\circ};<0mm,0mm>*{}**@{},
 <-0.6mm,0.44mm>*{};<-8mm,5mm>*{}**@{-},
 <-0.4mm,0.7mm>*{};<-4.5mm,5mm>*{}**@{-},
 <0mm,0mm>*{};<-1mm,5mm>*{\ldots}**@{},
 <0.4mm,0.7mm>*{};<4.5mm,5mm>*{}**@{-},
 <0.6mm,0.44mm>*{};<8mm,5mm>*{}**@{-},
   <0mm,0mm>*{};<-8.5mm,5.5mm>*{^1}**@{},
   <0mm,0mm>*{};<-5mm,5.5mm>*{^2}**@{},
   <0mm,0mm>*{};<4.5mm,5.5mm>*{^{m\hspace{-0.5mm}-\hspace{-0.5mm}1}}**@{},
   <0mm,0mm>*{};<9.0mm,5.5mm>*{^m}**@{},
 <-0.6mm,-0.44
 mm>*{};<-8mm,-5mm>*{}**@{-},
 <-0.4mm,-0.7mm>*{};<-4.5mm,-5mm>*{}**@{-},
 <0mm,0mm>*{};<-1mm,-5mm>*{\ldots}**@{},
 <0.4mm,-0.7mm>*{};<4.5mm,-5mm>*{}**@{-},
 <0.6mm,-0.44mm>*{};<8mm,-5mm>*{}**@{-},
   <0mm,0mm>*{};<-8.5mm,-6.9mm>*{^1}**@{},
   <0mm,0mm>*{};<-5mm,-6.9mm>*{^2}**@{},
   <0mm,0mm>*{};<4.5mm,-6.9mm>*{^{n\hspace{-0.5mm}-\hspace{-0.5mm}1}}**@{},
   <0mm,0mm>*{};<9.0mm,-6.9mm>*{^n}**@{},
 \end{xy}}
\ \ = \ \
 \sum_{\substack{[1,\ldots,m]=I_1\sqcup I_2\\
 {|I_1|\geq 0, |I_2|\geq 1}}}
 \sum_{\substack{[1,\ldots,n]=J_1\sqcup J_2 \\
 {|J_1|\geq 1, |J_2|\geq 1} }
}\hspace{0mm}
\pm
\resizebox{22mm}{!}{ \begin{xy}
 <0mm,0mm>*{\circ};<0mm,0mm>*{}**@{},
 <-0.6mm,0.44mm>*{};<-8mm,5mm>*{}**@{-},
 <-0.4mm,0.7mm>*{};<-4.5mm,5mm>*{}**@{-},
 <0mm,0mm>*{};<0mm,5mm>*{\ldots}**@{},
 <0.4mm,0.7mm>*{};<4.5mm,5mm>*{}**@{-},
 <0.6mm,0.44mm>*{};<12.4mm,4.8mm>*{}**@{-},
     <0mm,0mm>*{};<-2mm,7mm>*{\overbrace{\ \ \ \ \ \ \ \ \ \ \ \ }}**@{},
     <0mm,0mm>*{};<-2mm,9mm>*{^{I_1}}**@{},
 <-0.6mm,-0.44mm>*{};<-8mm,-5mm>*{}**@{-},
 <-0.4mm,-0.7mm>*{};<-4.5mm,-5mm>*{}**@{-},
 <0mm,0mm>*{};<-1mm,-5mm>*{\ldots}**@{},
 <0.4mm,-0.7mm>*{};<4.5mm,-5mm>*{}**@{-},
 <0.6mm,-0.44mm>*{};<8mm,-5mm>*{}**@{-},
      <0mm,0mm>*{};<0mm,-7mm>*{\underbrace{\ \ \ \ \ \ \ \ \ \ \ \ \ \ \
      }}**@{},
      <0mm,0mm>*{};<0mm,-10.6mm>*{_{J_1}}**@{},
 <13mm,5mm>*{};<13mm,5mm>*{\circ}**@{},
 <12.6mm,5.44mm>*{};<5mm,10mm>*{}**@{-},
 <12.6mm,5.7mm>*{};<8.5mm,10mm>*{}**@{-},
 <13mm,5mm>*{};<13mm,10mm>*{\ldots}**@{},
 <13.4mm,5.7mm>*{};<16.5mm,10mm>*{}**@{-},
 <13.6mm,5.44mm>*{};<20mm,10mm>*{}**@{-},
      <13mm,5mm>*{};<13mm,12mm>*{\overbrace{\ \ \ \ \ \ \ \ \ \ \ \ \ \ }}**@{},
      <13mm,5mm>*{};<13mm,14mm>*{^{I_2}}**@{},
 <12.4mm,4.3mm>*{};<8mm,0mm>*{}**@{-},
 <12.6mm,4.3mm>*{};<12mm,0mm>*{\ldots}**@{},
 <13.4mm,4.5mm>*{};<16.5mm,0mm>*{}**@{-},
 <13.6mm,4.8mm>*{};<20mm,0mm>*{}**@{-},
     <13mm,5mm>*{};<14.3mm,-2mm>*{\underbrace{\ \ \ \ \ \ \ \ \ \ \ }}**@{},
     <13mm,5mm>*{};<14.3mm,-4.5mm>*{_{J_2}}**@{},
 \end{xy}}.
\end{equation}
For details of the signs we refer to \cite{MaVo}.
We define $(\mathcal{H}olieb^\bullet_{p,q},d)$ to be the free dg properad generated by the $\mathbb{S}$-bimodule $\{E^\bullet(m,n)\}_{m,n\in\mathbb{N}}$ with analogously defined differential. Also define the dg properad $(\mathcal{H}olieb^+_{p,q},d)$ generated by the same generators as $\mathcal{H}olieb_{p,q}$ with one additional generator $
    \begin{tikzpicture}[node distance=1.2cm,auto,baseline=-0.1cm]
    \node[black] (m) at (0,0) {};
    \node[] (d) at (0,-0.55) {}
        edge [->] (m);
    \node[] (u) at (0,0.55) {}
        edge [<-] (m);
    \end{tikzpicture}$ of arity $(1,1)$.
\end{definition}
The properad $(\mathcal{H}olieb_{p,q},d)$ is a minimal model of $\mathcal{L}ieb_{p,q}$, shown in \cite{K,V}. Further it is a subproperad of both $\mathcal{H}olieb_{p,q}^+$ and $\mathcal{H}olieb_{p,q}^\bullet$. Denote the inclusions as $\pi^+:\mathcal{H}olieb_{p,q}\hookrightarrow \mathcal{H}olieb^+_{p,q}$ and $\pi^\bullet:\mathcal{H}olieb_{p,q}\hookrightarrow \mathcal{H}olieb^\bullet_{p,q}$ respectively.
\begin{remark}
    The elements of $\mathcal{H}olieb_{p,q}$ can be seen as \textit{oriented graphs} (graphs without any closed paths) with out- and in-hairs attached to the vertices such that each vertex have at least one outgoing edge or out-hair, at least one incoming edge or in-hair and at least trivalent with respect to edges and hairs.
\end{remark}
\subsection{Deformations of properads}
\begin{definition}
    Let $(A,d_A),(B,d_B)$ be dg properads and $f:A\rightarrow B$ a morphism of dg properads. That is $d_A\circ f=f\circ d_B$, for all $x,y\in A$ $f(x\circ_Ay)=f(x)\circ_Bf(y)$, and $|f|=0$. A \textit{derivation} of $f$ is a morphism of $\mathbb{S}$-bimodules $D:A\rightarrow B$ such that
    \begin{equation*}
        D(x\circ_A y)=D(x)\circ_Bf(y)-(-1)^{|x|}f(x)\circ_BD(y).
    \end{equation*}
    We denote by $Der(A,B)$ the set of derivations with respect to $f$.
    Note that this is a $\mathbb{K}$ graded vector space. Define the space of \textit{deformations} with respect to $f$ as to be the space of derivations shifted by $-1$. That is $Def(A,B)=Der(A,B)[-1]$.
    We will in this paper mostly study the deformations.
    We define a differential $d$ on $Def(A,B)$ by 
    $d(D):=d_B\circ D + f\circ D + (-1)^{|f|} D\circ f$ for all $D\in Def(A,B)$.
\end{definition}
\begin{remark}
    When $A$ is free, then any derivation $D$ is uniquely determined by its values on the generators of $A$. In particular $Def(\mathcal{F}ree(E),B)\cong Hom(E,B)$ and $Der(\mathcal{F}ree(E),B)\cong Hom(E,B)[-1]$, where $Hom$ stands for the linear space of morphisms of $\mathbb{S}$-bimodules.
\end{remark}
\begin{example}
    Let $Der(\mathcal{H}olieb^{\uparrow}_{p,q}$) be the derivation complex with respect to $\pi^\bullet:\mathcal{H}olieb^{\bullet}_{p,q}\rightarrow \mathcal{H}olieb_{p,q}$ and $Der^+(\mathcal{H}olieb^{\uparrow}_{p,q}$) be the derivation complex with respect to $\pi^+:\mathcal{H}olieb_{p,q}^{+}\rightarrow \mathcal{H}olieb_{p,q}$. Both properads $\mathcal{H}olieb^{\bullet}_{p,q}$ and $\mathcal{H}olieb^{+}_{p,q}$ are free and the deformation complexes are thus isomorphic to the space of linear morphisms from the generators of $\mathcal{H}olieb_{p,q}^\bullet$ and $\mathcal{H}olieb_{p,q}^+$ respectively to $\mathcal{H}olieb_{p,q}$.
    Hence the derivation complexes can be described as
    \begin{align*}
        Der(\mathcal{H}olieb^{\uparrow}_{p,q})&\cong\prod_{m,n\geq 0}(\mathcal{H}olieb_{p,q}^\uparrow(m,n)\otimes sgn_m^{\otimes |p|}\otimes sgn_n^{\otimes |q|})^{\mathbb{S}_m\times\mathbb{S}_n}[1+p(1-m)+q(1-n)]\\
        Der^+(\mathcal{H}olieb^{\uparrow}_{p,q})&\cong\prod_{m,n\geq 1}(\mathcal{H}olieb_{p,q}^\uparrow(m,n)\otimes sgn_m^{\otimes |p|}\otimes sgn_n^{\otimes |q|})^{\mathbb{S}_m\times\mathbb{S}_n}[1+p(1-m)+q(1-n)].
    \end{align*}
    where $\mathcal{H}olieb^\uparrow_{p,q}(m,n)$ is the set of elements represented by oriented graphs having $m$ outputs and $n$ inputs. 
    The differential $d$ of the derivation complex is given by the vertex splitting differential $d^{spl}$ from $\mathcal{H}olieb_{p,q}$ with the additional terms of attaching $(m,n)$ corollas to every hair for all integers $m,n$:
\begin{equation}
\label{eq:defholiebdiff}
 d\Gamma =
 d^{spl}\Gamma
  \pm
  \sum_{m,n}
\begin{tikzpicture}
        [shorten >=1pt,node distance=1.2cm,auto,baseline=-0.1cm]
        \node[black] (c) at (0,0) {};
        \node[] (u1) at (-1.4,0.6) {$\scriptstyle 1$}
            edge [-] (c);
        \node[] (u2) at (-0.8,0.6) {$\scriptstyle 2$}
            edge [-] (c);
        \node[] (u3) at (0,0.5) {$\scriptstyle \cdots$};
        \node[] (u4) at (0.8,0.6) {$\scriptstyle m$}
            edge [-] (c);
        \node[] (u5) at (1.4,0.6) {$\Gamma$}
            edge [-] (c);
        \node[] (l1) at (-1.4,-0.6) {$\scriptstyle 1$}
            edge [-] (c);
        \node[] (l2) at (-0.8,-0.6) {$\scriptstyle 2$}
            edge [-] (c);
        \node[] (l3) at (0,-0.5) {$\scriptstyle \cdots$};
        \node[] (l4) at (0.8,-0.6) {}
            edge [-] (c);
        \node[] (l5) at (1.4,-0.6) {$\scriptstyle n$}
            edge [-] (c);
    \end{tikzpicture}
    \mp
    \sum_{m,n}
    \begin{tikzpicture}
        [shorten >=1pt,node distance=1.2cm,auto,baseline=-0.1cm]
        \node[black] (c) at (0,0) {};
        \node[] (u1) at (-1.4,0.6) {$\scriptstyle1$}
            edge [-] (c);
        \node[] (u2) at (-0.8,0.6) {$\scriptstyle2$}
            edge [-] (c);
        \node[] (u3) at (0,0.5) {$\scriptstyle \cdots$};
        \node[] (u4) at (0.8,0.6) {}
            edge [-] (c);
        \node[] (u5) at (1.4,0.6) {$\scriptstyle m$}
            edge [-] (c);
        \node[] (l1) at (-1.4,-0.6) {$\scriptstyle 1$}
            edge [-] (c);
        \node[] (l2) at (-0.8,-0.6) {$\scriptstyle 2$}
            edge [-] (c);
        \node[] (l3) at (0,-0.5) {$\scriptstyle \cdots$};
        \node[] (l4) at (0.8,-0.6) {$\scriptstyle n$}
            edge [-] (c);
        \node[] (l5) at (1.4,-0.6) {$\Gamma$}
            edge [-] (c);
    \end{tikzpicture}
 \end{equation}
The sign rule for this formula can be found in \cite{MW1}, where the cohomology of these complexes are described in terms of Kontsevich graph complexes.
\end{example}
\begin{remark}
    Note that $Der(\mathcal{H}olieb^\uparrow_{p,q})=Der^+(\mathcal{H}olieb^{\uparrow}_{p,q})$ since $\mathcal{H}olieb_{p,q}^\uparrow(0,0)=\mathcal{H}olieb_{p,q}^\uparrow(1,0)=\mathcal{H}olieb_{p,q}^\uparrow(0,1)=0 $ due to any composition of operations in $\mathcal{H}olieb_{p,q}$ must have at least one output and one input.
\end{remark}
Let $Der(\widehat{\mathcal{H}olieb}^\uparrow_{p,q})$ denote the derivation complex over the genus completed properads $\pi^\bullet:\widehat{\mathcal{H}olieb}_{p,q}\rightarrow\widehat{\mathcal{H}olieb}_{p,q}^\bullet$.
The genus completion of $\mathcal{H}olieb_{p,q}$ has a complete topology, and the derivations are considered as continuous. In the derivation complex, this means that we allow infinite series of graphs as elements. It is easy to see that the loop number $b=e-v+1$ of a graph is invariant under the differential. Let $b_0Der(\widehat{\mathcal{H}olieb}^\uparrow_{p,q})$ denote the subcomplex of $Der(\widehat{\mathcal{H}olieb}^\uparrow_{c,d})$ generated by graphs with loop number zero.
\begin{theorem}\label{thm:onevertex}
    The cohomology of $b_0Der(\widehat{\mathcal{H}olieb}^\uparrow_{p,q})$ is generated by the sum of single vertex graphs
    \begin{equation*}
        \sum_{
        \substack{
            m,n\geq 1\\
            m+n\geq 3}}
            \ (m+n-2) \ 
    \begin{tikzpicture}[node distance=1.2cm,auto,baseline=-0.1cm]
        \node[black] (r) at (0,0) {};
        \node[invisible] (u1) at (-0.4,0.5) {}
            edge [<-] (r);
        \node[] (udot1) at (0,0.4) {$\cdots$};
        \node[invisible] (u2) at (0.4,0.5) {}
            edge [<-] (r);
        \node[invisible] (d1) at (-0.4,-0.5) {}
            edge [->] (r);
        \node[] (ddot1) at (0,-0.4) {$\cdots$};
        \node[invisible] (d2) at (0.4,-0.5) {}
            edge [->] (r);
        \node[] at (0,0.7) {$\overbrace{\ \ \ \ \ \ \ \ }$};
        \node[] at (0,-0.7) {$\underbrace{\ \ \ \ \ \ \ \ }$};
        \node[] at (0,1) {$\scriptstyle m$};
        \node[] at (0,-1) {$\scriptstyle n$};
            \end{tikzpicture}.
    \end{equation*}
\end{theorem}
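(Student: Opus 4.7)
The plan is to verify the statement by an explicit cocycle construction together with a dimension-count argument. I would split the proof into three steps.

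First, I verify that the sum $S := \sum_{\substack{m,n\geq 1\\ m+n\geq 3}}(m+n-2)\,C_{m,n}$, where $C_{m,n}$ denotes the $(m,n)$-corolla, is a $d$-cocycle. Applying the differential from \eqref{eq:defholiebdiff} to a single-vertex corolla produces only two-vertex trees. For a fixed two-vertex tree $T$ whose vertices are corollas of arities $(m_1,n_1)$ and $(m_2,n_2)$ connected by one internal edge directed from the first to the second, the element $T$ occurs in $dS$ from three distinct sources: the vertex-splitting contribution to $d^{spl}(C_{m_1+m_2-1,\,n_1+n_2-1})$ carrying the weight $(m_1+m_2+n_1+n_2-4)$; the hair-attachment term grafting $C_{m_2,n_2}$ onto an out-hair of $C_{m_1,n_1}$ carrying weight $(m_1+n_1-2)$; and the symmetric grafting of $C_{m_1,n_1}$ onto an in-hair of $C_{m_2,n_2}$ carrying weight $(m_2+n_2-2)$. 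The arithmetic identity $(m_1+m_2+n_1+n_2-4) = (m_1+n_1-2) + (m_2+n_2-2)$ is precisely the reason why the weight $(m+n-2)$ is the one that closes $S$.

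Second, I observe that $S$ is not exact. Both pieces of the differential in \eqref{eq:defholiebdiff}, namely $d^{spl}$ and the hair-attachment operator, strictly increase the number of vertices in every graph they produce. Therefore the image of $d$ is concentrated in graphs with at least two vertices, and no nonzero combination of single-vertex corollas can be a coboundary. Since $S$ is manifestly a nonzero single-vertex element (the coefficient $(m+n-2)$ is positive for every $m+n\geq 3$), the class $[S]$ is nontrivial. Third, I conclude by invoking the result quoted from \cite{MW1} in the introduction, that the loop-zero part of the derivation complex has one-dimensional cohomology (the ``rescaling class''). Since $[S]\neq 0$ and $H^\ast$ is one-dimensional, $S$ must generate $H^\ast(b_0Der(\widehat{\mathcal{H}olieb}^\uparrow_{p,q}))$.

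The main obstacle is the sign bookkeeping in the first step. The sign conventions from \cite{MaVo} combine the antisymmetries of the corollas under the $sgn_m^{\otimes|p|}\otimes sgn_n^{\otimes|q|}$ representation with the global degree shift $p(m-1)+q(n-1)-1$ and the orientation data of the internal edge. My strategy would be to check the sign cancellation carefully in a minimal case such as $(m_1,n_1)=(m_2,n_2)=(2,1)$, then propagate the result to general two-vertex trees via the naturality of the conventions, relying ultimately on the $d^2=0$ identity in the full derivation complex, which already encodes the compatible interaction of these signs.
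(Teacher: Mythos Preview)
The paper's own proof of this theorem is simply the sentence ``See \cite{MW1}.'' Nothing is argued locally; the entire statement, both the one-dimensionality of the loop-zero cohomology and the identification of the generator with the rescaling class, is imported wholesale from Merkulov--Willwacher.

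Your approach is therefore strictly more explicit than what the paper does. Steps 1 and 2 are genuine added value: the arithmetic identity $(m_1+m_2+n_1+n_2-4)=(m_1+n_1-2)+(m_2+n_2-2)$ is exactly the reason the weights $(m+n-2)$ close $S$, and the observation that every summand of $d$ strictly increases the vertex count cleanly rules out exactness. These are correct and illuminating. However, your Step 3 is not an independent argument but a second appeal to \cite{MW1}: you invoke the one-dimensionality of $H^\ast$ from the introduction, which is precisely the content of the cited theorem. So in the end your proof and the paper's proof have the same logical dependency on \cite{MW1}; you have only separated out and verified by hand that the displayed element really is a nonzero cocycle, while the paper leaves even that to the reference.

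On the sign bookkeeping you flag in Step 1: your plan to check one minimal two-vertex tree and then ``propagate via naturality'' is a heuristic, not a proof. The honest way to close this gap is either to import the full sign formula from \cite{MaVo} and carry it through, or to observe (as the paper effectively does) that the element $S$ is the image of the Euler derivation under the identification $Der(\mathcal{H}olieb^\uparrow_{p,q})\cong\prod_{m,n}\mathcal{H}olieb^\uparrow_{p,q}(m,n)^{\mathbb{S}_m\times\mathbb{S}_n}[\cdots]$, which makes the cocycle property automatic. Either route removes the hand-waving, but since the paper itself defers to \cite{MW1}, your level of detail already exceeds what is required here.
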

\begin{proof}
    See \cite{MW1}.
\end{proof}

\subsection{Wheelification}\label{sec:wheelification}
Recall that a properad is an $\mathbb{S}$-module $\{P(m,n)\}_{m,n\in\mathbb{N}}$ together with two compositions, namely horizontal composition
\begin{equation*}
    \otimes:P(m_1,n_1)\otimes...\otimes P(m_k,n_k)\rightarrow P(m_1+...+m_k,n_1+...+n_k)
\end{equation*}
and vertical composition \cite{V}:
\begin{equation*}
    \circ: P(m,n)\otimes P(n,k)\rightarrow P(m,k).
\end{equation*}
A \textit{wheeled properad} is a properad equipped with additional contraction maps
\begin{equation*}
    \xi^i_j:P(m,n)\rightarrow P(m-1,n-1)\quad\text{ where }\quad 1\leq m,n,\, 1\leq i\leq m \text{ and } 1\leq j\leq n
\end{equation*}
which satisfy axioms (see \cite{M2,MMS} for full details). A wheeled properad is best understood from the basic example of the endomorphism properad $End_V=\{Hom(V^m,V^n)\}_{m,n\in\mathbb{N}}$ for any finite dimensional vector space $V$. There the maps $\xi_{i,j}:Hom(V^m,V^n)\rightarrow Hom(V^{m-1},V^{n-1})$ are induced by the standard trace map $Hom(V,V)\rightarrow\mathbb{K}$.

There is a pair of adjoint functors between the category of properads and the category of wheeled properads, the first being the forgetful functor. The second is called the \textit{wheelification functor}. If $\mathcal{P}$ is a free properad, then elements of $P$ consist of freely composing operations from the generating set such that no closed paths occur in the resulting operation when represented as a graph. The wheelification $\mathcal{P}^{\circlearrowleft}$ of $\mathcal{P}$ consists of freely composing generating operations where the latter restriction is removed. That is composition of operations can be represented by graphs with possibly closed paths. In particular, if $\mathcal{F}$ is a free properad, then elements of $\mathcal{F}^\circlearrowleft$ can be viewed as graphs with hairs attached to the vertices so that each vertex have in- and outputs of some operation. In contrast, such a graph in $\mathcal{F}$ needs to be oriented.

\subsection{The wheeled deformation complexes $Der(\mathcal{H}olieb_{p,q}^\circlearrowleft)$ and $Der^+(\mathcal{H}olieb_{p,q}^\circlearrowleft)$}\label{sec:Der*+}
\begin{definition}
    Let $Der(\mathcal{H}olieb_{p,q}^\circlearrowleft)$ be the genus completed derivation complex with respect to $\pi^\bullet:\mathcal{H}olieb_{p,q}^{\circlearrowleft\bullet}\rightarrow\mathcal{H}olieb_{p,q}$. Similarly let $Der^+(\mathcal{H}olieb_{p,q}^\circlearrowleft)$ be the genus completed derivation complex with respect to $\pi^+:\mathcal{H}olieb_{p,q}^{\circlearrowleft+}\rightarrow\mathcal{H}olieb_{p,q}^\circlearrowleft$. Since $\mathcal{H}olieb_{p,q}^{\circlearrowleft\bullet}$ and $\mathcal{H}olieb_{p,q}^{\circlearrowleft+}$ are free properads, the derivation complexes can be described as
    \begin{align*}
        Der(\mathcal{H}olieb^{\circlearrowleft}_{p,q})&\cong\prod_{m,n\geq 0}(\mathcal{H}olieb_{p,q}^\circlearrowleft(m,n)\otimes sgn_m^{\otimes |p|}\otimes sgn_n^{\otimes |q|})^{\mathbb{S}_m\times\mathbb{S}_n}[1+p(1-m)+q(1-n)]\\
        Der^+(\mathcal{H}olieb^{\circlearrowleft}_{p,q})&\cong\prod_{m,n\geq 1}(\mathcal{H}olieb_{p,q}^\circlearrowleft(m,n)\otimes sgn_m^{\otimes |p|}\otimes sgn_n^{\otimes |q|})^{\mathbb{S}_m\times\mathbb{S}_n}[1+p(1-m)+q(1-n)].
    \end{align*}
    where $\mathcal{H}olieb_{p,q}^\circlearrowleft(m,n)$ is the set of elements represented by graphs with $m$ outputs and $n$ inputs.
\end{definition}
\begin{remark}
    From the decomposition above we see that the derivation complex $Der(\mathcal{H}olieb_{p,q}^\circlearrowleft)$ split as
    \begin{equation*}
        Der(\mathcal{H}olieb_{p,q}^\circlearrowleft)=Der^0(\mathcal{H}olieb_{p,q}^\circlearrowleft)\oplus Der^*(\mathcal{H}olieb_{p,q}^\circlearrowleft)
    \end{equation*}
    where 
    \begin{align*}
        Der^0(\mathcal{H}olieb_{p,q}^\circlearrowleft) =& (\mathcal{H}olieb_{p,q}^\circlearrowleft(0,0)\otimes sgn_0^{\otimes |p|}\otimes sgn_0^{\otimes |q|})^{\mathbb{S}_0\times\mathbb{S}_0}[1+p+q]\\
        Der^*(\mathcal{H}olieb_{p,q}^\circlearrowleft) =& \prod_{\substack{m,n\geq 0 \\ m+n\geq1}}(\mathcal{H}olieb_{p,q}^\circlearrowleft(m,n)\otimes sgn_m^{\otimes |p|}\otimes sgn_n^{\otimes |q|})^{\mathbb{S}_m\times\mathbb{S}_n}[1+p(1-m)+q(1-n)].
    \end{align*}
    Recall that the loop number is invariant under the differential. The complex $Der^0(\mathcal{H}olieb_{p,q}^\circlearrowleft)$ consists of graphs with no in- or outputs, and is generated by graphs where each vertex is at least trivalent and is neither a source nor a target. It hence consists of graphs with loop number at least two or higher.
    Let $b_0Der(\mathcal{H}olieb_{p,q}^\circlearrowleft)$ be the  component with loop number zero of $Der(\mathcal{H}olieb_{p,q}^\circlearrowleft)$. We note that $b_0Der(\mathcal{H}olieb_{p,q}^\circlearrowleft)=b_0Der^*(\mathcal{H}olieb_{p,q}^\circlearrowleft)=b_0Der^+(\mathcal{H}olieb_{p,q}^\circlearrowleft)$ and so the cohomology of the latter two complexes is generated by the same sum of one-vertex graphs as in theorem \ref{thm:onevertex}.
\end{remark}

\section{Graph complexes}

\subsection{The Kontsevich graph complex}
Let $e,v\in\mathbb{N}$ and let $\overline{\mathrm{V}}_v\overline{\mathrm{E}}_e\mathrm{cgra}$ be the set of connected directed graphs with $e$ edges and $v$ vertices labelled from $1$ to $e$ and from $1$ to $v$ respectively. Tadpoles and multiple edges are allowed.
Let $k\in\mathbb{Z}$ and let
\begin{equation*}
    \overline{\mathrm{V}}_v\overline{\mathrm{E}}_e\mathrm{GC}_k=\langle\overline{\mathrm{V}}_v\overline{\mathrm{E}}_e\mathrm{cgra}\rangle[-(v-1)k-(1-k)e]    
\end{equation*}
be the graded vector space over $\mathbb{K}$ of formal power series of graphs. The vector space is concentrated in degree $(v-1)k+(1-k)e$.
There is a natural right action of the group $\mathbb{S}_v\times\mathbb{S}_e\times\mathbb{S}_2^{\times e}$ on $\overline{\mathrm{V}}_v\overline{\mathrm{E}}_e\mathrm{GC}_k$ where $\mathbb{S}_v$ permutes vertices, $\mathbb{S}_e$ permutes edges and $\mathbb{S}_2^{\times e}$ acts on edges reversing their direction.
\begin{definition}\label{def:GC}
The \textit{full and connected Kontsevich graph complex} $(\mathsf{cfGC}_k,d)$ is the chain complex where
\begin{equation*}
        \mathsf{cfGC}_k:=
        \begin{cases}
        \prod_{e,v} \Big(\mathrm{\Bar{V}}_v\mathrm{\Bar{E}}_e\mathrm{GC}_k \otimes \mathrm{sgn}_e \Big)_{\mathbb{S}_v\times \mathbb{S}_e \times\mathbb{S}_2^{\times e}} & \text{ for }k \text{ even,}\\
        \prod_{e,v} \Big(\mathrm{\Bar{V}}_v\mathrm{\Bar{E}}_e\mathrm{GC}_k \otimes \mathrm{sgn}_v \otimes \mathrm{sgn}_2^{\otimes e}\Big)_{\mathbb{S}_v\times \mathbb{S}_e \times\mathbb{S}_2^{\times e}} & \text{ for }k \text{ odd.}
        \end{cases}
\end{equation*}
The subscript denotes taking the space of coinvariants under the group action.
The differential $d$ is of degree 1 and is defined on graphs as
\begin{equation*}
    d(\Gamma):=\delta(\Gamma)-\delta'(\Gamma)-\delta''(\Gamma)=\sum_{x\in V(\Gamma)}\delta_x(\Gamma)-\delta_x'(\Gamma)-\delta_x''(\Gamma)
\end{equation*}
where $V(\Gamma)$ is the set of vertices of $\Gamma$, $\delta_x(\Gamma)$ denotes the \textit{splitting} of the vertex $x$ which is a sum of graphs where $x$ has been replaced by $\begin{tikzpicture}[baseline={([yshift=-.5ex]current bounding box.center)}]
        \node[black] (l) at (0,0) {};
        \node[black] (r) at (0.9,0) {}
            edge [<-] (l);
    \end{tikzpicture}$ and the sum is over all possible ways of reattaching the edges originally connected to $x$ to the two new vertices. The $\delta_x'(\Gamma)$ stands for the graph where an outgoing edge is attached to $x$ with a univalent vertex in the other end, and $\delta_x''(\Gamma)$ stands for the graph where an incoming edge is attached to $x$ in a similar way.
The signs of the resulting graphs is determined so that the new edge is labeled $e+1$, the source vertex of the edge $e+1$ is labeled with the original vertex $x$ and the target vertex is labeled with $v+1$. Note that no univalent vertices are created under the action of the differential since any such graphs in $\delta$ and $\delta'+\delta''$ cancel each other.
\end{definition}
Elements of $\mathsf{cfGC}_k$ can be viewed as equivalence classes of undirected graphs up to a sign depending on labellings on vertices or edges. When representing a graph, we normally pick a representative and label edges, vertices and include directions of edges.
\begin{example}\label{ex:vertexsplit}
    Consider a vertex $x$ with one outgoing edge and two incoming edges in some representative graph $\Gamma$ in $\fcGC_k$. Pictorially we draw this as
    \begin{equation*}
        \begin{tikzpicture}[baseline={([yshift=-.5ex]current bounding box.center)}]
        \node[black,label=west:$\scriptstyle x$] (c) at (0,0) {};
        \node[black,label=west:$\scriptstyle i_2$] (b1) at (-0.35,-0.5) {}
            edge [->] (c);
        \node[black,label=east:$\scriptstyle i_3$] (b2) at (0.35,-0.5) {}
            edge [->] (c);
        \node[black,label=west:$\scriptstyle i_1$] (t1) at (0,0.6) {}
            edge [<-] (c);
    \end{tikzpicture}
    \end{equation*}
    where eventual vertices and edges not adjacent to $x$ have been omitted. Then $\delta_x$ acts on $\Gamma$ as
\begin{align*}
    \delta_x\Bigg(
    \begin{tikzpicture}[baseline={([yshift=-.5ex]current bounding box.center)}]
        \node[black,label=west:$\scriptstyle x$] (c) at (0,0) {};
        \node[black,label=west:$\scriptstyle i_2$] (b1) at (-0.35,-0.5) {}
            edge [->] (c);
        \node[black,label=east:$\scriptstyle i_3$] (b2) at (0.35,-0.5) {}
            edge [->] (c);
        \node[black,label=west:$\scriptstyle i_1$] (t1) at (0,0.6) {}
            edge [<-] (c);
    \end{tikzpicture}\Bigg)
    & \ =
    \begin{tikzpicture}[auto,baseline={([yshift=-.5ex]current bounding box.center)}]
        \node[black,label=west:$\scriptstyle x$] (l) at (0,0) {};
        \node[black,label=east:$\scriptstyle v+1$] (r) at (0.9,0) {}
            edge [<-] node [swap] {$\scriptstyle e+1$} (l);
        \node[black,label=west:$\scriptstyle i_2$] (b1) at (-0.35,-0.5) {}
            edge [->] (l);
        \node[black,label=east:$\scriptstyle i_3$] (b2) at (0.35,-0.5) {}
            edge [->] (l);
        \node[black,label=west:$\scriptstyle i_1$] (t1) at (0,0.6) {}
            edge [<-] (l);
    \end{tikzpicture}
    \ + \ 
    \begin{tikzpicture}[auto,baseline={([yshift=-.5ex]current bounding box.center)}]
        \node[black,label=west:$\scriptstyle x$] (l) at (0,0) {};
        \node[black,label=east:$\scriptstyle v+1$] (r) at (0.9,0) {}
            edge [<-] node [swap] {$\scriptstyle e+1$} (l);
        \node[black,label=west:$\scriptstyle i_2$] (b1) at (-0.35,-0.5) {}
            edge [->] (l);
        \node[black,label=east:$\scriptstyle i_3$] (b2) at (0.35,-0.5) {}
            edge [->] (l);
        \node[black,label=east:$\scriptstyle i_1$] (t1) at (0.9,0.6) {}
            edge [<-] (r);
    \end{tikzpicture}
    \ + \ 
    \begin{tikzpicture}[auto,baseline={([yshift=-.5ex]current bounding box.center)}]
        \node[black,label=west:$\scriptstyle x$] (l) at (0,0) {};
        \node[black,label=east:$\scriptstyle v+1$] (r) at (0.9,0) {}
            edge [<-] node [swap] {$\scriptstyle e+1$} (l);
        \node[black,label=west:$\scriptstyle i_3$] (b1) at (0,-0.5) {}
            edge [->] (l);
        \node[black,label=east:$\scriptstyle i_2$] (b2) at (0.9,-0.5) {}
            edge [->] (r);
        \node[black,label=west:$\scriptstyle i_1$] (t1) at (0,0.6) {}
            edge [<-] (l);
    \end{tikzpicture}
    \ + \
    \begin{tikzpicture}[auto,baseline={([yshift=-.5ex]current bounding box.center)}]
        \node[black,label=west:$\scriptstyle x$] (l) at (0,0) {};
        \node[black,label=east:$\scriptstyle v+1$] (r) at (0.9,0) {}
            edge [<-] node [swap] {$\scriptstyle e+1$} (l);
        \node[black,label=west:$\scriptstyle i_2$] (b1) at (0,-0.5) {}
            edge [->] (l);
        \node[black,label=east:$\scriptstyle i_3$] (b2) at (0.9,-0.5) {}
            edge [->] (r);
        \node[black,label=west:$\scriptstyle i_1$] (t1) at (0,0.6) {}
            edge [<-] (l);
    \end{tikzpicture}
    \\ 
    &\ + \ 
    \begin{tikzpicture}[auto,baseline={([yshift=-.5ex]current bounding box.center)}]
        \node[black,label=west:$\scriptstyle x$] (l) at (0,0) {};
        \node[black,label=east:$\scriptstyle v+1$] (r) at (0.9,0) {}
            edge [<-] node [swap] {$\scriptstyle e+1$} (l);
        \node[black,,label=west:$\scriptstyle i_3$] (b1) at (0,-0.5) {}
            edge [->] (l);
        \node[black,label=east:$\scriptstyle i_2$] (b2) at (0.9,-0.5) {}
            edge [->] (r);
        \node[black,label=east:$\scriptstyle i_1$] (t1) at (0.9,0.6) {}
            edge [<-] (r);
    \end{tikzpicture}
    \ + \
    \begin{tikzpicture}[auto,baseline={([yshift=-.5ex]current bounding box.center)}]
        \node[black,label=west:$\scriptstyle x$] (l) at (0,0) {};
        \node[black,label=east:$\scriptstyle v+1$] (r) at (0.9,0) {}
            edge [<-] node [swap] {$\scriptstyle e+1$} (l);
        \node[black,label=east:$\scriptstyle i_3$] (b1) at (0.9,-0.5) {}
            edge [->] (r);
        \node[black,label=west:$\scriptstyle i_2$] (b2) at (0,-0.5) {}
            edge [->] (l);
        \node[black,label=east:$\scriptstyle i_1$] (t1) at (0.9,0.6) {}
            edge [<-] (r);
    \end{tikzpicture}
    \ + \ 
    \begin{tikzpicture}[auto,baseline={([yshift=-.5ex]current bounding box.center)}]
        \node[black,label=west:$\scriptstyle x$] (l) at (0,0) {};
        \node[black,label=east:$\scriptstyle v+1$] (r) at (0.9,0) {}
            edge [<-] node [swap] {$\scriptstyle e+1$} (l);
        \node[black,label=west:$\scriptstyle i_2$] (b1) at (0.55,-0.5) {}
            edge [->] (r);
        \node[black,label=east:$\scriptstyle i_3$] (b2) at (1.25,-0.5) {}
            edge [->] (r);
        \node[black,label=west:$\scriptstyle i_1$] (t1) at (0,0.6) {}
            edge [<-] (l);
    \end{tikzpicture}
    \ + \ 
    \begin{tikzpicture}[auto,baseline={([yshift=-.5ex]current bounding box.center)}]
        \node[black,label=west:$\scriptstyle x$] (l) at (0,0) {};
        \node[black,label=east:$\scriptstyle v+1$] (r) at (0.9,0) {}
            edge [<-] node [swap] {$\scriptstyle e+1$} (l);
        \node[black,label=west:$\scriptstyle i_2$] (b1) at (0.55,-0.5) {}
            edge [->] (r);
        \node[black,label=east:$\scriptstyle i_3$] (b2) at (1.25,-0.5) {}
            edge [->] (r);
        \node[black,label=east:$\scriptstyle i_1$] (t1) at (0.9,0.6) {}
            edge [<-] (r);
    \end{tikzpicture}.
\end{align*}
    Similarly $\delta_x'$ and $\delta_x''$ act on $\Gamma$ as
    \begin{equation*}
    \delta_x'\Bigg(
    \begin{tikzpicture}[baseline={([yshift=-.5ex]current bounding box.center)}]
        \node[black,label=west:$\scriptstyle x$] (c) at (0,0) {};
        \node[black,label=west:$\scriptstyle i_2$] (b1) at (-0.35,-0.5) {}
            edge [->] (c);
        \node[black,label=east:$\scriptstyle i_3$] (b2) at (0.35,-0.5) {}
            edge [->] (c);
        \node[black,label=west:$\scriptstyle i_1$] (t1) at (0,0.6) {}
            edge [<-] (c);
    \end{tikzpicture}\Bigg)
    \ = \
    \begin{tikzpicture}[auto,baseline={([yshift=-.5ex]current bounding box.center)}]
        \node[black,label=west:$\scriptstyle x$] (l) at (0,0) {};
        \node[black,label=east:$\scriptstyle v+1$] (r) at (0.9,0) {}
            edge [<-] node [swap] {$\scriptstyle e+1$} (l);
        \node[black,label=west:$\scriptstyle i_2$] (b1) at (-0.35,-0.5) {}
            edge [->] (l);
        \node[black,label=east:$\scriptstyle i_3$] (b2) at (0.35,-0.5) {}
            edge [->] (l);
        \node[black,label=west:$\scriptstyle i_1$] (t1) at (0,0.6) {}
            edge [<-] (l);
    \end{tikzpicture}\ ,
    \quad\quad
    \delta_x''\Bigg(
    \begin{tikzpicture}[baseline={([yshift=-.5ex]current bounding box.center)}]
        \node[black,label=west:$\scriptstyle x$] (c) at (0,0) {};
        \node[black,,label=west:$\scriptstyle i_2$] (b1) at (-0.35,-0.5) {}
            edge [->] (c);
        \node[black,label=east:$\scriptstyle i_3$] (b2) at (0.35,-0.5) {}
            edge [->] (c);
        \node[black,label=west:$\scriptstyle i_1$] (t1) at (0,0.6) {}
            edge [<-] (c);
    \end{tikzpicture} \Bigg)
    \ = \
    \begin{tikzpicture}[auto,baseline={([yshift=-.5ex]current bounding box.center)}]
        \node[black,label=west:$\scriptstyle x$] (l) at (0,0) {};
        \node[black,label=east:$\scriptstyle v+1$] (r) at (0.9,0) {}
            edge [<-] node [swap] {$\scriptstyle e+1$} (l);
        \node[black,label=west:$\scriptstyle i_2$] (b1) at (0.55,-0.5) {}
            edge [->] (r);
        \node[black,label=east:$\scriptstyle i_3$] (b2) at (1.25,-0.5) {}
            edge [->] (r);
        \node[black,label=west:$\scriptstyle i_1$] (t1) at (0.9,0.6) {}
            edge [<-] (r);
    \end{tikzpicture}.
\end{equation*}
\end{example}
\begin{remark}
    The graph complex $(\mathsf{fGC}_k,d)$ of not necessarily connected graphs can be described in terms of the complex of connected graphs, so no information is lost by considering the smaller complex. More specifically $\mathsf{cfGC}_k=S^+(\mathsf{fGC}_k[-k])[k]$, where $S^+(V)$ denotes the (completed) symmetric product space of the (dg) vector space $V$ \cite{W1}.
\end{remark}
\begin{remark}
    From the definition of $\mathsf{cfGC}_k$, we note that complexes of the same parity are isomorphic up to a degree shift. The only two crucial complexes to study are thus $\mathsf{cfGC}_2$ and $\mathsf{cfGC}_3$. As we will soon see, we know much more about the first one compared to the latter.
\end{remark}
The term \textit{full} refers to that there is no restriction on which types of graphs we consider as generators. Let $\mathsf{b}_2\GC_k$ denote the subcomplex of graphs only consisting of bivalent vertices, and $\GC_k$ the subcomplex of graphs whose vertices are of valency three or higher and contain no tadpoles. We call $\GC_k$ the \textit{Kontsevich graph complex}. The inclusion
\begin{equation*}
    \mathsf{b}_2\GC_k\oplus\GC_k\hookrightarrow\mathsf{cfGC}_k
\end{equation*}
is a quasi-isomorphism \cite{W2}.
The complex $\mathsf{b}_2\GC_k$ consists of loop-graphs and its cohomology is fully described as
\begin{equation*}
    H(\mathsf{b}_2\GC_k)=\bigoplus_{\substack{i\geq 1 \\ i\equiv 2k+1 \mod 4}}\mathbb{K}[k-i]
\end{equation*}
where $\mathbb{K}[k-i]$ denotes the loop-graph containing $i$ edges. The cohomology of $\GC_k$ is only partially understood in negative degrees and degree zero for $k=2$. The following remarkable results was shown by T. Willwacher in \cite{W1}.
\begin{theorem}
    There is a Lie-algebra structure on $H^0(\GC_2)$ which is isomorhpic to the Grothendieck-Teichmuller Lie algebra $\mathfrak{grt}_0$.
    \begin{equation*}
        H^0(\GC_2)\cong\mathfrak{grt}_0
    \end{equation*}
    Furthermore the cohomology in negative degrees vanish.
    \begin{equation*}
        H^{<0}(\GC_2)=0.
    \end{equation*}
\end{theorem}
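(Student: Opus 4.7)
The plan is to relate the graph complex $\GC_2$ to the deformation complex of the operad $\mathsf{e}_2$ of Gerstenhaber algebras and then exploit Tamarkin's rigidity theorem. The Lie bracket on $H^0(\GC_2)$ itself comes directly from the edge-insertion operation, so I would first check that this operation descends to cohomology and induces a well-defined Lie bracket — this is a routine sign bookkeeping exercise, essentially already in the definition.

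Next, I would construct a morphism of dg Lie algebras from $\GC_2$ (up to the appropriate degree shift) into the operadic deformation complex $\mathrm{Def}(\mathsf{hoe}_2 \xrightarrow{\simeq} \mathsf{e}_2)$, where $\mathsf{hoe}_2$ is the minimal resolution. A connected graph with $v$ vertices and $e$ edges, viewed as a multilinear operation on a Gerstenhaber algebra (edges encode brackets, vertices encode products), carries total degree $(v-1)\cdot 2 + (1-2)e$, matching the grading on $\GC_2$; the graph vertex-splitting differential matches the operadic deformation differential by a compatibility check of the Jacobi and Leibniz relations in $\mathsf{e}_2$.

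I would then invoke Kontsevich's formality theorem: $\mathsf{e}_2$ is formal over $\mathbb{Q}$, and by Tamarkin's construction the set of homotopy classes of formality morphisms $\mathsf{hoe}_2 \to \mathsf{e}_2$ forms a torsor over the prounipotent Grothendieck-Teichmüller group $GRT_1$. Differentiating this torsor action at the identity produces a Lie algebra homomorphism $\mathfrak{grt}_0 \to H^0(\mathrm{Def}(\mathsf{hoe}_2 \to \mathsf{e}_2)) \cong H^0(\GC_2)$. Injectivity follows from the faithfulness of the $GRT$-action on associators, a classical result of Drinfeld.

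The main obstacle is surjectivity of this map together with the vanishing $H^{<0}(\GC_2)=0$. The natural tool is a spectral sequence filtering $\mathrm{Def}(\mathsf{hoe}_2 \to \mathsf{e}_2)$ by the number of internal edges (or equivalently by loop order), which allows a reduction to analysing the Chevalley-Eilenberg complex of a free Lie algebra decorated by graphs with only external hairs. On the $E_1$ page this reduces to so-called hairy graph complexes, whose cohomology in the relevant bidegrees can be shown to vanish using exhaustion by valence and explicit contracting homotopies that remove bivalent and univalent vertices. Combining these vanishing statements with the explicit description of the lowest-filtration piece (which turns out to be dual to a free Lie algebra on $\sigma_3,\sigma_5,\sigma_7,\dots$) forces any degree-zero cocycle to lie in the image of $\mathfrak{grt}_0$, and simultaneously forces any negative-degree cocycle to be a coboundary. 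This combinatorial-cum-spectral-sequence part is by far the most delicate step of the argument.
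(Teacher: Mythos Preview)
The paper does not actually prove this theorem: it is stated as a background result and attributed to Willwacher \cite{W1} with no argument given. So there is nothing in the paper to compare your attempt against beyond the citation.

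That said, since you have attempted a sketch of Willwacher's argument itself, let me comment on that. The broad architecture --- build a dg Lie algebra map from $\GC_2$ into an operadic deformation complex of $\mathsf{e}_2$, use the torsor structure over $GRT_1$ on formality morphisms to produce $\mathfrak{grt}_0 \to H^0(\GC_2)$, and then argue surjectivity and negative-degree vanishing by a spectral sequence --- is indeed the shape of the proof in \cite{W1}. However, your surjectivity step has a genuine gap. You write that the lowest-filtration piece ``turns out to be dual to a free Lie algebra on $\sigma_3,\sigma_5,\sigma_7,\dots$'' and that this forces any degree-zero cocycle into the image of $\mathfrak{grt}_0$. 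This is circular: the statement that $\mathfrak{grt}_0$ is freely generated by the odd $\sigma_{2k+1}$ is the Deligne--Drinfeld conjecture, which is open, so it cannot be used as input. Willwacher's actual argument avoids this entirely: he identifies the relevant associated graded with the Chevalley--Eilenberg cohomology of the Drinfeld--Kohno Lie algebras $\mathfrak{t}_n$ (via the complex of internally connected graphs and Arnold's description of $H^*(\mathrm{Conf}_n(\mathbb{R}^2))$), and then invokes the known identification of degree-zero derivations of $\hat{\mathfrak{t}}_n$ with $\mathfrak{grt}_0$ to obtain surjectivity. The vanishing $H^{<0}(\GC_2)=0$ falls out of the same comparison, not from a separate hairy-graph contraction argument.
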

In the dual complex $\mathsf{gc}_k$ where the differential is defined by contracting edges, the zeroth homology contain the the wheel classes $\{\omega_{2n+1}\}_{n\in\mathbb{N}}$:
\begin{equation*}
    \omega_3 = \begin{tikzpicture}[shorten >=1pt,node distance=1.2cm,auto,>=angle 90,scale=1,baseline={([yshift=-.5ex]current bounding box.center)}]
    \node[black] (t0) at (0,1) {};
    \node[black] (t1) at (0.87,-0.5) {};
    \node[black] (t2) at (-0.87,-0.5) {};
    \node[black] (c) at (0,0) {};
    \draw (t0) -- (t1) -- (t2) -- (t0) -- (c);
    \draw (t1) -- (c) -- (t2);
    \end{tikzpicture}
    \quad \omega_5 = \ \begin{tikzpicture}[shorten >=1pt,node distance=1.2cm,auto,>=angle 90,scale=1,baseline={([yshift=-.5ex]current bounding box.center)}]
    \node[black] (t0) at (0,1) {};
    \node[black] (t1) at (0.95,0.31) {};
    \node[black] (t2) at (0.59,-0.81) {};
    \node[black] (t3) at (-0.59,-0.81) {};
    \node[black] (t4) at (-0.95,0.31) {};
    \node[black] (c) at (0,0) {};
    \draw (t0) -- (t1) -- (t2) -- (t3) -- (t4) -- (t0) -- (c);
    \draw (t1) -- (c) -- (t2);
    \draw (t3) -- (c) -- (t4);
    \end{tikzpicture}
    \quad \omega_7 = \ \begin{tikzpicture}[shorten >=1pt,node distance=1.2cm,auto,>=angle 90,scale=1,baseline={([yshift=-.5ex]current bounding box.center)}]
    \node[black] (c) at (0,0) {};
    \node[black] (t0) at (0,1) {};
    \node[black] (t1) at (0.78,0.62) {};
    \node[black] (t2) at (0.97,-0.22) {};
    \node[black] (t3) at (0.43,-0.9) {};
    \node[black] (t4) at (-0.43,-0.9) {};
    \node[black] (t5) at (-0.97,-0.22) {};
    \node[black] (t6) at (-0.78,0.62) {};
    \draw (c) -- (t0) -- (t1) -- (t2) -- (t3) -- (t4) -- (t5) -- (t6) -- (t0) ;
    \draw (t1) -- (c) -- (t2);
    \draw (t3) -- (c) -- (t4);
    \draw (t5) -- (c) -- (t6);
    \end{tikzpicture} \quad \cdots
\end{equation*}
It is conjectured that these are these are the only classes of homology zero. In $\GC_2$, the first wheel classes are represented by the following graphs:
\begin{equation*}
    \begin{tikzpicture}[shorten >=1pt,node distance=1.2cm,auto,>=angle 90,scale=1,baseline={([yshift=-.5ex]current bounding box.center)}]
    \node[black] (t0) at (0,1) {};
    \node[black] (t1) at (0.87,-0.5) {};
    \node[black] (t2) at (-0.87,-0.5) {};
    \node[black] (c) at (0,0) {};
    \draw (t0) -- (t1) -- (t2) -- (t0) -- (c);
    \draw (t1) -- (c) -- (t2);
    \end{tikzpicture}
    \ , \quad 
    \begin{tikzpicture}[shorten >=1pt,node distance=1.2cm,auto,>=angle 90,scale=1,baseline={([yshift=-.5ex]current bounding box.center)}]
    \node[black] (t0) at (0,1) {};
    \node[black] (t1) at (0.95,0.31) {};
    \node[black] (t2) at (0.59,-0.81) {};
    \node[black] (t3) at (-0.59,-0.81) {};
    \node[black] (t4) at (-0.95,0.31) {};
    \node[black] (c) at (0,0) {};
    \draw (t0) -- (t1) -- (t2) -- (t3) -- (t4) -- (t0) -- (c);
    \draw (t1) -- (c) -- (t2);
    \draw (t3) -- (c) -- (t4);
    \end{tikzpicture} \ + \ \frac{5}{2} \ \
    \begin{tikzpicture}[shorten >=1pt,node distance=1.2cm,auto,>=angle 90,scale=1,baseline={([yshift=-.5ex]current bounding box.center)}]
    \node[black] (t0) at (0,1) {};
    \node[black] (t1) at (0.95,0.31) {};
    \node[black] (t2) at (0.59,-0.81) {};
    \node[black] (t3) at (-0.59,-0.81) {};
    \node[black] (t4) at (-0.95,0.31) {};
    \node[black] (c) at (0,0) {};
    \draw (t0) -- (t1) -- (t2) -- (t3) -- (t4) -- (t0);
    \draw (t0) -- (c) -- (t1);
    \draw (t2) -- (t4);
    \draw (t0) -- (t3) -- (c);
    \end{tikzpicture}
\end{equation*}
The cohomology of $\GC_3$ is not as known as $\GC_2$, but we know that it is concentrated in negative degrees. This can be derived from more general statement. Note that the loop number $b=e-v+1$ of a graph is invariant under the differential. Let $\mathsf{b}_b\GC_{k}$ be the subcomplex of $\GC_k$ of graphs with loop number $b$. The graphs of $\GC_k$ have a loop number that is at least 2 or higher.
\begin{proposition}
    Let $b\geq 2$. Then $H^l(\mathsf{b}_b\GC_{k})=0$ for $(3-k)b-3< l$.
\end{proposition}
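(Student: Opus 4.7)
The plan is a pure dimension count: the subcomplex $\mathsf{b}_b\GC_k$ already vanishes above the claimed degree, so there is nothing to prove at the level of cohomology beyond this observation.

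First I would read off the degree of a generator from Definition \ref{def:GC}: the summand $\overline{\mathrm{V}}_v\overline{\mathrm{E}}_e\mathrm{GC}_k$ is concentrated in degree $(v-1)k+(1-k)e$. A graph in $\mathsf{b}_b\GC_{k}$ has loop number $b=e-v+1$, i.e.\ $v=e-b+1$, so substituting gives
\begin{equation*}
(v-1)k+(1-k)e \;=\; (e-b)k+(1-k)e \;=\; e-bk .
\end{equation*}
Thus the degree of a graph in $\mathsf{b}_b\GC_k$ depends only on its edge number $e$ and is equal to $e-bk$.

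Next I would bound $e$ from above using the definitional constraint that every vertex of a graph in $\GC_k$ is at least trivalent and carries no tadpole. Summing valencies gives $2e=\sum_v \deg(v)\geq 3v$, hence $v\leq 2e/3$. Plugging in $v=e-b+1$ yields $e-b+1\leq 2e/3$, i.e.\ $e\leq 3(b-1)=3b-3$. Since this bound is uniform in the dimension of each graded piece, the whole complex $\mathsf{b}_b\GC_k$ is concentrated in degrees
\begin{equation*}
e-bk \;\leq\; (3b-3)-bk \;=\; (3-k)b-3 .
\end{equation*}
In particular, $\mathsf{b}_b\GC_k^{\,l}=0$ whenever $l>(3-k)b-3$, and therefore $H^l(\mathsf{b}_b\GC_k)=0$ in that range, which is exactly the claim.

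There is no genuine obstacle; the assumption $b\geq 2$ is only needed so that the bound $e\leq 3b-3$ produces a nonempty complex at all (for $b=1$ any connected trivalent--or--higher graph would be impossible except tadpole/loop graphs, which are excluded from $\GC_k$). If I wanted to make the statement slightly sharper, I could separately note that the extremal value $e=3b-3$ is only attained by $3$-regular graphs, so in top degree $l=(3-k)b-3$ the complex equals the (coinvariants of the) space of connected $3$-regular graphs of loop order $b$; but this refinement is not required by the proposition as stated.
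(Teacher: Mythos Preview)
Your proof is correct and follows essentially the same approach as the paper: both compute the degree as $e-bk$, use the trivalence condition $2e\geq 3v$ together with $b=e-v+1$ to bound the degree by $(3-k)b-3$, and conclude that the complex vanishes above this degree. The only cosmetic difference is that the paper phrases the intermediate bound as $v\leq 2b-2$ whereas you phrase it as $e\leq 3b-3$; these are equivalent via $e=v+b-1$.
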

\begin{proof}
    The degree of a graph $\Gamma\in\mathsf{b}_b\GC_{k}$ can be rewritten as 
    \begin{equation*}
        |\Gamma|=(v-1)k-(k-1)e=(v-e-1)k+e=-bk+e.
    \end{equation*}
    Now $3v\leq 2e$ since the vertices of $\Gamma$ are at least trivalent. Then we get the inequality
    \begin{equation*}
        2b=2e-2v+2\geq v+2 \implies v\leq 2b-2.
    \end{equation*}
    Finally the degree of $|\Gamma|\leq (3-k)b-3$ since
    \begin{equation*}
        |\Gamma|=-bk+e=-bk+(b+v-1)=(1-k)b+v-1\leq (1-k)b+2b-3=(3-k)b-3.
    \end{equation*}
    Since $b$ and $k$ are independent from $\Gamma$, there are no graphs of such degrees in $\mathsf{b}_b\GC_k$.
\end{proof}
\begin{corollary}
    $H^l(\GC_3)=0$ for $l>-3$.
\end{corollary}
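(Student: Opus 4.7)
The plan is to derive the corollary directly from the preceding proposition by specialising $k=3$ and summing over the loop-number decomposition. The differential on $\GC_k$ preserves the loop number $b=e-v+1$, so we have a decomposition as complexes
\begin{equation*}
    \GC_3 \;=\; \prod_{b\geq 2}\mathsf{b}_b\GC_3,
\end{equation*}
which induces $H^l(\GC_3)=\prod_{b\geq 2}H^l(\mathsf{b}_b\GC_3)$. The bound $b\geq 2$ comes from the defining conditions of $\GC_k$: every vertex has valency at least three and there are no tadpoles, so $3v\leq 2e$ gives $b\geq v/2+1\geq 2$ (the case $v=1$ is excluded, since a vertex with no tadpole and valency $\geq 3$ forces another vertex).

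Then I would simply apply the proposition with $k=3$. The bound $(3-k)b-3$ becomes $(3-3)b-3=-3$, independent of $b$. The proposition yields $H^l(\mathsf{b}_b\GC_3)=0$ for every $l>-3$ and every $b\geq 2$. Taking the product over $b$ gives $H^l(\GC_3)=0$ for $l>-3$, as required.

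There is essentially no obstacle here — the corollary is an immediate consequence of the proposition once one observes that for $k=3$ the degree bound $(3-k)b-3=-3$ no longer depends on the loop number, so the uniform vanishing above degree $-3$ passes to the full complex. The only small point to verify is the compatibility of cohomology with the loop-number product decomposition, which follows from the fact that $d$ preserves $b$.
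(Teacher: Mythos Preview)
Your argument is correct and is exactly the intended one: the paper states this as an immediate corollary of the preceding proposition, and your specialisation $k=3$ (so that $(3-k)b-3=-3$ is independent of $b$) together with the loop-number decomposition is precisely how one reads it off.
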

\subsection{The directed Kontsevich graph complex}
\begin{definition}
    The \textit{full and connected directed Kontsevich graph complex} $(\mathsf{fdGC},d)$ is the chain complex defined by
    \begin{equation*}
        \mathsf{cfdGC}_k:=
        \begin{cases}
        \prod_{e,v} \Big(\mathrm{\Bar{V}}_v\mathrm{\Bar{E}}_e\mathrm{GC}_k \otimes \mathrm{sgn}_e \Big)_{\mathbb{S}_v\times \mathbb{S}_e} & \text{ for }k \text{ even,}\\
        \prod_{e,v} \Big(\mathrm{\Bar{V}}_v\mathrm{\Bar{E}}_e\mathrm{GC}_k \otimes \mathrm{sgn}_v \Big)_{\mathbb{S}_v\times \mathbb{S}_e} & \text{ for }k \text{ odd.}
        \end{cases}
\end{equation*}
    The elements are now equivalence classes of directed graphs. The differential is defined using the same formula $d(\Gamma)=\delta(\Gamma)-\delta'(\Gamma)-\delta''(\Gamma)$ 
    in definition \ref{def:GC}.
\end{definition}
Let $\dGC_k$ denote the subcomplex of $\mathsf{cfdGC}_k$ consisting of graphs with no univalent vertices nor passing vertices (i.e bivalent vertices with exactly one incoming edge and one outgoing edge) and at least one vertex of valency three or higher. Also let $\dGC_k^2$ denote the subcomplex of graphs consisting only of bivalent vertices. The inclusion $\dGC_k^2\oplus\dGC_k\hookrightarrow\mathsf{cfdGC}_k$ is a quasi-isomorphism. We call $\dGC_k$ the \textit{directed Kontsevich graph complex}.
Let $f:\GC_k\rightarrow\dGC$ mapping an undirected graph $\Gamma$ to $f(\Gamma)$, where $f(\Gamma)$ is the sum of all possible ways of adding directions to the edges of $\Gamma$. T. Willwacher showed in \cite{W1} the following result.
\begin{proposition}
    The map $f:\GC_k\rightarrow\dGC_k$ is a chain map and furthermore a quasi-isomorphism.
\end{proposition}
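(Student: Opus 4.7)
The plan is to prove the two assertions separately: first that $f$ is a chain map, which is essentially a combinatorial check, and then that it induces an isomorphism on cohomology, for which I would invoke a filtration/spectral sequence argument.

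For the chain map property, recall that both $d_{\GC_k}$ and $d_{\dGC_k}$ are given by the same formula $d = \delta - \delta' - \delta''$, with $\delta$ splitting a vertex into two connected by a new edge, and $\delta', \delta''$ attaching a new univalent vertex via an outgoing or incoming edge, respectively. The map $f$ assigns each possible orientation to every edge of an undirected graph $\Gamma$ and sums the resulting $2^e$ terms (with signs when $k$ is odd to account for the $\mathrm{sgn}_2^{\otimes e}$ factor in $\GC_k$). Since the vertex splitting and univalent attachments introduce new edges whose orientations are independent of those on pre-existing edges, performing $d$ and then summing over orientations of the resulting edges yields the same expression as summing over orientations first and then applying $d$. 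The local sign prescriptions match on both sides, since they depend only on vertex and edge labels.

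For the quasi-isomorphism, the idea is that $f$ identifies $\GC_k$ with the ``symmetrization'' of $\dGC_k$ under the $(\mathbb{Z}/2)^{e}$-action that flips individual edge directions; summing over this action (which gives coinvariants equal to invariants in characteristic zero) recovers exactly the image of $f$. The nontrivial content is to show that the complement of $\mathrm{im}(f)$ in $\dGC_k$ is acyclic. I would achieve this via a filtration of $\dGC_k$ indexed, for instance, by the number of edges whose direction disagrees with a chosen reference orientation on a spanning tree. The $E^0$ differential then splits off a ``direction-preserving'' vertex-splitting piece whose $E^1$-page one identifies with $\GC_k$ itself (at each fixed number of disagreements, the edge-flipping action produces a contracting homotopy on the orientation-asymmetric summand). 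Convergence of the spectral sequence is then checked using the genus filtration, which is preserved by $d$.

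The main obstacle is the acyclicity of the orientation-asymmetric part: constructing the filtration and the auxiliary homotopy so that the associated graded is tractable, while keeping track of the genus completion and the $\mathbb{S}$-equivariant signs, is the heart of Willwacher's argument in \cite{W1}. The chain map verification itself and the sign bookkeeping are routine but tedious; the conceptual work lies entirely in the spectral sequence collapse.
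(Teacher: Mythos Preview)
The paper does not prove this proposition at all: it is stated with attribution to Willwacher and the citation \cite{W1}, and no argument is given. So there is no ``paper's own proof'' to compare against beyond that reference.

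That said, your sketch has a genuine gap in the quasi-isomorphism half. The filtration you propose---by the number of edges whose direction disagrees with a chosen reference orientation on a spanning tree---is not well-defined on the complex. The differential changes the underlying graph (it adds a vertex and an edge), so a spanning tree and a reference orientation chosen for $\Gamma$ do not canonically induce ones on the terms of $d\Gamma$; there is no evident way to make the ``disagreement count'' compatible with $d$. Consequently the $E^0$ differential you describe, and the contracting homotopy coming from ``edge-flipping,'' are not actually available as stated.

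Willwacher's argument in \cite{W1} proceeds differently: one filters $\dGC_k$ by the number of vertices of valence $\geq 3$. On the associated graded the differential only creates bivalent passing vertices, so the complex decomposes over ``core graphs'' (the graph obtained by collapsing strings of passing vertices). The resulting edge complexes---strings of passing vertices on each edge---are analyzed directly, and summing over orientations of the single resulting edge identifies the $E^1$-page with $\GC_k$. This is the filtration that makes the spectral sequence tractable; the orientation data enters only at the very end, not in the indexing of the filtration. Your chain-map verification is fine, but for the quasi-isomorphism you should replace the spanning-tree filtration by this valence filtration (or simply cite \cite{W1}, as the paper does).
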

Hence from a cohomology point of view, we can choose which of the two complexes to study.
\subsection{Subcomplexes of $\dGC_k$}
There are several subcomplexes of $\dGC_k$, whose cohomology have been studied and been related to other graph complexes. Here is an overview of the most important ones.
\begin{itemize}
    \item The \textit{oriented graph complex} $\OGC_k$. This is the subcomplex of graphs that do not contain any closed paths of directed edges. M. Zivkovic found an explicit chain-map $\dGC_{k}\rightarrow\OGC_{k+1}$, which he also showed to be a quasi-isomorphism \cite{Z1}. 
    \item The \textit{sourced graph complex} $\dGC^s_k$. It is the subcomplex of graphs that contain at least one source vertex. The inclusion $\OGC_k\hookrightarrow\dGC^s_k$ is a quasi-isomorphism \cite{Z2}.
    \item The \textit{targeted graph complex} $\dGC^t_k$. It is the subcomplex of graphs that contain at least one target vertex. It is naturally isomorphic to $\dGC^s_k$ by the map that reverses the direction of all edges of a graph. Similarly the inclusion $\OGC_k\hookrightarrow\dGC^t_k$ is a quasi-isomorphism \cite{Z2}.
    \item The \textit{sourced or targeted graph complex} $\dGC_k^{s+t}$. It is the subcomplex of graphs with at least one source or one target vertex. We have that $H^l(\dGC_3^{s+t})=0$ for $l\leq 1$ \cite{Z3}.
    \item The \textit{sourced and targeted graph complex} $\dGC_k^{st}$. This is the subcomplex of graphs that contain at least one source and one target vertex. There is a short exact sequence
    \begin{equation*}
        \xymatrix{
        0 \ar[r] & \dGC^{st}_k \ar[r] & \dGC^s_k\oplus\dGC^t_k \ar[r] & \dGC^{s+t}_k \ar[r] & 0\\
            & \Gamma \ar@{|->}[r] & (\Gamma,\Gamma)\\
            & & (\Gamma_1,\Gamma_2) \ar@{|->}[r] & \Gamma_1-\Gamma_2 }
    \end{equation*}
    Since $H^l(\dGC_3^{s+t})=0$ for $l\leq 1$ one sees that $H^0(\dGC^{st}_3)=H^0(\dGC^s_3)\oplus H(\dGC^t_3)$ \cite{Z3}. In particular we get the remarkable result:
    \begin{corollary}\label{cor:st-grt}
                $H^0(\dGC^{st}_3)=H^0(\GC_{2})\oplus H^0(\GC_2)=\mathfrak{grt}\oplus\mathfrak{grt}$.
    \end{corollary}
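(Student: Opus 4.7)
The plan is to apply the long exact sequence in cohomology induced by the short exact sequence of complexes
\begin{equation*}
\xymatrix{0 \ar[r] & \dGC^{st}_3 \ar[r] & \dGC^s_3 \oplus \dGC^t_3 \ar[r] & \dGC^{s+t}_3 \ar[r] & 0}
\end{equation*}
displayed just above the statement, and then to plug in the vanishing and identification results collected in the preceding bullet list. First I would write out the relevant segment of the long exact sequence around degree zero, namely
\begin{equation*}
H^{-1}(\dGC^{s+t}_3) \longrightarrow H^0(\dGC^{st}_3) \longrightarrow H^0(\dGC^s_3) \oplus H^0(\dGC^t_3) \longrightarrow H^0(\dGC^{s+t}_3).
\end{equation*}

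Next I would invoke the hypothesis $H^l(\dGC_3^{s+t})=0$ for $l\leq 1$, cited from \cite{Z3}, which kills both the left and right terms of this segment. The resulting isomorphism
\begin{equation*}
H^0(\dGC^{st}_3) \;\cong\; H^0(\dGC^s_3) \oplus H^0(\dGC^t_3)
\end{equation*}
then has to be combined with the identifications $H^0(\dGC^s_3) \cong H^0(\OGC_3)$ and $H^0(\dGC^t_3) \cong H^0(\OGC_3)$, which follow from the quasi-isomorphisms $\OGC_3 \hookrightarrow \dGC^s_3$ and $\OGC_3 \hookrightarrow \dGC^t_3$ listed in the preceding itemised summary, and finally with the chain of quasi-isomorphisms $\OGC_3 \simeq \dGC_2 \simeq \GC_2$ (from Zivkovic's result $\dGC_k \to \OGC_{k+1}$ and Willwacher's $\GC_k \to \dGC_k$), combined with Willwacher's theorem $H^0(\GC_2) \cong \mathfrak{grt}$.

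There is essentially no technical obstacle here since the proof is a pure diagram chase; the whole content has been pushed into the two external inputs $H^{\leq 1}(\dGC^{s+t}_3)=0$ and $H^0(\GC_2)\cong\mathfrak{grt}$. The one place where one should be a bit careful is in verifying that the natural map $\dGC^{st}_3 \to \dGC^s_3 \oplus \dGC^t_3$ sending $\Gamma \mapsto (\Gamma,\Gamma)$ is indeed injective as a chain map and that its cokernel is precisely $\dGC^{s+t}_3$ via $(\Gamma_1,\Gamma_2)\mapsto \Gamma_1-\Gamma_2$, i.e.\ checking exactness of the short exact sequence itself; this is a straightforward inclusion-exclusion argument on the subspaces of $\dGC_3$ cut out by the existence of a source vertex, a target vertex, or both.
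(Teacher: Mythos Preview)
Your proposal is correct and follows essentially the same approach as the paper: the paper also uses the short exact sequence $0 \to \dGC^{st}_3 \to \dGC^s_3\oplus\dGC^t_3 \to \dGC^{s+t}_3 \to 0$ together with the vanishing $H^l(\dGC^{s+t}_3)=0$ for $l\leq 1$ to obtain $H^0(\dGC^{st}_3)\cong H^0(\dGC^s_3)\oplus H^0(\dGC^t_3)$, and then invokes the identifications $H^0(\dGC^s_3)\cong H^0(\dGC^t_3)\cong H^0(\GC_2)\cong\mathfrak{grt}$ from the preceding bullet points. Your write-up is in fact slightly more detailed, since you spell out the relevant segment of the long exact sequence and comment on verifying exactness of the short exact sequence, which the paper simply states.
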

    \item The \textit{completely wheeled graph complex} $\dGC^\circlearrowleft_k$. This complex is defined as the quotient complex $\dGC_k/\dGC^{s+t}_k$. Hence it consists of graphs where all vertices are at least trivalent, and each vertex have at least one incoming and one outgoing edge. We get the short exact sequence
    \begin{equation*}
        \xymatrix{0 \ar[r] & \dGC^{s+t}_k \ar[r] & \dGC_k \ar[r] & \dGC^{\circlearrowleft}_k \ar[r] & 0}
    \end{equation*}
    Note in particular that due to $H^l(\dGC^{s+t}_3)=0$ for $l\leq 1$ and $H^l(\dGC_3)=0$ for $l\geq -2$ we get the following result.
    \begin{corollary}\label{cor:CohomDGC3}
        $H^l(\dGC^\circlearrowleft_3)=0 \ \text{for} \ -2\leq l \leq 1$.
    \end{corollary}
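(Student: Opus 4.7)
The plan is to deduce the corollary directly from the long exact sequence in cohomology induced by the short exact sequence of complexes
\begin{equation*}
    \xymatrix{0 \ar[r] & \dGC^{s+t}_3 \ar[r] & \dGC_3 \ar[r] & \dGC^{\circlearrowleft}_3 \ar[r] & 0}
\end{equation*}
that was just displayed. There is essentially no creative content beyond assembling two facts that have already been recalled in the excerpt, so the proof is a short diagram chase.

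First I would write out the relevant segment of the long exact sequence at each degree $l$ of interest, namely
\begin{equation*}
    \cdots \to H^l(\dGC^{s+t}_3) \to H^l(\dGC_3) \to H^l(\dGC^{\circlearrowleft}_3) \to H^{l+1}(\dGC^{s+t}_3) \to H^{l+1}(\dGC_3) \to \cdots
\end{equation*}
Then I would plug in the two vanishing inputs that are explicitly recalled: the Zivkovic result $H^l(\dGC^{s+t}_3)=0$ for $l\leq 1$, and the consequence of the previous proposition that $H^l(\dGC_3)=H^l(\GC_3)=0$ for $l\geq -2$ (using the quasi-isomorphism $\GC_k\simeq\dGC_k$). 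For $l \in \{-2,-1,0\}$ both the term $H^l(\dGC_3)$ on the left and the term $H^{l+1}(\dGC^{s+t}_3)$ on the right are zero, so exactness forces the middle term $H^l(\dGC^{\circlearrowleft}_3)$ to vanish.

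The only step that requires a moment of attention is the boundary case $l=1$, where the naive input $H^{l+1}(\dGC^{s+t}_3)=H^2(\dGC^{s+t}_3)$ is not part of the ``$l\leq 1$'' vanishing range as written. Here I would extract one more term of the long exact sequence, namely
\begin{equation*}
    0=H^1(\dGC_3)\to H^1(\dGC^{\circlearrowleft}_3)\to H^2(\dGC^{s+t}_3)\to H^2(\dGC_3)=0,
\end{equation*}
which identifies $H^1(\dGC^{\circlearrowleft}_3)\cong H^2(\dGC^{s+t}_3)$, and then appeal to the sharper vanishing from \cite{Z3} (implicit in the degree bound used in the proof of $H^0(\dGC^{st}_3)\cong\mathfrak{grt}\oplus\mathfrak{grt}$) to conclude that this last group is also zero. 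No other obstacle is expected; the corollary is genuinely a homological bookkeeping consequence of the two previously cited vanishing theorems.
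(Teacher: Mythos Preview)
Your proof is correct and follows exactly the paper's approach: the corollary is stated immediately after the short exact sequence with the remark that it follows from $H^l(\dGC^{s+t}_3)=0$ for $l\leq 1$ and $H^l(\dGC_3)=0$ for $l\geq -2$, which is precisely the long exact sequence argument you spell out. Your extra care at the boundary case $l=1$ is justified, since the paper's stated inputs literally only cover $-2\leq l\leq 0$ and the case $l=1$ indeed needs the slightly stronger vanishing of $H^2(\dGC^{s+t}_3)$ from \cite{Z3} that you invoke.
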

\end{itemize}

\section{The bi-weighted graph complex $\fWGC_k$}
When studying the complex $Der(\mathcal{H}olieb_{p,q}^{*\circlearrowleft})$, one note that the sign rules of the differential are rather complicated when expressed in terms of generating corollas using the defining formula \ref{eq:defholiebdiff}. In this section we introduce the \textit{bi-weighted graph complex} $\fWGC_k$, and show that it is isomorphic to $Der(\mathcal{H}olieb_{p,q}^{\circlearrowleft})$. The sign rules of $\fWGC_k$ are the same as for the Kontsevich graph complexes, i.e based on ordering of edges or vertices, and are generally easier to work with.
\subsection{Definition of the bi-weighted graph complex}
Let $\Gamma$ be a directed graph and $x$ a vertex of $\Gamma$. Let $|x|_{out}$ and $|x|_{in}$ denote the number of outgoing and incoming edges respectively from $x$. A \textit{bi-weight} of a vertex $x$ is a pair of non-negative integers $(w_x^{out},w_x^{in})$ satisfying
\begin{enumerate}
    \item $w_x^{out}+|x|_{out}\geq 1$,
    \item $w_x^{in}+|x|_{in}\geq 1$,
    \item $w_x^{out}+w_x^{in}+|x|_{out}+|x|_{in}\geq 3$.
\end{enumerate}
We refer to $w^{out}_x$ as the \textit{out-weight} and $w^{in}_x$ the \textit{in-weight} of $x$ respectively. A graphs whose vertices all carries bi-weights is called a \textit{bi-weighted} graph.
\begin{example}
    We include the bi-weights of a vertex when drawing a bi-weighted graph as
    $\begin{tikzpicture}[baseline={([yshift=-.5ex]current bounding box.center)}]
        \node[biw] (v) at (0,0) {$\scriptstyle{w_x^{out}}$\nodepart{lower}$\scriptstyle{w_x^{in}}$};
    \end{tikzpicture}$. A bi-weighted graphs might then look like
    \begin{equation*}
    \begin{tikzpicture}[shorten >=1pt,>=angle 90,baseline={([yshift=-.5ex]current bounding box.center)}]
    \node[biw] (v1) at (-3.5,0) {$2$\nodepart{lower}$1$};
    \node[biw] (v2) at (-1.75,0) {$1$\nodepart{lower}$2$}
        edge [<-] (v1);
    \node[biw] (v3) at (0,1) {$0$\nodepart{lower}$0$}
        edge [->] (v2);
    \node[biw] (v4) at (0,-1) {$0$\nodepart{lower}$3$}
        edge [->] (v2)
        edge [->] (v3);
    \node[biw] (v5) at (1.75,0) {$0$\nodepart{lower}$1$}
        edge [<-] (v3)
        edge [->] (v4);
    \node[biw] (v6) at (3.5,0) {$3$\nodepart{lower}$0$}
        edge [<-] (v5);
\end{tikzpicture}
\end{equation*}.
\end{example}
Let $\overline{\mathrm{V}}_v\overline{\mathrm{E}}_e\mathrm{wcgra}$ be the set of connected bi-weighted graphs with $e$ edges and $v$ vertices.
Let $k\in\mathbb{Z}$ and let
\begin{equation*}
    \overline{\mathrm{V}}_v\overline{\mathrm{E}}_e\mathrm{WGC}_k:=\langle\overline{\mathrm{V}}_v\overline{\mathrm{E}}_e\mathrm{wcgra}\rangle[-(v-1)k-(1-k)e]
\end{equation*}
be the graded vector space over $\mathbb{K}$ of formal power series of bi-weighted graphs concentrated in degree $(v-1)k+(1-k)e$. We have an action of $\mathbb{S}_v\times\mathbb{S}_e$ on the vector space by permuting vertices and edges respectively.

\begin{definition}
The \textit{bi-weighted graph complex} $(\mathsf{f}\WGC_k,d)$ is the chain complex where
\begin{equation*}
        \mathsf{f}\WGC_k:=
        \begin{cases}
            \prod_{v,e} \Big(\mathrm{\Bar{V}}_v\mathrm{\Bar{E}}_e\mathrm{WGC}_d \otimes \mathrm{sgn}_e \Big)_{S_v\times S_e} & \text{ for }k \text{ even},\\
            \prod_{v,e} \Big(\mathrm{\Bar{V}}_v\mathrm{\Bar{E}}_e\mathrm{WGC}_d \otimes \mathrm{sgn}_v \Big)_{S_v\times S_e} & \text{ for }k \text{ odd}.
        \end{cases}
\end{equation*}
The differential $d$ is defined on a graph $\Gamma$ as
\begin{equation*}
    d(\Gamma):=\delta(\Gamma)-\delta'(\Gamma)-\delta''(\Gamma)=\sum_{x\in V(\Gamma)}\delta_x(\Gamma)-\delta_x'(\Gamma)-\delta_x''(\Gamma)
\end{equation*}
where $V(\Gamma)$ is the set of vertices of $\Gamma$, $\delta_x(\Gamma)$ denotes the splitting of the vertex $x$ similar to the vertex splitting in $\dGC_k$ with the addition that we sum over all possible ways of redistributing the bi-weight of $x$ to the two new vertices. The $\delta_x'(\Gamma)$ is the sum of graphs where we decrease the out-weight of $x$ by one and add an outgoing edge attached to a new univalent vertex. The summation is over all possible bi-weights on the new vertex.
The $\delta_x''(\Gamma)$ is defined similarly but where the in-weight is decreased and the edge is incoming to $x$. The signs of the graphs under the action of the differential are the same as for $\delta, \delta'$ and $\delta''$ in $\dGC_k$.
\end{definition}

 We pictorially represent the action of $d_x$ on a vertex $x$ as
\begin{equation*}
    d_x\Big(\ 
    \begin{tikzpicture}[shorten >=1pt,>=angle 90,baseline={([yshift=-.5ex]current bounding box.center)}]
        \node[biw] (v0) at (0,0) {$m$\nodepart{lower}$n$};
        \node[invisible] (v1) at (-0.4,0.8) {}
            edge [<-] (v0);
        \node[] at (0,0.6) {$\scriptstyle\cdots$};
        \node[invisible] (v3) at (0.4,0.8) {}
            edge [<-] (v0);
        \node[invisible] (v4) at (-0.4,-0.8) {}
            edge [->] (v0);
        \node[] at (0,-0.6) {$\scriptstyle\cdots$};
        \node[invisible] (v6) at (0.4,-0.8) {}
            edge [->] (v0);
    \end{tikzpicture}\ \Big) \ 
    = \ \sum_{\substack{\scriptstyle{m=m_1+m_2}\\ \scriptstyle{n=n_1+n_2}}}
    \begin{tikzpicture}[shorten >=1pt,>=angle 90,baseline={([yshift=-.5ex]current bounding box.center)}]
        \node[ellipse,
            draw = black,
            minimum width = 3cm, 
            minimum height = 1.8cm,
            dotted] (e) at (0,0) {};
        \node[biw] (vLeft) at (-0.8,0) {$m_1$\nodepart{lower}$n_1$};
        \node[biw] (vRight) at (0.8,0) {$m_2$\nodepart{lower}$n_2$}
            edge [<-] (vLeft);
        \node[invisible] (v1) at (-0.6,1.4) {}
            edge [<-] (e);
        \node[] at (0,1.2) {$\cdots$};
        \node[invisible] (v3) at (0.6,1.4) {}
            edge [<-] (e);
        \node[invisible] (v4) at (-0.6,-1.4) {}
            edge [->] (e);
        \node[] at (0,-1.2) {$\cdots$};
        \node[invisible] (v6) at (0.6,-1.4) {}
            edge [->] (e);
    \end{tikzpicture}
    \ -\ \sum_{\substack{i\geq 1,j\geq 0\\
    i+j\geq 2}}\ 
    \begin{tikzpicture}[shorten >=1pt,>=angle 90,baseline={(4ex,-0.5ex)}]
        \node[biw] (v0) at (0,0) {$\scriptstyle m-1$\nodepart{lower}$n$};
        \node[biw] (new) at (1.5,0.7) {$i$\nodepart{lower}$j$}
            edge [<-] (v0);
        \node[invisible] (v1) at (-0.4,0.8) {}
            edge [<-] (v0);
        \node[] at (0,0.6) {$\scriptstyle\cdots$};
        \node[invisible] (v3) at (0.4,0.8) {}
            edge [<-] (v0);
        \node[invisible] (v4) at (-0.4,-0.8) {}
            edge [->] (v0);
        \node[] at (0,-0.6) {$\scriptstyle\cdots$};
        \node[invisible] (v6) at (0.4,-0.8) {}
            edge [->] (v0);
    \end{tikzpicture}
    \ -\ \sum_{\substack{i\geq 0,j\geq 1\\
    i+j\geq 2}}\ 
    \begin{tikzpicture}[shorten >=1pt,>=angle 90,baseline={(4ex,-0.5ex)}]
        \node[biw] (v0) at (0,0) {$\scriptstyle m$\nodepart{lower}$\scriptstyle{n-1}$};
        \node[biw] (new) at (1.5,-0.7) {$i$\nodepart{lower}$j$}
            edge [->] (v0);
        \node[invisible] (v1) at (-0.4,0.8) {}
            edge [<-] (v0);
        \node[] at (0,0.6) {$\scriptstyle\cdots$};
        \node[invisible] (v3) at (0.4,0.8) {}
            edge [<-] (v0);
        \node[invisible] (v4) at (-0.4,-0.8) {}
            edge [->] (v0);
        \node[] at (0,-0.6) {$\scriptstyle\cdots$};
        \node[invisible] (v6) at (0.4,-0.8) {}
            edge [->] (v0);
    \end{tikzpicture}
\end{equation*}
We tacitly assume in this formula that any term (if any) with negative in- or out-weight is set to zero. Similarly any graph with vertices of a bi-weight not satisfying condition 3 above is also set to zero. Some examples of vertices with invalid bi-weights are
\begin{equation*}
\begin{tikzpicture}[shorten >=1pt,>=angle 90,baseline=-0.1cm]
    \node[biw] (r) at (0,0) {$1$\nodepart{lower} $0$};
    \node[invisible] (v1) at (-0.35,0.8) {}
        edge [<-] (r);
    \node[invisible] (v2) at (0.35,0.8) {}
        edge [<-] (r);
\end{tikzpicture}
\qquad\quad
\begin{tikzpicture}[shorten >=1pt,>=angle 90,baseline=-0.1cm]
    \node[biw] (r) at (0,0) {$0$\nodepart{lower} $0$};
    \node[invisible] (v2) at (0,0.85) {}
        edge [<-] (r);
    \node[invisible] (v1) at (0,-0.85) {}
        edge [->] (r);
\end{tikzpicture}
\qquad\quad
\begin{tikzpicture}[shorten >=1pt,>=angle 90,baseline=-0.1cm]
    \node[biw] (r) at (0,0) {$1$\nodepart{lower} $1$};
\end{tikzpicture}
\qquad\quad
\begin{tikzpicture}[shorten >=1pt,>=angle 90,baseline=-0.1cm]
    \node[biw] (r) at (0,0) {$0$\nodepart{lower} $4$};
\end{tikzpicture}
\qquad\quad
\begin{tikzpicture}[shorten >=1pt,>=angle 90,baseline=-0.1cm]
    \node[biw] (r) at (0,0) {$1$\nodepart{lower} $0$};
    \node[invisible] (v1) at (0,-0.85) {}
        edge [->] (r);
\end{tikzpicture}
\qquad\quad
\begin{tikzpicture}[shorten >=1pt,>=angle 90,baseline=-0.1cm]
    \node[biw] (r) at (0,0) {$-1$\nodepart{lower} $2$};
    \node[invisible] (v1) at (-0.35,0.8) {}
        edge [<-] (r);
    \node[invisible] (v2) at (0.35,0.8) {}
        edge [<-] (r);
    \node[invisible] (v1) at (-0.35,-0.8) {}
        edge [->] (r);
    \node[invisible] (v2) at (0.35,-0.8) {}
        edge [->] (r);
\end{tikzpicture}
\end{equation*}
\begin{remark}
    Contrary to $\dGC_k$, the creation of new univalent vertices of graphs in $\mathsf{f}\WGC_k$ do not in general cancel under the action of the differential. If $d^{uni}_x$ is the part of the differential which increase the number of univalent vertices, then $d_x^{uni}$ acting on a vertex can look like
    \begin{equation*}
        d_x^{uni}\Big(\ 
        \begin{tikzpicture}[shorten >=1pt,>=angle 90,baseline={([yshift=-.5ex]current bounding box.center)}]
            \node[biw] (v0) at (0,0) {$3$\nodepart{lower}$0$};
            \node[invisible] (v1) at (-0.4,0.8) {}
                edge [<-] (v0);
            \node[invisible] (v3) at (0.4,0.8) {}
                edge [<-] (v0);
            \node[invisible] (v4) at (-0.4,-0.8) {}
                edge [->] (v0);
            \node[invisible] (v6) at (0.4,-0.8) {}
                edge [->] (v0);
        \end{tikzpicture}\ \Big) \
        =\ 
        \begin{tikzpicture}[shorten >=1pt,>=angle 90,baseline={(4ex,-0.5ex)}]
            \node[biw] (v0) at (0,0) {$ 0$\nodepart{lower}$0$};
            \node[biw] (new) at (1.5,0.7) {$3$\nodepart{lower}$0$}
                edge [<-] (v0);
            \node[invisible] (v1) at (-0.4,0.8) {}
                edge [<-] (v0);
            \node[invisible] (v3) at (0.4,0.8) {}
                edge [<-] (v0);
            \node[invisible] (v4) at (-0.4,-0.8) {}
                edge [->] (v0);
            \node[invisible] (v6) at (0.4,-0.8) {}
                edge [->] (v0);
        \end{tikzpicture}
        \ + \ 
        \begin{tikzpicture}[shorten >=1pt,>=angle 90,baseline={(4ex,-0.5ex)}]
            \node[biw] (v0) at (0,0) {$ 1$\nodepart{lower}$0$};
            \node[biw] (new) at (1.5,0.7) {$2$\nodepart{lower}$0$}
                edge [<-] (v0);
            \node[invisible] (v1) at (-0.4,0.8) {}
                edge [<-] (v0);
            \node[invisible] (v3) at (0.4,0.8) {}
                edge [<-] (v0);
            \node[invisible] (v4) at (-0.4,-0.8) {}
                edge [->] (v0);
            \node[invisible] (v6) at (0.4,-0.8) {}
                edge [->] (v0);
        \end{tikzpicture}
        \ -\ \sum_{\substack{i\geq 1,j\geq 0\\
        i+j\geq 2}}\
        \begin{tikzpicture}[shorten >=1pt,>=angle 90,baseline={(4ex,-0.5ex)}]
            \node[biw] (v0) at (0,0) {$ 2$\nodepart{lower}$0$};
            \node[biw] (new) at (1.5,0.7) {$i$\nodepart{lower}$j$}
                edge [<-] (v0);
            \node[invisible] (v1) at (-0.4,0.8) {}
                edge [<-] (v0);
            \node[invisible] (v3) at (0.4,0.8) {}
                edge [<-] (v0);
            \node[invisible] (v4) at (-0.4,-0.8) {}
                edge [->] (v0);
            \node[invisible] (v6) at (0.4,-0.8) {}
                edge [->] (v0);
        \end{tikzpicture}
    \end{equation*}
\end{remark}
\subsection{A special kind of bi-weight}
\begin{definition}
    Let $r\geq 0$ be an integer. The symbol $\infty_r$ when used as an in-weight or out-weight denotes the sum of graphs
\begin{equation*}
    \begin{tikzpicture}[baseline={([yshift=-.5ex]current bounding box.center)}]
        \node[biw] (v0) at (0,0) {$\infty_r$\nodepart{lower}$n$};
        \node[invisible] (v1) at (-0.4,0.8) {}
            edge [<-] (v0);
        \node[] at (0,0.6) {$\scriptstyle\cdots$};
        \node[invisible] (v3) at (0.4,0.8) {}
            edge [<-] (v0);
        \node[invisible] (v4) at (-0.4,-0.8) {}
            edge [->] (v0);
        \node[] at (0,-0.6) {$\scriptstyle\cdots$};
        \node[invisible] (v6) at (0.4,-0.8) {}
            edge [->] (v0);
    \end{tikzpicture}  
\ = \ \sum_{i\geq r}\ 
    \begin{tikzpicture}[baseline={([yshift=-.5ex]current bounding box.center)}]
        \node[biw] (v0) at (0,0) {$i$\nodepart{lower}$n$};
        \node[invisible] (v1) at (-0.4,0.8) {}
            edge [<-] (v0);
        \node[] at (0,0.6) {$\scriptstyle\cdots$};
        \node[invisible] (v3) at (0.4,0.8) {}
            edge [<-] (v0);
        \node[invisible] (v4) at (-0.4,-0.8) {}
            edge [->] (v0);
        \node[] at (0,-0.6) {$\scriptstyle\cdots$};
        \node[invisible] (v6) at (0.4,-0.8) {}
            edge [->] (v0);
    \end{tikzpicture} 
\ \qquad,\qquad\
    \begin{tikzpicture}[baseline={([yshift=-.5ex]current bounding box.center)}]
        \node[biw] (v0) at (0,0) {$m$\nodepart{lower}$\infty_r$};
        \node[invisible] (v1) at (-0.4,0.8) {}
            edge [<-] (v0);
        \node[] at (0,0.6) {$\scriptstyle\cdots$};
        \node[invisible] (v3) at (0.4,0.8) {}
            edge [<-] (v0);
        \node[invisible] (v4) at (-0.4,-0.8) {}
            edge [->] (v0);
        \node[] at (0,-0.6) {$\scriptstyle\cdots$};
        \node[invisible] (v6) at (0.4,-0.8) {}
            edge [->] (v0);
    \end{tikzpicture}  
\ = \ \sum_{i\geq r} \ 
    \begin{tikzpicture}[baseline={([yshift=-.5ex]current bounding box.center)}]
        \node[biw] (v0) at (0,0) {$m$\nodepart{lower}$i$};
        \node[invisible] (v1) at (-0.4,0.8) {}
            edge [<-] (v0);
        \node[] at (0,0.6) {$\scriptstyle\cdots$};
        \node[invisible] (v3) at (0.4,0.8) {}
            edge [<-] (v0);
        \node[invisible] (v4) at (-0.4,-0.8) {}
            edge [->] (v0);
        \node[] at (0,-0.6) {$\scriptstyle\cdots$};
        \node[invisible] (v6) at (0.4,-0.8) {}
            edge [->] (v0);
    \end{tikzpicture}
\end{equation*}
For graphs with two or more of these symbols decorating vertices, the sum is distributed as in the example below:
\begin{equation*}
\begin{tikzpicture}[shorten >=1pt,node distance=1.2cm,auto,>=angle 90]
    \node[biw] (g1v1) {$4$\nodepart{lower} $\infty_1$};
    \node[biw] (g1v2) [right of=g1v1] {$\infty_0$\nodepart{lower} $0$}
        edge [<-] (g1v1);
    \node[biw] (g1v3) [right of=g1v2] {$\infty_2$\nodepart{lower} $0$}
        edge [<-] (g1v2);
    \node[biw] (g1v4) [below of=g1v1] {$\infty_1$\nodepart{lower} $\infty_1$}
        edge [->] (g1v1);
    \node[biw] (g1v5) [right of=g1v4] {$0$\nodepart{lower} $\infty_0$}
        edge [<-] (g1v4)
        edge [->] (g1v2);
    \node[biw] (g1v6) [below of=g1v4] {$\infty_1$\nodepart{lower} $0$}
        edge [<-] (g1v4);
    \node[biw] (g1v7) [below of=g1v5] {$2$\nodepart{lower} $\infty_1$}
        edge [->] (g1v5);
    \node (likhet) at (3.3,-1.5) {$=\sum\limits_{\substack{
    i_l\geq 0\\j_m\geq 1\\k_n\geq 2}}$};
    \node[biw] (g2v1) at (4.5,0){$4$\nodepart{lower} $j_1$};
    \node[biw] (g2v2) [right of=g2v1] {$i_1$\nodepart{lower} $1$}
        edge [<-] (g2v1);
    \node[biw] (g2v3) [right of=g2v2] {$k_1$\nodepart{lower} $0$}
        edge [<-] (g2v2);
    \node[biw] (g2v4) [below of=g2v1] {$j_3$\nodepart{lower} $j_4$}
        edge [->] (g2v1);
    \node[biw] (g2v5) [right of=g2v4] {$0$\nodepart{lower} $i_2$}
        edge [<-] (g2v4)
        edge [->] (g2v2);
    \node[biw] (g2v6) [below of=g2v4] {$j_5$\nodepart{lower} $0$}
        edge [<-] (g2v4);
    \node[biw] (g2v7) [below of=g2v5] {$2$\nodepart{lower} $j_6$}
        edge [->] (g2v5);
\end{tikzpicture}
\end{equation*}
Any term of such a sum containing a vertex of invalid bi-weight is set to zero.
\end{definition}
Using this convention the differential is described as
\begin{equation*}
    d_x\Big(\ 
    \begin{tikzpicture}[shorten >=1pt,>=angle 90,baseline={([yshift=-.5ex]current bounding box.center)}]
        \node[biw] (v0) at (0,0) {$m$\nodepart{lower}$n$};
        \node[invisible] (v1) at (-0.4,0.8) {}
            edge [<-] (v0);
        \node[] at (0,0.6) {$\scriptstyle\cdots$};
        \node[invisible] (v3) at (0.4,0.8) {}
            edge [<-] (v0);
        \node[invisible] (v4) at (-0.4,-0.8) {}
            edge [->] (v0);
        \node[] at (0,-0.6) {$\scriptstyle\cdots$};
        \node[invisible] (v6) at (0.4,-0.8) {}
            edge [->] (v0);
    \end{tikzpicture}\ \Big) \ 
    = \ \sum_{\substack{\scriptstyle{m=m_1+m_2}\\ \scriptstyle{n=n_1+n_2}}}
    \begin{tikzpicture}[shorten >=1pt,>=angle 90,baseline={([yshift=-.5ex]current bounding box.center)}]
        \node[ellipse,
            draw = black,
            minimum width = 3cm, 
            minimum height = 1.8cm,
            dotted] (e) at (0,0) {};
        \node[biw] (vLeft) at (-0.8,0) {$m_1$\nodepart{lower}$n_1$};
        \node[biw] (vRight) at (0.8,0) {$m_2$\nodepart{lower}$n_2$}
            edge [<-] (vLeft);
        \node[invisible] (v1) at (-0.6,1.4) {}
            edge [<-] (e);
        \node[] at (0,1.2) {$\cdots$};
        \node[invisible] (v3) at (0.6,1.4) {}
            edge [<-] (e);
        \node[invisible] (v4) at (-0.6,-1.4) {}
            edge [->] (e);
        \node[] at (0,-1.2) {$\cdots$};
        \node[invisible] (v6) at (0.6,-1.4) {}
            edge [->] (e);
    \end{tikzpicture}
    \ -\
    \begin{tikzpicture}[shorten >=1pt,>=angle 90,baseline={(4ex,-0.5ex)}]
        \node[biw] (v0) at (0,0) {$\scriptstyle m-1$\nodepart{lower}$n$};
        \node[biw] (new) at (1.5,0.7) {$\infty_1$\nodepart{lower}$\infty_0$}
            edge [<-] (v0);
        \node[invisible] (v1) at (-0.4,0.8) {}
            edge [<-] (v0);
        \node[] at (0,0.6) {$\scriptstyle\cdots$};
        \node[invisible] (v3) at (0.4,0.8) {}
            edge [<-] (v0);
        \node[invisible] (v4) at (-0.4,-0.8) {}
            edge [->] (v0);
        \node[] at (0,-0.6) {$\scriptstyle\cdots$};
        \node[invisible] (v6) at (0.4,-0.8) {}
            edge [->] (v0);
    \end{tikzpicture}
    \ -\ 
    \begin{tikzpicture}[shorten >=1pt,>=angle 90,baseline={(4ex,-0.5ex)}]
        \node[biw] (v0) at (0,0) {$\scriptstyle m$\nodepart{lower}$\scriptstyle{n-1}$};
        \node[biw] (new) at (1.5,-0.7) {$\infty_0$\nodepart{lower}$\infty_1$}
            edge [->] (v0);
        \node[invisible] (v1) at (-0.4,0.8) {}
            edge [<-] (v0);
        \node[] at (0,0.6) {$\scriptstyle\cdots$};
        \node[invisible] (v3) at (0.4,0.8) {}
            edge [<-] (v0);
        \node[invisible] (v4) at (-0.4,-0.8) {}
            edge [->] (v0);
        \node[] at (0,-0.6) {$\scriptstyle\cdots$};
        \node[invisible] (v6) at (0.4,-0.8) {}
            edge [->] (v0);
    \end{tikzpicture}
\end{equation*}
\begin{remark}
    Outgoing univalent vertices cancel under the differential when the out-weight of a vertex is $0$ or $\infty_1$, and similarly for incoming univalent vertices.
\end{remark}
\subsection{The bi-weighted graph complex and the deformation complex}
\begin{definition}
    Let $\fWGC^+_k$ be the subcomplex of $\fWGC_k$ generated by graphs having at least one vertex with out-weight greater than zero, and at least one vertex with in-weight greater than zero. These two vertices are allowed to be the same vertex.
\end{definition}
The complex $\mathsf{f}\WGC_k$ is constructed to mimic the derivation complex $Der(\mathcal{H}olieb_{p,q}^{\circlearrowleft})$. We interpret the bi-weight of a vertex as the number of outgoing and incoming hairs that are attached to it. In the derivation complex the hairs are symmetrized/skew-symmetrized, which is not the case in the bi-weighted graph complex. The sign conventions for the differentials also differ between the complexes. In the deformation complex the sign convention is complicated, while the sign convention of the graph complex is a bit simpler.
We define the map
\begin{equation*}
    F:Der(\mathcal{H}olieb_{p,q}^{\circlearrowleft})\rightarrow\mathsf{f}\WGC_{p+q+1}    
\end{equation*} where a graph $\Gamma$ with unlabeled hairs (up to symmetry/skew-symmetry) is mapped to the bi-weighted graph $F(\Gamma)$ of the same shape and where the bi-weights of vertices correspond to the number of in- and out-hairs of the vertices in $\Gamma$.
\begin{proposition}\label{prop:HoliebWGC}
    The map $F:Der(\mathcal{H}olieb_{p,q}^{\circlearrowleft})\rightarrow\mathsf{f}\WGC_{p+q+1}$ is a chain map of degree 0 such that
    \begin{enumerate}
        \item the map $F$ is an isomorphism of complexes,
        \item the map $F$ restricts to an isomorphism $F^+:Der^+(\mathcal{H}olieb_{p,q}^{\circlearrowleft})\rightarrow\fwGC_{p+q+1}^+$.
    \end{enumerate}
\end{proposition}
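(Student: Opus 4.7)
My plan is to verify each claim in turn: first that $F$ is a well-defined bijection of graded vector spaces of the right degree, then that it intertwines the two differentials, and finally that it restricts to $F^+$. The core observation is that both $Der(\mathcal{H}olieb_{p,q}^\circlearrowleft)$ and $\fwGC_{p+q+1}$ admit a common description as linear combinations of connected graphs equipped with a pair of non-negative integers at each vertex (the out/in-hair counts, respectively the out/in-weight), so $F$ is essentially the identity on this decorated-graph data.

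First I fix an underlying graph shape and match decorations. On the $Der$ side, taking coinvariants with respect to the global $\mathbb{S}_m \times \mathbb{S}_n$-action tensored with $\mathrm{sgn}_m^{\otimes|p|} \otimes \mathrm{sgn}_n^{\otimes|q|}$ forgets the hair labels modulo a sign, leaving only the number of out- and in-hairs at each vertex; the $\mathcal{H}olieb$-admissibility condition $m_i, n_i \geq 1$, $m_i + n_i \geq 3$ at each internal vertex translates, after treating hairs on equal footing with the edge-endpoints at each vertex, precisely to the bi-weight validity conditions (1)--(3). For the degree, a direct count using $\sum_i m_i = m + e$ and $\sum_i n_i = n + e$ gives
\begin{equation*}
\sum_i \bigl(1 - p(m_i-1) - q(n_i-1)\bigr) \;-\; \bigl(1 + p(1-m) + q(1-n)\bigr) \;=\; (v-1)(1+p+q) - (p+q)e,
\end{equation*}
which agrees with the $\fwGC_{p+q+1}$ degree formula $(v-1)k + (1-k)e$ at $k = p+q+1$; hence $F$ is bijective and of degree zero.

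For the chain-map property I would decompose both differentials into three matching pieces. The internal vertex-splitting part $d^{\mathrm{spl}}$ in $Der$, induced from the $\mathcal{H}olieb$-differential, replaces a corolla at a vertex $x$ by two corollas joined by a new edge and sums over distributions of the incident hairs between them; under $F$ this is exactly $\delta$, which splits $x$ and sums over distributions of its bi-weight. The upper-corolla term in (2) corresponds to attaching a new corolla to $\Gamma$ via one of its in-hairs, producing a new vertex of some bi-weight $(i,j)$ connected by an edge outgoing to $\Gamma$'s vertex; summing over valid $(i,j)$ reproduces $\delta''$, once one repackages the sum using the $\infty_r$ notation. The lower-corolla term corresponds symmetrically to $\delta'$. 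The hardest step will be matching signs: in $Der$ they arise from the Merkulov--Vallette Koszul conventions combined with the sign representations on hairs, whereas in $\fwGC_{p+q+1}$ they come from orderings of edges (for $p+q+1$ even) or vertices (for $p+q+1$ odd). The choice of parity $k = p+q+1$ is precisely what makes these conventions compatible; after fixing consistent local orderings of half-edges at each vertex (with internal edge-endpoints preceding hair-endpoints), the sign of each of the three elementary moves can be checked case by case, which is the one genuine calculation in the proof.

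Finally, the restriction to $F^+$ is straightforward. A graph in $Der^+$ corresponds to a component of a derivation indexed by $(m,n)$ with $m, n \geq 1$, i.e.\ both total hair counts are positive; under $F$ this translates to the existence of at least one vertex of positive out-weight and at least one of positive in-weight, which is exactly the defining property of $\fwGC_{p+q+1}^+$. Both subcomplexes are preserved by their respective differentials: in $Der$ the arity $(m,n)$ is conserved under $d$, so $(m,n) \geq (1,1)$ is stable, and $\fwGC^+$ is correspondingly a subcomplex of $\fwGC_{p+q+1}$. Hence $F$ restricts to the claimed isomorphism $F^+$, concluding the proof.
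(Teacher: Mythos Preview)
Your proposal is correct and follows essentially the same approach as the paper: identify bi-weights with hair counts, verify the degree formula via the half-edge count $\sum_i m_i = m+e$, $\sum_i n_i = n+e$, and read off the restriction $F^+$ from the condition $m,n\geq 1$. The paper's proof is in fact terser than yours---it dispatches the bijection and chain-map property in one line as ``untwisting of definitions'' and devotes the bulk of its argument to the degree calculation you also carry out---so your explicit matching of $d^{\mathrm{spl}}\leftrightarrow\delta$ and the corolla-attachment terms with $\delta',\delta''$, together with your remarks on sign conventions, goes beyond what the paper spells out.
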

\begin{proof}
    Proving that $F$ is an isomorphism is just the untwisting of definitions of both complexes. We do however need to show that $F$ is of degree 0. Recall that the derivation complex decomposes as
    \begin{equation*}
        Der(\mathcal{H}olieb_{p,q}^{\circlearrowleft})=\prod_{m,n\geq0}\big(\mathcal{H}olieb_{p,q}^{\circlearrowleft}\otimes sgn^{\otimes|p|}_m\otimes sgn^{\otimes|q|}_n\big)^{\mathbb{S}_m\times\mathbb{S}_n}[1+p(m-1)+q(n-1)]
    \end{equation*} where $\mathcal{H}olieb_{p,q}^{\circlearrowleft}$ is the vector space of graphs with $m$ out-hairs and $n$ in-hairs. Let $\Gamma$ be a graph in the derivation complex. For each vertex $x$ of $\Gamma$, let $|x|_{out} $ denote the number of outgoing half-edges and $|x|_{in}$ the number of incoming half-edges. Then
    \begin{align*}
        |\Gamma|&=\sum_{x\in V(\Gamma)}\big(1-(|x|_{out}-1)-(|x|_{in}-1) \big)-1-p(1-m)-q(1-n)\\
        &=\sum_{x\in V(\Gamma)}\big(1+p+q\big) -(1+p+q)- \sum_{x\in V(\Gamma)}\big(p|x|_{out}+m|x|_{in}\big)+pm+qn\\
        &=|V(\Gamma)-1|(p+q+1) - |E(\Gamma)|(p+q) = |F(\Gamma)|
    \end{align*}
    One remarks that $F^+$ is a bijection by noting that bi-weights are symbolizing hairs and $Der^+(\mathcal{H}olieb_{c,d}^{\circlearrowleft})$ can be seen as generated by graphs with at least one out- and in-hair attached.
\end{proof}
\subsection{Decomposition over decorations and loop numbers}\label{sec:BiWCohom}
Let $\fwGC_k^0$ be the subcomplex of $\fwGC_k$ generated by graphs whose vertices are only decorated by $\frac{0}{0}$, and $\fwGC_k^*$ its complement. Then the complex $\fwGC_k$ split as
\begin{equation*}
    \fwGC_k=\fwGC_k^0\oplus\fwGC_k^*.
\end{equation*}
\begin{proposition}\label{prop:0complex}
    The complex $\fwGC_k^0$ is isomorphic to the complex $\dGC^\circlearrowleft_k=\dGC_k/\dGC^{s+t}_k$.
\end{proposition}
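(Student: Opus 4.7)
The plan is to exhibit the isomorphism as essentially the identity map that forgets the redundant $(0,0)$ decoration, and then to check that the two differentials match piece by piece.

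First I would unpack what it means for a vertex $x$ to carry the bi-weight $(0,0)$. The three admissibility conditions on bi-weights force $|x|_{out}\geq 1$, $|x|_{in}\geq 1$, and $|x|_{out}+|x|_{in}\geq 3$. This is exactly the condition defining vertices of $\dGC_k^\circlearrowleft=\dGC_k/\dGC_k^{s+t}$: every vertex is at least trivalent and is neither a source nor a target. So, forgetting the decorations gives a degree-preserving bijection on generators $\phi:\fwGC_k^0\to\dGC_k^\circlearrowleft$, with inverse decorating every vertex by $(0,0)$. Degrees match since both complexes place a graph with $v$ vertices and $e$ edges in degree $(v-1)k+(1-k)e$, and the sign conventions (the $\mathrm{sgn}_e$ action on edges for $k$ even and the $\mathrm{sgn}_v$ action on vertices for $k$ odd) coincide; the maps $\mathbb{S}_v\times \mathbb{S}_e$ acting on labellings are literally the same on both sides.

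Next I would check that $\fwGC_k^0$ is indeed a subcomplex, and that $\phi$ is a chain map. The point is that on a $(0,0)$-decorated vertex $x$ the pieces $\delta'_x$ and $\delta''_x$ of the bi-weighted differential both vanish, since they require subtracting one from an already-zero out- (resp.\ in-) weight. The remaining piece $\delta_x$ splits $x$ into two vertices joined by a new edge, redistributing the (trivial) bi-weight so that both new vertices inherit decoration $(0,0)$, while summing over all distributions of the edges originally incident to $x$. Any summand in which one of the two new vertices fails to receive an incoming or an outgoing edge is forced to vanish by the bi-weight admissibility conditions. Under $\phi$, this exactly reproduces the induced differential on $\dGC_k^\circlearrowleft$: the $\delta'$ and $\delta''$ contributions in $\dGC_k$ attach a univalent target (resp.\ source), and thus vanish in the quotient $\dGC_k/\dGC_k^{s+t}$; and the surviving splittings in $\dGC_k^\circlearrowleft$ are precisely those whose two new vertices both retain at least one incoming and one outgoing edge. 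The signs attached to these splittings are defined by the same prescription (label the new edge $e+1$, the new vertex $v+1$, and so on), so they agree under $\phi$.

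The only step requiring genuine care, rather than mere unwinding of definitions, is the sign matching: I would verify directly from the rule in Definition~\ref{def:GC} (extended verbatim to the bi-weighted setting) that the identification of orderings of edges and vertices is preserved by $\phi$. Since nothing in the bi-weight decoration interacts with edge or vertex orderings, this reduces to a tautology once both differentials are written in the same conventions. Putting the pieces together yields a bijective chain map, hence an isomorphism of complexes $\fwGC_k^0\cong \dGC_k^\circlearrowleft$, which is the content of the proposition.
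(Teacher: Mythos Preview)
Your proof is correct and follows exactly the approach sketched in the paper: the paper's proof is a two-sentence ``by direct inspection'' argument noting that the $(0,0)$ admissibility conditions force every vertex to be at least trivalent with both incoming and outgoing edges, and that the differentials agree. You have simply spelled out this inspection in full detail, including the vanishing of $\delta'_x,\delta''_x$ on zero bi-weights and the matching of sign conventions, which is precisely what the paper leaves to the reader.
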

\begin{proof}
    By direct inspection of the graphs where all vertices can be decorated by $\frac{0}{0}$, one easily sees that they need to be at least trivalent and have at least one incoming and one outgoing vertex. This corresponds to the graphs in $\dGC^\circlearrowleft_k$. One also notes that the differentials act in the same manner.
\end{proof}
Recall that the loop number of a graph is preserved under the differential. Consider the decompositions
\begin{align*}
    \fwGC_k&=\mathsf{b}_0\WGC_k\oplus\wGC_k\\
    \fwGC^*_k&=\mathsf{b}_0\WGC_k^*\oplus\wGC^*_k\\
    \fwGC^+_k&=\mathsf{b}_0\WGC_k^+\oplus\WGC_k^+
\end{align*}
where $\mathsf{b}_0\WGC_k, \mathsf{b}_0\WGC_k^*$ and $\mathsf{b}_0\WGC_k^+$ are the subcomplexes of graphs with loop number zero and $\WGC_k,\ \WGC_k^*$ and $\WGC_k^+$ the subcomplex of graphs with loop number one and higher.
We note that graphs with loop number zero cannot be completely bald, and so $\mathsf{b}_0\WGC_k=\mathsf{b}_0\WGC_k^*$.
Further note that there are no closed loops in a graph with loop number zero, and so they contain at least one source and one target vertex. These vertices must have positive in-weight and out-weight respectively, and so  $\mathsf{b}_0\wGC_k=\mathsf{b}_0\wGC_k^+$.
\begin{proposition}
    The cohomology of the complex of graphs with loop number zero $\mathsf{b}_0\wGC_k$ is generated by the series
    \begin{equation*}
        \sum_{\substack{i,j\geq1\\i+j\geq 3}}(i+j-2)\ 
        \tikz[baseline=-2.07cm]
        \node[biw] at (0,-2) {$i$\nodepart{lower}$j$};\ .
    \end{equation*}
\end{proposition}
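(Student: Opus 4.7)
The plan is to transport the known cohomology computation for the derivation complex across the chain isomorphism $F$ of Proposition~\ref{prop:HoliebWGC}. First I would fix any $p,q\in\Z$ with $p+q+1=k$ (e.g.\ $p=k-1$, $q=0$). Proposition~\ref{prop:HoliebWGC} then supplies a degree-zero chain isomorphism
\[
F\colon Der(\mathcal{H}olieb_{p,q}^{\circlearrowleft})\ \xrightarrow{\ \sim\ }\ \fwGC_k
\]
which sends a graph having $m$ outgoing and $n$ incoming hairs attached to a given vertex to the bi-weighted graph of the same underlying shape whose corresponding vertex carries bi-weight $(m,n)$.

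Next I would observe that since $F$ leaves the underlying graph shape unchanged, it preserves the loop number $b=e-v+1$, and therefore restricts to an isomorphism of the loop-zero subcomplexes
\[
b_0 Der(\mathcal{H}olieb_{p,q}^{\circlearrowleft})\ \xrightarrow{\ \sim\ }\ \mathsf{b}_0\wGC_k.
\]
This is essentially bookkeeping, using only the definitions of the loop-number gradings on both sides.

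Finally I would invoke the cohomology computation already available for the source: by the remark closing Section~\ref{sec:Der*+} (together with Theorem~\ref{thm:onevertex}), the cohomology of $b_0 Der(\mathcal{H}olieb_{p,q}^{\circlearrowleft})$ is one-dimensional and represented by the class
\[
\sum_{\substack{m,n\geq 1\\ m+n\geq 3}}(m+n-2)\,\Gamma_{m,n},
\]
where $\Gamma_{m,n}$ denotes the single-vertex corolla with $m$ outputs and $n$ inputs. Applying $F$ to this cocycle sends each $\Gamma_{m,n}$ to the single-vertex bi-weighted graph with bi-weight $(m,n)$, which is exactly the representative in the statement.

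No real obstacle is anticipated: the content of the proposition is already encoded in the combination of Proposition~\ref{prop:HoliebWGC} and Theorem~\ref{thm:onevertex}. The only point requiring a brief verification is the matching of loop-zero pieces under $F$, which is immediate from the fact that $F$ preserves the numbers of vertices and edges.
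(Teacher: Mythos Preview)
Your proposal is correct and follows essentially the same approach as the paper: the paper's proof is the one-line remark that these graphs correspond, via the isomorphism $F$ of Proposition~\ref{prop:HoliebWGC}, to the loop-number-zero part of the deformation complex, whose cohomology is given by Theorem~\ref{thm:onevertex}. You have simply unpacked this in more detail, including the (harmless) choice of $p,q$ with $p+q+1=k$ and the observation that $F$ preserves loop number.
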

\begin{proof}
    These graphs correspond to the part of the deformation complex of graphs with loop number zero, whose cohomology was computed to be this graphs counterpart in \ref{thm:onevertex}.
\end{proof}
\section{Special in-vertices and special out-vertices}
In this section we will define three subcomplexes $\qGC_k\subset\wGC_k$, $\qGC^*_k\subset\wGC^*_k$ and $\qGC^+_k\subset\wGC_k^+$ consisting of graphs whose vertices are decorated by four types of decorations $\frac{\infty_1}{\infty_1}$, $\frac{\infty_1}{0}$, $\frac{0}{\infty_1}$ and $\frac{0}{0}$. The main goal is to
show that these inclusions are quasi-isomorphisms. We do this by considering two consecutive filtrations on
the complexes over special-in and special-out vertices respectively. We then show that the associated spectral sequences agree on some page.

\subsection{Convergence of filtrations and their associated spectral sequences}
In this paper, we will consider many arguments where we consider a filtration of a chain complex and then study the associated spectral sequences. All the spectral sequences we construct in this way will converge. To see this, we do a similar trick of shifting the degrees of the complexes as seen in \cite{W1}. Let the new degree of a graph be $k(v-1)-(k-1)e+(k-\frac{1}{2})(e-v)=\frac{1}{2}(v+e)-k$. The cohomology of both complexes agree up to degree shifts. Further any filtration in the old grading corresponds to a filtration with the new grading. We see that the number of underlying directed graphs of the bi-weighted graphs contained in each degree is finite. Any filtration we do will be over the number of vertices of certain types, and so the filtration will be bounded and hence converges to the desired cohomology. The cohomology of the original complex is then acquired from the shifted version.
\subsection{Filtration over special in-vertices}
\begin{definition}
    Let $\Gamma$ be a bi-weighted graph. A
    vertex $x$ of $\Gamma$ is a \textit{special in-vertex} if
    \begin{enumerate}
        \item[i)] either $x$ is a univalent vertex with one outgoing edge and out-weight zero, i.e on the form $\begin{tikzpicture}[shorten >=1pt,node distance=1.2cm,auto,>=angle 90,baseline=-0.1cm]
            \node[biw] (a) at (0,0) {$0$\nodepart{lower}$n$};
            \node[] (d) at (0,1) {}
                edge [<-] (a);
        \end{tikzpicture}$
        \item[ii)] or $x$ becomes a univalent vertex of type i) after recursive removal of all special-in vertices of type i) from $\Gamma$ (see figure \ref{fig:SpecIn}).
    \end{enumerate}
    \begin{figure}[h]
    \centering
\begin{tikzpicture}[shorten >=1pt,>=angle 90,baseline={([yshift={-\ht\strutbox}]current bounding box.north)},outer sep=0pt,inner sep=0pt]
    \node[biw] (r1) at (0,0) {$2$\nodepart{lower}$3$};
    \node[biw] (g1v1) at (-0.75,-1) {$0$\nodepart{lower}$3$}
        edge [->] (r1);
    \node[biw] (g1v2) at (-1.5,-2) {$0$\nodepart{lower}$1$}
        edge [->] (g1v1);
    \node[biw] (g1v3) at (-2.25,-3) {$0$\nodepart{lower}$4$}
        edge [->] (g1v2);
    \node[biw] (g1v4) at (0.75,-1) {$0$\nodepart{lower}$0$}
        edge [->] (r1);
    \node[biw] (g1v5) at (1.5,-2) {$0$\nodepart{lower}$2$}
        edge [->] (g1v4);
    \node[biw] (g1v6) at (2.25,-3) {$0$\nodepart{lower}$2$}
        edge [->] (g1v5);
    \node[biw] (g1v7) at (0,-2) {$0$\nodepart{lower}$2$}
        edge [->] (g1v4);
    \node[biw] (g1v8) at (-0.75,-3) {$0$\nodepart{lower}$2$}
        edge [->] (g1v7);
    \node[biw] (g1v9) at (0.75,-3) {$0$\nodepart{lower}$6$}
        edge [->] (g1v7);
\end{tikzpicture}
\ \ 
\begin{tikzpicture}[shorten >=1pt,>=angle 90,baseline={([yshift={-\ht\strutbox}]current bounding box.north)},outer sep=0pt,inner sep=0pt]
    \node[biw] (r1) at (0,0) {$2$\nodepart{lower}$3$};
    \node[biw] (g1v1) at (-0.75,-1) {$0$\nodepart{lower}$3$}
        edge [->] (r1);
    \node[biw] (g1v2) at (-1.5,-2) {$0$\nodepart{lower}$1$}
        edge [->] (g1v1);
    \node[biw] (g1v4) at (0.75,-1) {$0$\nodepart{lower}$0$}
        edge [->] (r1);
    \node[biw] (g1v5) at (1.5,-2) {$0$\nodepart{lower}$2$}
        edge [->] (g1v4);
    \node[biw] (g1v7) at (0,-2) {$0$\nodepart{lower}$2$}
        edge [->] (g1v4);
\end{tikzpicture}
\ \ 
\begin{tikzpicture}[shorten >=1pt,>=angle 90,baseline={([yshift={-\ht\strutbox}]current bounding box.north)},outer sep=0pt,inner sep=0pt]
    \node[biw] (r1) at (0,0) {$2$\nodepart{lower}$3$};
    \node[biw] (g1v1) at (-0.75,-1) {$0$\nodepart{lower}$3$}
        edge [->] (r1);
    \node[biw] (g1v4) at (0.75,-1) {$0$\nodepart{lower}$0$}
        edge [->] (r1);
\end{tikzpicture}
\ \ 
\begin{tikzpicture}[baseline={([yshift={-\ht\strutbox}]current bounding box.north)},outer sep=0pt,inner sep=0pt]
    \node[biw] (r1) at (0,0) {$2$\nodepart{lower}$3$};
\end{tikzpicture}
\caption{Example of recursive removal of special-in vertices of the rightmost graph. All vertices except the top one are special in-vertices.}
\label{fig:SpecIn}
\end{figure}
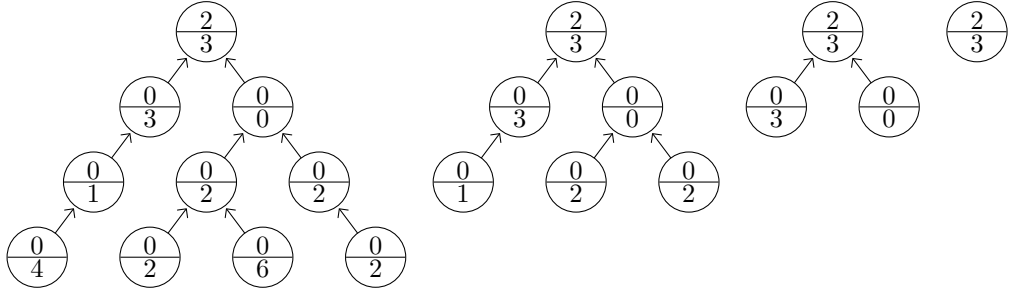
    Any vertex that is not a special in-vertex is called an \textit{in-core vertex}. The special in-vertices of a graph form trees with a flow towards some \textit{in-core vertex} and where the trees have no out-hairs.
    Given an arbitrary graph $\Gamma$ in $\WGC_k$, we define the associated \textit{in-core graph} $\gamma$ as the one spanned by in-core vertices with their in-weight forgotten. Then every vertex $x$ in $\gamma$ has three integer parameters associated to it, $|x|_{out}$, $|x|_{in}$ and $w_x^{out}$. Note that every graph contain at least one in-core vertex, and so every graph has an associated in-core graph.
\end{definition}
Consider a filtration of $\wGC_k$ over the number of in-core vertices. Let $\{S^{in}_{r}\wGC_k\}_{r\geq 0}$ be the associated spectral sequence.
\begin{proposition}\label{prop:specialIn}
The page one complex $S^{in}_1\wGC_k$ is generated by directed graphs $\Gamma$ whose vertices $V(\Gamma)$ are independently decorated by one out-weight $m$ and with two possible symbols $0$ or $\infty_1$ as in-decorations, subject to the following conditions:
\begin{enumerate}
    \item If $x\in V(\Gamma)$ is a source, then
        \begin{equation*}
        x=\begin{tikzpicture}[shorten >=1pt,node distance=1.2cm,auto,>=angle 90,baseline=-0.1cm]
            \node[biw] (a) at (0,0) {$m$\nodepart{lower}$\infty_1$};
            \node[] (up1) at (-0.4,0.8) {}
                edge [<-] (a);
            \node[] (up2) at (0,0.6) {$\scriptstyle\cdots$};
            \node[] (up2) at (0,0.9) {$\scriptstyle \geq 1$};
            \node[] (up3) at (0.4,0.8) {}
                edge [<-] (a);
        \end{tikzpicture}
        \text{with } m+|x|_{out}\geq 2\text{ and }|x|_{out} \geq 1
    \end{equation*}
    \item If $x\in V(\Gamma)$ is a target with precisely one in-edge, then
        \begin{equation*}
        x=\begin{tikzpicture}[shorten >=1pt,node distance=1.2cm,auto,>=angle 90,baseline=-0.1cm]
            \node[biw] (a) at (0,0) {$m$\nodepart{lower}$\infty_1$};
            \node[] (d) at (0,-1) {}
                edge [->] (a);
        \end{tikzpicture}
        \text{ with }m\geq 1,\text{ or }
        x=\begin{tikzpicture}[shorten >=1pt,node distance=1.2cm,auto,>=angle 90,baseline=-0.1cm]
            \node[biw] (a) at (0,0) {$m$\nodepart{lower}$0$};
            \node[] (d) at (0,-1) {}
                edge [->] (a);
        \end{tikzpicture}
        \text{ with }m\geq 2
    \end{equation*}
    \item If $x\in V(\Gamma)$ is a target with at least two in-edges, then
    \begin{equation*}
        x=\begin{tikzpicture}[shorten >=1pt,node distance=1.2cm,auto,>=angle 90,baseline=-0.1cm]
            \node[biw] (a) at (0,0) {$m$\nodepart{lower}$\infty_1$};
            \node[] (up1) at (-0.4,-0.8) {}
                edge [->] (a);
            \node[] (up2) at (0,-0.6) {$\scriptstyle\cdots$};
            \node[] (up2) at (0,-0.9) {$\scriptstyle \geq 2$};
            \node[] (up3) at (0.4,-0.8) {}
                edge [->] (a);
        \end{tikzpicture}
        \text{with } m\geq 1,\text{ or }
        x=\begin{tikzpicture}[shorten >=1pt,node distance=1.2cm,auto,>=angle 90,baseline=-0.1cm]
            \node[biw] (a) at (0,0) {$m$\nodepart{lower}$0$};
            \node[] (up1) at (-0.4,-0.8) {}
                edge [->] (a);
            \node[] (up2) at (0,-0.6) {$\scriptstyle\cdots$};
            \node[] (up2) at (0,-0.9) {$\scriptstyle \geq 2$};
            \node[] (up3) at (0.4,-0.8) {}
                edge [->] (a);
        \end{tikzpicture}
        \text{with } m\geq 1
    \end{equation*}
    \item If $x\in V(\Gamma)$ is passing (one in-edge and one out-edge), then
    \begin{equation*}
        x=\begin{tikzpicture}[shorten >=1pt,node distance=1.2cm,auto,>=angle 90,baseline=-0.1cm]
            \node[biw] (a) at (0,0) {$m$\nodepart{lower}$\infty_1$};
            \node[] (up) at (0,1) {}
                edge [<-] (a);
            \node[] (down) at (0,-1) {}
                edge [->] (a);
        \end{tikzpicture}
        \text{ with }m\geq 0,\text{ or }
        x=\begin{tikzpicture}[shorten >=1pt,node distance=1.2cm,auto,>=angle 90,baseline=-0.1cm]
            \node[biw] (a) at (0,0) {$m$\nodepart{lower}$0$};
            \node[] (up) at (0,1) {}
                edge [<-] (a);
            \node[] (down) at (0,-1) {}
                edge [->] (a);
        \end{tikzpicture}
        \text{ with }m\geq 1
    \end{equation*}
    \item If $x\in V(\Gamma)$ is of none of the types above (i.e $x$ is at least trivalent and has at least one in-edge and at least one out-edge), then
    \begin{equation*}
        x=\begin{tikzpicture}[shorten >=1pt,node distance=1.2cm,auto,>=angle 90,baseline=-0.1cm]
            \node[biw] (a) at (0,0) {$m$\nodepart{lower}$\infty_1$};
            \node[] (down1) at (-0.4,-0.8) {}
                edge [->] (a);
            \node[] (down2) at (0,-0.6) {$\scriptstyle\cdots$};
            \node[] (down3) at (0.4,-0.8) {}
                edge [->] (a);
            \node[] (up1) at (-0.4,0.8) {}
                edge [<-] (a);
            \node[] (up2) at (0,0.6) {$\scriptstyle\cdots$};
            \node[] (up3) at (0.4,0.8) {}
                edge [<-] (a);
        \end{tikzpicture}
        \text{with } m\geq 0,\text{ or }
        x=\begin{tikzpicture}[shorten >=1pt,node distance=1.2cm,auto,>=angle 90,baseline=-0.1cm]
            \node[biw] (a) at (0,0) {$m$\nodepart{lower}$0$};
            \node[] (down1) at (-0.4,-0.8) {}
                edge [->] (a);
            \node[] (down2) at (0,-0.6) {$\scriptstyle\cdots$};
            \node[] (down3) at (0.4,-0.8) {}
                edge [->] (a);
            \node[] (up1) at (-0.4,0.8) {}
                edge [<-] (a);
            \node[] (up2) at (0,0.6) {$\scriptstyle\cdots$};
            \node[] (up3) at (0.4,0.8) {}
                edge [<-] (a);
        \end{tikzpicture}
        \text{with } m\geq 0
    \end{equation*}
\end{enumerate}
The differential acts on a graph $\Gamma\in S^{in}_1\wGC_k$ with vertices of the types (1)-(5) above as $d(\Gamma)=\sum_{x\in V(\Gamma)}d_x(\Gamma)$. The map $d_x$ act on vertices with in-weight $\infty_1$ and $0$ respectively as
\begin{align*}
    d_x\Big(\ 
    \begin{tikzpicture}[shorten >=1pt,>=angle 90,baseline={([yshift=-.5ex]current bounding box.center)}]
        \node[biw] (v0) at (0,0) {$m$\nodepart{lower}$\infty_1$};
        \node[invisible] (v1) at (-0.4,0.8) {}
            edge [<-] (v0);
        \node[] at (0,0.6) {$\scriptstyle\cdots$};
        \node[invisible] (v3) at (0.4,0.8) {}
            edge [<-] (v0);
        \node[invisible] (v4) at (-0.4,-0.8) {}
            edge [->] (v0);
        \node[] at (0,-0.6) {$\scriptstyle\cdots$};
        \node[invisible] (v6) at (0.4,-0.8) {}
            edge [->] (v0);
    \end{tikzpicture}\ \Big) \ 
    &= \ \sum_{\scriptstyle{m=m_1+m_2}}
    \Bigg(
    \begin{tikzpicture}[shorten >=1pt,>=angle 90,baseline={([yshift=-.5ex]current bounding box.center)}]
        \node[ellipse,
            draw = black,
            minimum width = 3cm, 
            minimum height = 1.8cm,
            dotted] (e) at (0,0) {};
        \node[biw] (vLeft) at (-0.8,0) {$m_1$\nodepart{lower}$\infty_1$};
        \node[biw] (vRight) at (0.8,0) {$m_2$\nodepart{lower}$\infty_1$}
            edge [<-] (vLeft);
        \node[invisible] (v1) at (-0.6,1.4) {}
            edge [<-] (e);
        \node[] at (0,1.2) {$\cdots$};
        \node[invisible] (v3) at (0.6,1.4) {}
            edge [<-] (e);
        \node[invisible] (v4) at (-0.6,-1.4) {}
            edge [->] (e);
        \node[] at (0,-1.2) {$\cdots$};
        \node[invisible] (v6) at (0.6,-1.4) {}
            edge [->] (e);
    \end{tikzpicture}
    \ + \
    \begin{tikzpicture}[shorten >=1pt,>=angle 90,baseline={([yshift=-.5ex]current bounding box.center)}]
        \node[ellipse,
            draw = black,
            minimum width = 3cm, 
            minimum height = 1.8cm,
            dotted] (e) at (0,0) {};
        \node[biw] (vLeft) at (-0.8,0) {$m_1$\nodepart{lower}$0$};
        \node[biw] (vRight) at (0.8,0) {$m_2$\nodepart{lower}$\infty_1$}
            edge [<-] (vLeft);
        \node[invisible] (v1) at (-0.6,1.4) {}
            edge [<-] (e);
        \node[] at (0,1.2) {$\cdots$};
        \node[invisible] (v3) at (0.6,1.4) {}
            edge [<-] (e);
        \node[invisible] (v4) at (-0.6,-1.4) {}
            edge [->] (e);
        \node[] at (0,-1.2) {$\cdots$};
        \node[invisible] (v6) at (0.6,-1.4) {}
            edge [->] (e);
    \end{tikzpicture}
    \ + \ 
    \begin{tikzpicture}[shorten >=1pt,>=angle 90,baseline={([yshift=-.5ex]current bounding box.center)}]
        \node[ellipse,
            draw = black,
            minimum width = 3cm, 
            minimum height = 1.8cm,
            dotted] (e) at (0,0) {};
        \node[biw] (vLeft) at (-0.8,0) {$m_1$\nodepart{lower}$\infty_1$};
        \node[biw] (vRight) at (0.8,0) {$m_2$\nodepart{lower}$0$}
            edge [<-] (vLeft);
        \node[invisible] (v1) at (-0.6,1.4) {}
            edge [<-] (e);
        \node[] at (0,1.2) {$\cdots$};
        \node[invisible] (v3) at (0.6,1.4) {}
            edge [<-] (e);
        \node[invisible] (v4) at (-0.6,-1.4) {}
            edge [->] (e);
        \node[] at (0,-1.2) {$\cdots$};
        \node[invisible] (v6) at (0.6,-1.4) {}
            edge [->] (e);
    \end{tikzpicture}
    \Bigg)\\
    &\qquad \ -\
    \begin{tikzpicture}[shorten >=1pt,>=angle 90,baseline={(4ex,-0.5ex)}]
        \node[biw] (v0) at (0,0) {$\scriptstyle m-1$\nodepart{lower}$\infty_1$};
        \node[biw] (new) at (1.5,0.7) {$\infty_1$\nodepart{lower}$\infty_0$}
            edge [<-] (v0);
        \node[invisible] (v1) at (-0.4,0.8) {}
            edge [<-] (v0);
        \node[] at (0,0.6) {$\scriptstyle\cdots$};
        \node[invisible] (v3) at (0.4,0.8) {}
            edge [<-] (v0);
        \node[invisible] (v4) at (-0.4,-0.8) {}
            edge [->] (v0);
        \node[] at (0,-0.6) {$\scriptstyle\cdots$};
        \node[invisible] (v6) at (0.4,-0.8) {}
            edge [->] (v0);
    \end{tikzpicture}
    \ -\ 
    \begin{tikzpicture}[shorten >=1pt,>=angle 90,baseline={(4ex,-0.5ex)}]
        \node[biw] (v0) at (0,0) {$m$\nodepart{lower}$\infty_1$};
        \node[biw] (new) at (1.5,-0.7) {$\infty_1$\nodepart{lower}$\infty_1$}
            edge [->] (v0);
        \node[invisible] (v1) at (-0.4,0.8) {}
            edge [<-] (v0);
        \node[] at (0,0.6) {$\scriptstyle\cdots$};
        \node[invisible] (v3) at (0.4,0.8) {}
            edge [<-] (v0);
        \node[invisible] (v4) at (-0.4,-0.8) {}
            edge [->] (v0);
        \node[] at (0,-0.6) {$\scriptstyle\cdots$};
        \node[invisible] (v6) at (0.4,-0.8) {}
            edge [->] (v0);
    \end{tikzpicture}
    \ -\ 
    \begin{tikzpicture}[shorten >=1pt,>=angle 90,baseline={(4ex,-0.5ex)}]
        \node[biw] (v0) at (0,0) {$m$\nodepart{lower}$0$};
        \node[biw] (new) at (1.5,-0.7) {$\infty_1$\nodepart{lower}$\infty_1$}
            edge [->] (v0);
        \node[invisible] (v1) at (-0.4,0.8) {}
            edge [<-] (v0);
        \node[] at (0,0.6) {$\scriptstyle\cdots$};
        \node[invisible] (v3) at (0.4,0.8) {}
            edge [<-] (v0);
        \node[invisible] (v4) at (-0.4,-0.8) {}
            edge [->] (v0);
        \node[] at (0,-0.6) {$\scriptstyle\cdots$};
        \node[invisible] (v6) at (0.4,-0.8) {}
            edge [->] (v0);
    \end{tikzpicture}\\
    d_x\Big(\ 
    \begin{tikzpicture}[shorten >=1pt,>=angle 90,baseline={([yshift=-.5ex]current bounding box.center)}]
        \node[biw] (v0) at (0,0) {$m$\nodepart{lower}$0$};
        \node[invisible] (v1) at (-0.4,0.8) {}
            edge [<-] (v0);
        \node[] at (0,0.6) {$\scriptstyle\cdots$};
        \node[invisible] (v3) at (0.4,0.8) {}
            edge [<-] (v0);
        \node[invisible] (v4) at (-0.4,-0.8) {}
            edge [->] (v0);
        \node[] at (0,-0.6) {$\scriptstyle\cdots$};
        \node[invisible] (v6) at (0.4,-0.8) {}
            edge [->] (v0);
    \end{tikzpicture}\ \Big) \ 
    &= \ \sum_{\scriptstyle{m=m_1+m_2}}
    \begin{tikzpicture}[shorten >=1pt,>=angle 90,baseline={([yshift=-.5ex]current bounding box.center)}]
        \node[ellipse,
            draw = black,
            minimum width = 3cm, 
            minimum height = 1.8cm,
            dotted] (e) at (0,0) {};
        \node[biw] (vLeft) at (-0.8,0) {$m_1$\nodepart{lower}$0$};
        \node[biw] (vRight) at (0.8,0) {$m_2$\nodepart{lower}$0$}
            edge [<-] (vLeft);
        \node[invisible] (v1) at (-0.6,1.4) {}
            edge [<-] (e);
        \node[] at (0,1.2) {$\cdots$};
        \node[invisible] (v3) at (0.6,1.4) {}
            edge [<-] (e);
        \node[invisible] (v4) at (-0.6,-1.4) {}
            edge [->] (e);
        \node[] at (0,-1.2) {$\cdots$};
        \node[invisible] (v6) at (0.6,-1.4) {}
            edge [->] (e);
    \end{tikzpicture}
    \ -\
    \begin{tikzpicture}[shorten >=1pt,>=angle 90,baseline={(4ex,-0.5ex)}]
        \node[biw] (v0) at (0,0) {$\scriptstyle m-1$\nodepart{lower}$0$};
        \node[biw] (new) at (1.5,0.7) {$\infty_1$\nodepart{lower}$\infty_0$}
            edge [<-] (v0);
        \node[invisible] (v1) at (-0.4,0.8) {}
            edge [<-] (v0);
        \node[] at (0,0.6) {$\scriptstyle\cdots$};
        \node[invisible] (v3) at (0.4,0.8) {}
            edge [<-] (v0);
        \node[invisible] (v4) at (-0.4,-0.8) {}
            edge [->] (v0);
        \node[] at (0,-0.6) {$\scriptstyle\cdots$};
        \node[invisible] (v6) at (0.4,-0.8) {}
            edge [->] (v0);
    \end{tikzpicture}
\end{align*}
Any term on the right hand side containing at least one vertex not of the type $(1)-(5)$ is set to zero.
\end{proposition}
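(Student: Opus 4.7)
The plan is to set up the filtration $F_p \subset \wGC_k$ of graphs with at least $p$ in-core vertices. To see this is a valid decreasing filtration of subcomplexes, one checks that the in-core count is monotonically non-decreasing under each summand of $d$: for $\delta_x$, the new internal edge forces at least one daughter to have an incoming edge and hence not to be type i special-in, and a short case analysis using the fact that $x$ itself was not type i / type ii rules out that daughter being type ii as well; for $\delta'$, the new univalent sink is always in-core; and for $\delta''$, the new source is in-core precisely when its out-weight is positive, and special-in otherwise. Convergence of the associated spectral sequence follows from the shifted-grading trick of Section 5.1. The page 0 differential $d_0$ is then the sum of exactly the summands of $d$ that preserve the in-core count: splittings where one daughter becomes special-in, splittings of special-in vertices, and $\delta''$ terms with a new special-in vertex.

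Next, I would decompose the page 0 complex as a direct sum indexed by in-core graphs $\gamma$ (recording the in-core vertices, their in-core edges, and their out-weights, but forgetting in-weights and all attached special-in trees). Inside each summand, the dressing data splits into a tensor product over in-core vertices $x$ of local complexes $A_x$, where an element of $A_x$ consists of a choice of in-weight on $x$ together with a forest of special-in trees attached to $x$ by their roots, and $d_0$ acts independently on each tensor factor. The main technical step is to show that $H(A_x)$ is at most two-dimensional, with representatives given by the bare decorations $w^{in}_x = 0$ and $w^{in}_x = \infty_1$, both with no attached trees. I would prove this via an auxiliary spectral sequence on $A_x$ filtered by the total number of special-in vertices, reducing the cohomology question to a contractibility statement about a small local complex which I would settle by an explicit contracting homotopy transferring in-weight between a topmost special-in leaf and its parent. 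The bi-weight validity conditions $w^{out}_x + |x|_{out} \geq 1$, $w^{in}_x + |x|_{in} \geq 1$, and $w^{out}_x + w^{in}_x + |x|_{out} + |x|_{in} \geq 3$ then cut the resulting dimension down to match the case distinction (1)--(5) on source/target/passing/generic vertices.

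Finally, I would compute the induced $d_1$ by applying the remaining summands of $d$ (the ones that raise the in-core count by exactly one) to a chosen page 1 representative and projecting back to the page 1 basis modulo $F_{p+2}$. Splittings of an $\infty_1$-vertex distribute the in-weight according to the three decompositions $i_1, i_2 \geq 1$, $i_1 = 0$ with $i_2 \geq 1$, and $i_1 \geq 1$ with $i_2 = 0$ of the constraint $i_1 + i_2 \geq 1$, yielding the three displayed split terms, while splittings of a $0$-vertex only admit $i_1 = i_2 = 0$. The $\delta'$ summand attaches a new target with one incoming edge whose bi-weight is forced by validity to be $\infty_1/\infty_0$, and the $\delta''$ summand, only nonzero on an $\infty_1$-vertex, attaches a new source of bi-weight $\infty_1/\infty_1$; since the decremented in-weight on the original vertex ranges over $\{0, 1, 2, \ldots\} = 0 \oplus \infty_1$, this contributes the two separate $\delta''$ terms. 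The principal obstacle is the local cohomology calculation in step two: the contracting homotopy must respect bi-weight validity, and the small-arity degenerate configurations of $x$ (a lone source, a target with one or two incoming edges, a passing vertex) must each be verified not to introduce spurious cohomology classes, which is precisely why the statement is organised by vertex type.
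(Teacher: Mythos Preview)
Your overall architecture matches the paper exactly: filter by the number of in-core vertices, decompose the associated graded over in-core graphs $\gamma$, factor each piece as a tensor product (invariants under $\mathrm{Aut}(\gamma)$) of local tree complexes $\mathcal{T}^{in}_x$, and then read off the induced $d_1$. Your description of the $d_1$ formulas is correct, including the decomposition $\infty_0 = \{0\}\sqcup\infty_1$ that produces the two $\delta''$ terms on an $\infty_1$-vertex.

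The gap is in the local computation of $H(\mathcal{T}^{in}_x)$. Your proposed auxiliary filtration on $A_x$ by the \emph{total} number of special-in vertices buys nothing: on the page~$0$ complex the differential already increases this count by exactly one in every summand, so the associated graded has zero differential and page~$1$ returns the original problem untouched. The contracting homotopy you sketch (merge a leaf into its parent, adding in-weights) runs into trouble with the $\delta''$ part of the differential, which attaches a leaf of in-weight $\infty_2$; this is an \emph{infinite} sum, so a naive leaf-merging homotopy does not close up to $dh+hd=\mathrm{id}-P$ without further work.

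The paper handles $H(\mathcal{T}^{in}_x)$ quite differently. It first filters by the number of \emph{univalent} special-in vertices (equivalently, the number of leaves of the attached forest), which is genuinely informative: the page~$0$ differential now only elongates strings without creating new branches. For the one-leaf piece $u_1\mathcal{T}^{in}_{a,b,c}$ (strings), the paper filters once more by total in-weight to isolate the purely combinatorial ``split the in-weight of one vertex into two'' differential, and identifies that complex with the reduced cobar-bar construction $\overline{\Omega B(V)}$ on the two-dimensional dga $V=\mathbb{K}\oplus\mathbb{K}a$ with $a^2=0$; the standard quasi-isomorphism $\Omega B(V)\simeq V$ then gives a one-dimensional cohomology, which lifts to the class $\frac{c}{\infty_1}$ (together with the evident class $\frac{c}{0}$ when it exists). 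For $N\geq 2$ leaves, the paper filters by the number of \emph{branch} vertices, reduces to a tensor product of copies of the one-leaf complex indexed by the hairs of a branch graph, and kills the cohomology by a symmetry argument (any branch vertex has two hairs carrying identical one-dimensional classes, forcing zero under $\mathrm{Aut}$). This branch-vertex step is the piece your proposal is missing entirely; even if your homotopy worked on strings, it gives no mechanism for trees with several leaves.
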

The rule of the differential follows from the bi-weight types of the vertices mentioned above. So we only need to show that the cohomology of $S^{in}_0\wGC_k$ is generated by graphs with such bi-weights. The differential acts on the initial page $S^{in}_{0}\wGC_d$ by only creating special-in vertices and leaves the connected in-core graph unchanged. Hence the complex decomposes into a direct sum parameterized by the set of all possible in-core graphs
\begin{equation*}
    S^{in}_0\wGC_k\cong \bigoplus_{\gamma}\mathsf{inCore}(\gamma)
\end{equation*}
where $\mathsf{inCore}(\gamma)$ is the subcomplex of $S^{in}_0\WGC_k$ of all graphs whose associated in-core graph is $\gamma$. The complex $\mathsf{inCore}(\gamma)$ decomposes further into a tensor product of complexes
\begin{equation*}
    \mathsf{inCore}(\gamma)\cong\Big(\bigotimes_{x\in V(\gamma)}\mathcal{T}^{in}_x\Big)^{\mathrm{Aut}(\gamma)}
\end{equation*}
with one complex $\mathcal{T}^{in}_x$ for each $x$ in $V(\gamma)$ and where $\mathrm{Aut}(\gamma)$ is the group of automorphisms of $\gamma$ acting by permuting the complexes of the tensor complex to preserve signs.
Each complex $\mathcal{T}^{in}_x$ consists of trees of special in-vertices attached to an in-core vertex $x$ and the differential acts by only creating special in-vertices on $x$ and in the trees. The complexes $\mathcal{T}^{in}_x$ depends on $x$ only via the number of outgoing and incoming edges attached to $x$ in the in-core graph as well as on the out-weight $w_x^{out}$. The in-weight of $x$ is not fixed. The complexes $\mathcal{T}^{in}_x$ which have the same values of the parameters $|x|_{out}$, $|x|_{in}$ and $w_x^{out}$ are isomorphic to each other, so we will often write $\mathcal{T}^{in}_v\cong \mathcal{T}^{in}_{|x|_{out},|x|_{in},w_x^{out}}$. So we have to study a family of complexes $\mathcal{T}^{in}_{a,b,c}$ parameterized by integers $a,b,c\geq 0$ such that $a+c\geq 1$ and $(a,b,c)\neq(1,0,0)$. The first condition guarantees that the in-core vertex has at least one out-edge or out-weight, and the second condition correspond to the invalid configuration where the in-core vertex would be a special-in vertex. The in-weight of the core vertex in $\mathcal{T}^{in}_{a,b,c}$ is any number $w_x^{in}$ satisfying the condition $w_x^{in}+b+\#\text{(in-edges from special in-vertices)}\geq 1$ and $w_x^{in}+a+b+c+\#\text{(in-edges from special in-vertices)}\geq 3$. Proposition \ref{prop:specialIn} now follows from the following lemma.
\begin{lemma}\label{lemma:intreecohom}
    The cohomology of $\mathcal{T}^{in}_{a,b,c}$ is generated by one or two classes containing the in-core vertex decorated by some bi-weights depending on the parameters $a,b,c$. More precisely
    \begin{align*}
        \text{For }a\geq 1\quad
        H(\mathcal{T}^{in}_{a,0,c})&=
        \Big\langle
            \tikz[baseline=-2.07cm]
            \node[biw] at (0,-2) {$c$\nodepart{lower}$\infty_1$};
        \Big\rangle & (c\geq 0 \text{ and } a+c\geq 2)\\
        H(\mathcal{T}^{in}_{0,1,c})&=
        \begin{cases}
            \Big\langle
            \tikz[baseline=-2.07cm]
            \node[biw] at (0,-2) {$c$\nodepart{lower}$\infty_1$};
            \Big\rangle &\text{ if }c=1\\
            \Big\langle
            \tikz[baseline=-2.07cm]
            \node[biw] at (0,-2) {$c$\nodepart{lower}$\infty_1$};
            \, ,\
            \tikz[baseline=-2.07cm]
            \node[biw] at (0,-2) {$c$\nodepart{lower}$0$};
        \Big\rangle &\text{ if }c\geq 2
        \end{cases}\\
        H(\mathcal{T}^{in}_{1,1,c})&=
            \begin{cases}
                \Big\langle
                \tikz[baseline=-2.07cm]
                \node[biw] at (0,-2) {$c$\nodepart{lower}$\infty_1$};
                \Big\rangle &\text{ if }c=0\\
                \Big\langle
                \tikz[baseline=-2.07cm]
                \node[biw] at (0,-2) {$c$\nodepart{lower}$\infty_1$};
                \, ,\
                \tikz[baseline=-2.07cm]
                \node[biw] at (0,-2) {$c$\nodepart{lower}$0$};
            \Big\rangle &\text{ if }c\geq 1
            \end{cases}\\
        \text{For }b\geq 2\quad H(\mathcal{T}^{in}_{0,b,c})&=
            \Big\langle
            \tikz[baseline=-2.07cm]
            \node[biw] at (0,-2) {$c$\nodepart{lower}$\infty_1$};
            \, ,\
            \tikz[baseline=-2.07cm]
            \node[biw] at (0,-2) {$c$\nodepart{lower}$0$};
            \Big\rangle & (c\geq 1)
            \\
        \text{For }a,b\geq 1\text{ and }a+b\geq 3\quad H(\mathcal{T}^{in}_{a,b,c})&=
            \Big\langle
            \tikz[baseline=-2.07cm]
            \node[biw] at (0,-2) {$c$\nodepart{lower}$\infty_1$};
            \, ,\
            \tikz[baseline=-2.07cm]
            \node[biw] at (0,-2) {$c$\nodepart{lower}$0$};
        \Big\rangle &(c\geq 0)\\
    \end{align*}
\end{lemma}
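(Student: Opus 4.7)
The plan is to apply a further filtration to $\mathcal{T}^{in}_{a,b,c}$ by the number of special in-vertices and study the associated spectral sequence. On page zero of this secondary filtration, the differential acts only on the tree of special in-vertices attached to the core vertex $x$: it splits special in-vertices amongst themselves, it creates new special in-vertex leaves via $\delta''$ (either on an existing special in-vertex or on $x$, reducing the in-weight of the latter by one), and it performs those splittings of $x$ in which one output part becomes a special-in leaf. The parameters $a$, $b$, $c$ and the rest of the in-core graph remain unchanged on this page.

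I would then construct an explicit contracting homotopy $h$ on the tree factor by selecting a distinguished leaf of each tree (for instance by label ordering) and merging it into its parent. The standard bar-type cancellation argument yields $dh+hd=\mathrm{id}-\pi$, where $\pi$ projects onto the subcomplex consisting of the single-vertex graphs where only $x$ is present, carrying a variable in-weight $n\geq 0$ subject to bi-weight validity ($n+b\geq 1$ and $a+b+c+n\geq 3$). The induced residual differential on this image is, up to sign, the shift $\frac{c}{n}\mapsto \frac{c}{n-1}$, arising from the $\delta''$ operation on $x$ composed with leaf contraction. Its cohomology is supported at the two extreme values of $n$: at the top it is generated by the cocycle $\frac{c}{\infty_1}=\sum_{n\geq 1}\frac{c}{n}$, and at the bottom, if $(c,0)$ is a valid bi-weight, by $\frac{c}{0}$.

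A direct case analysis then confirms the lemma. The bi-weight $(c,0)$ is valid for the core vertex with fixed parameters $(a,b,c)$ precisely when $b\geq 1$ and $a+b+c\geq 3$, giving $c\geq 2$ in case $(0,1,c)$, $c\geq 1$ in case $(1,1,c)$, $c\geq 1$ in case $(0,b,c)$ with $b\geq 2$, and $c\geq 0$ in the generic case $(a,b,c)$ with $a,b\geq 1$ and $a+b\geq 3$; while for $b=0$ the class $\frac{c}{0}$ never exists, so only $\frac{c}{\infty_1}$ survives. The main obstacle I anticipate is verifying the signs in the leaf-contracting homotopy and ensuring that the splittings of the core vertex $x$ with a special-in output are correctly absorbed into the $dh+hd$ cancellation without producing any spurious residual term that could obstruct the reduction to the simple shift complex.
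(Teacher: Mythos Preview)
Your contracting-homotopy strategy has two structural problems that go beyond sign bookkeeping.

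First, $\mathcal{T}^{in}_{a,b,c}$ is a complex of coinvariants under vertex relabelling, so selecting a distinguished leaf by label ordering is not defined on its elements; any homotopy must be intrinsic to isomorphism classes of trees. (The preliminary filtration you propose, by the total number of special in-vertices, is also inert: the differential increases that number by exactly one in every term, so the associated graded has zero differential and page one is the original complex.)

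Second, even on strings with a single univalent leaf, where the labelling issue disappears, leaf-merging does not give $dh+hd=\mathrm{id}-\pi$. Taking $h$ to merge the leaf $\frac{0}{n''}$ into the core $\frac{c}{n'}$ by summing in-weights, one finds directly from the formula for $d$ on a lone core vertex that
\[
hd\Big(\tfrac{c}{n}\Big)\;=\;(n-1)\,\tfrac{c}{n}\;-\;\sum_{m\geq n+1}\tfrac{c}{m},
\]
whereas $(\mathrm{id}-\pi)(\frac{c}{n})=0$ since single-vertex graphs lie in the image of $\pi$. More fundamentally, all single-vertex graphs $\frac{c}{n}$ sit in the same cohomological degree, so there is no room for a degree-one residual differential on the image of $\pi$; your proposed shift $\frac{c}{n}\mapsto\frac{c}{n-1}$ cannot arise as a transferred differential there, and a genuine shift complex in the completed setting has zero cohomology anyway, so it could not carry the class $\frac{c}{\infty_1}$.

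The paper proceeds differently. It filters by the number of \emph{univalent} special in-vertices. For $N=1$ the graphs are strings; after splitting off the obvious cocycle $\frac{c}{0}$ (when the bi-weight is valid), a further filtration by total in-weight identifies the remaining string complex with the reduced cobar--bar construction $\overline{\Omega B(V)}$ of the square-zero dga $V=\mathbb{K}\oplus\mathbb{K}a$, whence one-dimensional cohomology represented by $\frac{c}{\infty_1}$. For $N\geq 2$ one filters by branch vertices and then by their total in-weight, reducing to a tensor product of string complexes; this vanishes after taking $\mathrm{Aut}$-invariants because some branch vertex necessarily carries two identical hairs, and the sign rules kill that configuration. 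Your final case analysis, determining exactly when $\frac{c}{0}$ is a valid bi-weight for the core vertex, is correct and matches the lemma.
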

    The differential in $\mathcal{T}^{in}_{a,b,c}$ does not decrease the number of univalent special in-vertices in a graph. Consider a filtration on $\mathcal{T}^{in}_{a,b,c}$ by the number of univalent vertices (considering the graph consisting of only the root vertex $v$ as having one univalent vertex).
    Let $gr(\mathcal{T}^{in}_{a,b,c})$ be the associated graded complex. It decomposes as $gr(\mathcal{T}^{in}_{a,b,c})=\bigoplus_{N\geq 1}u_N\mathcal{T}^{in}_{a,b,c}$ where $u_N\mathcal{T}^{in}_{a,b,c}$ is spanned by trees with precisely $N$ univalent special in-vertices. Lemma \ref{lemma:intreecohom} follows from the following results:
\begin{lemma}\label{lemma:u1Tin}
    The cohomology $H(u_1\mathcal{T}^{in}_{a,b,c})$ is generated by the same elements given in lemma \ref{lemma:intreecohom}.        
\end{lemma}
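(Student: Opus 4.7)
The plan is to describe $u_1\mathcal{T}^{in}_{a,b,c}$ explicitly, reduce the induced differential to a tractable piece via a secondary filtration, and finish with a direct case analysis on bare core vertices.

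First I observe that a tree of special in-vertices with exactly one univalent leaf is necessarily a linear chain. A graph in $u_1\mathcal{T}^{in}_{a,b,c}$ therefore consists of the in-core vertex $v$ together with a chain $v_k\to v_{k-1}\to\cdots\to v_1\to v$ of length $k\ge 0$ (the case $k=0$ being the bare core vertex, counted as one univalent vertex by convention). Such a graph is determined by $k$ and the in-weight tuple $(n_0,n_1,\ldots,n_k)$; the bi-weight validity conditions force $n_k\ge 2$ when $k\ge 1$, $n_i\ge 1$ for intermediate $v_i$, and a lower bound on $n_0$ depending on $(a,b,c)$ and on whether the chain is empty.

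Next I would identify the induced differential on the associated graded. Splittings that would create a vertex failing to be a special in-vertex (a univalent target, or a source with extra edges) take the graph out of $\mathcal{T}^{in}_{a,b,c}$ and so are absent, while the $\delta''$ components attaching a new univalent source with out-weight $0$ would raise the univalent count and therefore vanish on the graded piece. What remains is (a) chain-extending splits of each $v_i$ that redistribute its in-weight between two new passing vertices, (b) the analogous chain-extending split of the top vertex $v_k$, and (c) the splits and $\delta''$-components at the core vertex $v$ whose new upper neighbour is either a new passing chain vertex or a non-special-in vertex of positive out-weight.

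I would then filter further by the chain length $k$. On the resulting graded, only (a) and (b) survive, acting on the combinatorial complex of in-weight tuples $(n_1,\ldots,n_k)$. This operator is of Koszul type: it is the graded derivation on a polynomial-times-exterior tensor complex in one slot per chain position that inserts a new letter and splits the weight, and a standard acyclicity argument forces its cohomology to vanish for all $k\ge 1$. All cohomology therefore survives only at $k=0$, where the complex collapses to a one-parameter family of bare core vertices indexed by the in-weight $n_0$, with differential given by the residual part of (c).

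Finally I would perform a direct case analysis on $(a,b,c)$ to identify the surviving classes at $k=0$. The class $\frac{c}{\infty_1}$ always lies in the image of a closed element and is never exact in the residual differential; the class $\frac{c}{0}$, by contrast, requires the bi-weight validity conditions to simultaneously permit $n_0=0$ at the core vertex and to prevent its being a coboundary of some chain-extension. The hard part will be this bookkeeping: one must check, for each of the five parameter ranges listed in Lemma \ref{lemma:intreecohom}, that the surviving classes match exactly, using the three validity inequalities $w_x^{out}+|x|_{out}\ge 1$, $w_x^{in}+|x|_{in}\ge 1$, and $w_x^{out}+w_x^{in}+|x|_{out}+|x|_{in}\ge 3$ to rule in or out the bi-weight $\frac{c}{0}$. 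The low-arity exceptions in the cases $(a,0,c)$, $(0,1,c)$ and $(1,1,c)$ arise precisely from the boundary of these validity constraints.
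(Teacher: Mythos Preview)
Your filtration step has a genuine gap. You claim that filtering by chain length $k$ leaves only the chain-internal splits (a) and (b) on the associated graded, with the core-vertex contributions (c) pushed to a later page. But every component of the induced differential on $u_1\mathcal{T}^{in}_{a,b,c}$ --- splits of passing vertices, of the univalent leaf, of the core vertex, and the $\delta''$-term at the leaf --- increases $k$ by exactly one. The filtration by $k$ is therefore nothing more than the cohomological grading itself; the associated graded carries the full differential and nothing is separated. Your description of (c) also introduces a phantom term: in $\mathcal{T}^{in}_{a,b,c}$ we are already on the graded of the in-core filtration, so the only vertices created are special-in vertices, and there is no ``non-special-in vertex of positive out-weight'' appearing anywhere.

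The paper takes a different route. After peeling off the obvious cocycle $\tfrac{c}{0}$ (present exactly when the bi-weight validity constraints allow $n_0=0$ on the bare core vertex), the remaining complex $\mathcal{I}$ is filtered by \emph{total in-weight}, not by chain length. This filtration genuinely kills the $\delta''$-terms on page zero, since those strictly increase total in-weight, leaving a pure weight-splitting differential. The paper then identifies this page with the reduced cobar--bar complex $\overline{\Omega B(V)}$ of a specific two-dimensional augmented dga and invokes the standard quasi-isomorphism $\Omega B(V)\simeq V$ to conclude that $H(T_0\mathcal{I})$ is one-dimensional, generated by the single core vertex with in-weight $1$. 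The explicit cycle $\tfrac{c}{\infty_1}$ then represents the unique class in $H(\mathcal{I})$. Your ``Koszul type'' gesture points in roughly the right direction, but to make it work you would need a filtration that actually isolates the splitting part and an identification with a known model --- both of which the paper provides and your proposal does not.
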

\begin{lemma}\label{lemma:uNTin}
        The complex $u_N\mathcal{T}^{in}_{a,b,c}$ is acyclic for $N\geq 2$.
\end{lemma}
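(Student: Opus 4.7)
The plan is to construct an explicit contracting homotopy $h$ of degree $-1$ on $u_N\mathcal{T}^{in}_{a,b,c}$ satisfying $dh+hd=\mathrm{id}$, from which acyclicity immediately follows. The key idea is that for $N\geq 2$, one can canonically single out a univalent special in-vertex and use it to invert a specific branch of the differential.

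First I would introduce an auxiliary filtration of $u_N\mathcal{T}^{in}_{a,b,c}$ by the number of non-leaf special in-vertices. Modulo this filtration, the induced differential is local: it redistributes bi-weights between a univalent leaf and its parent, without altering the global tree topology or the remaining leaves. On this local model I would single out, for each graph $\Gamma$, the univalent leaf $\ell=\ell(\Gamma)$ of smallest label, and define $h(\Gamma)$ by absorbing the in-weight of $\ell$ into that of its parent $p=p(\ell)$ and then re-extracting a fresh leaf at $p$ via the $\infty_r$ summation convention. The condition $N\geq 2$ guarantees that after the absorption step there is still at least one other univalent leaf, so that the smallest-label leaf of the output is canonically determined and distinct from the freshly extracted one.

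Then I would verify $dh+hd=\mathrm{id}$ term by term. The identity is produced by pairing the $\delta''$-extraction of a leaf at $p$ in $d$ with the re-extraction step of $h$, as these two operations are manifestly inverse to one another. The remaining terms of $d$ either act on parts of the graph far from $\ell$ and $p$ (in which case they commute with $h$ up to sign and cancel pairwise between $dh$ and $hd$) or act precisely at $\ell$ and $p$, where the absorption step of $h$ cancels them against the corresponding $\delta$-splittings of $p$. Once acyclicity is established on the initial page of the auxiliary filtration, convergence via the degree shift $\tfrac{1}{2}(v+e)-k$ used throughout the paper transfers the result to the full complex.

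The main obstacle will be the precise sign bookkeeping, which depends on the orientation conventions of $\fwGC_k$, together with the case analysis for the boundary configurations: $p$ may be the in-core vertex (whose in-weight is constrained by $b$) or an internal special in-vertex (whose in-weight is essentially free), and the absorption/extraction pair must be checked to respect the bi-weight validity conditions in both situations. The hypothesis $N\geq 2$ enters in two essential ways: to make the canonical choice of $\ell$ stable under $h$, and to ensure that the intermediate configurations appearing in $dh+hd$ always remain in $u_N$ rather than escaping to $u_{N-1}$ or $u_{N+1}$.
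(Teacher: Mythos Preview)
Your proposal has two genuine gaps that prevent it from going through.

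First, the phrase ``univalent leaf of smallest label'' is not well-defined. The bi-weighted graph complex $\fwGC_k$ is built from \emph{coinvariants} under $\mathbb{S}_v\times\mathbb{S}_e$, and the tree complexes $\mathcal{T}^{in}_{a,b,c}$ inherit this: vertices carry no intrinsic labels, and two leaves attached to the same parent with equal in-weights are genuinely indistinguishable. Any homotopy must be invariant under these symmetries, so a construction that begins by singling out one leaf cannot descend to the quotient. This is not a technicality to be fixed later; in the paper's argument it is \emph{precisely} this symmetry that forces the cohomology to vanish.

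Second, your description of the induced differential and of $h$ is incorrect. Every term of the differential in $u_N\mathcal{T}^{in}_{a,b,c}$ splits a vertex and adds an edge; there is no piece that merely ``redistributes bi-weights between a leaf and its parent without altering the tree topology''. Consequently the number of non-leaf special in-vertices strictly increases by one under $d$, so the associated graded of your auxiliary filtration has zero differential and carries no information. Relatedly, your $h$ as written (absorb $\ell$ into $p$, then re-extract a fresh leaf) removes one vertex/edge and then adds one back, so it has degree $0$ rather than $-1$.

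The paper proceeds by a different route. One first filters by the number of \emph{branch vertices} (vertices reached by paths from at least two distinct leaves), and then by the total in-weight of the branch vertices. On the resulting page the complex decomposes, for each fixed branch graph $\Gamma_{br}$, as
\[
\Big(\bigotimes_{h\in H(\Gamma_{br})}\mathcal{I}\Big)^{\mathrm{Aut}(\Gamma_{br})},
\]
one copy of the complex $\mathcal{I}$ of Corollary~\ref{cor:Helpcomplex} for each hair. Since $H(\mathcal{I})$ is one-dimensional, the cohomology of the tensor product is one-dimensional; but for $N\geq 2$ every branch graph has a vertex carrying at least two hairs, and the transposition of those hairs lies in $\mathrm{Aut}(\Gamma_{br})$ and acts by a sign, killing the invariants. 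The hypothesis $N\geq 2$ enters exactly here, not to stabilise a choice of leaf.
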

    We start by computing the cohomology of $u_1\mathcal{T}^{in}_{a,b,c}$. The graphs in this complex are on the form
\begin{equation*}
    \begin{tikzpicture}[shorten >=1pt,node distance=1.2cm,auto,>=angle 90,baseline=-0.1cm]
    \node[biw] (core) at (0,0) {$c$\nodepart{lower}$n_0$};
    \node[biw] (p1) at (2,0) {$0$\nodepart{lower}$n_1$}
        edge [->] (core);
    \node[invisible] (lgap) at (3,0) {}
        edge [->] (p1);
    \node[] (cdot) at (3.5,0) {$\cdots$};
    \node[invisible] (rgap) at (4,0) {};
    \node[biw] (p2) at (5,0) {$0$\nodepart{lower}$\scriptstyle n_{l-1}$}
        edge [->] (rgap);
    \node[biw] (p3) at (7,0) {$0$\nodepart{lower}$n_{l}$}
        edge [->] (p2);
    \node[invisible] (out1) at (-1,0.33) {}
        edge [<-] (core);
    \node[] (dotout) at (-0.6,0.6) {$\cdots$};
    \node[invisible] (out2) at (-0.33,1) {}
        edge [<-] (core);
    \node[invisible] (in1) at (-1,-0.33) {}
        edge [->] (core);
    \node[] (dotin) at (-0.6,-0.6) {$\cdots$};
    \node[invisible] (in2) at (-0.33,-1) {}
        edge [->] (core);
    \node[] at (4.5,-0.8) {$\underbrace{\qquad \qquad \qquad \qquad \qquad \qquad \qquad \qquad \ \ \ \ }$};
    \node[] at (4.5,-1.3) {$l$};
    \node[rotate=45] at (-0.86,0.86) {$\overbrace{\qquad \qquad}$};
    \node[] at (-1.2,1.2) {$a$};
    \node[rotate=135] at (-0.86,-0.86) {$\overbrace{\qquad \qquad}$};
    \node[] at (-1.2,-1.2) {$b$};
    \end{tikzpicture}
\end{equation*}
with $l\geq 0$. If $l=0$, then $n_0\geq 1$ when $(a,b,c)$ is either of the three cases $(0,1,1)$, $(1,0,0)$ or $(a,0,c)$ for $a\geq 1$ and $c\geq 0$. In any other case for $(a,b,c)$ we have that $n_0\geq 0$.
If $l\geq 1$, then $n_0\geq 0$, $n_i\geq 1$ for $1\leq i\leq l-1$ and $n_l\geq 2$.
If $l=0$, the induced differential acts on the root vertex as
\begin{equation*}
    d\Big(
        \begin{tikzpicture}[shorten >=1pt,node distance=1.2cm,auto,>=angle 90,baseline=-0.1cm]
            \node[biw] (a) at (0,0) {$c$\nodepart{lower}$n_0$};
            \node[] (up1) at (-0.4,0.8) {}
                edge [-] (a);
            \node[] (up2) at (0,0.6) {$\scriptstyle\cdots$};
            \node[] (up3) at (0.4,0.8) {}
                edge [-] (a);
        \end{tikzpicture}
    \Big)
    =
    \sum_{\substack{n_0=n_{0}'+n_{0}''\\ n_{0}'\geq 0,\ n_{0}''\geq 2}}\ 
        \begin{tikzpicture}[shorten >=1pt,node distance=1.2cm,auto,>=angle 90,baseline=-0.1cm]
            \node[biw] (up) at (0,0.6) {$c$\nodepart{lower}$n_{0}'$};
            \node[biw] (down) at (0,-0.6) {$0$\nodepart{lower}$n_{0}''$}
                edge [->] (up);
            \node[] (up1) at (-0.4,1.4) {}
                edge [-] (up);
            \node[] (up2) at (0,1.2) {$\scriptstyle\cdots$};
            \node[] (up3) at (0.4,1.4) {}
                edge [-] (up);
        \end{tikzpicture}
    \ - \ 
        \begin{tikzpicture}[shorten >=1pt,node distance=1.2cm,auto,>=angle 90,baseline=-0.1cm]
            \node[biw] (up) at (0,0.6) {$c$\nodepart{lower}$\scriptstyle n_0-1$};
            \node[biw] (down) at (0,-0.6) {$0$\nodepart{lower}$\infty_2$}
                edge [->] (up);
            \node[] (up1) at (-0.4,1.4) {}
                edge [-] (up);
            \node[] (up2) at (0,1.2) {$\scriptstyle\cdots$};
            \node[] (up3) at (0.4,1.4) {}
                edge [-] (up);
        \end{tikzpicture}
\end{equation*}
and when $l\geq 1$ it acts on the root vertex as
\begin{equation*}
    d\Big(
        \begin{tikzpicture}[shorten >=1pt,node distance=1.2cm,auto,>=angle 90,baseline=-0.1cm]
            \node[biw] (a) at (0,0) {$c$\nodepart{lower}$n_0$};
            \node[] (up1) at (-0.4,0.8) {}
                edge [-] (a);
            \node[] (up2) at (0,0.6) {$\scriptstyle\cdots$};
            \node[] (up3) at (0.4,0.8) {}
                edge [-] (a);
            \node[] (down) at (0,-1) {}
                edge [->] (a);
        \end{tikzpicture}
    \Big)
    =
    \sum_{\substack{n_0=n_{0}'+n_{0}''\\ n_{0}'\geq 0,\ n_{0}''\geq 1}}\ 
        \begin{tikzpicture}[shorten >=1pt,node distance=1.2cm,auto,>=angle 90,baseline=-0.1cm]
            \node[biw] (up) at (0,0.6) {$0$\nodepart{lower}$n_{0}'$};
            \node[biw] (down) at (0,-0.6) {$0$\nodepart{lower}$n_{0}''$}
                edge [->] (up);
            \node[] (up1) at (-0.4,1.4) {}
                edge [-] (up);
            \node[] (up2) at (0,1.2) {$\scriptstyle\cdots$};
            \node[] (up3) at (0.4,1.4) {}
                edge [-] (up);
            \node[] (in) at (0,-1.6) {}
                edge [->] (down);
        \end{tikzpicture}.
\end{equation*}
Also for $l\geq 1$ there are passing vertices and a univalent vertex in the graph. The differential acts on these vertices as
\begin{equation*}
    d\Big(
        \begin{tikzpicture}[shorten >=1pt,node distance=1.2cm,auto,>=angle 90,baseline=-0.1cm]
            \node[biw] (a) at (0,0) {$0$\nodepart{lower}$n_i$};
            \node[] (up) at (0,1) {}
                edge [<-] (a);
            \node[] (down) at (0,-1) {}
                edge [->] (a);
        \end{tikzpicture}
    \Big)
    =
    \sum_{\substack{n_i=n_i'+n_{i}''\\ n_i',n_{i}''\geq 1}}\ 
        \begin{tikzpicture}[shorten >=1pt,node distance=1.2cm,auto,>=angle 90,baseline=-0.1cm]
            \node[biw] (up) at (0,0.6) {$0$\nodepart{lower}$n_i'$};
            \node[biw] (down) at (0,-0.6) {$0$\nodepart{lower}$n_{i}''$}
                edge [->] (up);
            \node[] (out) at (0,1.6) {}
                edge [<-] (up);
            \node[] (in) at (0,-1.6) {}
                edge [->] (down);
        \end{tikzpicture}
        \ ,\quad\quad
    d\Big(
        \begin{tikzpicture}[shorten >=1pt,node distance=1.2cm,auto,>=angle 90,baseline=-0.1cm]
            \node[biw] (a) at (0,0) {$0$\nodepart{lower}$n_l$};
            \node[] (up) at (0,1) {}
                edge [<-] (a);
        \end{tikzpicture}
    \Big)
    =
    \sum_{\substack{n_l=n_l'+n_{l}''\\ n_l'\geq 1,\ n_{l}''\geq 2}}\ 
        \begin{tikzpicture}[shorten >=1pt,node distance=1.2cm,auto,>=angle 90,baseline=-0.1cm]
            \node[biw] (up) at (0,0.6) {$0$\nodepart{lower}$n_l'$};
            \node[biw] (down) at (0,-0.6) {$0$\nodepart{lower}$n_{l}''$}
                edge [->] (up);
            \node[] (out) at (0,1.6) {}
                edge [<-] (up);
        \end{tikzpicture}
    \ - \ 
        \begin{tikzpicture}[shorten >=1pt,node distance=1.2cm,auto,>=angle 90,baseline=-0.1cm]
            \node[biw] (up) at (0,0.6) {$0$\nodepart{lower}$\scriptstyle n_l-1$};
            \node[biw] (down) at (0,-0.6) {$0$\nodepart{lower}$\infty_2$}
                edge [->] (up);
            \node[] (out) at (0,1.6) {}
                edge [<-] (up);
        \end{tikzpicture}.
\end{equation*}
We can already identify the first cohomology classes generated by the graph consisting of one core vertex on the form $\tikz[baseline=-2.07cm]\node[biw] at (0,-2) {$c$\nodepart{lower}$0$};$ (whenever this in-weight is possible).
The remaining graphs to study are then on the form above with $n_0\geq 1$ if $l=0$, and $n_0\geq 0$, $n_1,...,n_{l-1}\geq1$ and $n_l\geq 2$ if $l\geq 1$. Let $(\mathcal{I},d)$ be the complex generated by these graphs (i.e the complex where we exclude the graphs $\tikz[baseline=-2.07cm]\node[biw] at (0,-2) {$c$\nodepart{lower}$0$};$ from $u_1\mathcal{T}^{in}_{a,b,c}$ when present). It is easy to verify that that $\tikz[baseline=-2.07cm]\node[biw] at (0,-2) {$c$\nodepart{lower}$\infty_1$};$ is a cycle in $(\mathcal{I},d)$. Lemma \ref{lemma:u1Tin} follows if we can show that the cohomology of $\mathcal{I}$ is one-dimensional. There is a filtration of $\mathcal{I}$ over the total in-weight of a graph, as the total in-weight can not decrease under the differential. Let $\{T_r\mathcal{I}\}_{r\geq 0}$ be the associated spectral sequence. The following two results proves lemma \ref{lemma:u1Tin}.
\begin{lemma}
The cohomology of $(T_0\mathcal{I},d)$ is generated by the graph consisting of a single in-core vertex with in-weight $1$. That is $H(\mathcal{I},\delta)=\Big\langle\tikz[baseline=-2.07cm]\node[biw] at (0,-2) {$c$\nodepart{lower}$1$};\Big\rangle$.
\end{lemma}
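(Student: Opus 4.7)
The plan is to decompose further by the total in-weight $n := \sum_i n_i$. Each splitting summand in the differential of $\mathcal{I}$ preserves $n$, whereas every $\infty_2$ correction strictly raises it, so on the $E_0$ page of this additional filtration all $\infty_2$ corrections drop out and $T_0\mathcal{I}$ splits as $\bigoplus_{n \geq 1} T_0\mathcal{I}^{(n)}$. Here $T_0\mathcal{I}^{(n)}$ is the complex spanned by chains $v_0 \leftarrow v_1 \leftarrow \cdots \leftarrow v_l$ whose in-weights sum to $n$, with the induced differential raising the chain length $l$ by one.

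In the base case $n = 1$ the admissibility condition $n_l \geq 2$ for $l \geq 1$ rules out every chain of positive length, leaving only the single-vertex graph with $n_0 = 1$; this graph is closed and unbounded, yielding the claimed generator. For every $n \geq 2$ the complex $T_0\mathcal{I}^{(n)}$ must be shown to be acyclic. The natural approach is to exhibit an explicit contracting homotopy $h$ of degree $-1$: for a chain of length $l \geq 1$ with in-weights $(n_0, \ldots, n_l)$ let $h$ act, with an appropriate sign, by merging $v_{l-1}$ and $v_l$ into one vertex of in-weight $n_{l-1} + n_l$, producing a chain of length $l - 1$; on chains of length $0$ set $h = 0$. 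The identity $dh + hd = \mathrm{id}$ then reduces to a term-by-term combinatorial check: a splitting strictly below position $l - 1$ is inverted cleanly by the subsequent merger, while splittings at positions $l - 1$ or $l$ should pair between $hd$ and $dh$ and cancel up to the single identity contribution.

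The main technical obstacle will be the boundary case $l = 1$, where merging produces a single in-core vertex of in-weight $n_0 + n_1$ and the admissibility regime switches from $n_0 \geq 0$ (for $l \geq 1$) to $n_0 \geq 1$ (for $l = 0$); the hypothesis $n \geq 2$ is used here precisely to guarantee that the merged in-weight is admissible. A second, more delicate point is to verify that the signs in the homotopy identity match those induced from the edge- or vertex-ordering conventions of $\wGC_k$. If the direct homotopy becomes intractable because of sign-bookkeeping, an alternative is to identify $T_0\mathcal{I}^{(n)}$ with a length-graded piece of a cobar-type construction on the reduced coalgebra $\bigoplus_{m \geq 1} \mathbb{K}\langle e_m \rangle$ with coproduct $\Delta(e_n) = \sum_{a+b=n,\, a,b\geq 1} e_a \otimes e_b$, for which acyclicity in positive chain-length is classical.
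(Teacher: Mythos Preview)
Your decomposition of $T_0\mathcal{I}$ by total in-weight $n$ is correct (indeed this is just the associated graded that defines $T_0\mathcal{I}$), and the $n=1$ analysis is fine. The gap is in the proposed contracting homotopy for $n\geq 2$: merging the last two vertices does \emph{not} give $dh+hd=\mathrm{id}$. Already on the length-$0$ chain $(n_0)=(3)$ one has $d(3)=(0,3)+(1,2)$, and your $h$ sends both summands back to $(3)$, so $(dh+hd)(3)=2\cdot(3)$. More generally $hd$ on a length-$0$ chain $(n)$ produces $(n-1)\cdot(n)$, so no sign convention or rescaling can repair this. The constraints at the tail end ($n_l\geq 2$, $n_{l-1}\geq 1$) are too uniform for a last-position merge to single out a unique identity term.

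The feature you flagged as a boundary obstacle---that $n_0\geq 0$ is allowed for $l\geq 1$ but $n_0\geq 1$ is forced for $l=0$---is in fact the key to the correct homotopy. Define instead
\[
h(n_0\,|\,n_1\,|\cdots|\,n_l)=\begin{cases}(n_1\,|\cdots|\,n_l)&\text{if }n_0=0,\\ 0&\text{if }n_0\geq 1,\end{cases}
\]
i.e.\ delete the root when it carries weight~$0$. One checks directly that $dh+hd=\mathrm{id}$ on each $T_0\mathcal{I}^{(n)}$ for $n\geq 2$: when $n_0\geq 1$, only the $a=0$ summand of the root-split survives under $h$ and returns the original chain; when $n_0=0$, the root-split of $h(\text{chain})$ with $a\geq 0$ versus the passing-split of the original chain with $a\geq 1$ differ by exactly the $a=0$ term.

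Your fallback---identifying the complex with a cobar construction---is precisely what the paper does: it realises $T_0\mathcal{I}$ as the reduced cobar-bar complex $\overline{\Omega(B(V))}$ of a two-dimensional dga $V$, and invokes the classical quasi-isomorphism $\Omega(B(V))\simeq V$ to read off the one-dimensional cohomology. So your alternative route and the paper's coincide; your primary route works once the homotopy is placed at the correct end of the chain.
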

\begin{proof}
We construct an isomorphic complex using the bar and cobar-construction. Let $V=V_{-1}\oplus V_0$ be a graded vector space with $V_0=\mathbb{K}$ and $V_{-1}=\mathbb{K}a$. We consider the augmented dga-algebra structure on $V$ where the product is defined by $\mu(a,a)=0$, and the differential $d$ is zero.
The bar complex $B(V)=(T^c(\overline{V}),d_B)$ satisfies $d_B=0$. Recall that $T^c(\overline{V})=\mathbb{K}\oplus sVa\oplus (sVa)^{\otimes 2} \oplus ...$ . All elements are of zero degree and a we denote the generator of $(sVa)^{\otimes n}$ by $sa\otimes\cdots\otimes sa=[sa]^n$. Also recall the coproduct is defined as $\Delta([sa]^n)=\sum_{i=1}^{n-1}[sa]^i\otimes[sa]^{n-i}$.
Next consider the cobar complex $\Omega(B(V))=(T(s^{-1}\overline{T^c(s\overline{V})}),d_\Omega)$. The degree of the elements are ranging from $0$ to $-\infty$. A general element of degree $-k$ is on the form $s^{-1}[sa]^{p_1}\otimes \cdots \otimes s^{-1}[sa]^{p_k}$. The differential $d_\Omega$ is now defined as $d_\Omega(s^{-1}[sa]^{p_1}\otimes \cdots \otimes s^{-1}[sa]^{p_k})=\sum_{i=1}^{k}(-1)^{1-i} s^{-1}[sa]^{p_1}\otimes\cdots\otimes \Delta(s^{-1}[sa]^{p_i}) \otimes\cdots\otimes s^{-1}[sa]^{p_k}$.
If we reverse the grading of $\Omega(B(V))$ and consider the reduced complex, it is easy to check that $\overline{\Omega(B(V))}$ is isomorphic $(T_0\mathcal{I},d)$ as a complex. In this isomorphism the element $s^{-1}[sa]^{p_1}\otimes \cdots \otimes s^{-1}[sa]^{p_k}$ maps to the graph with $k$ vertices, the in-core vertex having in-weight $p_1-1$ and the $i:th$ vertex having in-weight $p_i$ for $i\geq 2$. 
Now $\Omega(B(V))$ is quasi-isomorphic to $V$ (for example see \cite{LV}). $H(V)$ is generated by $1_\mathbb{K}$ and $a$. Hence we note that $[s^{-1}sa]$ is the only cohomology class of $\overline{\Omega(B(V))}$, which correspond in $(T_0\mathcal{I},d)$ to the single vertex graph with in-weight one.
\end{proof}
\begin{corollary}\label{cor:Helpcomplex}
    The cohomology group of $(\mathcal{I},d)$ is generated by the graph
    \begin{equation*}
        \tikz[baseline=-2.07cm]
        \node[biw] at (0,-2) {$c$\nodepart{lower}$\infty_1$};
        =\sum_{i=1}^\infty\ 
        \tikz[baseline=-2.07cm]
        \node[biw] at (0,-2) {$c$\nodepart{lower}$i$};\ .
    \end{equation*}
\end{corollary}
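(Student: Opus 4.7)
The plan is to read this off the preceding lemma by spectral-sequence collapse. The filtration of $\mathcal{I}$ by total in-weight is the very filtration used to define the spectral sequence $\{T_r\mathcal{I}\}_{r\geq 0}$, and, by the degree-renormalisation argument at the start of Section~5.1, this filtration is bounded in each renormalised degree, so the spectral sequence converges to $H(\mathcal{I},d)$. The preceding lemma identifies the $E_1$-page $H(T_0\mathcal{I},d)$ as one-dimensional, spanned by the single-in-core-vertex graph $\tikz[baseline=-2.07cm]\node[biw] at (0,-2) {$c$\nodepart{lower}$1$};$ (the image of $[s^{-1}sa]$ under the bar--cobar isomorphism).

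Because $E_1$ is one-dimensional and concentrated in a single bidegree, there is no possible target for any higher differential $d_r$ with $r\geq 1$; hence $E_\infty\cong E_1$ and $H(\mathcal{I},d)$ is also one-dimensional. To finish, I need only exhibit one explicit cocycle whose leading term in the in-weight filtration is the generator $\tikz[baseline=-2.07cm]\node[biw] at (0,-2) {$c$\nodepart{lower}$1$};$; such a cocycle automatically represents the non-trivial class. The natural candidate is
\begin{equation*}
\tikz[baseline=-2.07cm]\node[biw] at (0,-2) {$c$\nodepart{lower}$\infty_1$};
\ =\ \sum_{i\geq 1}\ \tikz[baseline=-2.07cm]\node[biw] at (0,-2) {$c$\nodepart{lower}$i$};,
\end{equation*}
whose lowest-filtration summand is precisely $\tikz[baseline=-2.07cm]\node[biw] at (0,-2) {$c$\nodepart{lower}$1$};$.

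It remains to verify $d\bigl(\tikz[baseline=-2.07cm]\node[biw] at (0,-2) {$c$\nodepart{lower}$\infty_1$};\bigr)=0$ in $\mathcal{I}$. Using the $\infty_r$-convention formula for the differential recorded just before Section~4.3, the three pieces act on the single-vertex graph as follows. The splitting $\delta$ produces two-vertex graphs with in-weights summing to $\infty_1$; summing over $i$ these land in the $l\geq 1$ stratum and are matched, in pairs, by the same telescoping identity exploited in the bar--cobar computation of $H(T_0\mathcal{I},d)$ (this is the place where the telescoping of $\sum_i$ is essential). The $\delta'$-contribution appends an outgoing univalent vertex decorated $\begin{smallmatrix}\infty_1\\ \infty_0\end{smallmatrix}$, and the $\delta''$-contribution appends an incoming univalent vertex decorated $\begin{smallmatrix}\infty_0\\ \infty_1\end{smallmatrix}$; by the remark that outgoing univalent vertices cancel when the source out-weight is $0$ or $\infty_1$ and, symmetrically, incoming univalent vertices cancel for in-weight $\infty_1$, both of these vanish identically. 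The main obstacle in the write-up is the bookkeeping at the boundary cases of $(a,b,c)$ where the core vertex is forced to have positive in-weight (so the graph $\tikz[baseline=-2.07cm]\node[biw] at (0,-2) {$c$\nodepart{lower}$0$};$ is absent from $\mathcal{I}$): one must verify there that the $i=1$ summand of the $\delta$-splitting does not produce an unmatched boundary term, but this is again handled by the convention that any summand containing an invalid bi-weight is set to zero.
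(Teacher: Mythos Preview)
Your approach is essentially the same as the paper's: the paper also argues that the one-dimensionality of $H(T_0\mathcal{I})$ forces $H(\mathcal{I},d)$ to be one-dimensional with generator the in-weight-$1$ vertex plus higher-filtration terms, and then observes that the $\infty_1$ vertex is a cycle with the correct leading term. You make the spectral-sequence degeneration explicit where the paper leaves it implicit in the phrase ``by the above argument''.

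One minor correction: your cycle verification invokes the full $\delta,\delta',\delta''$ from Section~4, but the differential on $\mathcal{I}$ is the induced one from the successive filtrations (over in-core vertices, then over univalent special-in vertices), and it only creates special-in vertices while preserving the univalent count; in particular there is no $\delta'$ (outgoing-univalent) contribution here at all. The actual check uses the explicit formula for $d$ on the $l=0$ root displayed in the paper: summing over $i\geq 1$, the splitting sum $\sum_{i'\geq 0,\,i''\geq 2}$ matches the subtracted $(\text{in-weight }i-1,\ \infty_2)$-term, giving zero. This is what the paper means by ``straight-forward'' (and it in fact records the cycle claim \emph{before} the spectral-sequence lemma, not after).
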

\begin{proof}
    By the above argument, we already know that $H(\mathcal{I},d)$ is one-dimensional and that its generating class given by $\tikz[baseline=-2.07cm]
    \node[biw] at (0,-2) {$c$\nodepart{lower}$1$};$ plus higher in-weight terms. It is straight-forward to see that the sum $\tikz[baseline=-2.07cm]
    \node[biw] at (0,-2) {$c$\nodepart{lower}$\infty_1$};$ is a cycle which satisfy this property.
\end{proof}
\begin{proof}[Proof of lemma \ref{lemma:uNTin}]
We need to show that $u_N\mathcal{T}^{in}_{a,b,c}$ is acyclic for $N\geq 2$. We say that a vertex $x$ is a \textit{branch vertex} if there are at least two paths starting at two different univalent special in-vertices and ending at $x$ (see figure \ref{fig:branch}). The number of branch vertices can not decrease under the differential, and so consider a filtration over the number of branch vertices. In the associated graded complex $gr(u_N\mathcal{T}^{in}_{a,b,c})$ the non branch vertices of a graph are attached to branch vertices as strings of passing vertices, and the differential acts by prolonging these strings. The differential can not increase the in-weight of a branch vertex. We consider a filtration over the total sum of the in-weights of the branch vertices. The differential of the associated graded complex $gr(gr(u_N\mathcal{T}^{in}_{a,b,c}))$ now act only act on the non branch vertices. The branch vertices and the number of non branch vertices that are attached to a branch vertex are invariant under the differential. By contracting the strings of non branch vertices in a graph $\Gamma$ into $N$ hairs, we get a \textit{branch graph} $\Gamma_{br}$ whose vertices are branch vertices. Then the complex splits as
\begin{equation*}
    gr(gr(u_N\mathcal{T}^{in}_{a,b,c}))=\bigoplus_{\Gamma_{br}}\mathsf{branchGraph}(\Gamma_{br})
\end{equation*}
summed over the set of all branch graphs $\Gamma_{br}$. These complexes decompose as
\begin{equation*}
    \mathsf{branchGraph}(\Gamma_{br})=\big(\bigotimes_{h\in H(\Gamma_{br})}\mathcal{I}\big)^{\mathrm{Aut}(\Gamma_{br})}
\end{equation*}
where $H(\Gamma_{br})$ is the set of hairs in $\Gamma_{br}$ and $\mathcal{I}$ is the complex from lemma \ref{cor:Helpcomplex} and $\mathrm{Aut}(\Gamma_{br})$ is the group of symmetries of $\Gamma_{br}$ acting with the appropriate signs. Since this group is finite, Maschke's theorem gives
\begin{equation*}
    H\Big(\big(\bigotimes_{h\in H(\Gamma_{br})}\mathcal{I}\big)^{\mathrm{Aut}(\Gamma_{br})}\Big)
    \cong\Big(H\big(\bigotimes_{h\in H(\Gamma_{br})}\mathcal{I}\big)\Big)^{\mathrm{Aut}(\Gamma_{br})}
    \cong\big(\bigotimes_{h\in H(\Gamma_{br})}H(\mathcal{I})\big)^{\mathrm{Aut}(\Gamma_{br})}
\end{equation*}
Now $H(\mathcal{I})$ is generated by one element by corollary \ref{cor:Helpcomplex}. In a branch graph there is at least one vertex with two or more hairs. Hence the cohomology classes are zero in $\mathsf{branchGraph}(\Gamma_{br})$ due to symmetries, finishing the proof.
\end{proof}
\begin{figure}[h]
    \begin{equation*}
    \begin{tikzpicture}[shorten >=1pt,node distance=1.2cm,auto,>=angle 90,baseline=-0.1cm]
        \node[biw] (root) at (0,0) {$3$\nodepart{lower}$4$};
        \node[] (up1) at (-0.4,0.8) {}
            edge [-] (root);
        \node[] (up2) at (0,0.6) {$\scriptstyle\cdots$};
        \node[] (up3) at (0.4,0.8) {}
            edge [-] (root);
        \node[biw] (b1) at (-0.7,-1.1) {$0$\nodepart{lower}$7$}
            edge [->] (root);
        \node[biw] (b11) at (-2.1,-1.3) {$0$\nodepart{lower}$5$}
            edge [->] (b1);
        \node[biw] (b21) at (-2,-2.3) {$0$\nodepart{lower}$0$}
            edge [->] (b1);
        \node[biw] (b22) at (-3.4,-2.5) {$0$\nodepart{lower}$2$}
            edge [->] (b21);
        \node[biw] (b23) at (-2,-3.7) {$0$\nodepart{lower}$5$}
            edge [->] (b21);
        \node[biw] (b31) at (-0.7,-2.4) {$0$\nodepart{lower}$3$}
            edge [->] (b1);
        \node[biw] (c1) at (0.7,-1.1) {$0$\nodepart{lower}$0$}
            edge [->] (root);
        \node[biw] (c2) at (0.7,-2.4) {$0$\nodepart{lower}$1$}
            edge [->] (c1);
        \node[biw] (c3) at (2.1,-1.3) {$0$\nodepart{lower}$4$}
            edge [->] (c1);
        \node[biw] (c4) at (0.7,-3.7) {$0$\nodepart{lower}$1$}
            edge [->] (c2);
        \node[biw] (c5) at (0.7,-5) {$0$\nodepart{lower}$2$}
            edge [->] (c4);
    \end{tikzpicture}
    \quad\quad
    \begin{tikzpicture}[shorten >=1pt,node distance=1.2cm,auto,>=angle 90,baseline=-0.1cm]
        \node[biw] (root) at (0,0) {$3$\nodepart{lower}$4$};
        \node[] (up1) at (-0.4,0.8) {}
            edge [-] (root);
        \node[] (up2) at (0,0.6) {$\scriptstyle\cdots$};
        \node[] (up3) at (0.4,0.8) {}
            edge [-] (root);
        \node[biw] (b1) at (-0.7,-1.1) {$0$\nodepart{lower}$7$}
            edge [->] (root);
        \node[] (b11) at (-2.1,-1.3) {}
            edge [-] (b1);
        \node[biw] (b21) at (-2,-2.3) {$0$\nodepart{lower}$0$}
            edge [->] (b1);
        \node[] (b22) at (-3.4,-2.5) {}
            edge [-] (b21);
        \node[] (b23) at (-2,-3.7) {}
            edge [-] (b21);
        \node[] (b31) at (-0.7,-2.4) {}
            edge [-] (b1);
        \node[biw] (c1) at (0.7,-1.1) {$0$\nodepart{lower}$0$}
            edge [->] (root);
        \node[] (c2) at (0.7,-2.4) {}
            edge [-] (c1);
        \node[] (c3) at (2.1,-1.3) {}
            edge [-] (c1);
    \end{tikzpicture}
\end{equation*}
    \caption{Picture of a graph and its corresponding branch graph.}
    \label{fig:branch}
\end{figure}
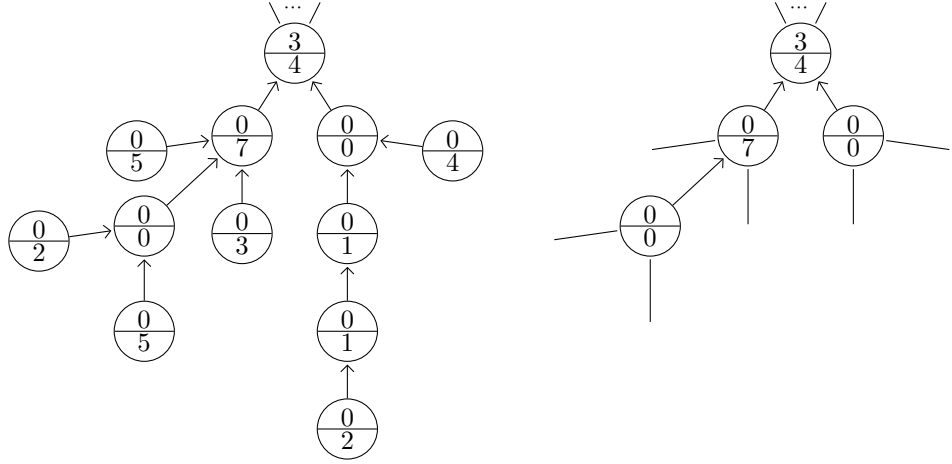
Lastly we consider the complexes $\wGC^*_k$ and $\wGC^+_k$. Let $\{S_r^{in}\wGC_k^*\}_{r\geq 0}$ and $\{S_r^{in}\wGC_k^+\}_{r\geq 0}$ be the spectral sequences associated to the filtration over the number of in-core vertices.
\begin{prop}\label{prop:OutIn1}
    The page one complex $S^{in}_1\wGC^*_k$ is a subcomplex of $S^{in}_1\wGC_k$ generated by directed graphs whose vertices are independently decorated by the two types 
    $\Bigl\{\begin{tikzpicture}[baseline={([yshift=-.5ex]current bounding box.center)},scale=0.7]
        \node[biw] at (0,0) {$m$\nodepart{lower}$\infty_1$};
    \end{tikzpicture}\Bigr\}_{m\geq 0}$ and 
    $\Bigl\{\begin{tikzpicture}[baseline={([yshift=-.5ex]current bounding box.center)},scale=0.7]
        \node[biw] at (0,0) {$m$\nodepart{lower}$0$};
    \end{tikzpicture}\Bigr\}_{m\geq 0}$ subject to the conditions of proposition \ref{prop:specialIn} as well as the additional condition that either
    \begin{itemize}
        \item at least one vertex is decorated with the bi-weight
        $\begin{tikzpicture}[baseline={([yshift=-.5ex]current bounding box.center)},scale=0.7]
        \node[biw] at (0,0) {$m$\nodepart{lower}$\infty_1$};
    \end{tikzpicture}$ for some $m\geq 0$, or
    \item at least one vertex is decorated with the bi-weight
    $\begin{tikzpicture}[baseline={([yshift=-.5ex]current bounding box.center)},scale=0.7]
        \node[biw] at (0,0) {$m$\nodepart{lower}$0$};
    \end{tikzpicture}$ 
    for some $m\geq 1$.
    \end{itemize}
    Furthermore, the page one complex $S^{in}_1\wGC^+_k$ is a subcomplex of $S^{in}_1\wGC^*_k$ where each graph additionally satisfies that either
    \begin{itemize}
        \item one vertex is decorated with the bi-weight
        $\begin{tikzpicture}[baseline={([yshift=-.5ex]current bounding box.center)},scale=0.7]
        \node[biw] at (0,0) {$m$\nodepart{lower}$\infty_1$};
    \end{tikzpicture}$ for some $m\geq 1$, or
    \item two vertices are decorated with the bi-weights 
    $\begin{tikzpicture}[baseline={([yshift=-.5ex]current bounding box.center)},scale=0.7]
        \node[biw] at (0,0) {$m$\nodepart{lower}$0$};
    \end{tikzpicture}$ 
     and 
    $\begin{tikzpicture}[baseline={([yshift=-.5ex]current bounding box.center)},scale=0.7]
        \node[biw] at (0,0) {$0$\nodepart{lower}$\infty_1$};
    \end{tikzpicture}$ respectively for some $m\geq 1$.
    \end{itemize}
\end{prop}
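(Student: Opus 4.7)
The plan is to adapt the argument used for Proposition \ref{prop:specialIn} essentially verbatim, with the only new work being to check that the two global subcomplex conditions translate correctly through the spectral sequence to the stated conditions on page-one decorations. First I would verify that the filtration over the number of in-core vertices restricts sensibly to $\wGC^*_k$ and $\wGC^+_k$. Since the page-zero differential acts only within the tree complexes $\mathcal{T}^{in}_v$ attached to each in-core vertex (it does not split in-core vertices or transfer bi-weight between them), the in-core subgraph is left untouched and both the "not completely bald" and "positive in-weight somewhere and positive out-weight somewhere" conditions are preserved, so the page-zero complexes of the subcomplexes are well-defined and compute via Lemma \ref{lemma:intreecohom}.

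For $\wGC^*_k$, the next step is to identify when a page-one class consists entirely of $\frac{0}{0}$-decorated vertices. Inspecting Lemma \ref{lemma:intreecohom}, the generator $\frac{c}{0}$ with $c=0$ appears only in the case $a,b\geq 1$, $a+b\geq 3$, $c=0$. A page-one class sits in the image of $S^{in}_1\wGC^0_k$ exactly when every in-core vertex picks this $\frac{0}{0}$-generator, which corresponds under Proposition \ref{prop:0complex} to $S^{in}_1\dGC^\circlearrowleft_k$. Excluding this subspace is exactly requiring that at least one vertex carries $\frac{m}{\infty_1}$ for some $m\geq 0$ or $\frac{m}{0}$ for some $m\geq 1$, as stated.

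For $\wGC^+_k$, I would track how positive out-weight and positive in-weight survive in the representatives of Lemma \ref{lemma:intreecohom}. Positive out-weight at an in-core vertex persists directly, encoded as $m\geq 1$ in its $\frac{m}{\ast}$-decoration. Positive in-weight, by contrast, survives only through the $\infty_1$ symbol, because the representative cycles of the classes $\frac{c}{\infty_1}=\sum_{i\geq 1}\frac{c}{i}$ are precisely those with strictly positive in-weight at the in-core vertex, whereas the $\frac{c}{0}$ class is represented by the bare in-core vertex with zero in-weight and no attached tree. Thus the $\wGC^+$ condition on page one is: at least one vertex has decoration of the form $\frac{m}{\infty_1}$ (this provides the positive in-weight) and at least one vertex has $m\geq 1$ (this provides the positive out-weight). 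Splitting on whether a single vertex realises both conditions gives case (i) of the proposition; otherwise the out-weight carrier is of the form $\frac{m}{0}$ and the in-weight carrier cannot itself have positive out-weight (else we are back in case (i)), forcing it to be $\frac{0}{\infty_1}$, which is case (ii).

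The main subtlety, and where I would spend the most care, is in the $\wGC^+_k$ case: one must argue that the $\infty_1$ symbol is the \emph{only} page-one avatar of positive in-weight, i.e.\ that a cohomologically nonzero class with representative having positive in-weight at some vertex $v$ cannot be represented instead by graphs with zero in-weight at the in-core vertex that $v$ eventually collapses into. This follows from the proof of Lemma \ref{lemma:intreecohom} (Lemmas \ref{lemma:u1Tin} and \ref{lemma:uNTin}), which identifies the cohomology at each in-core vertex with the $u_1\mathcal{T}^{in}$ classes $\frac{c}{\infty_1}$ and $\frac{c}{0}$ and, by Corollary \ref{cor:Helpcomplex}, forces the $\frac{c}{0}$ representative to have in-weight zero at the in-core vertex. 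Once this is in place, the two stated conditions for $\wGC^+_k$ are exactly the translation of the original subcomplex condition through the page-zero cohomology.
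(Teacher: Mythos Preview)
Your overall strategy matches the paper's: filter by in-core vertices, reduce to the tree complexes $\mathcal{T}_x^{in}$, and read off the page-one conditions from Lemma~\ref{lemma:intreecohom}. For $\wGC^*_k$ your argument is essentially the paper's, since the global splitting $\wGC_k=\wGC_k^0\oplus\wGC_k^*$ already does the work.

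For $\wGC^+_k$ there is a difference worth noting. You argue by ``tracking'' positive out- and in-weight through the cohomology representatives of Lemma~\ref{lemma:intreecohom}. The paper instead makes a structural move you leave implicit: each tree complex is split as
\[
\mathcal{T}_x^{in}=\mathcal{T}_x^{in}(0)\oplus\mathcal{T}_x^{in}(out)\oplus\mathcal{T}_x^{in}(in)\oplus\mathcal{T}_x^{in}(out\wedge in)
\]
according to whether any vertex in the tree carries positive out-weight and/or positive in-weight, and one checks that the page-zero differential preserves each summand (the in-core out-weight $c$ is fixed, and ``all in-weights zero'' is stable because the page-zero $\delta''$ only produces special-in vertices with out-weight zero). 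Then $\mathsf{inCore}^+(\gamma)$ becomes a direct sum of honest tensor products indexed by signatures, K\"unneth applies termwise, and the cohomology of each piece is read off directly as $\frac{m}{\infty_1}$ with $m\geq 1$, $\frac{m}{0}$ with $m\geq 1$, $\frac{0}{\infty_1}$, or $\frac{0}{0}$.

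Your ``main subtlety'' paragraph is aiming at exactly this, but as written it argues about representatives of cohomology classes rather than about a subcomplex decomposition. That is the soft spot: without knowing that ``positive in-weight somewhere in this tree'' defines a direct summand of $\mathcal{T}_x^{in}$ (not merely a subspace), you cannot conclude that the $\wGC^+_k$ condition on page zero computes via K\"unneth to the stated condition on page one. The fix is short---it is precisely the four-way splitting above---but it should be stated, since this is what turns your tracking heuristic into a proof.
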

\begin{proof}
    Similar to the case of $S_0^{in}\wGC_k$, both $S_0^{in}\wGC_k^*$ and $S_0^{in}\wGC_k^+$ decompose over in-core graphs $\gamma$ as
    \begin{align*}
        S_0^{in}\wGC_k^*&\cong \bigoplus_{\gamma}\mathsf{inCore}^*(\gamma)\\
        S_0^{in}\wGC_k^+&\cong \bigoplus_{\gamma}\mathsf{inCore}^+(\gamma)
    \end{align*}
    where $\mathsf{inCore}^*(\gamma)$ and $\mathsf{inCore}^+(\gamma)$ are the complex generated by graphs having $\gamma$ as their in-core graph. These complex do not however split into a tensor over the tree complexes $\mathcal{T}_x^{in}$, since some of the tensors will not represent graphs in these complexes. For example, a graph having all vertices decorated by $\frac{0}{0}$ or a graph having only zero out-weights or zero in-weights respectively.
    The tree complexes $\mathcal{T}_x^{in}$ split into (at most) four complexes as $\mathcal{T}_x^{in}=\mathcal{T}_x^{in}(0)\oplus\mathcal{T}_x^{in}(out)\oplus\mathcal{T}_x^{in}(in)\oplus\mathcal{T}_x^{in}(out\wedge in)$, where $\mathcal{T}_x^{in}(0)$ is the complex where all bi-weights are zero, $\mathcal{T}_x^{in}(out)$ is the complex where all in bi-weights are zero and at least one vertex is decorated with positive out bi-weight, $\mathcal{T}_x^{in}(in)$ is the complex where all out bi-weights are zero and at least one vertex is decorated with positive in bi-weight, and $\mathcal{T}_x^{in}(out\wedge in)$ is the complex where at least one vertex is decorated with positive out bi-weight and and at least one vertex with positive in bi-weight. Depending on the vertex $x$, some of the three first complexes might be zero in the decomposition of $\mathcal{T}_x^{out}$. It is easy to verify that the differential preserve the decomposition. We then get that
    \begin{align*}
        \mathsf{inCore}^*(\gamma)=\bigoplus_I\big(\mathcal{T}_{x_1}^{out}(I_1)\otimes \mathcal{T}_{x_2}^{out}(I_2)\otimes...\otimes\mathcal{T}_{x_k}^{out}(I_k)\big)\\
        \mathsf{inCore}^+(\gamma)=\bigoplus_J\big(\mathcal{T}_{x_1}^{out}(J_1)\otimes \mathcal{T}_{x_2}^{out}(J_2)\otimes...\otimes\mathcal{T}_{x_k}^{out}(J_k)\big)
    \end{align*}
    where the first sum runs over signatures $I=(I_1,I_2,...,I_{k})\in\{0,\ in,\ out,\ in\wedge out\}^{k}$ such that there is an $i$ such that $I_i\neq0$. Similarly the second sum runs over signatures $J=(J_1,J_2,...,J_k)\in\{0,\ in,\ out,\ in\wedge out\}^{k}$ such that either there is an $i$ such that $J_i=out\wedge in$, or there are $i,j$ such that $J_i=in$ and $J_j=out$. Lemma \ref{lemma:intreecohom} gives us that 
    \begin{align*}
        H(\mathcal{T}_x^{out}(out\wedge in))\ =\ &\Big\langle \begin{tikzpicture}[baseline={([yshift=-.5ex]current bounding box.center)},scale=0.7]
        \node[biw] at (0,0) {$m$\nodepart{lower}$\infty_1$};
    \end{tikzpicture}\ : \ m\geq 1 \Big\rangle \\
    H(\mathcal{T}_x^{out}(out))\ = \ &\Big\langle \begin{tikzpicture}[baseline={([yshift=-.5ex]current bounding box.center)},scale=0.7]
        \node[biw] at (0,0) {$m$\nodepart{lower}$0$};
    \end{tikzpicture}\ : \ m\geq 1 \Big\rangle \\
    H(\mathcal{T}_x^{out}(in))\ =\ &\Big\langle \begin{tikzpicture}[baseline={([yshift=-.5ex]current bounding box.center)},scale=0.7]
        \node[biw] at (0,0) {$0$\nodepart{lower}$\infty_1$};
    \end{tikzpicture} \Big\rangle \\
    H(\mathcal{T}_x^{out}(0))\ =\ &\Big\langle \begin{tikzpicture}[baseline={([yshift=-.5ex]current bounding box.center)},scale=0.7]
        \node[biw] at (0,0) {$0$\nodepart{lower}$0$};
    \end{tikzpicture} \Big\rangle
    \end{align*}
    The proposition immediately follows by comparing the condition for the signatures $I$ and $J$ with the proposition statement.
\end{proof}

\subsection{Filtrations over special out-vertices}
We define special out-vertices by analogy to the special in-vertices introduced above.
\begin{definition}
A vertex $x$ in a graph $\Gamma$ is called a \textit{special out-vertex} if
\begin{enumerate}
    \item[i)] either $x$ is a univalent vertex with one incoming edge and in-weight zero, i.e on the form $\begin{tikzpicture}[shorten >=1pt,node distance=1.2cm,auto,>=angle 90,baseline=-0.1cm]
            \node[biw] (a) at (0,0) {$m$\nodepart{lower}$0$};
            \node[] (d) at (0,-1) {}
                edge [->] (a);
        \end{tikzpicture}$
    \item[ii)] or $v$ becomes a univalent vertex of type i) after recursive removal of all special-out vertices of type i) from $\Gamma$.
\end{enumerate}
\end{definition}
Vertices which are not special out-vertices are called \textit{out-core vertices} or just \textit{core vertices}.
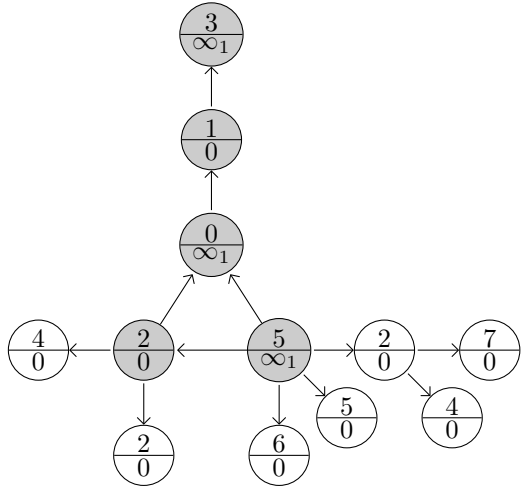
\begin{figure}[h]
    \begin{equation*}
    \begin{tikzpicture}[shorten >=1pt,node distance=1.2cm,auto,>=angle 90,baseline=-0.1cm]
        \node[biw,fill=mygray] (up) at (0,0) {$0$\nodepart{lower}$\infty_1$};
        \node[biw,fill=mygray] (left) at (-0.9,-1.4) {$2$\nodepart{lower}$0$}
            edge [->] (up);
        \node[biw,fill=mygray] (right) at (0.9,-1.4) {$5$\nodepart{lower}$\infty_1$}
            edge [->] (up)
            edge [->] (left);
        \node[biw,fill=mygray] (up1) at (0,1.4) {$1$\nodepart{lower}$0$}
            edge [<-] (up);
        \node[biw,fill=mygray] (up2) at (0,2.8) {$3$\nodepart{lower}$\infty_1$}
            edge [<-] (up1);
        \node[biw] (left1) at (-2.3,-1.4) {$4$\nodepart{lower}$0$}
            edge [<-] (left);
        \node[biw] (left2) at (-0.9,-2.8) {$2$\nodepart{lower}$0$}
            edge [<-] (left);
        \node[biw] (right1) at (2.3,-1.4) {$2$\nodepart{lower}$0$}
            edge [<-] (right);
        \node[biw] (right2) at (1.8,-2.3) {$5$\nodepart{lower}$0$}
            edge [<-] (right);
        \node[biw] (right3) at (0.9,-2.8) {$6$\nodepart{lower}$0$}
            edge [<-] (right);
        \node[biw] at (3.7,-1.4) {$7$\nodepart{lower}$0$}
            edge [<-] (right1);
        \node[biw] at (3.2,-2.3) {$4$\nodepart{lower}$0$}
            edge [<-] (right1);    
    \end{tikzpicture}
\end{equation*}
    \caption{Example of a graph in $S_1^{in}\wGC_k$ where out-core vertices are colored gray and the special out-vertices are colored white.}
    \label{fig:SpecOut}
\end{figure}
\newline
Note that there the are no special-in vertices in any graph of $S^{in}_1\wGC_k$, but there are graphs with special out-vertices (see figure \ref{fig:SpecOut}). Given an arbitrary graph $\Gamma$ in $S^{in}_1\wGC_k$, we define the associated \textit{core graph} $\gamma$ as the graph spanned by core vertices with their out-weight forgotten. Similar to before, we consider a filtration of $S^{in}_1\wGC_k$ over the number of core-vertices and let $\{S^{out}_r\wGC_k\}_{r\geq 0}$ be the associated spectral sequence. The differential acts by only creating special out-vertices and leaves the connected core-graph unchanged.
\begin{proposition}\label{prop:specialOut}
    The page one complex $S^{out}_1\wGC_d$ is generated by graphs $\Gamma$ whose vertices $V(\Gamma)$ are independently decorated with four possible bi-weights
$\begin{tikzpicture}[baseline={([yshift=-.5ex]current bounding box.center)}]
    \node[biw] at (0,0) {$\infty_1$\nodepart{lower}$\infty_1$};
\end{tikzpicture}$, 
$\begin{tikzpicture}[baseline={([yshift=-.5ex]current bounding box.center)}]
    \node[biw] at (0,0) {$0$\nodepart{lower}$\infty_1$};
\end{tikzpicture}$,
$\begin{tikzpicture}[baseline={([yshift=-.5ex]current bounding box.center)}]
    \node[biw] at (0,0) {$\infty_1$\nodepart{lower}$0$};
\end{tikzpicture}$, and
$\begin{tikzpicture}[baseline={([yshift=-.5ex]current bounding box.center)}]
    \node[biw] at (0,0) {$0$\nodepart{lower}$0$};
\end{tikzpicture}$ subject to the following conditions:
\begin{enumerate}
    \item If $x\in V(\Gamma)$ is univalent, then
    \begin{equation*}
        x=\begin{tikzpicture}[shorten >=1pt,node distance=1.2cm,auto,>=angle 90,baseline=-0.1cm]
            \node[biw] (a) at (0,0) {$\infty_1$\nodepart{lower}$\infty_1$};
            \node[] (d) at (0,-1) {}
                edge [->] (a);
        \end{tikzpicture}
        \text{ or }
        x=\begin{tikzpicture}[shorten >=1pt,node distance=1.2cm,auto,>=angle 90,baseline=-0.1cm]
            \node[biw] (a) at (0,0) {$\infty_1$\nodepart{lower}$\infty_1$};
            \node[] (d) at (0,1) {}
                edge [<-] (a);
        \end{tikzpicture}
    \end{equation*}
    \item If $x\in V(\Gamma)$ is a source with at least two out-edges, then
    \begin{equation*}
        x=\begin{tikzpicture}[shorten >=1pt,node distance=1.2cm,auto,>=angle 90,baseline=-0.1cm]
            \node[biw] (a) at (0,0) {$\infty_1$\nodepart{lower}$\infty_1$};
            \node[] (up1) at (-0.4,0.8) {}
                edge [<-] (a);
            \node[] (up2) at (0,0.6) {$\scriptstyle\cdots$};
            \node[] (up2) at (0,0.9) {$\scriptstyle \geq 2$};
            \node[] (up3) at (0.4,0.8) {}
                edge [<-] (a);
        \end{tikzpicture}
        \text{ or }
        x=\begin{tikzpicture}[shorten >=1pt,node distance=1.2cm,auto,>=angle 90,baseline=-0.1cm]
            \node[biw] (a) at (0,0) {$0$\nodepart{lower}$\infty_1$};
            \node[] (up1) at (-0.4,0.8) {}
                edge [<-] (a);
            \node[] (up2) at (0,0.6) {$\scriptstyle\cdots$};
            \node[] (up2) at (0,0.9) {$\scriptstyle \geq 2$};
            \node[] (up3) at (0.4,0.8) {}
                edge [<-] (a);
        \end{tikzpicture}
    \end{equation*}
    \item If $x\in V(\Gamma)$ is a target with at least two in-edges, then
     \begin{equation*}
        x=\begin{tikzpicture}[shorten >=1pt,node distance=1.2cm,auto,>=angle 90,baseline=-0.1cm]
            \node[biw] (a) at (0,0) {$\infty_1$\nodepart{lower}$\infty_1$};
            \node[] (up1) at (-0.4,-0.8) {}
                edge [->] (a);
            \node[] (up2) at (0,-0.6) {$\scriptstyle\cdots$};
            \node[] (up2) at (0,-0.9) {$\scriptstyle \geq 2$};
            \node[] (up3) at (0.4,-0.8) {}
                edge [->] (a);
        \end{tikzpicture}
        \text{ or }
        x=\begin{tikzpicture}[shorten >=1pt,node distance=1.2cm,auto,>=angle 90,baseline=-0.1cm]
            \node[biw] (a) at (0,0) {$\infty_1$\nodepart{lower}$0$};
            \node[] (up1) at (-0.4,-0.8) {}
                edge [->] (a);
            \node[] (up2) at (0,-0.6) {$\scriptstyle\cdots$};
            \node[] (up2) at (0,-0.9) {$\scriptstyle \geq 2$};
            \node[] (up3) at (0.4,-0.8) {}
                edge [->] (a);
        \end{tikzpicture}
    \end{equation*}
    \item If $x\in V(\Gamma)$ is passing (one in-edge and one out-edge), then
    \begin{equation*}
        x=\begin{tikzpicture}[shorten >=1pt,node distance=1.2cm,auto,>=angle 90,baseline=-0.1cm]
            \node[biw] (a) at (0,0) {$\infty_1$\nodepart{lower}$\infty_1$};
            \node[] (up) at (0,1) {}
                edge [<-] (a);
            \node[] (down) at (0,-1) {}
                edge [->] (a);
        \end{tikzpicture}
        \ , \
        x=\begin{tikzpicture}[shorten >=1pt,node distance=1.2cm,auto,>=angle 90,baseline=-0.1cm]
            \node[biw] (a) at (0,0) {$0$\nodepart{lower}$\infty_1$};
            \node[] (up) at (0,1) {}
                edge [<-] (a);
            \node[] (down) at (0,-1) {}
                edge [->] (a);
        \end{tikzpicture}
        \text{ or }
        x=\begin{tikzpicture}[shorten >=1pt,node distance=1.2cm,auto,>=angle 90,baseline=-0.1cm]
            \node[biw] (a) at (0,0) {$\infty_1$\nodepart{lower}$0$};
            \node[] (up) at (0,1) {}
                edge [<-] (a);
            \node[] (down) at (0,-1) {}
                edge [->] (a);
        \end{tikzpicture}
    \end{equation*}
    \item If $x\in V(\Gamma)$ is none of the above types (i.e $x$ is at least trivalent and has at least one in-edge and at least one out-edge), then
    \begin{equation*}
        x=\begin{tikzpicture}[shorten >=1pt,node distance=1.2cm,auto,>=angle 90,baseline=-0.1cm]
            \node[biw] (a) at (0,0) {$\infty_1$\nodepart{lower}$\infty_1$};
            \node[] (down1) at (-0.4,-0.8) {}
                edge [->] (a);
            \node[] (down2) at (0,-0.6) {$\scriptstyle\cdots$};
            \node[] (down3) at (0.4,-0.8) {}
                edge [->] (a);
            \node[] (up1) at (-0.4,0.8) {}
                edge [<-] (a);
            \node[] (up2) at (0,0.6) {$\scriptstyle\cdots$};
            \node[] (up3) at (0.4,0.8) {}
                edge [<-] (a);
        \end{tikzpicture}
        \ , \
        x=\begin{tikzpicture}[shorten >=1pt,node distance=1.2cm,auto,>=angle 90,baseline=-0.1cm]
            \node[biw] (a) at (0,0) {$0$\nodepart{lower}$\infty_1$};
            \node[] (down1) at (-0.4,-0.8) {}
                edge [->] (a);
            \node[] (down2) at (0,-0.6) {$\scriptstyle\cdots$};
            \node[] (down3) at (0.4,-0.8) {}
                edge [->] (a);
            \node[] (up1) at (-0.4,0.8) {}
                edge [<-] (a);
            \node[] (up2) at (0,0.6) {$\scriptstyle\cdots$};
            \node[] (up3) at (0.4,0.8) {}
                edge [<-] (a);
        \end{tikzpicture}
        \ , \
        x=\begin{tikzpicture}[shorten >=1pt,node distance=1.2cm,auto,>=angle 90,baseline=-0.1cm]
            \node[biw] (a) at (0,0) {$\infty_1$\nodepart{lower}$0$};
            \node[] (down1) at (-0.4,-0.8) {}
                edge [->] (a);
            \node[] (down2) at (0,-0.6) {$\scriptstyle\cdots$};
            \node[] (down3) at (0.4,-0.8) {}
                edge [->] (a);
            \node[] (up1) at (-0.4,0.8) {}
                edge [<-] (a);
            \node[] (up2) at (0,0.6) {$\scriptstyle\cdots$};
            \node[] (up3) at (0.4,0.8) {}
                edge [<-] (a);
        \end{tikzpicture}
        \text{ or }
        x=\begin{tikzpicture}[shorten >=1pt,node distance=1.2cm,auto,>=angle 90,baseline=-0.1cm]
            \node[biw] (a) at (0,0) {$0$\nodepart{lower}$0$};
            \node[] (down1) at (-0.4,-0.8) {}
                edge [->] (a);
            \node[] (down2) at (0,-0.6) {$\scriptstyle\cdots$};
            \node[] (down3) at (0.4,-0.8) {}
                edge [->] (a);
            \node[] (up1) at (-0.4,0.8) {}
                edge [<-] (a);
            \node[] (up2) at (0,0.6) {$\scriptstyle\cdots$};
            \node[] (up3) at (0.4,0.8) {}
                edge [<-] (a);
        \end{tikzpicture}
    \end{equation*}
\end{enumerate}
Let $\Gamma$ be a graph in $S^{out}_1\wGC_d$ and $x$ some vertex of $\Gamma$. Further let $a,b,c$ and $d$ be either of the symbols $\infty_1$ or $0$. Then we set $\Big(\frac{a}{b},\frac{c}{d}\Big)_x$ to denote the sum over all possible reattachements of the edges attached to $x$ among two new vertices $x'$ and $x''$ (connected by a single edge going from $x'$ to $x''$) of bi-weight $\frac{a}{b}$ and $\frac{c}{d}$ respectively. The reattachements are done in such a way that the resulting vertices are of the types $(1)-(5)$ described above. Using earlier notation for vertex splitting we get
\begin{equation*}
\Big(\frac{a}{b},\frac{c}{d}\Big)_x:=
\begin{tikzpicture}[shorten >=1pt,>=angle 90,baseline={([yshift=-.5ex]current bounding box.center)}]
        \node[ellipse,
            draw = black,
            minimum width = 3cm, 
            minimum height = 1.8cm,
            dotted] (e) at (0,0) {};
        \node[biw] (vLeft) at (-0.8,0) {$a$\nodepart{lower}$b$};
        \node[biw] (vRight) at (0.8,0) {$c$\nodepart{lower}$d$}
            edge [<-] (vLeft);
        \node[invisible] (v1) at (-0.6,1.4) {}
            edge [<-] (e);
        \node[] at (0,1.2) {$\cdots$};
        \node[invisible] (v3) at (0.6,1.4) {}
            edge [<-] (e);
        \node[invisible] (v4) at (-0.6,-1.4) {}
            edge [->] (e);
        \node[] at (0,-1.2) {$\cdots$};
        \node[invisible] (v6) at (0.6,-1.4) {}
            edge [->] (e);
    \end{tikzpicture}    
\end{equation*}
The differential $d$ acts on graphs $\Gamma\in S^{out}_1\wGC_k$ as $d(\Gamma)=\sum_{x\in V(\Gamma)}d_x(\Gamma)$. The map $d_x$ act on vertices with the four different bi-weights in the following way:
\begin{flalign*}
    d_x\Big(\ 
    \begin{tikzpicture}[shorten >=1pt,>=angle 90,baseline={([yshift=-.5ex]current bounding box.center)}]
        \node[biw] (v0) at (0,0) {$\infty_1$\nodepart{lower}$\infty_1$};
        \node[invisible] (v1) at (-0.4,0.8) {}
            edge [<-] (v0);
        \node[] at (0,0.6) {$\scriptstyle\cdots$};
        \node[invisible] (v3) at (0.4,0.8) {}
            edge [<-] (v0);
        \node[invisible] (v4) at (-0.4,-0.8) {}
            edge [->] (v0);
        \node[] at (0,-0.6) {$\scriptstyle\cdots$};
        \node[invisible] (v6) at (0.4,-0.8) {}
            edge [->] (v0);
    \end{tikzpicture}\ \Big) \
    &=\Big(\frac{\infty_1}{\infty_1},\frac{\infty_1}{\infty_1}\Big)_x
    +\Big(\frac{0}{\infty_1},\frac{\infty_1}{\infty_1}\Big)_x
    +\Big(\frac{\infty_1}{0},\frac{\infty_1}{\infty_1}\Big)_x
    +\Big(\frac{\infty_1}{\infty_1},\frac{0}{\infty_1}\Big)_x
    +\Big(\frac{\infty_1}{\infty_1},\frac{\infty_1}{0}\Big)_x\\
    &+\Big(\frac{0}{0},\frac{\infty_1}{\infty_1}\Big)_x
    +\Big(\frac{\infty_1}{0},\frac{0}{\infty_1}\Big)_x
    +\Big(\frac{0}{\infty_1},\frac{\infty_1}{0}\Big)_x
    +\Big(\frac{\infty_1}{\infty_1},\frac{0}{0}\Big)_x\\
    & - \Bigg( \ \ 
    \begin{tikzpicture}[shorten >=1pt,>=angle 90,baseline={(4ex,-0.5ex)}]
        \node[biw] (v0) at (0,0) {$\infty_1$\nodepart{lower}$\infty_1$};
        \node[biw] (new) at (1.5,0.7) {$\infty_1$\nodepart{lower}$\infty_1$}
            edge [<-] (v0);
        \node[invisible] (v1) at (-0.4,0.8) {}
            edge [<-] (v0);
        \node[] at (0,0.6) {$\scriptstyle\cdots$};
        \node[invisible] (v3) at (0.4,0.8) {}
            edge [<-] (v0);
        \node[invisible] (v4) at (-0.4,-0.8) {}
            edge [->] (v0);
        \node[] at (0,-0.6) {$\scriptstyle\cdots$};
        \node[invisible] (v6) at (0.4,-0.8) {}
            edge [->] (v0);
    \end{tikzpicture}
    \ +\ 
    \begin{tikzpicture}[shorten >=1pt,>=angle 90,baseline={(4ex,-0.5ex)}]
        \node[biw] (v0) at (0,0) {$0$\nodepart{lower}$\infty_1$};
        \node[biw] (new) at (1.5,0.7) {$\infty_1$\nodepart{lower}$\infty_1$}
            edge [<-] (v0);
        \node[invisible] (v1) at (-0.4,0.8) {}
            edge [<-] (v0);
        \node[] at (0,0.6) {$\scriptstyle\cdots$};
        \node[invisible] (v3) at (0.4,0.8) {}
            edge [<-] (v0);
        \node[invisible] (v4) at (-0.4,-0.8) {}
            edge [->] (v0);
        \node[] at (0,-0.6) {$\scriptstyle\cdots$};
        \node[invisible] (v6) at (0.4,-0.8) {}
            edge [->] (v0);
    \end{tikzpicture}
    \ +\ 
    \begin{tikzpicture}[shorten >=1pt,>=angle 90,baseline={(4ex,-0.5ex)}]
        \node[biw] (v0) at (0,0) {$\infty_1$\nodepart{lower}$\infty_1$};
        \node[biw] (new) at (1.5,-0.7) {$\infty_1$\nodepart{lower}$\infty_1$}
            edge [->] (v0);
        \node[invisible] (v1) at (-0.4,0.8) {}
            edge [<-] (v0);
        \node[] at (0,0.6) {$\scriptstyle\cdots$};
        \node[invisible] (v3) at (0.4,0.8) {}
            edge [<-] (v0);
        \node[invisible] (v4) at (-0.4,-0.8) {}
            edge [->] (v0);
        \node[] at (0,-0.6) {$\scriptstyle\cdots$};
        \node[invisible] (v6) at (0.4,-0.8) {}
            edge [->] (v0);
    \end{tikzpicture}
    \ +\ 
    \begin{tikzpicture}[shorten >=1pt,>=angle 90,baseline={(4ex,-0.5ex)}]
        \node[biw] (v0) at (0,0) {$\infty_1$\nodepart{lower}$0$};
        \node[biw] (new) at (1.5,-0.7) {$\infty_1$\nodepart{lower}$\infty_1$}
            edge [->] (v0);
        \node[invisible] (v1) at (-0.4,0.8) {}
            edge [<-] (v0);
        \node[] at (0,0.6) {$\scriptstyle\cdots$};
        \node[invisible] (v3) at (0.4,0.8) {}
            edge [<-] (v0);
        \node[invisible] (v4) at (-0.4,-0.8) {}
            edge [->] (v0);
        \node[] at (0,-0.6) {$\scriptstyle\cdots$};
        \node[invisible] (v6) at (0.4,-0.8) {}
            edge [->] (v0);
    \end{tikzpicture}
    \ \ \Bigg) &&
    \\
    \\
    d_x\Big(\ 
    \begin{tikzpicture}[shorten >=1pt,>=angle 90,baseline={([yshift=-.5ex]current bounding box.center)}]
        \node[biw] (v0) at (0,0) {$0$\nodepart{lower}$\infty_1$};
        \node[invisible] (v1) at (-0.4,0.8) {}
            edge [<-] (v0);
        \node[] at (0,0.6) {$\scriptstyle\cdots$};
        \node[invisible] (v3) at (0.4,0.8) {}
            edge [<-] (v0);
        \node[invisible] (v4) at (-0.4,-0.8) {}
            edge [->] (v0);
        \node[] at (0,-0.6) {$\scriptstyle\cdots$};
        \node[invisible] (v6) at (0.4,-0.8) {}
            edge [->] (v0);
    \end{tikzpicture}\ \Big) \
    &=\Big(\frac{0}{\infty_1},\frac{0}{\infty_1}\Big)_x
    +\Big(\frac{0}{0},\frac{0}{\infty_1}\Big)_x
    +\Big(\frac{0}{\infty_1},\frac{0}{0}\Big)_x\\
    & - \Bigg( \ \ 
    \begin{tikzpicture}[shorten >=1pt,>=angle 90,baseline={(4ex,-0.5ex)}]
        \node[biw] (v0) at (0,0) {$0$\nodepart{lower}$\infty_1$};
        \node[biw] (new) at (1.5,-0.7) {$\infty_1$\nodepart{lower}$\infty_1$}
            edge [->] (v0);
        \node[invisible] (v1) at (-0.4,0.8) {}
            edge [<-] (v0);
        \node[] at (0,0.6) {$\scriptstyle\cdots$};
        \node[invisible] (v3) at (0.4,0.8) {}
            edge [<-] (v0);
        \node[invisible] (v4) at (-0.4,-0.8) {}
            edge [->] (v0);
        \node[] at (0,-0.6) {$\scriptstyle\cdots$};
        \node[invisible] (v6) at (0.4,-0.8) {}
            edge [->] (v0);
    \end{tikzpicture}
    \ +\ 
    \begin{tikzpicture}[shorten >=1pt,>=angle 90,baseline={(4ex,-0.5ex)}]
        \node[biw] (v0) at (0,0) {$0$\nodepart{lower}$0$};
        \node[biw] (new) at (1.5,-0.7) {$\infty_1$\nodepart{lower}$\infty_1$}
            edge [->] (v0);
        \node[invisible] (v1) at (-0.4,0.8) {}
            edge [<-] (v0);
        \node[] at (0,0.6) {$\scriptstyle\cdots$};
        \node[invisible] (v3) at (0.4,0.8) {}
            edge [<-] (v0);
        \node[invisible] (v4) at (-0.4,-0.8) {}
            edge [->] (v0);
        \node[] at (0,-0.6) {$\scriptstyle\cdots$};
        \node[invisible] (v6) at (0.4,-0.8) {}
            edge [->] (v0);
    \end{tikzpicture}
    \ \ \Bigg) &&
    \end{flalign*}
\begin{flalign*}
    d_x\Big(\ 
    \begin{tikzpicture}[shorten >=1pt,>=angle 90,baseline={([yshift=-.5ex]current bounding box.center)}]
        \node[biw] (v0) at (0,0) {$\infty_1$\nodepart{lower}$0$};
        \node[invisible] (v1) at (-0.4,0.8) {}
            edge [<-] (v0);
        \node[] at (0,0.6) {$\scriptstyle\cdots$};
        \node[invisible] (v3) at (0.4,0.8) {}
            edge [<-] (v0);
        \node[invisible] (v4) at (-0.4,-0.8) {}
            edge [->] (v0);
        \node[] at (0,-0.6) {$\scriptstyle\cdots$};
        \node[invisible] (v6) at (0.4,-0.8) {}
            edge [->] (v0);
    \end{tikzpicture}\ \Big) \
    &=\Big(\frac{\infty_1}{0},\frac{\infty_1}{0}\Big)_x
    +\Big(\frac{0}{0},\frac{\infty_1}{0}\Big)_x
    +\Big(\frac{\infty_1}{0},\frac{0}{0}\Big)_x\\
    & - \Bigg( \ \ 
    \begin{tikzpicture}[shorten >=1pt,>=angle 90,baseline={(4ex,-0.5ex)}]
        \node[biw] (v0) at (0,0) {$\infty_1$\nodepart{lower}$0$};
        \node[biw] (new) at (1.5,0.7) {$\infty_1$\nodepart{lower}$\infty_1$}
            edge [<-] (v0);
        \node[invisible] (v1) at (-0.4,0.8) {}
            edge [<-] (v0);
        \node[] at (0,0.6) {$\scriptstyle\cdots$};
        \node[invisible] (v3) at (0.4,0.8) {}
            edge [<-] (v0);
        \node[invisible] (v4) at (-0.4,-0.8) {}
            edge [->] (v0);
        \node[] at (0,-0.6) {$\scriptstyle\cdots$};
        \node[invisible] (v6) at (0.4,-0.8) {}
            edge [->] (v0);
    \end{tikzpicture}
    \ +\ 
    \begin{tikzpicture}[shorten >=1pt,>=angle 90,baseline={(4ex,-0.5ex)}]
        \node[biw] (v0) at (0,0) {$0$\nodepart{lower}$0$};
        \node[biw] (new) at (1.5,0.7) {$\infty_1$\nodepart{lower}$\infty_1$}
            edge [<-] (v0);
        \node[invisible] (v1) at (-0.4,0.8) {}
            edge [<-] (v0);
        \node[] at (0,0.6) {$\scriptstyle\cdots$};
        \node[invisible] (v3) at (0.4,0.8) {}
            edge [<-] (v0);
        \node[invisible] (v4) at (-0.4,-0.8) {}
            edge [->] (v0);
        \node[] at (0,-0.6) {$\scriptstyle\cdots$};
        \node[invisible] (v6) at (0.4,-0.8) {}
            edge [->] (v0);
    \end{tikzpicture}
    \ \ \Bigg) &&
    \\
    \\
    d_x\Big(\ 
    \begin{tikzpicture}[shorten >=1pt,>=angle 90,baseline={([yshift=-.5ex]current bounding box.center)}]
        \node[biw] (v0) at (0,0) {$0$\nodepart{lower}$0$};
        \node[invisible] (v1) at (-0.4,0.8) {}
            edge [<-] (v0);
        \node[] at (0,0.6) {$\scriptstyle\cdots$};
        \node[invisible] (v3) at (0.4,0.8) {}
            edge [<-] (v0);
        \node[invisible] (v4) at (-0.4,-0.8) {}
            edge [->] (v0);
        \node[] at (0,-0.6) {$\scriptstyle\cdots$};
        \node[invisible] (v6) at (0.4,-0.8) {}
            edge [->] (v0);
    \end{tikzpicture}\ \Big) \
    &=\Big(\frac{0}{0},\frac{0}{0}\Big)_x &&
\end{flalign*}
\end{proposition}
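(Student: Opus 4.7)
The plan is to mirror the proof of Proposition~\ref{prop:specialIn} with the roles of incoming and outgoing edges swapped. On the page-zero complex $S^{out}_0\wGC_k$, the induced differential only creates special out-vertices and preserves the associated out-core graph, so it decomposes as
\[
S^{out}_0\wGC_k \;\cong\; \bigoplus_{\gamma}\Bigl(\bigotimes_{x\in V(\gamma)}\mathcal{T}^{out}_x\Bigr)^{\mathrm{Aut}(\gamma)},
\]
indexed by core graphs $\gamma$, where $\mathcal{T}^{out}_x$ is the complex of trees of special out-vertices hanging off an out-core vertex $x$, remembering the valences $|x|_{in},|x|_{out}$ and the in-weight $w^{in}_x$ of $x$ (which already lies in $\{0,\infty_1\}$ as a result of the previous spectral sequence). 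Reversing the orientation of every edge identifies $\mathcal{T}^{out}_{a,b,c}$ with $\mathcal{T}^{in}_{b,a,c}$, so Lemma~\ref{lemma:intreecohom} computes $H(\mathcal{T}^{out}_x)$ directly: every cohomology class is represented by the single out-core vertex $x$ with out-weight $0$ or $\infty_1$, subject to the legality constraints dictated by the valence of $x$.

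A case analysis paralleling the five cases in Lemma~\ref{lemma:intreecohom} then yields exactly the admissible decorations listed in conditions $(1)$--$(5)$ of the proposition. In particular, univalent out-core vertices force the opposite weight to be $\infty_1$ (this is the $c=1$ sub-case of $\mathcal{T}^{in}_{0,1,c}$); sources with $\geq 2$ out-edges admit in-weight $0$ or $\infty_1$ (from $\mathcal{T}^{in}_{a,0,c}$ with $a\geq 2$); targets with $\geq 2$ in-edges admit out-weight $0$ or $\infty_1$ (from the dual); and at least trivalent vertices with at least one in- and one out-edge admit all four bi-weight combinations.

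To derive the explicit formula for the page-one differential, I would use that $d$ on $S^{out}_1\wGC_k$ is inherited from the full differential on $\fwGC_k$ restricted to graphs with vertex decorations in $\{0,\infty_1\}\times\{0,\infty_1\}$. A vertex splitting must distribute the bi-weight of the split vertex over two new fragments each lying in $\{0,\infty_1\}\times\{0,\infty_1\}$; the $\infty_1$ symbols already on the original vertex may end up entirely on one fragment, and the valence constraints of conditions $(1)$--$(5)$ select exactly the nine, three, three, or one pair $(\tfrac{a}{b},\tfrac{c}{d})_x$ appearing in the displayed formulae for the four possible source bi-weights. The univalent-vertex creation terms are obtained by attaching, on each side of a freshly created edge, a two-vertex fragment whose special out- or in-subtree collapses, by the tree-complex cohomology, to a single class with bi-weight $\tfrac{\infty_1}{\infty_1}$; this reproduces exactly the negative terms in the formula.

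The main obstacle will be the sign bookkeeping and the verification that the enumerated list of splitting pairs is exhaustive: in each of the four differential formulae one has to check case by case that no pair such as $(\tfrac{0}{0},\tfrac{0}{0})_x$ or $(\tfrac{\infty_1}{\infty_1},\tfrac{\infty_1}{\infty_1})_x$ has been overlooked, and conversely that any pair missing from the formula is excluded because at least one fragment would violate the valence and bi-weight compatibility of conditions $(1)$--$(5)$. Once this checklist is completed, the proposition follows from the spectral sequence identification on page one.
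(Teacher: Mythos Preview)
Your approach is essentially the paper's: decompose $S^{out}_0\wGC_k$ over core graphs into tensor products of local tree complexes $\mathcal{T}^{out}_x$, compute their cohomology, and read off the page-one description. The paper states this as a separate Lemma~\ref{lemma:outtreecohom} and proves it by rerunning the same filtrations (univalent count, branch vertices, total weight) and invoking Corollary~\ref{cor:Helpcomplex}; you instead propose to shortcut via the edge-reversal involution $\mathcal{T}^{out}_{a,b,c}\cong\mathcal{T}^{in}_{b,a,c}$ and quote Lemma~\ref{lemma:intreecohom} directly.

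One caution about that shortcut: the third parameter $c$ lives in different sets on the two sides. In $\mathcal{T}^{in}_{b,a,c}$ the out-weight $c$ is a fixed nonnegative integer, whereas in $\mathcal{T}^{out}_{a,b,c}$ the in-weight $c$ is one of the formal symbols $0$ or $\infty_1$ (already produced by the first spectral sequence). So the identification is not literal. What is true is that on the page-zero differential the in-weight $c$ of the core vertex is inert, and its only effect is through the valence constraint, where $|\infty_1|=1$. Hence $\mathcal{T}^{out}_{a,b,\infty_1}$ behaves like $\mathcal{T}^{in}_{b,a,1}$ and $\mathcal{T}^{out}_{a,b,0}$ like $\mathcal{T}^{in}_{b,a,0}$; with that dictionary your case analysis goes through and indeed matches the table in Lemma~\ref{lemma:outtreecohom}. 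If you present the argument this way you should make that dictionary explicit rather than writing $\mathcal{T}^{in}_{b,a,\infty_1}$, which has no meaning.

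Your treatment of the induced page-one differential is correct in outline and matches the paper, which simply asserts that the displayed formulae follow from the bi-weight types.
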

The first page $S^{out}_0\wGC_k$ decomposes into a directed sum parameterized by the set of all possible core graphs
\begin{equation*}
    S^{out}_0\wGC_k\cong\bigoplus_{\gamma}\mathsf{outCore}(\gamma)
\end{equation*}
where $\mathsf{outCore}(\gamma)$ is the subcomplex of $S^{out}_0\wGC_k$ of graphs with associated core graph $\gamma$. Further $\mathsf{outCore}(\gamma)$ decomposes into a tensor product of complexes
\begin{equation*}
    \mathsf{outCore}(\gamma)\cong\Big(\bigotimes_{x\in V(\gamma)}\mathcal{T}^{out}_x\Big)^{\mathrm{Aut}(\gamma)}
\end{equation*}
with one complex $\mathcal{T}^{out}_x$ for each vertex $x$ in $V(\gamma)$ and $\mathrm{Aut}(\gamma)$ is the group of automorphisms of $\gamma$ acting on the tensor product. Each complex $\mathcal{T}^{out}_x$ consists of trees of special out-vertices attached to a core vertex $x$ and the differential acts by only creating special out-vertices on $x$ and in the tree. The complex $\mathcal{T}^{out}_x$ depends on $x$ only via the number of outgoing and incoming edges attached to $x$ in the core graph as well as on the in-weight $w^{in}_x$. Note that here the in-weight can be assigned one of the symbols $0$ or $\infty_1$. The complexes $\mathcal{T}^{out}_x$ having the same values of the parameters $|x|_{out}$, $|x|_{in}$ and $w^{in}_x$ are isomorphic, and we often write $\mathcal{T}^{out}_{x}\cong \mathcal{T}^{out}_{|x|_{out},|x|_{in},w^{in}_x}$. Hence we study the family of complexes $\mathcal{T}^{out}_{a,b,c}$ parameterized by integers $a,b\geq0$ and $c\in\{0,\infty_1\}$ such that $a\geq 0,\ b\geq 1$ and $a+b\geq 2$ when $c=0$, and $a,b\geq 0$ and $a+b\geq 1$ when $c=\infty_1$. The out-weight of the core vertex in $\mathcal{T}^{out}_{a,b,c}$ is any number $w^{out}_x$ satisfying the conditions
\begin{align*}
    w^{out}_x+a+\text{\#(out-edges from special out-vertices)}\geq 1\\
    a+b+w^{out}_x+|c|+\text{\#(out-edges from special out-vertices)}\geq 3
\end{align*} where $|0|:=0$ and $|\infty_1|:=1$.
Proposition \ref{prop:specialOut} follows from these remarks together with the following lemma.
\begin{lemma}\label{lemma:outtreecohom}
    The cohomology of $\mathcal{T}^{out}_{a,b,c}$ is generated by one or two classes containing the out-core vertex decorated by some bi-weights depending on the parameters $a,b,c$. More precisely
    \begin{align*}
        H(\mathcal{T}^{out}_{1,0,\infty_1})&=
            \Big\langle
                \tikz[baseline=-2.07cm]
                \node[biw] at (0,-2) {$\infty_1$\nodepart{lower}$\infty_1$};
            \Big\rangle\\
        H(\mathcal{T}^{out}_{0,1,\infty_1})&=
            \Big\langle
                \tikz[baseline=-2.07cm]
                \node[biw] at (0,-2) {$\infty_1$\nodepart{lower}$\infty_1$};
            \Big\rangle \\
        H(\mathcal{T}^{out}_{1,1,\infty_1})&=
            \Big\langle
                \tikz[baseline=-2.07cm]
                \node[biw] at (0,-2) {$\infty_1$\nodepart{lower}$\infty_1$};
                \, ,\
                \tikz[baseline=-2.07cm]
                \node[biw] at (0,-2) {$0$\nodepart{lower}$\infty_1$};
            \Big\rangle
            \ ,&&&
            H(\mathcal{T}^{out}_{1,1,0})&=
            \Big\langle
                \tikz[baseline=-2.07cm]
                \node[biw] at (0,-2) {$\infty_1$\nodepart{lower}$0$};
            \Big\rangle\\
        \text{For }a\geq2\quad H(\mathcal{T}^{out}_{a,0,\infty_1})&=
            \Big\langle
                \tikz[baseline=-2.07cm]
                \node[biw] at (0,-2) {$\infty_1$\nodepart{lower}$\infty_1$};
                \, ,\
                \tikz[baseline=-2.07cm]
                \node[biw] at (0,-2) {$0$\nodepart{lower}$\infty_1$};
            \Big\rangle\\
        \text{For }b\geq2\quad H(\mathcal{T}^{out}_{0,b,\infty_1})&=
            \Big\langle
                \tikz[baseline=-2.07cm]
                \node[biw] at (0,-2) {$\infty_1$\nodepart{lower}$\infty_1$};
            \Big\rangle
            \ ,&&&
        H(\mathcal{T}^{out}_{0,b,0})&=
            \Big\langle
                \tikz[baseline=-2.07cm]
                \node[biw] at (0,-2) {$\infty_1$\nodepart{lower}$0$};
            \Big\rangle\\
        \text{For }a,b\geq 1\text{ and }a+b\geq 3\quad H(\mathcal{T}^{out}_{a,b,\infty_1})&=
            \Big\langle
                \tikz[baseline=-2.07cm]
                \node[biw] at (0,-2) {$\infty_1$\nodepart{lower}$\infty_1$};
                \, ,\
                \tikz[baseline=-2.07cm]
                \node[biw] at (0,-2) {$0$\nodepart{lower}$\infty_1$};
            \Big\rangle
            \ ,&&&
        H(\mathcal{T}^{out}_{a,b,0})&=
            \Big\langle
                \tikz[baseline=-2.07cm]
                \node[biw] at (0,-2) {$\infty_1$\nodepart{lower}$0$};
                \, ,\
                \tikz[baseline=-2.07cm]
                \node[biw] at (0,-2) {$0$\nodepart{lower}$0$};
            \Big\rangle
    \end{align*}
\end{lemma}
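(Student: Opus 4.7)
The proof will follow the same two-step strategy used for Lemma~\ref{lemma:intreecohom}, exploiting the duality between special in-vertices and special out-vertices. Since the in-weight $c$ of the core vertex in $\mathcal{T}^{out}_{a,b,c}$ takes values in the finite set $\{0,\infty_1\}$ rather than ranging over $\mathbb{N}$, one expects the same kind of answer but with a slightly richer case distinction. Concretely, the differential acting on a graph in $\mathcal{T}^{out}_{a,b,c}$ cannot decrease the number of univalent special out-vertices, so the first step is to install a filtration by that number. Let $u_N \mathcal{T}^{out}_{a,b,c}$ denote the associated graded subcomplex spanned by trees with exactly $N$ univalent leaves (by convention, the graph consisting of the bare core vertex with no attached special out-vertex is placed in $u_1$). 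The lemma then reduces to the two separate claims:
\begin{enumerate}
\item[(i)] $H(u_1\mathcal{T}^{out}_{a,b,c})$ is spanned by the classes listed in the statement;
\item[(ii)] $u_N\mathcal{T}^{out}_{a,b,c}$ is acyclic for all $N\geq 2$.
\end{enumerate}

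For step (i), the graphs in $u_1\mathcal{T}^{out}_{a,b,c}$ are linear chains of passing special out-vertices emanating from the core vertex and terminating in a univalent leaf. The induced differential only redistributes out-weight along this chain (plus the splitting at the core) and its action formally mirrors that of $u_1\mathcal{T}^{in}_{a,b,c}$, up to swapping the roles of incoming and outgoing edges. I would therefore follow the same route: isolate the single-vertex class $\bigl(\tikz[baseline=-0.5ex]\node[biw]{$0$\nodepart{lower}$c$};\bigr)$ which represents a separate cocycle whenever it is an admissible bi-weight, and then analyze the remaining subcomplex via a further filtration by total out-weight. The associated graded piece is isomorphic to the reduced cobar-bar construction $\overline{\Omega B(V)}$ built from the same two-dimensional dga $V=\mathbb{K}\oplus\mathbb{K}a$ used in the proof of Lemma~\ref{cor:Helpcomplex}, only transposed so that $a$ now encodes out-weight rather than in-weight. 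Its cohomology is one-dimensional and corresponds to the class $\bigl(\tikz[baseline=-0.5ex]\node[biw]{$\infty_1$\nodepart{lower}$c$};\bigr)$, at least whenever that bi-weight is admissible for the given $(a,b,c)$. The enumeration of cases in the statement is then merely a matter of checking which of the two candidate classes $\tfrac{\infty_1}{c}$ and $\tfrac{0}{c}$ actually satisfies the weight-constraints imposed by $(a,b,c)$.

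For step (ii), I would reuse verbatim the branch-vertex argument from the proof of Lemma~\ref{lemma:uNTin}: call a vertex a branch vertex if at least two distinct paths from univalent leaves end at it, filter over the number of branch vertices, then filter again over the total out-weight carried by branch vertices. The associated graded complex splits as a direct sum over branch graphs, each summand being a tensor product, indexed by hairs of the branch graph, of copies of the complex $\mathcal{I}^{out}$ dual to $\mathcal{I}$. By Maschke's theorem and the one-dimensional cohomology of $\mathcal{I}^{out}$ (the dual of Corollary~\ref{cor:Helpcomplex}), the cohomology of each summand vanishes because for $N\geq 2$ every branch graph contains a vertex with at least two hairs, forcing the tensor factor to vanish under the action of the automorphism group.

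The main obstacle I anticipate is purely bookkeeping: the admissibility conditions on the bi-weights around the core vertex depend subtly on $(a,b,c)$ (e.g.\ the case $c=0$, $b=0$ would be degenerate and is excluded by the hypotheses, while the boundary cases $(a,b,c)=(1,0,\infty_1),(0,1,\infty_1),(1,1,0)$ each kill one of the two expected generators for a different reason). The case lists in the lemma statement are precisely a careful enumeration of when $\tfrac{\infty_1}{c}$ and/or $\tfrac{0}{c}$ survive; verifying each one is routine once the cobar-bar identification of step (i) is in place, but it must be done case by case.
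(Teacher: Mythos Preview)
Your proposal is correct and follows essentially the same approach as the paper: the paper's proof simply says the argument is analogous to the in-vertex case, using the same filtrations (by number of univalent leaves, then by branch vertices) together with Corollary~\ref{cor:Helpcomplex}, which is exactly the two-step strategy you outline. Your explicit identification of the dual complex $\mathcal{I}^{out}$ and the case-by-case admissibility check for $\tfrac{\infty_1}{c}$ versus $\tfrac{0}{c}$ fills in precisely the details the paper leaves implicit.
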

\begin{proof}
The proof is similar to the proof for proposition \ref{prop:specialOut}, using the same filtrations and decompositions together with corollary \ref{cor:Helpcomplex}.
\end{proof}

Now we turn to the complexes $S^{in}_1\wGC_k^*$ and $S^{in}_1\wGC_k^+$. Let $\{S^{out}_r\wGC^*_k\}_{r\geq 0}$ and $\{S^{out}_r\wGC^+_k\}_{r\geq 0}$ be the spectral sequences associated to the filtrations over the number of out-core vertices.
\begin{proposition}\label{prop:specialOut+}
    The page one complex $S^{out}_1\wGC^*_k$ is a subcomplex of $S^{out}_1\wGC_k$ generated by directed graphs whose vertices are independently decorated by the bi-weights
$\begin{tikzpicture}[baseline={([yshift=-.5ex]current bounding box.center)}]
    \node[biw] at (0,0) {$\infty_1$\nodepart{lower}$\infty_1$};
\end{tikzpicture}$, 
$\begin{tikzpicture}[baseline={([yshift=-.5ex]current bounding box.center)}]
    \node[biw] at (0,0) {$0$\nodepart{lower}$\infty_1$};
\end{tikzpicture}$,
$\begin{tikzpicture}[baseline={([yshift=-.5ex]current bounding box.center)}]
    \node[biw] at (0,0) {$\infty_1$\nodepart{lower}$0$};
\end{tikzpicture}$, and
$\begin{tikzpicture}[baseline={([yshift=-.5ex]current bounding box.center)}]
    \node[biw] at (0,0) {$0$\nodepart{lower}$0$};
\end{tikzpicture}$ subject to the conditions of proposition \ref{prop:specialOut} as well as the additional condition that at least one vertex is not decorated by $\begin{tikzpicture}[baseline={([yshift=-.5ex]current bounding box.center)}]
    \node[biw] at (0,0) {$0$\nodepart{lower}$0$};
    \end{tikzpicture}$.
Furthermore, the page one complex $S^{out}_1\wGC^+_k$ is a subcomplex of $S^{out}_1\wGC^*_k$ where each graph additionally satisfies that either
\begin{itemize}
    \item at least one vertex is decorated with the bi-weight
    $\begin{tikzpicture}[baseline={([yshift=-.5ex]current bounding box.center)}]
    \node[biw] at (0,0) {$\infty_1$\nodepart{lower}$\infty_1$};
    \end{tikzpicture}$
    , or
    \item at least two vertices are decorated with the bi-weights
    $\begin{tikzpicture}[baseline={([yshift=-.5ex]current bounding box.center)}]
    \node[biw] at (0,0) {$0$\nodepart{lower}$\infty_1$};
    \end{tikzpicture}$
     and 
     $\begin{tikzpicture}[baseline={([yshift=-.5ex]current bounding box.center)}]
    \node[biw] at (0,0) {$\infty_1$\nodepart{lower}$0$};
    \end{tikzpicture}$ respectively.
\end{itemize}
\end{proposition}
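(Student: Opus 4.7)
The plan is to follow step-by-step the proof of Proposition \ref{prop:OutIn1}, with ``out-core'' replacing ``in-core'' throughout and relying on Lemma \ref{lemma:outtreecohom} in place of Lemma \ref{lemma:intreecohom}. Concretely, the page-zero complexes decompose over out-core graphs,
\[
S_0^{out}\wGC_k^*\cong \bigoplus_{\gamma}\mathsf{outCore}^*(\gamma),\qquad
S_0^{out}\wGC_k^+\cong \bigoplus_{\gamma}\mathsf{outCore}^+(\gamma),
\]
where $\mathsf{outCore}^*(\gamma)$ and $\mathsf{outCore}^+(\gamma)$ are the subcomplexes of $\mathsf{outCore}(\gamma)$ consisting of those graphs meeting the global $\wGC^*_k$- and $\wGC^+_k$-conditions respectively. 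Neither of these subcomplexes factors as a pure tensor product over $V(\gamma)$ because the defining conditions are global rather than local.

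The second step is to refine the tensor decomposition of $\mathsf{outCore}(\gamma)$. Each tree complex $\mathcal{T}^{out}_x$ splits as a dg vector space into four pieces
\[
\mathcal{T}^{out}_x=\mathcal{T}^{out}_x(0)\oplus \mathcal{T}^{out}_x(in)\oplus \mathcal{T}^{out}_x(out)\oplus \mathcal{T}^{out}_x(out\wedge in),
\]
classified according to whether the configuration at $x$ carries neither positive in- nor positive out-weight, positive in-weight only (which can only come from $x$ itself since special out-vertices have in-weight zero), positive out-weight only (from $x$ and/or attached special out-vertices), or both. The positive-in contribution is controlled entirely by the fixed parameter $c=w^{in}_x$ of $\mathcal{T}^{out}_{a,b,c}$, while the positive-out contribution is preserved in the required sense by the differential: vertex-splittings keep the total out-weight constant, and univalent-vertex creations can only increase it. One thereby obtains that the four pieces have well-defined cohomology computable by Lemma \ref{lemma:outtreecohom}, yielding the four core-vertex representatives $\tikz[baseline=-2.07cm]\node[biw] at (0,-2) {$0$\nodepart{lower}$0$};$, $\tikz[baseline=-2.07cm]\node[biw] at (0,-2) {$0$\nodepart{lower}$\infty_1$};$, $\tikz[baseline=-2.07cm]\node[biw] at (0,-2) {$\infty_1$\nodepart{lower}$0$};$, and $\tikz[baseline=-2.07cm]\node[biw] at (0,-2) {$\infty_1$\nodepart{lower}$\infty_1$};$ respectively.

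Finally, one reads off the conditions. With the above splitting one writes
\[
\mathsf{outCore}^*(\gamma)=\bigoplus_I\bigotimes_{x\in V(\gamma)}\mathcal{T}^{out}_x(I_x),\qquad \mathsf{outCore}^+(\gamma)=\bigoplus_J\bigotimes_{x\in V(\gamma)}\mathcal{T}^{out}_x(J_x),
\]
where $I\in\{0,in,out,out\wedge in\}^{V(\gamma)}$ ranges over signatures with at least one entry different from $0$, and $J$ over signatures such that either some $J_x=out\wedge in$, or there exist $x\ne y$ with $J_x=in$ and $J_y=out$. Matching these combinatorial conditions against the cohomology labels produces precisely the decoration rules stated in the proposition. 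The main obstacle I anticipate is making rigorous the claim that the four-way splitting of $\mathcal{T}^{out}_x$ is preserved by the differential in a manner that allows one to separate cohomology contributions; this amounts to a careful version of the argument in the proof of Proposition \ref{prop:OutIn1}, and hinges on the fact that the local differential on $S_0^{out}\wGC_k$ cannot destroy positive in- or out-weights once they are present.
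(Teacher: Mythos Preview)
Your proposal is correct and follows exactly the approach the paper intends: the paper's own proof is the single sentence ``The proof is analogous to that of proposition \ref{prop:OutIn1}'', and you have correctly unpacked what that analogy entails---decomposing over out-core graphs, splitting each $\mathcal{T}^{out}_x$ into four pieces according to presence of positive in/out weight, invoking Lemma \ref{lemma:outtreecohom} in place of Lemma \ref{lemma:intreecohom}, and matching signature conditions to the stated decoration rules. The point you flag as a potential obstacle (that the four-way splitting is preserved by the differential) is handled in the paper's proof of Proposition \ref{prop:OutIn1} by the phrase ``It is easy to verify that the differential preserve the decomposition'', so no further argument is expected here either.
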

\begin{proof}
    The proof is analogous to that of proposition \ref{prop:OutIn1}.
\end{proof}

\subsection{The 4-type graph complex $\qGC_k$}
\begin{definition}
    Let $\qGC_k$ be the subcomplex of $\wGC_k$ consisting of graphs whose vertices can independently be decorated by four types of decorations $\frac{\infty_1}{\infty_1},\ \frac{\infty_1}{0},\ \frac{0}{\infty_1}$ and $\frac{0}{0}$. The possible decorations of a vertex depend on its type. More concretely
    \begin{itemize}
        \item A univalent vertex can only be decorated by $\frac{\infty_1}{\infty_1}$.
        \item A source vertex can be decorated by $\frac{\infty_1}{\infty_1}$ and $\frac{0}{\infty_1}$.
        \item A target vertex can be decorated by $\frac{\infty_1}{\infty_1}$ and $\frac{\infty_1}{0}$.
        \item A passing vertex can be decorated by $\frac{\infty_1}{\infty_1}$, $\frac{\infty_1}{0}$ and $\frac{0}{\infty_1}$.
        \item A generic vertex can be decorated by $\frac{\infty_1}{\infty_1}$, $\frac{\infty_1}{0}$, $\frac{0}{\infty_1}$ and $\frac{0}{0}$.
    \end{itemize}
    Let $\qGC_k^*$ and $\qGC_k^+$ be the subcomplexes of $\qGC_k$ where
    \begin{itemize}
        \item $\qGC^*_k$ is generated by graphs where at least one vertex is not decorated by $\frac{0}{0}$.
        \item $\qGC^+_k$ is generated by graphs with at least one vertex decorated by $\frac{\infty_1}{\infty_1}$ or two vertices decorated by $\frac{0}{\infty_1}$ and $\frac{\infty_1}{0}$ respectively.
    \end{itemize}
\end{definition}
The differential act on vertices in the same way as the differential in proposition \ref{prop:specialOut}. Note that $\qGC^*_k\subset\wGC^*_k$ and $\qGC^+_k\subset\wGC^+_k$.
\begin{remark}
    We will use the following convention: when specifying the decorations of a general graph, the univalent vertices are excluded from this specification. For example, "A graph where all vertices are decorated by $\frac{\infty_1}{0}$" should be interpreted as a graph where all non-univalent vertices are decorated by $\frac{\infty_1}{0}$, and the univalent vertices decorated by their only possible decoration $\frac{\infty_1}{\infty_1}$.
\end{remark}
\begin{proposition}\label{prop:4-types}
    The three inclusions $\qGC_k\hookrightarrow\wGC_k $, $\qGC^*_k\hookrightarrow\wGC^*_k $ and $\qGC^+_k\hookrightarrow\wGC^+_k$
    are quasi-isomorphims.
\end{proposition}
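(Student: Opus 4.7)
The plan is to apply the two consecutive filtrations developed in the preceding subsections (by number of in-core vertices, then by number of out-core vertices) and compare the resulting iterated spectral sequences for $\wGC_k$ with those for its subcomplex $\qGC_k$.

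The first observation is that no graph $\Gamma\in\qGC_k$ admits a special in-vertex or a special out-vertex: since every univalent vertex in $\qGC_k$ is decorated by $\tfrac{\infty_1}{\infty_1}$, it can neither take the form $\tfrac{0}{n}$ with a single outgoing edge nor $\tfrac{m}{0}$ with a single incoming edge. Thus every vertex of $\Gamma$ is simultaneously in-core and out-core, so both filtrations are trivial on $\qGC_k$. In particular, the inclusion $\qGC_k\hookrightarrow\wGC_k$ respects both filtrations and induces a map between the associated (iterated) spectral sequences.

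At the $E_1$-page of the first filtration, Proposition \ref{prop:specialIn} identifies $S^{in}_1\wGC_k$ with graphs whose in-weights lie in $\{0,\infty_1\}$. Applying the second filtration and passing again to the $E_1$-page, Proposition \ref{prop:specialOut} restricts the out-weights to $\{0,\infty_1\}$ subject to precisely the vertex-type rules listed in the definition of $\qGC_k$. A direct inspection of the two descriptions shows that the induced inclusion $\qGC_k\hookrightarrow S^{out}_1 S^{in}_1\wGC_k$ is in fact an isomorphism of complexes, since every admissible vertex decoration on either side appears on the other with the same combinatorial constraints and the differential rules coincide. Convergence of both spectral sequences is guaranteed by the degree-shift trick recalled at the start of this section, so the classical comparison theorem yields the desired quasi-isomorphism.

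The proofs for $\qGC_k^*\hookrightarrow\wGC_k^*$ and $\qGC_k^+\hookrightarrow\wGC_k^+$ follow the same template, now invoking Propositions \ref{prop:OutIn1} and \ref{prop:specialOut+} for the respective $E_1$-pages. The main obstacle I anticipate is bookkeeping: one must check that the extra global conditions built into $\qGC_k^*$ and $\qGC_k^+$ (presence of at least one vertex with a prescribed nonzero decoration, or a prescribed pair of such vertices) match exactly the additional constraints that survive at the $E_1$-pages $S^{out}_1\wGC_k^*$ and $S^{out}_1\wGC_k^+$, and that these conditions are stable under the two filtrations rather than producing subtle mismatches between sub- and quotient complexes. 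This is a case-by-case verification using the vertex lists of the four propositions cited, but it introduces no new cohomological ingredient beyond Lemmas \ref{lemma:intreecohom} and \ref{lemma:outtreecohom}.
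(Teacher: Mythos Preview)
Your proposal is correct and follows essentially the same approach as the paper: filter first by in-core vertices and then by out-core vertices, observe that graphs in $\qGC_k$ (and its $*$, $+$ variants) contain no special in- or out-vertices so the filtrations are trivial there, and then invoke Propositions \ref{prop:specialOut} and \ref{prop:specialOut+} to identify the $E_1$-pages. The paper's proof is slightly terser but the logical structure is identical, including the appeal to the convergence argument from the beginning of the section.
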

\begin{proof}
    Consider a filtration over special in-vertices as seen in the previous section of both $\wGC_k$ and $\qGC_k$. The filtration is preserved by the inclusions since no graph in $\qGC_k$ contain special in-vertices, and furthermore the differential is trivial in the associated complex of $\qGC_k$. On the second page, we consider a second filtration over special out-vertices. The two complexes agree on the second page of the associated spectral sequence by proposition \ref{prop:specialOut}. This proves that the inclusion is a quasi-isomorphism. The proof is analogous for the other two inclusions with the help of proposition \ref{prop:specialOut+}
\end{proof}
\begin{remark}
    We notice the splitting of $\qGC_k=\qGC_k^0\oplus\qGC_k^*$ where $\qGC_k^0$ is the complex of graphs where all vertices are decorated by $\frac{0}{0}$ and $\qGC_k^*$ is the complex of graphs with at least one vertex not decorated by $\frac{0}{0}$. This is analogous to the splitting $\fwGC_k=\fWGC_k^0\oplus\fwGC_k^*$ in section \ref{sec:BiWCohom}, and it is immediate by proposition \ref{prop:0complex} that $\qGC^0_k\cong \dGC_k^\circlearrowleft$. In the remainder of the paper we focus on studying $\qGC_k^*$ and $\qGC_k^+$.
\end{remark}

\section{Reducing $\qGC_k^*$ to the mono-decorated graph complex $\fM_k^*$}

\subsection{Removing $\frac{0}{0}$ decorations from $\qGC_k^*$}
    Let $\qGC^{*,0}_k\subset\qGC^*_k$ be the subcomplex generated by graphs with at least one vertex decorated by $\frac{0}{0}$. Consider the short exact sequence
    \begin{equation*}
        0\rightarrow \qGC^{*,0}_k \hookrightarrow \qGC^{*}_k \rightarrow \tGC^{*}_k \rightarrow 0
    \end{equation*}
    where the quotient complex $\tGC_k^*$ is generated by graphs with no decoration $\frac{0}{0}$. Similarly let $\qGC^{+,0}\subset\qGC^+_k$ be the complex spanned by graphs with at least one decoration $\frac{0}{0}$ and $\tGC^{+}_k:=\qGC^{+}_k/\qGC^{+,0}_k$ the quotient complex of graphs with at least one vertex decorated by $\frac{\infty_1}{\infty_1}$, or a pair of vertices decorated by $\frac{0}{\infty_1}$ and $\frac{\infty_1}{0}$, with no vertices decorated by $\frac{0}{0}$.
\begin{proposition}\label{prop:0-decorations}
     The projections $\qGC^{*}_k\rightarrow\tGC_k^*$ and $\qGC^{+}_k\rightarrow\tGC_k^+$ are quasi-isomorphisms.
\end{proposition}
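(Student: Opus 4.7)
The plan is to show that the kernels $\qGC^{*,0}_k$ and $\qGC^{+,0}_k$ of the two projections are acyclic; by the long exact sequences attached to the two short exact sequences, this forces both projections to be quasi-isomorphisms. I will describe the argument for $\qGC^{*,0}_k$, the $^+$-case being parallel.

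Reading off the differential from proposition 5.3.1, the number $n_0(\Gamma)$ of non-$\tfrac{0}{0}$-decorated vertices of a graph $\Gamma$ is weakly non-decreasing under $d$: the only $\tfrac{0}{0}$-splitting available is $(\tfrac{0}{0},\tfrac{0}{0})_v$; any splitting of a non-$\tfrac{0}{0}$ vertex of decoration $D$ either preserves $n_0$ (the extensions $(D,\tfrac{0}{0})_v$, $(\tfrac{0}{0},D)_v$) or raises it by one (all remaining splittings); and the $\delta',\delta''$ pieces attach a univalent $\tfrac{\infty_1}{\infty_1}$. Setting $F^q \qGC^{*,0}_k = \{\Gamma : n_0(\Gamma)\ge q\}$ gives a decreasing filtration by subcomplexes whose associated spectral sequence converges by the regrading trick of section 5.1.

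On the initial page $E_0$ only the $n_0$-preserving components of $d$ survive: the internal splittings $(\tfrac{0}{0},\tfrac{0}{0})_v$ of $\tfrac{0}{0}$-vertices, and the extensions $(D,\tfrac{0}{0})_w$, $(\tfrac{0}{0},D)_w$ that attach a new $\tfrac{0}{0}$-vertex to a non-$\tfrac{0}{0}$-vertex $w$ of decoration $D$. In particular, the multiset of non-$\tfrac{0}{0}$-vertices together with their decorations is frozen on $E_0$, and the remaining dynamics only reshape the $\tfrac{0}{0}$-subgraph and its connections to the non-$\tfrac{0}{0}$ skeleton. This "$\tfrac{0}{0}$-extension complex" is structurally analogous to the tree complexes $\mathcal{T}^{in}_x$ of section 5, and its cohomology can be computed by a bar-cobar argument parallel to lemma 5.2.4 and corollary 5.2.6; the argument produces a single generating class, represented by the graph with no $\tfrac{0}{0}$-vertices at all. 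But this class is precisely the one excluded by the defining condition $n_0\ge 1$ of $\qGC^{*,0}_k$, so $E_0$ is acyclic and hence so is $\qGC^{*,0}_k$. The same reasoning works verbatim for $\qGC^{+,0}_k$, because the defining condition of $\qGC^+_k$ concerns only non-$\tfrac{0}{0}$-decorations, which are frozen on $E_0$; the excluded surviving class (no $\tfrac{0}{0}$-vertices) still has the required $\tfrac{\infty_1}{\infty_1}$ vertex or the required $\tfrac{0}{\infty_1}$/$\tfrac{\infty_1}{0}$ pair by assumption.

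The main obstacle is the precise identification of the $E_0$-complex with a bar-cobar resolution. Unlike the trees $\mathcal{T}^{in}_x$ of section 5, which grow from a single in-core vertex, a $\tfrac{0}{0}$-subgraph here can simultaneously link several non-$\tfrac{0}{0}$-vertices and contain loops, and the extension differential can reassign an edge between two non-$\tfrac{0}{0}$-vertices to a newly created $\tfrac{0}{0}$-vertex. The resolution must therefore be set up globally across the whole non-$\tfrac{0}{0}$ skeleton rather than vertex by vertex, and the analog of lemma 5.2.4 has to be proved in this more delicate setting, presumably via an auxiliary filtration by the number of $\tfrac{0}{0}$-vertices adjacent to the skeleton followed by the bar-cobar computation of corollary 5.2.6 at each combinatorial building block. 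Once this global resolution is in place, the single-generator cohomology statement and hence acyclicity are immediate.
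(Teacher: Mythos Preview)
Your filtration by $n_0$ is sound, but your description of the $E_0$-differential is incomplete. Looking at the formula in proposition~5.3.1 for a vertex decorated $\tfrac{0}{\infty_1}$ (resp.\ $\tfrac{\infty_1}{0}$), the $\delta''$ (resp.\ $\delta'$) piece contains a term where the original vertex is relabelled $\tfrac{0}{0}$ while a new univalent $\tfrac{\infty_1}{\infty_1}$-vertex is attached below (resp.\ above). This term preserves $n_0$: one non-$\tfrac{0}{0}$ decoration is lost and one is gained. Hence on $E_0$ the multiset of non-$\tfrac{0}{0}$ decorations is \emph{not} frozen---a $\tfrac{0}{\infty_1}$ vertex can mutate into a $\tfrac{\infty_1}{\infty_1}$ univalent antenna on a new $\tfrac{0}{0}$ vertex---and the reduction to a local bar--cobar problem anchored at a fixed non-$\tfrac{0}{0}$ skeleton breaks down.

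Independently of this, the proposal does not contain a proof of the $E_0$ acyclicity: you correctly identify that the $\tfrac{0}{0}$-subgraph may contain loops and join several skeleton vertices, so the tree argument of lemma~5.2.4 does not transfer, and the phrases ``presumably via an auxiliary filtration'' and ``once this global resolution is in place'' are exactly the missing step. The paper circumvents this difficulty by a different pair of filtrations: first by the number of \emph{non-passing} vertices, so that the page-zero differential only creates bivalent passing vertices (which assemble into strings along edges rather than into graphs with loops); then by the number of vertices not decorated $\tfrac{\infty_1}{\infty_1}$, so that the remaining differential only inserts $\tfrac{\infty_1}{\infty_1}$-passing vertices on edges. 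This reduces the computation to edge-local string complexes $\mathcal{E}_e$, which are handled by a short case analysis on the decorations of the endpoints; a second pass with the same idea (replacing $\tfrac{\infty_1}{\infty_1}$ by $\tfrac{\infty_1}{0}$) finishes the argument. The point is that filtering so as to isolate \emph{passing} vertices, rather than $\tfrac{0}{0}$-vertices, keeps the residual combinatorics one-dimensional.
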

\begin{proof}
    It is enough to show that $\qGC^{*,0}_k$ and $\qGC^{+,0}_k$ are acyclic.
    Consider a filtration of $\qGC^{*,0}_k$ over the number of non-passing vertices and let $P_0\qGC^{*,0}_k$ be the first page of the associated spectral sequence. The differential act by only creating passing vertices. Consider a filtration of $P_0\qGC^{*,0}_k$ over the number of vertices \textit{not} decorated by $\frac{\infty_1}{\infty_1}$. The differential on the first page of the associated spectral sequence $D_0P_0\qGC^{*,0}_k$ only creates passing vertices decorated by $\frac{\infty_1}{\infty_1}$.
    To each graph in this complex, we can associated a $\frac{\infty_1}{\infty_1}$\textit{-skeleton graph} by removing passing vertices decorated by $\frac{\infty_1}{\infty_1}$ and replace them by a single edge. The $\frac{\infty_1}{\infty_1}$-skeleton graph is invariant under the action of the differential, hence the complex split as
    \begin{equation*}
        gr(P_0\qGC^{*,0}_k)=\bigoplus_{\gamma}\mathcal{C}^*(\gamma)
    \end{equation*}
    where $\mathcal{C}^*(\gamma)$ is the associated complex of graphs with $\frac{\infty_1}{\infty_1}$-skeleton $\gamma$. Note that no graph has the associated $\frac{\infty_1}{\infty_1}$-skeleton graph with one single vertex and one edge, since any such graph has no vertex decorated by $\frac{0}{0}$. We claim the following:
    \begin{itemize}
        \item If $\gamma$ has at least one vertex decorated by $\frac{\infty_1}{\infty_1}$, then $H(\mathcal{C}^*(\gamma))=0$.
        \item If $\gamma$ has no vertices decorated by $\frac{\infty_1}{\infty_1}$ and at least one decorated by $\frac{\infty_1}{0}$ or $\frac{0}{\infty_1}$, then $H(\mathcal{C}^*(\gamma))=\langle\gamma\rangle$.
        \item If $\gamma$ is only decorated by $\frac{0}{0}$ and has at least one univalent vertex, then $H(\mathcal{C}^*(\gamma))=\langle\gamma\rangle$.
        \item If $\gamma$ is only decorated by $\frac{0}{0}$, then $H(\mathcal{C}^*(\gamma))=0$.
    \end{itemize}
    In the three first cases, the complex can be written as
    \begin{equation}\label{eq:ZeroTensor}
        \mathcal{C}^*(\gamma)\cong\Big(\bigotimes_{e\in E(\gamma)}\mathcal{E}_e\Big)^{\mathrm{Aut}(\gamma)}
    \end{equation}
    where $\mathcal{E}_e$ is the associated complex of passing vertices decorated by $\frac{\infty_1}{\infty_1}$ on the edge $e$ in the skeleton. In the first case, there is one edge $e'$ in $\gamma$ such that one of its adjacent vertices is decorated by $\frac{\infty_1}{\infty_1}$ and the other by another decoration. One computes that $\mathcal{E}_{e'}$ is acyclic.
    In the second and third case, all $\mathcal{E}_e$ are isomorphic, consisting of the complex of passing vertices decorated by $\frac{\infty_1}{\infty_1}$. One sees that $H(\mathcal{E}_e)$ is generated by the graph of two vertices and an edge, with no passing vertices, giving the desired result.
    In the fourth case, the complex does not split as a tensor, but a direct sum of such tensors where in each one, at least one complex $\mathcal{E}_e$ have at least one passing vertex decorated by $\frac{\infty_1}{\infty_1}$. This complex is equivalent to the tensor above when removing the initial graph with no passing vertices, which we saw was a cycle in the second case. Hence the complex is acyclic.
    The cohomology of $D_0P_0\qGC^{*,0}$ now consists of graphs only decorated by $\frac{\infty_1}{0}$, $\frac{0}{\infty_1}$ and $\frac{0}{0}$ with at least one vertex decorated by $\frac{0}{0}$. The differential act by creating passing vertices.
    It is easy to see that these graphs form a subcomplex of $P_0\qGC^{*,0}_k$, and so the second page $D_1P_0\qGC^{*,0}$ of the spectral sequence is described with the full differential. We claim that $D_1P_0\qGC^{*,0}$ is acyclic. By considering a similar filtration over the number of vertices not decorated by $\frac{\infty_1}{0}$, one finds that this page of the spectral sequence is acyclic, finishing the proof.
    The proof to show that $\qGC^{+,0}_k$ is acyclic follows the same argument above using the same filtrations.
\end{proof}
\subsection{Subcomplex of monodecorated graphs}
Let $\Gamma$ be an undecorated directed graph. We define $\Gamma(\frac{0}{\infty_1})$ to be the graph $\Gamma$ where all non-univalent vertices are decorated by $\frac{0}{\infty_1}$ and the univalent vertices are decorated by $\frac{\infty_1}{\infty_1}$. If $\Gamma$ contains a target vertex (which cannot be decorated by $\frac{0}{\infty_1}$), then we set $\Gamma(\frac{0}{\infty_1})=0$. Similarly define $\Gamma(\frac{\infty_1}{0})$.
Let $\Gamma$ be a directed graph without any univalent vertices and let $\tGC^+_k(\Gamma)\subset\tGC_k^+$ be the subset of decorated graphs whose underlying shape is $\Gamma$. Define $\Gamma(\omega)\in\tGC_k^+$ as the sum of graphs
\begin{equation*}
    \Gamma(\omega)=\sum_{\gamma\in\tGC^+_k(\Gamma)}\gamma.
\end{equation*}
Consider the decomposition $d=d_s+d_u$ of the differential of $\tGC_k^+$ where $d_u$ is the part of where a new univalent vertex is created and $d_s$ the part where no new univalent vertices are created (also known as splitting). We see that $d_s\Gamma(\omega)=(d_s\Gamma)(\omega)$. Further we see that $d_u\Gamma(\omega)=-(d_u\Gamma(\frac{\infty_1}{0})+d_u\Gamma(\frac{0}{\infty_1}))$. Due to $\Gamma(\frac{\infty_1}{0})$ and $\Gamma(\frac{0}{\infty_1})$ being zero for some graphs, we have more specifically that
\begin{itemize}
    \item If $\Gamma$ contain both at least one source and one target, then $d_u\Gamma(\omega)=0$.
    \item If $\Gamma$ contain at least one source but no targets, then $d_u\Gamma(\omega)=-d_u\Gamma(\frac{0}{\infty_1})$.
    \item If $\Gamma$ contain at least one target but no sources, then $d_u\Gamma(\omega)=-d_u\Gamma(\frac{\infty_1}{0})$.
    \item If $\Gamma$ contain neither sources nor targets, then $d_u\Gamma(\omega)=-(d_u\Gamma(\frac{0}{\infty_1})+d_u\Gamma(\frac{\infty_1}{0}))$.
\end{itemize}
Define $\mGC_k^+\subset\tGC_k^+$ to be the subcomplex generated by graphs on three forms:
\begin{enumerate}
    \item Graphs on the form $\Gamma(\omega)$ with $\Gamma$ a directed graph with no univalent vertices.
    \item Graphs on the form $\Gamma(\frac{0}{\infty_1})$ with $\Gamma$ a directed graph with at least one univalent source, and no targets of any valency.
    \item Graphs on the form $\Gamma(\frac{\infty_1}{0})$ with $\Gamma$ a directed graph with at least one univalent target, and no sources of any valency.
\end{enumerate}
Similarly we define $\mGC_k^*\subset\tGC_k^*$ to be the subcomplex generated by graphs on three forms:
\begin{enumerate}
    \item Graphs on the form $\Gamma(\omega)$ with $\Gamma$ a directed graph with no univalent vertices.
    \item Graphs on the form $\Gamma(\frac{0}{\infty_1})$ with $\Gamma$ a directed graph containing no targets of any valency.
    \item Graphs on the form $\Gamma(\frac{\infty_1}{0})$ with $\Gamma$ a directed graph containing no sources of any valency.
\end{enumerate}
\begin{proposition}\label{prop:mGC}
    The inclusions $\mGC_k^+\rightarrow\tGC_k^+$ and $\mGC_k^*\rightarrow\tGC_k^*$ are quasi-isomorphisms.
\end{proposition}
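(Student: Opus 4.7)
The plan is to prove both inclusions are quasi-isomorphisms by establishing that the quotient complexes $\tGC_k^*/\mGC_k^*$ and $\tGC_k^+/\mGC_k^+$ are acyclic. First I would verify that $\mGC_k^+$ and $\mGC_k^*$ are indeed subcomplexes. This is essentially spelled out in the paragraphs preceding the statement: the splitting part satisfies $d_s\Gamma(\omega)=(d_s\Gamma)(\omega)$ because summing over all valid decorations commutes with splitting a vertex, while the behaviour of $d_u$ on $\Gamma(\omega)$ (and the behaviour of both $d_s$ and $d_u$ on the uniform generators $\Gamma(\tfrac{0}{\infty_1})$ and $\Gamma(\tfrac{\infty_1}{0})$) is given by the four-case analysis above the proposition; the checks amount to recognising that each resulting term is again of one of the three allowed forms.

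For the acyclicity of the quotients, my approach would be to set up a spectral sequence associated to a filtration by the number of non-univalent vertices of the underlying directed shape (which is preserved modulo a small shift by both $d_s$ and $d_u$, and is bounded on each piece of fixed loop number by Section~4's degree-shift trick). On the associated graded the remaining differential only modifies decorations and univalent decoration trees, and the complex decomposes as a direct sum over the underlying shape $\Gamma$ (stripped of univalent vertices) of a local \emph{decoration complex} $D(\Gamma)$ whose underlying vector space is spanned by all valid assignments of $\tfrac{\infty_1}{\infty_1}$, $\tfrac{\infty_1}{0}$, $\tfrac{0}{\infty_1}$ to vertices of $\Gamma$ (with the constraints imposed by sources/targets) together with optional attached $\tfrac{\infty_1}{\infty_1}$ univalents.

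The heart of the argument is then the \emph{local lemma}: in each decoration complex $D(\Gamma)$, the cohomology is one-dimensional and is generated precisely by $\Gamma(\omega)$ when $\Gamma$ has no univalent vertices, by $\Gamma(\tfrac{0}{\infty_1})$ when $\Gamma$ has a univalent source (and no target, in the $+$-case), and by $\Gamma(\tfrac{\infty_1}{0})$ symmetrically; in $\mGC^*$ the allowed univalent configurations are broader but the same statement holds mutatis mutandis. To prove this I would isolate the per-vertex decoration piece using a Künneth/Koszul decomposition together with the explicit contracting homotopy given by averaging over the three possible decorations at a single distinguished vertex and using the splitting terms of $d$ to cancel non-symmetric configurations, mimicking the bar-cobar argument used in Lemma~5.4 and Corollary~5.5.

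The main obstacle will be the local lemma and in particular the case distinction between $\mGC^*$ and $\mGC^+$. In $\mGC^+$ one must simultaneously account for the constraint that the graph admits at least one $\tfrac{\infty_1}{\infty_1}$ vertex \emph{or} a matched pair $(\tfrac{0}{\infty_1},\tfrac{\infty_1}{0})$; this extra condition prevents a naive per-vertex Künneth factorisation and forces one to track two distinct but interacting pieces of the decoration. Handling shapes with univalent vertices (which are forced to carry $\tfrac{\infty_1}{\infty_1}$ and so couple the decoration of their adjacent non-univalent neighbour to the decoration of the univalent) is the place where the $*$ and $+$ versions genuinely diverge, and it will require the contracting homotopy to be defined piecewise according to whether $\Gamma$ has sources, targets, both, or neither, and to be shown explicitly to respect the boundary of $\mGC^+$ inside $\mGC^*$.
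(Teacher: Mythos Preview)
Your reduction to acyclicity of the quotients is correct, and the paper does the same. However, there is a genuine gap in your spectral sequence argument.

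You propose filtering by the number of non-univalent vertices. With the splitting $d=d_s+d_u$, the part $d_s$ increases the number of non-univalent vertices by one, while $d_u$ leaves it unchanged. Hence on the associated graded the surviving differential is $d_u$ alone. But $d_u$ attaches a univalent $\tfrac{\infty_1}{\infty_1}$-vertex to an existing vertex; it does \emph{not} alter the decoration of any non-univalent vertex. Consequently the page-zero complex decomposes as a direct sum over pairs (core shape, \emph{fixed} decoration of the core), not merely over core shapes. Your ``decoration complex $D(\Gamma)$'' in which all decorations vary simultaneously, with a differential relating them, simply does not arise from this filtration. In particular the element $\Gamma(\omega)$, being a \emph{sum} over all decorations of a fixed shape, cannot appear as a page-zero cohomology class: on page zero different decorations of the same shape live in different summands and never interact. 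The ``local lemma'' as you formulate it therefore cannot be proved by the mechanism you describe; to make it work you would need a completely different filtration, and it is not clear which one would produce the decoration complex you want.

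The paper proceeds differently. It first makes the key observation that the two quotients coincide, $\mathsf{Q}^*_k=\mathsf{Q}^+_k$, so only one acyclicity statement is needed; you missed this and it would have spared you the delicate $*$-versus-$+$ case analysis that you yourself flag as the main obstacle. The paper then splits $\mathsf{Q}^+_k$ by the presence or absence of univalent vertices, and for each piece filters first by the number of \emph{non-passing} vertices (not non-univalent), then by the number of vertices not decorated by $\tfrac{\infty_1}{\infty_1}$. On the resulting associated graded the differential only creates $\tfrac{\infty_1}{\infty_1}$-passing vertices along edges, so the complex decomposes over ``$\tfrac{\infty_1}{\infty_1}$-skeleton graphs'' and a tensor product of elementary edge complexes, exactly as in the proof of Proposition~6.1.1. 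A short case analysis on the skeleton decorations then kills everything except the classes that were already in $\mGC_k^+$.
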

Consider the short exact sequences
\begin{align*}
    0 \rightarrow \mGC_k^+\rightarrow \tGC^+_k\rightarrow \mathsf{Q}^+_k \rightarrow 0\\
    0 \rightarrow \mGC_k^*\rightarrow \tGC^*_k\rightarrow \mathsf{Q}^*_k \rightarrow 0
\end{align*}
where $\mathsf{Q}^*_k=\tGC^*_k/\mGC_k^*$ and $\mathsf{Q}^+_k=\tGC^+_k/\mGC_k^+$. We observe that $\mathsf{Q}^*_k=\mathsf{Q}^+_k$. The complex $\mathsf{Q}^+_k$ is generated by graphs $\Gamma\in\tGC_k^+$ on the forms
\begin{enumerate}
    \item $\Gamma$ has no univalent vertices and has at least one vertex not decorated by $\frac{\infty_1}{\infty_1}$.
    \item $\Gamma=\Gamma(\frac{\infty_1}{0})$ and has at least one univalent target.
    \item $\Gamma=\Gamma(\frac{0}{\infty_1})$ and has at least one univalent source.
    \item $\Gamma$ has at least one univalent vertex and at least two non-univalent vertices with different decorations.
\end{enumerate}
The proposition follows if we show that $\mathsf{Q}_k^+$ is acyclic. Consider the subcomplex $\mathsf{Q}_k^{+,1}\subset\mathsf{Q}_k^+$ of graphs with at least one univalent vertex. We get the induced short exact sequence
\begin{equation*}
    0 \rightarrow \mathsf{Q}^{+,1}_k \rightarrow \mathsf{Q}^{+}_k \rightarrow \mathsf{Q}^{+,\geq 2}_k \rightarrow 0.
\end{equation*}
The complex $\mathsf{Q}^{+,\geq 2}_k$ is spanned by graphs on the form (1) as above and $\mathsf{Q}^{+,1}_k$ is spanned by graphs on the form (2)-(4). The acyclicity of $\mathsf{Q}^+_k$ follows from the following proposition.
\begin{proposition}
    The complexes $\mathsf{Q}^{+,1}_k$ and $\mathsf{Q}^{+,\geq2}_k$ are acyclic.
\end{proposition}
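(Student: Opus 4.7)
The proof plan is to mimic the strategy used in the proof of Proposition \ref{prop:0-decorations}, adapted to exploit the defining relations of the quotient $\mathsf{Q}^+_k$: namely, that $\Gamma(\omega)=0$ in $\mathsf{Q}^+_k$ for every shape $\Gamma$ without univalent vertices, and that $\Gamma(\tfrac{0}{\infty_1})=0$ (resp.\ $\Gamma(\tfrac{\infty_1}{0})=0$) for every shape $\Gamma$ with a univalent source and no targets (resp.\ a univalent target and no sources).

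For $\mathsf{Q}^{+,\geq 2}_k$, I would first filter by the number of non-passing vertices. On the first page the differential only creates passing vertices. I then filter the resulting complex by the number of vertices \emph{not} decorated by $\tfrac{\infty_1}{\infty_1}$; on the associated graded, the differential only produces new passing vertices decorated by $\tfrac{\infty_1}{\infty_1}$. As in the proof of Proposition \ref{prop:0-decorations}, I associate to each graph its $\tfrac{\infty_1}{\infty_1}$-skeleton $\gamma$ (obtained by collapsing chains of passing $\tfrac{\infty_1}{\infty_1}$-vertices into single edges), and the complex decomposes as $\bigoplus_{\gamma}\mathcal{C}(\gamma)$. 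Each $\mathcal{C}(\gamma)$ further factors as $\bigl(\bigotimes_{e\in E(\gamma)}\mathcal{E}_e\bigr)^{\mathrm{Aut}(\gamma)}$, and a case analysis on the decorations of the vertices adjacent to each edge shows that $\mathcal{E}_e$ is acyclic when one endpoint is $\tfrac{\infty_1}{\infty_1}$ and the other is not. This kills all skeletons with at least one $\tfrac{\infty_1}{\infty_1}$-vertex. What remains are skeletons whose non-univalent vertices are decorated only by $\tfrac{\infty_1}{0}$ and $\tfrac{0}{\infty_1}$. Here the relation $\Gamma(\omega)\equiv 0$ in the quotient enters: modulo $\mGC^+_k$, the $\tfrac{\infty_1}{\infty_1}$-decorated representatives were absorbed, and the remaining configurations are precisely those identified by this relation, giving acyclicity.

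For $\mathsf{Q}^{+,1}_k$, I would stratify by the three types (2), (3), (4) of generating graphs, use the filtration by the number of univalent vertices (which the differential only creates, never destroys within the quotient), and argue by induction. Type (4) graphs, having at least one univalent vertex and a mixed decoration on the core, collapse by the same skeleton-tensor argument as above, now with univalent vertices giving edges to a $\tfrac{\infty_1}{\infty_1}$-valued leaf; such $\mathcal{E}_e$ are again acyclic. Types (2) and (3) are symmetric; for (3), graphs $\Gamma(\tfrac{0}{\infty_1})$ with a univalent source are killed in the quotient whenever $\Gamma$ has no targets, so the only surviving classes have either a target in $\Gamma$ or lie outside the subspace (2)-(4), both contradicting membership in $\mathsf{Q}^{+,1}_k$.

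The main obstacle I anticipate is the bookkeeping on the final page of the spectral sequence for $\mathsf{Q}^{+,\geq 2}_k$: one must verify that after the $\tfrac{\infty_1}{\infty_1}$-skeletons with no $\tfrac{\infty_1}{\infty_1}$ vertices have been reached, the residual differential, combined with the quotient relations $\Gamma(\omega)=0$, $\Gamma(\tfrac{0}{\infty_1})=0$, $\Gamma(\tfrac{\infty_1}{0})=0$, leaves no non-trivial cohomology. This is delicate because the relations mix different shapes through the action of $d_u$ (as indicated in the case analysis just above the proposition), so one must be careful that the chosen filtration is compatible with these relations, likely via a secondary filtration by the number of source-or-target vertices in the underlying shape.
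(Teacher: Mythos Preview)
Your initial filtrations for $\mathsf{Q}^{+,\geq 2}_k$ (first by non-passing vertices, then by vertices not decorated $\tfrac{\infty_1}{\infty_1}$, then the $\tfrac{\infty_1}{\infty_1}$-skeleton decomposition) match the paper's approach exactly. The gap is in what you do \emph{after} killing the skeletons that contain a $\tfrac{\infty_1}{\infty_1}$-vertex.

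You attribute the vanishing of the remaining skeletons to the quotient relation $\Gamma(\omega)\equiv 0$, but this relation does not directly kill anything here: it only rewrites the all-$\tfrac{\infty_1}{\infty_1}$ graph in terms of the others, and those graphs have already been removed. What actually happens on this page is a case split that you have not carried out. If the skeleton $\gamma$ is decorated \emph{only} by $\tfrac{\infty_1}{0}$ (or only by $\tfrac{0}{\infty_1}$), then $\mathcal{C}^+(\gamma)$ is acyclic, not because of the quotient, but because the membership condition for $\tGC^+_k$ (at least one $\tfrac{\infty_1}{\infty_1}$, or a pair $\tfrac{\infty_1}{0}$ and $\tfrac{0}{\infty_1}$) forces every graph in $\mathcal{C}^+(\gamma)$ to carry at least one passing $\tfrac{\infty_1}{\infty_1}$-vertex; the ``no passing vertex'' term that would generate the cohomology is simply absent, and the argument is the same as case four in the proof of Proposition~\ref{prop:0-decorations}. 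However, if $\gamma$ carries \emph{both} a $\tfrac{\infty_1}{0}$-vertex and a $\tfrac{0}{\infty_1}$-vertex, then $\gamma$ itself lies in $\tGC^+_k$, the tensor argument goes through unobstructed, and $H(\mathcal{C}^+(\gamma))=\langle\gamma\rangle$ is \emph{not} zero. So the $E_2$-page $D_1P_0\mathsf{Q}^{+,\geq 2}_k$ is nontrivial: it consists of graphs with at least one $\tfrac{\infty_1}{0}$ and at least one $\tfrac{0}{\infty_1}$, with the differential creating passing vertices of either type.

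To finish, you need a \emph{second} round of the same trick: filter $D_1P_0\mathsf{Q}^{+,\geq 2}_k$ by the number of vertices not decorated by $\tfrac{\infty_1}{0}$, decompose over $\tfrac{\infty_1}{0}$-skeletons, and repeat the edge-tensor analysis. Now every surviving skeleton has at least one $\tfrac{\infty_1}{0}$-vertex adjacent (along some edge) to a $\tfrac{0}{\infty_1}$-vertex, and that edge's complex $\mathcal{E}_e$ is acyclic. This is the step the paper sketches in one line (``consider a filtration over the number of vertices not decorated by $\tfrac{\infty_1}{0}$''), and it is what your proposal is missing. Your anticipated ``secondary filtration by the number of source-or-target vertices'' is not the right one; the relevant parameter is the decoration type, not the underlying graph combinatorics.

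For $\mathsf{Q}^{+,1}_k$ the paper runs the same argument, the only new input being that the differential on a univalent source (resp.\ target) creates only passing $\tfrac{0}{\infty_1}$- (resp.\ $\tfrac{\infty_1}{0}$-) vertices. Your stratification by types (2)--(4) is workable but more complicated than necessary.
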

\begin{proof}
    First consider a filtration of $\mathsf{Q}^{+,\geq2}_k$ over the number of non-passing vertices and let $P_0\mathsf{Q}^{+,\geq2}_k$ be the first page of the spectral sequence. On this page the differential acts by only creating passing vertices. We will show that this page is acyclic. Consider a filtration on $P_0\mathsf{Q}^{+,\geq2}_k$ over the number of vertices not decorated by $\frac{\infty_1}{\infty_1}$ and let $D_0P_0\mathsf{Q}^{+,\geq2}_k$ be the first page of the associated spectral sequence. Here the differential act by only creating passing vertices decorated by $\frac{\infty_1}{\infty_1}$. Similar to proposition \ref{prop:0-decorations}, the complex decompose over $\frac{\infty_1}{\infty_1}$-skeleton graphs as
    \begin{equation*}
        D_0P_0\mathsf{Q}^{+,\geq2}_k=\bigoplus_\gamma\mathcal{C}^+(\gamma)
    \end{equation*}
    where $\mathcal{C}^+(\gamma)$ is the complex of graphs with $\frac{\infty_1}{\infty_1}$-skeleton $\gamma$. We claim the following
    \begin{itemize}
        \item If $\gamma$ has at least one vertex decorated by $\frac{\infty_1}{\infty_1}$, then $\mathcal{C}^+(\gamma)\simeq 0$.
        \item If $\gamma$ is only decorated by $\frac{\infty_1}{0}$, then $\mathcal{C}^+(\gamma)\simeq0$.
        \item If $\gamma$ is only decorated by $\frac{0}{\infty_1}$, then $\mathcal{C}^+(\gamma)\simeq0$.
        \item If $\gamma$ is decorated by both $\frac{\infty_1}{0}$ and $\frac{0}{\infty}$ (but not $\frac{\infty_1}{\infty_1}$), then $H(\mathcal{C}^+(\gamma))=\langle\gamma\rangle$.
    \end{itemize}
    These result follows by using similar arguments as in proposition $\ref{prop:0-decorations}$.  Let $D_1P_0\mathsf{Q}_k^{+,\geq2}$ be the second page of the spectral sequence. By the result above, it consists of graphs with at least a pair of vertices decorated by $\frac{\infty_1}{0}$ and $\frac{0}{\infty_1}$. Consider a filtration over the number of vertices not decorated by $\frac{\infty_1}{0}$. Using similar arguments as above on the associated spectral sequence, we get that $D_1P_0\mathsf{Q}_k^{+,\geq2}$ is acyclic.
    The proof showing that $\mathsf{Q}^{+,1}_k$ is acyclic is similar, noting that the differential acting on univalent sources only create passing vertices decorated by $\frac{0}{\infty_1}$ and univalent targets only create passing vertices decorated by $\frac{\infty_1}{0}$.
\end{proof}
\subsection{Removing long antennas}
Consider the two subcomplexes $\mGC_k^{*}(\frac{\infty_1}{0})$ and $\mGC_k^{*}(\frac{0}{\infty_1})$ of $\mGC_k^*$ of graphs whose non-univalent vertices are decorated by $\frac{\infty_1}{0}$ and $\frac{0}{\infty_1}$ respectively.
\begin{proposition}\label{prop:mGC*(0)}
    The complexes $\mGC_k^{*}(\frac{\infty_1}{0})$ and $\mGC_k^{*}(\frac{0}{\infty_1})$ are acyclic.
\end{proposition}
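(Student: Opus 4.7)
The strategy is to prove this by a short exact sequence plus a contracting homotopy, very much in the spirit of the earlier acyclicity proofs in this section. Since the edge-reversal involution gives an isomorphism $\mGC_k^{*}(\frac{\infty_1}{0}) \cong \mGC_k^{*}(\frac{0}{\infty_1})$ (sending $\Gamma(\frac{\infty_1}{0}) \mapsto \bar{\Gamma}(\frac{0}{\infty_1})$, where $\bar{\Gamma}$ is $\Gamma$ with all edges reversed), it suffices to prove acyclicity of one of them; I focus on $\mGC_k^{*}(\frac{\infty_1}{0})$.

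First I would split $\mGC_k^{*}(\frac{\infty_1}{0})$ along the presence of univalent vertices in the underlying shape $\Gamma$. Since $\Gamma$ has no sources, every univalent vertex is necessarily a univalent target. Let $\mathsf{A}$ be the subcomplex spanned by $\Gamma(\frac{\infty_1}{0})$ with $\Gamma$ having at least one univalent target, and let $\mathsf{B} = \mGC_k^{*}(\frac{\infty_1}{0})/\mathsf{A}$ be the quotient of graphs with no univalent vertices. This gives a short exact sequence
\begin{equation*}
    0 \to \mathsf{A} \to \mGC_k^{*}(\tfrac{\infty_1}{0}) \to \mathsf{B} \to 0,
\end{equation*}
and it suffices to prove that both $\mathsf{A}$ and $\mathsf{B}$ are acyclic.

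For $\mathsf{A}$, my plan is to build an explicit contracting homotopy. The decoration $\frac{\infty_1}{0}$ on a non-univalent vertex $v$ means summation over out-weight $m_v \geq 1$; morally, attaching a univalent target at $v$ via a new outgoing edge is interchangeable with increasing $m_v$ by one. This is exactly the meaning of the univalent-creation term $-G_1^v$ appearing in $d_v(\frac{\infty_1}{0})$ after passing to $\tGC_k^*$. I would define a degree $-1$ map $h\colon \mathsf{A}\to\mathsf{A}$ that picks a distinguished univalent target (say, the one with smallest label) and "absorbs" it into its unique neighbour by deleting the target together with its edge, accounting for signs via the edge-labelling convention. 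The identity $dh+hd = \mathrm{id}$ should then follow from a direct check in which the vertex-splitting part of $d$ cancels against the splitting contributions of $h$ applied to neighbouring configurations, and the $-G_1^v$ terms cancel against the "add target / remove target" composition. Making this homotopy equivariant with respect to the automorphism group of the graph is the main technical issue, but the usual averaging / skeletonisation trick should resolve it.

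For $\mathsf{B}$, the differential is much simpler: univalent creation is projected out, and only the splitting $(\frac{\infty_1}{0},\frac{\infty_1}{0})_v$ on non-univalent $\frac{\infty_1}{0}$-vertices survives. Graphs in $\mathsf{B}$ have no sources and no univalents, so every vertex has valence $\geq 2$ and at least one incoming edge. I would run a filtration analogous to those used in the proof of Proposition~\ref{prop:0-decorations}, filtering first by the number of non-passing vertices and then reducing via a skeleton argument (here, one would filter over the combinatorial type of the "core" obtained by contracting passing $\frac{\infty_1}{0}$-vertices), reducing the question to a tensor product over a family of small acyclic complexes analogous to the tree complexes $\mathcal T^{out}_x$ of Section~5. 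The hardest part is matching the bookkeeping: the proof has to keep track simultaneously of the no-source condition (which is inherited globally from $\Gamma$, not vertex-by-vertex) and the $\infty_1$-summation, and then combine Steps 2 and 3 through the long exact sequence to conclude $H(\mGC_k^{*}(\frac{\infty_1}{0}))=0$.
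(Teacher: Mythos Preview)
Your decomposition has a genuine gap: the quotient $\mathsf{B}$ is \emph{not} acyclic. The complex $\mathsf{B}$ consists of graphs with no univalent vertices, no sources, and all vertices decorated by $\frac{\infty_1}{0}$, with the pure splitting differential; this is exactly the complex $\fM_k^{*,\geq 2}(\frac{\infty_1}{0})$ appearing later in the paper. The very filtration you propose for it---over the number of non-passing vertices---is the one used in Proposition~\ref{prop:dGC/dGCs} to prove that $\mathsf{B}$ is quasi-isomorphic to $\dGC_k/\dGC_k^{s}$, which is far from acyclic (for $k=3$ its cohomology contains a copy of $\mathfrak{grt}$ in degree $-1$, and the loop-one part contributes an infinite family of classes). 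Consequently your contracting homotopy on $\mathsf{A}$ cannot exist either: by the long exact sequence, $H^{\bullet}(\mathsf{A})\cong H^{\bullet-1}(\mathsf{B})\neq 0$. The acyclicity of $\mGC_k^{*}(\frac{\infty_1}{0})$ is precisely the statement that the connecting morphism of your short exact sequence is an isomorphism; it cannot be deduced by showing each piece vanishes.

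The paper proceeds quite differently. Rather than separating off univalent vertices, it filters by the number of \emph{core vertices}, i.e.\ those that survive iterative removal of univalents. On the associated graded the differential only creates antenna vertices, so the complex decomposes over fixed core graphs $\gamma$ as $\bigoplus_\gamma \mathsf{Core}(\gamma)$, and each summand factors as a tensor product $\big(\bigotimes_{x\in V(\gamma)}\mathcal{T}_x\big)^{\mathrm{Aut}(\gamma)}$ of tree complexes of antennas rooted at $x$. Each $\mathcal{T}_x$ is then shown to be acyclic: one filters by the number of univalent leaves to get pieces $u_N\mathcal{T}_x$, checks $u_1\mathcal{T}_x$ directly, and for $N\geq 2$ runs the branch-vertex argument already used in Lemma~\ref{lemma:uNTin}. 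The point is that the antenna direction provides the contracting degree of freedom, not the presence or absence of a single univalent vertex.
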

\begin{proof}
    The two complexes are isomorphic by swapping decorations and reversing the orientation of all edges, hence it is enough to show that $\mGC_k^*(\frac{\infty_1}{0})$ is acyclic.
    Let the core of a graph be the graph remaining after iterative removal of univalent vertices. Any vertex in the core graph is called a core-vertex. Vertices which are not core vertices are called antenna vertices. The number of core vertices can not decrease under the action of the differential. Consider a filtration of $\mGC_k^{*}(\frac{\infty_1}{0})$ over the number of core-vertices in a graph. Let $gr( \mGC_k^{*}(\frac{\infty_1}{0}))$ be the associated graded complex. The differential acts by creating antenna vertices, but the core graph is invariant. Hence we get the decomposition
    \begin{equation*}
        gr(\mGC_k^{*}(\frac{\infty_1}{0}))=\bigoplus_\gamma\mathsf{Core}(\gamma)
    \end{equation*}
    where the summation is over all possible core graphs $\gamma$ and $\mathsf{Core}(\gamma)$ is the complex of graphs with core graph $\gamma$. This complex further decompose as
    \begin{equation*}
        \mathsf{Core}(\gamma)\cong\Big(\bigotimes_{x\in V(\gamma)}\mathcal{T}_x\Big)^{\mathrm{Aut}(\gamma)}
    \end{equation*}
    where $\mathcal{T}_x$ is the associated complex of antenna-vertices attached to the vertex $x$. The complex $\mathcal{T}_x$ is composed of directed trees with a designated core vertex $x$. All edges in any such graph are directed away from $x$. The proof follows if we show that $\mathcal{T}_x$ is acyclic. Consider a filtration on $\mathcal{T}_x$ over the number of univalent vertices (considering the core vertex $x$ to be univalent only when it is the only vertex in the graph). Let $gr\ \mathcal{T}_x$ be the associated graded complex. The differential acts by splitting vertices such that no new univalent vertices are created (except in the case of the one vertex graph). Hence it decomposes as $gr\ \mathcal{T}_x=\bigoplus_{N\geq 1}u_N\mathcal{T}_x$ where $u_N\mathcal{T}_x$ is the complex of graphs with $N$ univalent vertices. It is easy to see that $u_1\mathcal{T}_x$ is acyclic. Consider $u_N\mathcal{T}_x$ for $N\geq2$. Similar to the proof of lemma \ref{lemma:uNTin}, call a vertex $y$ of $\Gamma\in u_N\mathcal{T}_x$ a \textit{branch vertex} if there are at least two outgoing edges from $y$, or if there is a directed path from $y$ to a vertex $z$ which has at least two outgoing edges from it. The number of such vertices can not decrease under the differential. Consider a filtration over the number of branch vertices on $u_N\mathcal{T}_x$. Following the same arguments as in the proof of lemma \ref{lemma:uNTin}, we get that $u_N\mathcal{T}_x$ is acyclic, finishing the proof.
\end{proof}
We have the following decomposition
\begin{align*}
    \mGC_k^* &= \fM_k^* \oplus \fM_k^{a\geq2}\\
    \mGC_k^+ &= \fM_k^+ \oplus \fM_k^{a\geq2}
\end{align*}
where $\fM_k^{a\geq2}$ is the subcomplex of graphs containing at least one antenna with two or more vertices.
\begin{proposition}
    The injections $\fM_k^*\hookrightarrow\mGC_k^*$ and $\fM_k^+\hookrightarrow\mGC_k^+$ are quasi-isomorphisms.
\end{proposition}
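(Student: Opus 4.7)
The proof strategy mirrors that of Proposition \ref{prop:mGC*(0)}. The plan is to show that the subcomplex $\fM_k^{a\geq 2}$ is acyclic, from which it follows that the projection $\mGC_k^* \twoheadrightarrow \fM_k^*$ (and likewise $\mGC_k^+ \twoheadrightarrow \fM_k^+$) is a quasi-isomorphism, with the stated injection being a section. That $\fM_k^{a\geq 2}$ is indeed a subcomplex is a consequence of the observation that neither vertex splitting nor univalent-vertex creation can strip a univalent endpoint off an existing antenna; consequently any graph with an antenna of length at least two maps to a sum of graphs each still carrying such an antenna.

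To prove the acyclicity of $\fM_k^{a\geq 2}$, I would filter by the number of core vertices. On the associated graded complex the differential leaves the core graph invariant and acts only on the antennas, yielding a decomposition
\begin{equation*}
\mathrm{gr}(\fM_k^{a\geq 2}) = \bigoplus_{\gamma} \mathsf{Core}^{a\geq 2}(\gamma),
\end{equation*}
where $\gamma$ ranges over admissible core graphs and $\mathsf{Core}^{a\geq 2}(\gamma)$ is the subcomplex of graphs with core $\gamma$ containing at least one long antenna. Since antennas attached to distinct core vertices evolve independently, one has
\begin{equation*}
\mathsf{Core}^{a\geq 2}(\gamma) \cong \Bigl( \bigotimes_{x \in V(\gamma)} \mathcal{T}_x \Bigr)^{\mathrm{Aut}(\gamma)} \Big/ \Bigl( \bigotimes_{x \in V(\gamma)} \mathcal{T}_x^{\leq 1} \Bigr)^{\mathrm{Aut}(\gamma)},
\end{equation*}
where $\mathcal{T}_x$ is the complex of antennas attached at $x$ and $\mathcal{T}_x^{\leq 1}$ the subcomplex of antennas of length at most one.

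By Maschke's theorem and the K\"unneth formula, the problem reduces to showing that $\mathcal{T}_x / \mathcal{T}_x^{\leq 1}$ is acyclic for each core vertex $x$. This is exactly what the proof of Proposition \ref{prop:mGC*(0)} establishes for the subcomplex of antennas with at least two vertices: filter by the number of univalent endpoints to split the complex as $\bigoplus_{N\geq 1} u_N(\mathcal{T}_x / \mathcal{T}_x^{\leq 1})$; the $u_1$ piece is acyclic by the bar-cobar computation of Corollary \ref{cor:Helpcomplex} (the would-be generating single-vertex class lies in $\mathcal{T}_x^{\leq 1}$ and hence vanishes in the quotient), and the $u_N$ pieces for $N\geq 2$ are acyclic by the branch-vertex filtration from Lemma \ref{lemma:uNTin}. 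The main technical obstacle is simply bookkeeping: antenna vertices in the present setting may carry either the $\frac{0}{\infty_1}$ decoration (when the antenna edges point toward the core) or the $\frac{\infty_1}{0}$ decoration (when they point away), but in either case all vertices of a single antenna share the same decoration, so each antenna falls into exactly one of the two mirror-image cases already handled in Proposition \ref{prop:mGC*(0)}.
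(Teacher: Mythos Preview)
Your argument is essentially correct but takes a considerably longer route than the paper. The paper's proof is three lines: since a long antenna forces a univalent vertex, every graph in $\fM_k^{a\geq2}$ is of type (2) or (3), giving a splitting $\fM_k^{a\geq2}=\fM_k^{a\geq2}(\frac{\infty_1}{0})\oplus\fM_k^{a\geq2}(\frac{0}{\infty_1})$; each summand is then a direct summand (as a complex) of the already-known-acyclic $\mGC_k^*(\frac{\infty_1}{0})$ respectively $\mGC_k^*(\frac{0}{\infty_1})$ from Proposition~\ref{prop:mGC*(0)}, and a direct summand of an acyclic complex is acyclic. No filtration, no spectral sequence, no Künneth---just the observation that the relevant complex was already shown acyclic and the long-antenna part sits inside it as a summand.

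Your approach instead re-runs the whole core-graph/tree-complex machinery of Proposition~\ref{prop:mGC*(0)} from scratch. This works, but it duplicates effort. One small technical point: your displayed formula identifies the \emph{subcomplex} $\mathsf{Core}^{a\geq2}(\gamma)$ with a \emph{quotient} $\bigl(\bigotimes_x\mathcal{T}_x\bigr)^{\mathrm{Aut}}\big/\bigl(\bigotimes_x\mathcal{T}_x^{\leq1}\bigr)^{\mathrm{Aut}}$, and your subsequent reduction to $\mathcal{T}_x/\mathcal{T}_x^{\leq1}$ via Künneth tacitly uses that $\mathcal{T}_x^{\leq1}$ is itself a direct summand as a complex (otherwise the tensor of short-antenna pieces is not a subcomplex and the quotient does not factor). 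This is exactly the same splitting-of-complexes assumption that underlies the paper's one-line argument, so you may as well invoke it directly rather than rebuild the filtration.
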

\begin{proof}
    The statement follows if we show that $\fM_k^{a\geq2}$ is acyclic.
    Note that $\fM_k^{a\geq2}$ decompose over graphs whose non-univalent vertices are decorated by $\frac{\infty_1}{0}$ and $\frac{0}{\infty_1}$ respectively, i.e $\fM_k^{a\geq2}=\fM_k^{a\geq2}(\frac{\infty_1}{0})\oplus\fM_k^{a\geq2}(\frac{0}{\infty_1})$.
    Further note that $\mGC_k^*(\frac{\infty_1}{0})=\fM_k^{*,a=0,1}(\frac{\infty_1}{0})\oplus\fM_k^{a\geq2}(\frac{\infty_1}{0})$. Hence acyclicity of $\mGC_k^*(\frac{\infty_1}{0})$ from proposition \ref{prop:mGC*(0)} gives that $\fM_k^{a\geq2}(\frac{\infty_1}{0})$ is also acyclic.
\end{proof}'
\section{Cohomology of $\fM_k^*$ and $\fM_k^+$}
\subsection{A commutative diagram of graph complexes}
Let $\fM_k^{+,1}$ be the subcomplex of $\fM_k^+$ of graphs with at least one univalent vertex.
Further let $\fM_k^*(\frac{\infty_1}{0})$ and $\fM_k^*(\frac{0}{\infty_1})$ be the subcomplexes of $\fM_k^*$ of graphs whose non-univalent vertices are decorated by $\frac{\infty_1}{0}$ and $\frac{0}{\infty_1}$ respectively. Note that $\fM_k^{+,1}$ is a subcomplex of $\fM_k^*(\frac{\infty_1}{0})\oplus\fM_k^*(\frac{0}{\infty_1})$ and can be rewritten as $\fM_k^{+,1}=\fM_k^{*,1}(\frac{\infty_1}{0})\oplus\fM_k^{*,1}(\frac{0}{\infty_1})$ where $\fM_k^{*,1}(\frac{\infty_1}{0})$ is the subcomplex of $\fM_k^*$ of graphs with at least one univalent vertex and where all non-univalent vertices are decorated by $\frac{\infty_1}{0}$, and similarly for $\fM_k^{*,1}(\frac{0}{\infty_1})$. We get the following commutative diagram
\begin{equation}\label{eq:+*-Diagram}
    \xymatrix{
    & 0 \ar[d] & 0 \ar[d] & 0 \ar[d] & \\
    0 \ar[r] & {\fM^{+,1}_k} \ar[r] \ar[d] & \fM^{+}_k \ar[r] \ar[d] & {\fM^{+,\geq2}_k} \ar[d]^{id} \ar[r] & 0  \\
    0 \ar[r] & \fM^{*}_k(\frac{\infty_1}{0})\oplus\fM^{*}_k(\frac{0}{\infty_1}) \ar[d] \ar[r] & \fM^{*}_k \ar[d] \ar[r] & {\fM^{+,\geq2}_k} \ar[r] \ar[d] & 0 \\
    0 \ar[r] & {\fM^{*,\geq2}_k(\frac{\infty_1}{0})\oplus\fM^{*,\geq2}_k(\frac{0}{\infty_1})} \ar[r]^{id} \ar[d] & {\fM^{*,\geq2}_k(\frac{\infty_1}{0})\oplus\fM^{*,\geq2}_k(\frac{0}{\infty_1})} \ar[r] \ar[d] &  0  \ar[r] \ar[d] & 0 \\
    & 0 & 0 & 0 &
    }
\end{equation}
where $\fM^{*,\geq2}_k(\frac{\infty_1}{0})$ is the quotient complex of graphs with no univalent vertices, and all vertices are decorated by $\frac{\infty_1}{0}$, and similarly for $\fM^{*,\geq2}_k(\frac{0}{\infty_1})$). Further $\fM_k^{+,\geq 2}$ is the quotient complex of graphs with no univalent vertices, and where each graph is on the form $\Gamma(\omega)$.
We already showed in proposition \ref{prop:mGC*(0)} that $\fM_k^*(\frac{\infty_1}{0})$ and $\fM_k^*(\frac{0}{\infty_1})$ are acyclic, and so we get the following corollaries.
\begin{corollary}\label{cor:mGCDiagram}
    \hspace{1cm}
    \begin{itemize}
        \item The projection $\fM_k^*\rightarrow\fM_k^{+,\geq2}$ is a quasi-isomorphism.
        \item The connecting morphisms $\delta:H^\bullet(\fM_k^{*,\geq2}(\frac{\infty_1}{0})\oplus\fM_k^{*,\geq2}(\frac{0}{\infty_1}))\rightarrow H^{\bullet+1}(\fM_k^{+,1})$ is an isomorphism.
    \end{itemize}
\end{corollary}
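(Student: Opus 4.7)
The plan is to deduce both claims directly from the long exact sequences in cohomology attached to two short exact sequences in the commutative diagram \eqref{eq:+*-Diagram}, using only the already-established acyclicity of $\fM_k^*(\tfrac{\infty_1}{0})$ and $\fM_k^*(\tfrac{0}{\infty_1})$ from Proposition \ref{prop:mGC*(0)}.

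For the first bullet, I would look at the middle row of \eqref{eq:+*-Diagram}, namely
\begin{equation*}
0 \longrightarrow \fM_k^{*}(\tfrac{\infty_1}{0})\oplus\fM_k^{*}(\tfrac{0}{\infty_1}) \longrightarrow \fM_k^{*} \longrightarrow \fM_k^{+,\geq 2} \longrightarrow 0 .
\end{equation*}
Its leftmost term is a direct sum of two acyclic complexes, hence acyclic, so the induced long exact sequence collapses and the projection $\fM_k^{*}\rightarrow\fM_k^{+,\geq 2}$ induces an isomorphism on cohomology. This is the first bullet of the corollary.

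For the second bullet, I would apply the same principle to the leftmost column of \eqref{eq:+*-Diagram},
\begin{equation*}
0 \longrightarrow \fM_k^{+,1} \longrightarrow \fM_k^{*}(\tfrac{\infty_1}{0})\oplus\fM_k^{*}(\tfrac{0}{\infty_1}) \longrightarrow \fM_k^{*,\geq 2}(\tfrac{\infty_1}{0})\oplus\fM_k^{*,\geq 2}(\tfrac{0}{\infty_1}) \longrightarrow 0 .
\end{equation*}
Again the middle term is acyclic by Proposition \ref{prop:mGC*(0)}, so in the associated long exact sequence every other third term vanishes. Hence the connecting morphism
\begin{equation*}
\delta: H^{\bullet}\bigl(\fM_k^{*,\geq 2}(\tfrac{\infty_1}{0})\oplus\fM_k^{*,\geq 2}(\tfrac{0}{\infty_1})\bigr)\longrightarrow H^{\bullet+1}(\fM_k^{+,1})
\end{equation*}
is forced to be an isomorphism, giving the second bullet.

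Both steps are routine once the diagram \eqref{eq:+*-Diagram} is accepted and Proposition \ref{prop:mGC*(0)} is in hand; there is no real obstacle beyond unpacking the long exact sequences. The only mild sanity check I would run is that the connecting map $\delta$ has the expected shape, i.e.\ for a cocycle $\Gamma(\tfrac{\infty_1}{0})$ or $\Gamma(\tfrac{0}{\infty_1})$ in the quotient $\fM_k^{*,\geq 2}(\cdot)$ one lifts to $\fM_k^{*}(\cdot)$ and applies the differential; the resulting graph lies in $\fM_k^{+,1}$ because the univalent-vertex-creating part of $d$ is precisely what distinguishes the larger complex from its no-univalent-vertex quotient. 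This identification will be useful in the next section for computing $H(\fM_k^+)$ explicitly, but is not needed for the isomorphism statement itself.
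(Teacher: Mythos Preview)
Your proposal is correct and matches the paper's approach exactly: the paper simply states that the corollary follows from the acyclicity of $\fM_k^*(\tfrac{\infty_1}{0})$ and $\fM_k^*(\tfrac{0}{\infty_1})$ established in Proposition~\ref{prop:mGC*(0)}, and you have correctly spelled out which row and column of diagram~\eqref{eq:+*-Diagram} yield each bullet via the associated long exact sequences.
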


\subsection{The cohomology of $\fM_k^*$}
Let $\fM^*_k=\mathsf{b}_1\M_k^*\oplus\M_k^*$ where $\mathsf{b}_1\M_k^*$ is the subcomplex of graphs with loop number one, and $\M_k^*$ is the subcomplex of graphs with loop number two and higher. All complexes in the commutative diagram \ref{eq:+*-Diagram} above split in the same manner, and we use an analogous notation for their splittings.
\begin{proposition}\label{prop:M*cohom}
    \hspace{1cm}
    \begin{enumerate}
        \item The map $b:\dGC_k\rightarrow\M_k^{+,\geq2}$ where $b(\Gamma)=\Gamma(\omega)$ is a quasi-isomorphism.
        \item The map $b_1:\mathsf{g}_1\dGC_k\rightarrow \mathsf{b}_1\M_k^{+,\geq 2}$ where $b_1(\Gamma)=\Gamma(\omega)$ is an isomorphism. In particular
        \begin{equation*}
        H^\bullet(\mathsf{b}_{1}\M_k^*)\cong H^\bullet(\mathsf{b}_1\dGC_k)\cong \bigoplus_{\substack{i\geq 1\\ i\equiv 2k+1 \mod 4}}
        \mathbb{K}[k-i]
        \end{equation*}
        where the component $\mathbb{K}[k-i]$ denotes the space spanned by the sum of all possible graphs consisting of $i$ bivalent vertices attached in a loop.
    \end{enumerate}
\end{proposition}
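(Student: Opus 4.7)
The plan is to verify $b$ is a chain map (routine), then prove (1) by showing that the subcomplex of graphs containing passing vertices is acyclic via a filtration argument, and prove (2) by direct inspection since loop-number-1 graphs are heavily constrained.

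First I would verify $b$ is a chain map. On the quotient $\fM_k^{+,\geq 2}$, the differential of $\Gamma(\omega)$ reduces to $(d_s\Gamma)(\omega)$, where $d_s$ is vertex splitting, since the univalent-creating terms land in $\fM_k^{+,1}$ and vanish. On $\dGC_k$ the differential is likewise just $d_s$, as the pieces $\delta', \delta''$ cancel against parts of $\delta$ (see the remark after Definition \ref{def:GC}). Hence $b(d\Gamma)=(d_s\Gamma)(\omega)=d(b\Gamma)$.

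For part (1), note that $\fM_k^{+,\geq 2}$ is spanned as a graded vector space by elements $\Gamma(\omega)$ for $\Gamma$ a directed graph with no univalent vertices, and $b$ is injective onto the subspace spanned by those $\Gamma$ having no passing vertices. I would show the cokernel $Q$, spanned by $\Gamma(\omega)$ with $\Gamma$ containing at least one passing vertex, is acyclic. Filter $Q$ by the number of non-passing vertices. On the $E_0$-page, the differential only splits passing vertices, so $E_0 Q$ decomposes over non-passing skeletons $\gamma$ (obtained by contracting maximal chains of passing vertices) as a tensor product over edges $e \in E(\gamma)$, with each factor $\mathcal{P}_e$ the complex of linear strings of passing vertices on $e$ decorated by elements of $\{\frac{\infty_1}{\infty_1}, \frac{\infty_1}{0}, \frac{0}{\infty_1}\}$. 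The key combinatorial lemma would be that each $\mathcal{P}_e$ is contractible onto the empty insertion, proved by a nested filtration by decoration type and a bar-cobar style argument analogous to Corollary \ref{cor:Helpcomplex} and Proposition \ref{prop:0-decorations}. This forces $Q$ to be acyclic and yields (1).

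For part (2), any loop-number-1 graph in $\fM_k^{+,\geq 2}$ satisfies $e=v$ with no univalent vertices, which forces every vertex to be bivalent (a single trivalent vertex would give $2e \geq 3 + 2(v-1) = 2v+1$, violating the parity of $2e$). Such graphs are directed cycles, and $b_1$ is a bijection between cycles $\Gamma$ and their decorated sums $\Gamma(\omega)$, hence an isomorphism of graded vector spaces; together with the chain map property this gives an isomorphism of complexes. The explicit cohomology then follows from the known description of $H(\dGC_k^2)$ by loop graphs with $i \equiv 2k+1 \bmod 4$ edges. The main obstacle will be establishing the contractibility of $\mathcal{P}_e$ in part (1): tracking the interaction of the three decoration types under the splitting differential requires a layered filtration analogous to, but somewhat more intricate than, those in Proposition \ref{prop:0-decorations}.
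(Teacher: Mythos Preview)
Your approach is essentially the same as the paper's: both use a filtration over the number of non-passing vertices to prove (1), and both handle (2) by direct inspection. The paper's proof is extremely terse (one sentence per part), so your elaboration is reasonable.

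There is one inaccuracy worth flagging. In your description of $\mathcal{P}_e$, you say the passing vertices are ``decorated by elements of $\{\frac{\infty_1}{\infty_1}, \frac{\infty_1}{0}, \frac{0}{\infty_1}\}$''. This is not correct: the elements of $\M_k^{+,\geq 2}$ are of the form $\Gamma(\omega)$, where $\omega$ already sums over \emph{all} admissible decorations of the underlying directed graph $\Gamma$. Hence the generators of $\M_k^{+,\geq 2}$ are indexed simply by directed graphs $\Gamma$ with no univalent vertices, and the differential is $d_s$ acting on $\Gamma$ itself (as you correctly note in your chain-map verification, $d_s\Gamma(\omega)=(d_s\Gamma)(\omega)$). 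Consequently $\mathcal{P}_e$ is just the complex of directed strings of passing vertices with no extra decoration data, and its contractibility onto the empty insertion is immediate (a standard one-dimensional bar-type computation), not the ``somewhat more intricate'' layered filtration you anticipate. This simplification removes the main obstacle you identified and brings your argument in line with the paper's one-line proof.
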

\begin{proof}
    The proof showing that $b$ is an quasi-isomorphism follows by considering a filtration over the number of non-passing vertices. By inspection one sees that $b_1$ is in fact an isomorphism.
\end{proof}
This proposition together with corollary \ref{cor:mGCDiagram} gives us Theorem 1.
\begin{theorem}    $H^\bullet(Def^*(\mathcal{H}olieb_{c,d}^\circlearrowleft))=H^\bullet(\GC_{c+d+1})\oplus \Big(\bigoplus_{\substack{i\geq 1\\ i\equiv 2(c+d+1)+1 \mod 4}}
        \mathbb{K}[c+d+1-i]\Big)$.
\end{theorem}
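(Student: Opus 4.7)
The strategy is to assemble the chain of reductions developed in Sections $4$--$7$ into a single composition of quasi-isomorphisms. First I would apply Proposition \ref{prop:HoliebWGC} to identify $Der^*(\mathcal{H}olieb_{c,d}^\circlearrowleft) \cong \fwGC_{c+d+1}^*$. Since the differential preserves the loop number $b = e - v + 1$, I can work loop degree by loop degree. For the piece of loop number at least one, namely $\wGC_{c+d+1}^*$, I would chain the quasi-isomorphisms
\begin{equation*}
\wGC_{c+d+1}^* \longleftarrow \qGC_{c+d+1}^* \longrightarrow \tGC_{c+d+1}^* \longleftarrow \mGC_{c+d+1}^* \longleftarrow \fM_{c+d+1}^* \longrightarrow \fM_{c+d+1}^{+,\geq 2}
\end{equation*}
supplied respectively by Proposition \ref{prop:4-types}, Proposition \ref{prop:0-decorations}, Proposition \ref{prop:mGC}, the inclusion $\fM_k^* \hookrightarrow \mGC_k^*$ at the end of Section $6$, and Corollary \ref{cor:mGCDiagram}. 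This reduces the computation to $H^\bullet(\fM_{c+d+1}^{+,\geq 2})$.

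Next I would split this complex by loop number. For the $b \geq 2$ piece, Proposition \ref{prop:M*cohom}(1) furnishes a quasi-isomorphism $\dGC_{c+d+1} \to \M_{c+d+1}^{+,\geq 2}$ defined by $\Gamma \mapsto \Gamma(\omega)$; composing with Willwacher's quasi-isomorphism $f:\GC_{c+d+1} \xrightarrow{\sim} \dGC_{c+d+1}$ identifies its cohomology with $H^\bullet(\GC_{c+d+1})$. For the $b = 1$ piece, Proposition \ref{prop:M*cohom}(2) immediately gives the bivalent-loop cohomology $\bigoplus_{i \equiv 2(c+d+1)+1 \bmod 4} \mathbb{K}[c+d+1 - i]$, each generator represented by an oriented $i$-gon with all vertices decorated by $\omega$.

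The substantive work has already been done in Sections $5$ and $6$: the successive filtration and spectral-sequence arguments that collapse general bi-weighted graphs onto the four canonical decoration types, then onto monodecorated graphs, and finally eliminate long antennas. For the present theorem the only remaining bookkeeping is to verify that each reduction respects the loop-number grading, which is immediate since none of the maps in the chain alters the underlying shape of a graph. The stated decomposition then follows by taking the direct sum of the two contributions.
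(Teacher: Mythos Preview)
Your proposal is correct and follows the same route as the paper: the paper's proof is the single sentence ``This proposition together with corollary \ref{cor:mGCDiagram} gives us Theorem 1,'' which implicitly relies on the chain of reductions from Sections 4--6 that you have spelled out explicitly. Your zig-zag $\wGC^*_k \leftarrow \qGC^*_k \rightarrow \tGC^*_k \leftarrow \mGC^*_k \leftarrow \fM^*_k \rightarrow \fM^{+,\geq 2}_k$ followed by the loop-number splitting and the identification via Proposition \ref{prop:M*cohom} is exactly the intended argument, just made visible.
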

\subsection{The cohomology of $\fM_k^+$}
\begin{proposition}\label{prop:dGC/dGCs}
    \hspace{1cm}
    \begin{enumerate}
        \item Let $f^s:\dGC_k/\dGC^{s}_k\rightarrow \M_k^{*,\geq2}(\frac{\infty_1}{0})$ be the map where $f^s(\Gamma)=\Gamma(\frac{\infty_1}{0})$, and let $f^t:\dGC_k/\dGC^{t}_k\rightarrow \M_k^{*,\geq2}(\frac{0}{\infty_1})$ be the map where $f^t(\Gamma)=\Gamma(\frac{0}{\infty_1})$. These maps are quasi-isomorphisms.
        \item The differentials of the complexes $\mathsf{b}_1\M_k^{*,\geq2}(\frac{\infty_1}{0})$ and $\mathsf{b}_1\M_k^{*,\geq2}(\frac{0}{\infty_1})$ are zero. In particular
        \begin{equation*}
            H^\bullet(\mathsf{b}_1\M_k^{*,\geq2}(\frac{\infty_1}{0}))\cong H^\bullet(\mathsf{b}_1\M_k^{*,\geq2}(\frac{0}{\infty_1}))\cong \bigoplus_{\substack{i\geq 1 \\ i \equiv 1 \mod 2}}\mathbb{K}[k-i]
        \end{equation*}
        where each component $\mathbb{K}[k-i]$ is generated by the loop graph with $i$ vertices that are all passing and decorated by $\frac{\infty_1}{0}$ and $\frac{0}{\infty_1}$ respectively.
    \end{enumerate}
\end{proposition}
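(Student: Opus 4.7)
The plan is to handle the two claims separately, with part (1) requiring a spectral sequence comparison and part (2) reducing to a small direct computation.

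For part (1), one first checks that $f^s$ is well-defined and a chain map. The map $f^s(\Gamma)=\Gamma(\tfrac{\infty_1}{0})$ vanishes precisely on graphs containing a source (since a source vertex cannot carry the decoration $\tfrac{\infty_1}{0}$), so it descends to $\dGC_k/\dGC_k^s$. That it intertwines the differentials follows from the fact that on $\fM_k^{*,\geq 2}(\tfrac{\infty_1}{0})$ the full bi-weighted differential of proposition \ref{prop:specialOut} collapses to ordinary vertex-splitting: the $\tfrac{0}{0}$-terms in the splitting rule vanish since $\tfrac{0}{0}$ is not allowed in $\fM_k^*$, and the univalent-vertex-creation terms are killed in the quotient $\fM_k^{*,\geq 2}$. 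Furthermore, any split in which one of the two resulting vertices has no incoming edge is automatically discarded: such a vertex would be a source, incompatible with the forced decoration $\tfrac{\infty_1}{0}$. This mirrors exactly the effect of the quotient by $\dGC_k^s$ on the source side.

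To upgrade well-definedness and chain-map to quasi-isomorphism, I would follow the strategy of proposition \ref{prop:M*cohom}(1) and filter both complexes by the number of non-passing vertices. On the zeroth page the induced differential only creates passing vertices, so each complex splits as a direct sum indexed by the skeleton graph obtained after contracting every maximal string of passing vertices. The local complex attached to each edge of the skeleton is a complex of passing-vertex chains, whose cohomology is one-dimensional and concentrated in ``length zero'' by a bar/cobar-type argument analogous to lemma \ref{cor:Helpcomplex}. Since the analogous phenomenon on the source side is trivial ($\dGC_k$ contains no passing vertices to begin with), the $E_1$-pages match term by term under $f^s$, and one concludes that $f^s$ is a quasi-isomorphism. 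The case of $f^t$ is identical after reversing edge orientations and swapping $\tfrac{\infty_1}{0}\leftrightarrow\tfrac{0}{\infty_1}$.

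For part (2), I would first identify the generators of $\mathsf{b}_1\fM_k^{*,\geq 2}(\tfrac{\infty_1}{0})$. Since loop number one combined with the absence of univalent vertices forces the underlying graph to be a single cycle, and the forced decoration $\tfrac{\infty_1}{0}$ forbids any source vertex, a short case analysis along the cycle shows that all edges must be oriented consistently and every vertex must be passing. Thus the complex is spanned by the directed $i$-cycles $\Gamma_i$ for $i\geq 1$, each vertex carrying the decoration $\tfrac{\infty_1}{0}$. Coinvariance under the cyclic action of $\mathbb{Z}/i\mathbb{Z}$, whose sign is governed by the edge-ordering sign for $k$ even and the vertex-ordering sign for $k$ odd, forces $\Gamma_i=0$ unless $i$ is odd. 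Applying the reduced splitting differential at any vertex of $\Gamma_i$ produces, as the only surviving term, the cycle $\Gamma_{i+1}$ obtained by inserting a new passing vertex; but whenever $\Gamma_i$ is nonzero the index $i+1$ is even, so $\Gamma_{i+1}=0$ and hence $d\Gamma_i=0$. A quick degree count $(i-1)k-(k-1)i=i-k$ yields the claimed degree shift $\mathbb{K}[k-i]$.

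The main obstacle I anticipate is the local computation on the skeleton edges in part (1): one must set up the bar/cobar argument carefully to cover passing chains carrying decoration $\tfrac{\infty_1}{0}$, and verify that the comparison with the passing-free situation on the $\dGC_k$ side really is an isomorphism on the $E_1$-page. Part (2) is essentially just the combinatorial identification of generators together with a parity argument, with no substantial analytic input.
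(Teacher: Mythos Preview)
Your proposal is correct and follows essentially the same approach as the paper: part (1) via a filtration by the number of non-passing vertices (exactly what the paper invokes in one line), and part (2) via the observation that a loop-number-one graph without univalent vertices and without sources must be a consistently oriented cycle of passing vertices, after which the parity kills the differential. The paper compresses part (2) into the single remark that such graphs ``have either both a source and a target or neither,'' which is just a slicker phrasing of your case analysis; your added detail on the edge-wise local complexes in part (1) and the explicit $\mathbb{Z}/i\mathbb{Z}$ sign check in part (2) are welcome elaborations of what the paper leaves implicit.
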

\begin{proof}
    The first part follows by a filtration over the number of non-passing vertices. The latter part follows by noting that graphs with loop number one and no univalent vertices have either both a source and a target or neither. 
\end{proof}
We do already at this point get main theorem 3.
\begin{theorem}
    $H^0(Def^+(\mathcal{H}olieb_{1,1}^\circlearrowleft))=H^\bullet(\M_3^+)=\mathfrak{grt}\oplus\mathfrak{grt}$.
\end{theorem}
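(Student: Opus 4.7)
The plan is to assemble the chain of quasi-isomorphisms built over sections 4--6 with the short exact sequence sitting in the middle column of the commutative diagram \eqref{eq:+*-Diagram}, and to feed into the resulting long exact sequence the strong vanishing $H^l(\dGC_3)=0$ for $l \geq -2$ recalled in section 3. Concretely, by stringing together Proposition \ref{prop:HoliebWGC}, Proposition \ref{prop:4-types}, Proposition \ref{prop:0-decorations}, Proposition \ref{prop:mGC} and the final reduction $\fM_k^+ \hookrightarrow \mGC_k^+$, I would first identify $H^\bullet(Def^+(\mathcal{H}olieb_{1,1}^\circlearrowleft))$ with $H^\bullet(\fM_3^+)$ up to the degree convention. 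Since the loop-number-one part is already pinned down by the proposition describing $\mathsf{b}_1\M_k^+$ and contributes nothing in the relevant degree, the task reduces to computing $H^0(\M_3^+)$, the genus$\,\geq 2$ summand.

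Next, I would extract from the middle column of \eqref{eq:+*-Diagram}, restricted to loop number $\geq 2$, the short exact sequence
\[
0 \longrightarrow \M_3^{+,1} \longrightarrow \M_3^+ \longrightarrow \M_3^{+,\geq 2} \longrightarrow 0
\]
and pass to its long exact sequence. Proposition \ref{prop:M*cohom}(1) identifies $H^\bullet(\M_3^{+,\geq 2})$ with $H^\bullet(\dGC_3)$ via the quasi-isomorphism $b \colon \dGC_3 \to \M_3^{+,\geq 2}$ sending $\Gamma \mapsto \Gamma(\omega)$, while Corollary \ref{cor:mGCDiagram} together with Proposition \ref{prop:dGC/dGCs}(1) realizes the connecting morphism as an isomorphism $H^{\bullet+1}(\M_3^{+,1}) \cong H^\bullet(\dGC_3/\dGC_3^s) \oplus H^\bullet(\dGC_3/\dGC_3^t)$. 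Reading off the fragment at degree zero,
\[
H^{-1}(\dGC_3) \longrightarrow H^0(\M_3^{+,1}) \longrightarrow H^0(\M_3^+) \longrightarrow H^0(\dGC_3),
\]
both flanks vanish by the cited corollary to $H^l(\GC_3)=0$ for $l \geq -2$ combined with $\GC_3 \simeq \dGC_3$, so the middle arrow is an isomorphism. This gives $H^0(\M_3^+) \cong H^{-1}(\dGC_3/\dGC_3^s) \oplus H^{-1}(\dGC_3/\dGC_3^t)$, and the introduction has already derived $H^{-1}(\dGC_3/\dGC_3^s) \cong \mathfrak{grt}$ (and similarly for the targeted quotient) from the short exact sequence $0 \to \dGC_3^s \to \dGC_3 \to \dGC_3/\dGC_3^s \to 0$, the vanishing of $H^l(\dGC_3)$ in low degrees, and Willwacher's identification $H^0(\GC_2) \cong \mathfrak{grt}$.

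The heart of the argument is therefore already in place: all the substantive work, namely the tree-complex spectral sequence computations of sections 5 and the $\frac{\infty_1}{\infty_1}$-skeleton decomposition arguments of section 6, has been executed. The main obstacle is simply careful bookkeeping, which I expect to address explicitly: one must confirm that the degree-zero slot in $\M_3^+$ receives no contribution from the discarded $\fM_k^{a \geq 2}$ summand nor from $\mathsf{b}_1\M_3^+$, and that the connecting-map identification in Corollary \ref{cor:mGCDiagram} is additive so that the two summands $H^\bullet(\dGC_3/\dGC_3^s)$ and $H^\bullet(\dGC_3/\dGC_3^t)$ really appear independently. Once these checks are in hand the theorem follows, and the alternative proof alluded to in the introduction via $H^0(\dGC_3^{st}) \cong \mathfrak{grt} \oplus \mathfrak{grt}$ (Corollary \ref{cor:st-grt}) provides an independent cross-verification.
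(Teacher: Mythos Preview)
Your proposal is correct and follows essentially the same route as the paper's own proof: the same short exact sequence $0\to\M_3^{+,1}\to\M_3^+\to\M_3^{+,\geq2}\to 0$, the same identifications of the flanking terms with $\dGC_3$ and with $\dGC_3/\dGC_3^s\oplus\dGC_3/\dGC_3^t$ via Proposition~\ref{prop:M*cohom} and Proposition~\ref{prop:dGC/dGCs}, the same use of the vanishing $H^l(\dGC_3)=0$ for $l\geq -2$, and the same final reduction to $H^0(\dGC_3^s)\cong H^0(\dGC_3^t)\cong\mathfrak{grt}$ through the long exact sequence of $0\to\dGC_3^s\to\dGC_3\to\dGC_3/\dGC_3^s\to 0$. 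Your extra bookkeeping remarks (ruling out contributions from $\mathsf{b}_1\M_3^+$ and $\fM_k^{a\geq2}$ in degree zero) are harmless elaborations rather than a different strategy.
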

\begin{proof}
    Consider the short exact sequence
    \begin{equation*}
        \xymatrix{
        0 \ar[r] & \M_3^{+,\geq1} \ar[r] & \M_3^+ \ar[r] & \M_3^{+,\geq2} \ar[r] & 0}.
    \end{equation*}
    We extract the following exact sequence from the induced long exact sequence
    \begin{equation*}
        \xymatrix{
        H^{-1}(\mathsf{dGC}_3) \ar[r] & H^{-1}(\mathsf{dGC}_3/\mathsf{dGC}^s_3)\oplus H^{-1}(\mathsf{dGC}_3/\mathsf{dGC}^t_3) \ar[r] & H^0(\M_k^+) \ar[r] & H^0(\mathsf{dGC}_3)}
    \end{equation*}
    by using proposition \ref{prop:dGC/dGCs} and the quasi-isomorphisms induced from the diagram \ref{eq:+*-Diagram}.
    Now since $H^k(\dGC_3)=0$ for $k\geq -2$, we get the isomorphism $H^0(\mGC_3)\cong H^{-1}(\mathsf{dGC}_3/\mathsf{dGC}^s_3)\oplus H^{-1}(\mathsf{dGC}_3/\mathsf{dGC}^t_3)$.
    Using the same trick on the long exact sequence on cohomology induced by the short exact sequence
    \begin{equation*}
        0 \rightarrow \dGC_3^s\rightarrow \dGC_3\rightarrow \dGC_3/\dGC_3^s \rightarrow 0 
    \end{equation*}
    we get that $H^{-1}(\mathsf{dGC}_3/\mathsf{dGC}^s_3)\cong H^0(\dGC_3^s)$. Similarly $H^{-1}(\mathsf{dGC}_3/\mathsf{dGC}^t_3)\cong H^0(\dGC_3^t)$.
    It was shown in \cite{Z2} that there are quasi-isomorphisms $\dGC_k\rightarrow \dGC_{k+1}^s$ and $\dGC_k\rightarrow \dGC_{k+1}^t$, and so in particular $H^0(\dGC_3^s)=H^0(\dGC_3^t)=H^0(\dGC_2)=\mathfrak{grt}$, giving the theorem.
\end{proof}
From the same short exact sequence of complexes of loop number one graphs
\begin{equation*}
        \xymatrix{
        0 \ar[r] & \mathsf{b}_1\M_k^{+,1} \ar[r] & \mathsf{b}_1\M_k^+ \ar[r] & \mathsf{b}_1\M_k^{+,\geq2} \ar[r] & 0}.
\end{equation*}
we can derive the cohomology of $\mathsf{b}_1\M_k^{+}$.
\begin{proposition}
    The cohomology of $\mathsf{b}_1\M_k^+$ is given by
    \begin{equation*}
        H^\bullet(\mathsf{b}_1\M_k^+)=\bigoplus_{\substack{i\geq 1 \\ i \equiv 1 \mod 2}}\mathcal{C}_i, \qquad\text{where } \mathcal{C}_i=
        \begin{cases}
            \mathbb{K}[k-(i+1)] & \text{if }i\equiv 2k+1 \mod 4\\
            \mathbb{K}[k-(i+1)]\oplus\mathbb{K}[k-(i+1)] & \text{if }i\equiv 2k+3 \mod 4\\
        \end{cases}
    \end{equation*}
\end{proposition}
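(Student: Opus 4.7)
The plan is to feed the short exact sequence
\begin{equation*}
    0 \longrightarrow \mathsf{b}_1\M_k^{+,1} \longrightarrow \mathsf{b}_1\M_k^+ \longrightarrow \mathsf{b}_1\M_k^{+,\geq 2} \longrightarrow 0
\end{equation*}
into the induced long exact sequence on cohomology, using the computations already at hand. Proposition \ref{prop:M*cohom}(2) gives $H^d(\mathsf{b}_1\M_k^{+,\geq 2})=\mathbb{K}$ precisely when $d=i-k$ with $i$ odd and $i\equiv 2k+1\pmod 4$, generated by a class represented by a decorated directed cycle $\Gamma_i(\omega)$. Combining Proposition \ref{prop:dGC/dGCs}(2) with the connecting isomorphism from Corollary \ref{cor:mGCDiagram}(2) shows that $H^d(\mathsf{b}_1\M_k^{+,1})\cong \mathbb{K}^2$ precisely when $d=j+1-k$ with $j$ odd, the two copies coming from the orthogonal direct summands $\mathsf{b}_1\M_k^{*,1}(\tfrac{\infty_1}{0})$ and $\mathsf{b}_1\M_k^{*,1}(\tfrac{0}{\infty_1})$ of $\mathsf{b}_1\M_k^{+,1}$.

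A short parity-and-mod-$4$ degree check will show that in every degree $d$ at most one of $H^{d-1}(\mathsf{b}_1\M_k^{+,\geq 2})$ and $H^d(\mathsf{b}_1\M_k^{+,1})$, and at most one of $H^d(\mathsf{b}_1\M_k^{+,\geq 2})$ and $H^{d+1}(\mathsf{b}_1\M_k^{+,1})$, can be simultaneously non-zero. Writing $\partial\colon H^\bullet(\mathsf{b}_1\M_k^{+,\geq 2})\to H^{\bullet+1}(\mathsf{b}_1\M_k^{+,1})$ for the connecting homomorphism, the long exact sequence therefore collapses in the relevant degrees to
\begin{equation*}
    0\longrightarrow H^{d-1}(\mathsf{b}_1\M_k^{+,\geq 2})\stackrel{\partial}{\longrightarrow} H^d(\mathsf{b}_1\M_k^{+,1})\longrightarrow H^d(\mathsf{b}_1\M_k^+)\longrightarrow 0.
\end{equation*}
The claim of the proposition then reduces to verifying that $\partial$ is injective on every wheel class $[\Gamma_i(\omega)]$ with $i\equiv 2k+1\pmod 4$: injectivity produces the one-dimensional cokernel $\mathbb{K}^2/\mathbb{K}\cong\mathbb{K}$ giving $\mathcal{C}_i=\mathbb{K}[k-(i+1)]$, while for $i\equiv 2k+3\pmod 4$ the source of $\partial$ is zero and the whole $\mathbb{K}^2$ survives, giving $\mathcal{C}_i=\mathbb{K}[k-(i+1)]^2$.

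To establish injectivity, I would lift $\Gamma_i(\omega)$ to $\mathsf{b}_1\M_k^+$ and split $d=d_s+d_u$. Closedness in the quotient forces $\partial[\Gamma_i(\omega)]=[d_u\Gamma_i(\omega)]$. Since $\Gamma_i$ is a directed cycle with neither a source nor a target vertex, the expansion formula listed just before the definition of $\mGC_k^+$ specializes to
\begin{equation*}
    d_u\Gamma_i(\omega)=-\bigl(d_u\Gamma_i(\tfrac{0}{\infty_1})+d_u\Gamma_i(\tfrac{\infty_1}{0})\bigr),
\end{equation*}
whose two summands land in the orthogonal subcomplexes $\mathsf{b}_1\M_k^{*,1}(\tfrac{0}{\infty_1})$ and $\mathsf{b}_1\M_k^{*,1}(\tfrac{\infty_1}{0})$. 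Under the connecting isomorphism of Corollary \ref{cor:mGCDiagram}(2), each summand $[d_u\Gamma_i(\tfrac{0}{\infty_1})]$ (respectively $[d_u\Gamma_i(\tfrac{\infty_1}{0})]$) is identified with the non-zero generator $[\Gamma_i(\tfrac{0}{\infty_1})]$ (resp.\ $[\Gamma_i(\tfrac{\infty_1}{0})]$) of $H^\bullet(\mathsf{b}_1\M_k^{*,\geq 2}(\tfrac{0}{\infty_1}))$ (resp.\ $H^\bullet(\mathsf{b}_1\M_k^{*,\geq 2}(\tfrac{\infty_1}{0}))$), which is non-zero by Proposition \ref{prop:dGC/dGCs}(2). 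Hence $\partial[\Gamma_i(\omega)]$ has non-zero projection on each summand; as the source is one-dimensional, $\partial$ is injective.

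The main technical point will be the explicit identification of $\partial[\Gamma_i(\omega)]$ with the pair of generators under the auxiliary connecting isomorphism from Corollary \ref{cor:mGCDiagram}(2) — this amounts to a short zig-zag through the middle row of diagram \ref{eq:+*-Diagram} pairing $d_u$-terms on $\Gamma_i(\omega)$ with $d_u$-terms on $\Gamma_i(\tfrac{\infty_1}{0})$ and $\Gamma_i(\tfrac{0}{\infty_1})$ on the nose. Everything else is a degree count, and once the two connecting maps are matched the proposition follows immediately from the collapse of the long exact sequence.
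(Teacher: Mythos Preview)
Your overall strategy coincides with the paper's: run the long exact sequence of $0\to\mathsf{b}_1\M_k^{+,1}\to\mathsf{b}_1\M_k^+\to\mathsf{b}_1\M_k^{+,\geq2}\to0$, show that the connecting map $\partial$ is injective, and read off the answer from the resulting three-term exact sequences. Two points need attention.

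First, the ``parity-and-mod-$4$ degree check'' is false as stated: when $d=i-k+1$ with $i\equiv 2k+1\pmod4$, both $H^{d-1}(\mathsf{b}_1\M_k^{+,\geq2})$ and $H^d(\mathsf{b}_1\M_k^{+,1})$ are nonzero, and this is precisely the interesting case. The paragraph is in any event superfluous --- once $\partial$ is injective in every degree, the long exact sequence collapses to your short exact sequences with no further input.

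Second, and more substantively, the oriented cycle $\Gamma_i^{\circlearrowleft}(\omega)$ is \emph{not} a cocycle in $\mathsf{b}_1\M_k^{+,\geq2}$: splitting a passing vertex also produces a bivalent source--target pair, so $d_s\Gamma_i^{\circlearrowleft}$ does not vanish (for instance at $i=3$, $k=3$ one obtains a nonzero multiple of the $4$-cycle with one reversed edge). The actual generator --- and this is how the paper proceeds --- is the image of the undirected $i$-cycle under the composite of the quasi-isomorphism $\GC_k\to\dGC_k$ with $b_1$, i.e.\ the sum over \emph{all} edge-orientations of the $i$-cycle, each decorated by $\omega$. Your injectivity computation is then salvaged verbatim: every summand except $\Gamma_i^{\circlearrowleft}(\omega)$ carries both a source and a target and is therefore annihilated by $d_u$, so $\partial$ applied to the correct generator still equals $[d_u\Gamma_i^{\circlearrowleft}(\omega)]$, and your identification of this class as a nonzero diagonal element via Corollary~\ref{cor:mGCDiagram}(2) and Proposition~\ref{prop:dGC/dGCs}(2) goes through unchanged.
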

\begin{proof}
    We first show that the connecting morphism $\delta:H^i(\mathsf{b}_1\M_k^{+,\geq2})\rightarrow H^{i+1}(\mathsf{b}_1\M_k^{+,1})$ is injective.
    The injection $b_1:\mathsf{b}_1\GC_k\rightarrow\mathsf{b}_1\M_k^{+,\geq2}$ mapping an undirected graph $\Gamma$ to the sum of graphs over all possible ways of adding directions on the edges of $\Gamma$ is a quasi-isomorphism. Let $[\Gamma]\in H^i(\mathsf{b}_1\GC_k)$ be a non-zero equivalence class, where $\Gamma$ is a loop graph with $i$ vertices. Then $i\geq 1$ and $i\equiv 2k+1\mod4$, since otherwise $[\Gamma]=0$. The sum $b_1(\Gamma)$ contains the loop graph with only passing vertices, call it $\Gamma^\circlearrowleft$, while all other terms are graphs with at least one source and one target. The connecting morphism $\delta:H^i(\mathsf{b}_1\M_k^{+,\geq2})\rightarrow H^{i+1}(\mathsf{b}_1\M_k^{+,1})$ is the univalent part $d_u$ of the differential, and is hence zero on graphs with both a source and a target. Hence we gather $\delta b_1(\Gamma)=d_u \Gamma^\circlearrowleft$. We can clearly see that this element belongs in the diagonal of $H^{i+1}(\mathsf{b}_1\M_k^{+,1})=\mathsf{b}_1\M_k^{+,1}(\frac{\infty_1}{0})\oplus \mathsf{b}_1\M_k^{+,1}(\frac{0}{\infty_1})$, that it is non-zero and this assignment is unique.
    Hence the long exact sequence on cohomology splits as short exact sequences
    \begin{equation*}
        \xymatrix{
        0 \ar[r] & H^{i-k}(\mathsf{b}_1\M_k^{+,\geq2}) \ar[r] & H^{i-k+1}(\mathsf{b}_1\M_k^{+,1}) \ar[r] & H^{i-k+1}(\mathsf{b}_1\M_k^{+}) \ar[r] & 0}.
    \end{equation*}
    When $i$ is even, the whole sequence is zero. When $i\equiv2k+3\mod 4$, then $H^{i-k}(\mathsf{b}_1\M_k^{+,\geq2})$ acyclic, and so $H^{i-k+1}(\mathsf{b}_1\M_k^{+})\cong H^{i-k+1}(\mathsf{b}_1\M_k^{+,1})=\mathbb{K}[k-i-1]\oplus\mathbb{K}[k-i-1]$.
    When $i\equiv 2k+1\mod 4$, then we get the short exact sequence
    \begin{equation*}
        \xymatrix{
        0 \ar[r] & \mathbb{K}[k-i] \ar[r] & \mathbb{K}[k-i-1]\oplus \mathbb{K}[k-i-1] \ar[r] & H^{i-k+1}(\mathsf{b}_1\M_k^{+}) \ar[r] & 0}
    \end{equation*}
    where the first map is the connecting morphism $\delta$, being identified with the suspended diagonal map. Hence we gather that $H^{i-k+1}(\mathsf{b}_1\M_k^{+})\cong (\mathbb{K}[k-i-1]\oplus \mathbb{K}[k-i-1])/\delta(\mathbb{K}[k-i])\cong \mathbb{K}[k-i-1]$.
\end{proof}
\subsection{Describing $\M_k^+$ as a mapping cone}
So far we have only been able to describe the cohomology of $\M_k^+$ via a short exact sequence. In this section we will show where this short exact sequence originates from, and that there is $\M_k^+$ quasi-isomorphic to a mapping cone of directed graph complexes. Let $P:\dGC_k\rightarrow\dGC_k/\dGC_k^s\oplus \dGC_k/\dGC_k^t$ be the natural projection. The suspended mapping cone of $P$ is the complex $Cone(P)[1]=\big(\dGC_k\oplus(\dGC_k/\dGC_k^s[1]\oplus \dGC_k/\dGC_k^t[1]),d_c\big)$ where
\begin{equation*}
    d_c(\Gamma,(\Gamma_1,\Gamma_2))=(d_s\Gamma,(-P(\Gamma)-d_s\Gamma_1,-P(\Gamma)-d_s\Gamma_2)).
\end{equation*}
The mapping cone naturally fits in the short exact sequence
\begin{equation*}
    \xymatrix{
        0 \ar[r] & \dGC_k/\dGC_k^s[1]\oplus \dGC_k/\dGC_k^t[1] \ar[r] & Cone(P)[1] \ar[r] & \dGC_k \ar[r] & 0}
\end{equation*}
which resembles the short exact sequence
\begin{equation*}
    \xymatrix{
        0 \ar[r] & \M_k^{+,1}\ar[r] & \M_k^+ \ar[r] & \M_k^{+,\geq2} \ar[r] & 0}
\end{equation*}
in that the cohomology of the leftmost and rightmost complexes are the same. We will relate these two short exact sequences with injective chain maps, giving us the commutative diagram
\begin{equation*}
    \xymatrix{
    & 0 \ar[d] & 0 \ar[d] & 0 \ar[d] & \\
    0 \ar[r] & {\mathsf{dGC}_k/\mathsf{dGC}_k^s[1]\oplus\mathsf{dGC}_k/\mathsf{dGC}_k^t[1]} \ar[d]^{a} \ar[r] & Cone(P)[1] \ar[r] \ar[d]^{a\oplus b} & \mathsf{dGC}_k \ar[d]^{b} \ar[r] & 0  \\
    0 \ar[r] & {\M_k^{+,1}} \ar[r] \ar[d] & \M_k^{+} \ar[r] \ar[d] & {\M_k^{+,\geq2}} \ar[d] \ar[r] & 0 \\
    0 \ar[r] & \mathsf{Q}_1 \ar[r] \ar[d] & \mathsf{Q}_2 \ar[r] \ar[d] & \mathsf{Q}_3 \ar[r] \ar[d] & 0\\
    & 0 & 0 & 0 & 
    }
\end{equation*}
If we show that $a$ and $b$ are quasi-isomorphisms,then the lower short exact sequence is acyclic, implying main theorem 2.
\begin{theorem}
    The map $a\oplus b:Cone(P)[1]\rightarrow \M_k^+$ is a quasi-isomorphism.
\end{theorem}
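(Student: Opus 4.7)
The plan is to define the map $a \oplus b$ explicitly, verify it is a chain map between the two short exact sequences, show that $a$ and $b$ are separately quasi-isomorphisms, and conclude via the five lemma applied to the induced long exact sequences of cohomology.

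The right-hand component $b : \dGC_k \to \fM^{+,\geq 2}_k$ is simply $b(\Gamma) := \Gamma(\omega)$, which is already known to be a quasi-isomorphism by Proposition \ref{prop:M*cohom}. On the left, I would define
\[
a : (\dGC_k/\dGC_k^s \oplus \dGC_k/\dGC_k^t)[1] \longrightarrow \fM^{+,1}_k,
\qquad
a(\Gamma_1, \Gamma_2) := d_u\bigl(\Gamma_1(\tfrac{\infty_1}{0})\bigr) + d_u\bigl(\Gamma_2(\tfrac{0}{\infty_1})\bigr),
\]
where $d_u$ denotes the univalent-creating part of the $\fM^*_k$ differential. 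Decomposing the differential on $\fM^*_k$ by univalent count as $d = d_s + d_u$, the identity $d^2 = 0$ forces the three separate relations $d_s^2 = 0$, $d_s d_u + d_u d_s = 0$, and $d_u^2 = 0$; combined with the intertwining $(d_s\Gamma)(\tfrac{\infty_1}{0}) = d_s(\Gamma(\tfrac{\infty_1}{0}))$, this immediately yields that $a$ is a chain map (taking the shift differential on the source to be $-d_s$, consistent with the cone formula).

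The crucial compatibility check for $a \oplus b$ is on elements of the form $(\Gamma, 0)$: using the description of $d_u\Gamma(\omega)$ from just before the definition of $\mGC_k^+$, one computes
\[
d(b(\Gamma)) \;=\; (d_s\Gamma)(\omega) - d_u\Gamma(\tfrac{\infty_1}{0}) - d_u\Gamma(\tfrac{0}{\infty_1}),
\]
which equals $(a \oplus b)(d_c(\Gamma, 0)) = b(d_s\Gamma) + a(-P\Gamma, -P\Gamma)$ since $P(\Gamma) = \Gamma$ on each quotient as far as the decoration is concerned. The check on elements $(0, (\Gamma_1, \Gamma_2))$ reduces to the chain-map property of $a$ already established.

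To see that $a$ is a quasi-isomorphism, observe that on cohomology $a$ factors as the composition of the quasi-isomorphism $(f^s \oplus f^t)[1]$ from Proposition \ref{prop:dGC/dGCs} with the connecting morphism $\delta$ of the leftmost column of diagram \eqref{eq:+*-Diagram}, which is an isomorphism on cohomology by Corollary \ref{cor:mGCDiagram} because the middle term $\fM^*_k(\tfrac{\infty_1}{0}) \oplus \fM^*_k(\tfrac{0}{\infty_1})$ is acyclic (Proposition \ref{prop:mGC*(0)}). The five lemma applied to the resulting morphism of long exact sequences then yields that $a \oplus b$ is a quasi-isomorphism. The main obstacle will be the sign bookkeeping: matching the $-P(\Gamma)$ terms in the cone differential with the univalent-creating terms in $d(\Gamma(\omega))$ requires the precise case analysis of $d_u\Gamma(\omega)$ based on whether $\Gamma$ has sources or targets, and the same case analysis must be tracked through the explicit formula for $a$.
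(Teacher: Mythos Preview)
Your proposal is correct and follows essentially the same route as the paper: define $a$ via $d_u$ applied to the monodecorated graphs, verify $a\oplus b$ is a chain map by the decomposition $d=d_s+d_u$ and the relation $d_sd_u+d_ud_s=0$, show $a$ is a quasi-isomorphism by factoring through $f^s\oplus f^t$ and the connecting morphism $\delta$ of Corollary~\ref{cor:mGCDiagram}, and conclude with the five lemma. Your assignment $\Gamma_1\in\dGC_k/\dGC_k^s\mapsto\Gamma_1(\tfrac{\infty_1}{0})$ is in fact the one consistent with the definition of $f^s$ in Proposition~\ref{prop:dGC/dGCs} (and is required for well-definedness on the quotient), whereas the paper's stated formula for $a$ has the two decorations swapped, which appears to be a typo.
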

The proof follows from the two following propositions.
\begin{proposition}
    Let $a:\dGC_k/\dGC^{s}_k[1]\oplus\dGC_k/\dGC^t_k[1]\rightarrow \M_k^{+,1}$ be the map defined by $a(\Gamma_1,\Gamma_2)=d_u(\Gamma_1(\frac{0}{\infty_1})=\Gamma_2(\frac{\infty_1}{0}))$. Then $a$ is a chain map, and furthermore it is a quasi-isomorphism.
\end{proposition}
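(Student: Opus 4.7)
The plan is to realize $a$ as the composition of two maps that have already been shown to be quasi-isomorphisms. Concretely, consider the composition
\begin{equation*}
\dGC_k/\dGC^s_k[1] \oplus \dGC_k/\dGC^t_k[1] \xrightarrow{\,f^s[1]\oplus f^t[1]\,} \fM_k^{*,\geq 2}(\tfrac{\infty_1}{0})[1] \oplus \fM_k^{*,\geq 2}(\tfrac{0}{\infty_1})[1] \xrightarrow{\;d_u\;} \fM_k^{+,1},
\end{equation*}
where the first arrow is a quasi-isomorphism by Proposition \ref{prop:dGC/dGCs} and the second applies $d_u$ to each summand. Upon unwinding the definitions of $f^s, f^t$ and the decoration conventions $\Gamma(\tfrac{\infty_1}{0}), \Gamma(\tfrac{0}{\infty_1})$, this composition agrees with $a$, so it suffices to verify that the second arrow is a chain map and a quasi-isomorphism.

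For the chain map property, I would use the decomposition $d = d_s + d_u$ on $\fM_k^+$, where $d_s$ preserves and $d_u$ strictly increases the number of univalent vertices. From $d^2 = 0$, graded by univalent count, one deduces $d_s^2 = 0$, $d_u^2 = 0$, and the anticommutation $d_s d_u + d_u d_s = 0$. The induced differential on the quotient $\fM_k^{*,\geq 2}$ is just $d_s$, and for $\Gamma$ in the quotient one computes $d(d_u \Gamma) = d_s d_u \Gamma + d_u^2 \Gamma = -d_u(d_s \Gamma)$, which is exactly the intertwining relation required for a chain map $\fM_k^{*,\geq 2}[1] \to \fM_k^{+,1}$ (the sign being absorbed by the shift). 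Composing with the chain maps $f^s$ and $f^t$ of Proposition \ref{prop:dGC/dGCs} then shows that $a$ is a chain map.

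For the quasi-isomorphism property I appeal to the left column of diagram \ref{eq:+*-Diagram}, the short exact sequence whose middle complex $\fM_k^*(\tfrac{\infty_1}{0})\oplus \fM_k^*(\tfrac{0}{\infty_1})$ is acyclic by Proposition \ref{prop:mGC*(0)}. Consequently the connecting morphism in the induced long exact sequence is an isomorphism on cohomology, which is the content of Corollary \ref{cor:mGCDiagram}(2). The key remaining step is to verify that the chain-level map $\Gamma \mapsto d_u \Gamma$ is a chain-level representative of this connecting morphism. This is a routine snake-lemma computation: the canonical lift of a cocycle $\alpha$ in the quotient is $\alpha$ itself viewed in the middle complex, and since $d_s \alpha = 0$ by the cocycle condition, applying the full differential yields $d\alpha = d_u \alpha$, which lies in the kernel $\fM_k^{+,1}$ by construction. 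Hence $d_u$ induces the connecting isomorphism on cohomology, and therefore $a$ is a quasi-isomorphism as a composition of two such.

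The only real technical point is the sign bookkeeping induced by the degree shift $[1]$, which is standard. All the conceptual content has already been done: Proposition \ref{prop:mGC*(0)} supplies the acyclicity that turns the connecting map into an isomorphism, and Proposition \ref{prop:dGC/dGCs} supplies the quasi-isomorphisms $f^s, f^t$. The present proposition is simply the assembly of these pieces into a single explicit chain-level quasi-isomorphism that will slot into the commutative diagram preceding the statement, thereby completing the proof of Main Theorem 2.
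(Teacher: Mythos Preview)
Your proposal is correct and follows essentially the same approach as the paper's proof. Both arguments verify the chain map property via the relations $d_u^2=0$ and $d_sd_u+d_ud_s=0$, and both establish the quasi-isomorphism by factoring $a$ through the decoration map $f^s\oplus f^t$ (the paper calls it $h$) and identifying the remaining map with the connecting morphism $\delta$ of Corollary~\ref{cor:mGCDiagram}(2); your snake-lemma computation making $d_u$ the chain-level representative of $\delta$ is exactly what the paper uses implicitly.
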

\begin{proof}
    For abbreviation, let $\mathcal{C}_k=\dGC_k/\dGC^{s}_k\oplus\dGC_k/\dGC^t_k$, and $\mathcal{C}_k[1]$ be the suspension. We first check that the differentials commute. The differential of the degree shifted complex $\mathcal{C}[1]$ is on the form $-d_s$. So 
    \begin{equation*}
        a\circ d(\Gamma_1,\Gamma_2)=a(-d_s\Gamma_1,-d_s\gamma_2)=-d_u d_s(\Gamma_1(\frac{0}{\infty_1})+\Gamma_2(\frac{\infty_1}{0})).
    \end{equation*}
    On the other hand $d_u\circ d_u=0$, and $d_sd_u+d_ud_s=0$ in $\M_k^+$, so
    \begin{equation*}
        d\circ a(\Gamma_1,\Gamma_2)=d_sd_u(\Gamma_1(\frac{0}{\infty_1})+\Gamma_2(\frac{\infty_1}{0}))+d_ud_u(\Gamma_1(\frac{0}{\infty_1})+\Gamma_2(\frac{\infty_1}{0}))=-d_ud_s(\Gamma_1(\frac{0}{\infty_1})+\Gamma_2(\frac{\infty_1}{0})).
    \end{equation*}
    Hence $a\circ d= d\circ a$.
    Lastly we show that $a^*:H^\bullet(\mathcal{C}_k[1])\rightarrow H^\bullet(\M_k^{+,1})$ is an isomorphism. We note that the chain map $h:\mathcal{C}_k\rightarrow \M_k^{*,\geq2}(\frac{0}{\infty_1})\oplus \M_k^{*,\geq2}(\frac{\infty_1}{0})$ where $(\Gamma_1,\Gamma_2)\mapsto (\Gamma_1(\frac{0}{\infty_1}),\Gamma_2(\frac{\infty_1}{0}))$ is an isomorphism. Furthermore, corollary \ref{cor:mGCDiagram} gives that the connecting morphism $\delta:H^\bullet(\M_k^{*,\geq2}(\frac{0}{\infty_1})\oplus \M_k^{*,\geq2}(\frac{\infty_1}{0}))\rightarrow H^{\bullet+1}(\M_k^{+,1})$ is an isomorphism, and is given by $\delta[\Gamma]=[d_u\Gamma]$. Hence we get the following chain of isomorphisms of cohomology groups
    \begin{equation*}
        \xymatrix{
        0 \ar[r] & {H^{\bullet}(\mathcal{C}_k[1])} \ar[r]^{id[1]} & H^{\bullet-1}(\mathcal{C}_k) \ar[r]^-h & {H^{\bullet-1}(\M_k^{*,\geq2}(\frac{0}{\infty_1})\oplus \M_k^{*,\geq2}(\frac{\infty_1}{0}))} \ar[r]^-\delta & H^\bullet(\M_k^{+,1}) \ar[r] & 0 }.
    \end{equation*}
    We can easily see that the composition of these maps is equal to $a^*$, finishing the proof.
\end{proof}
Let $b:\dGC_k\rightarrow\mGC_k^{+,\geq2}$ be the quasi-isomorphism from proposition \ref{prop:M*cohom}. From the short exact sequence
\begin{equation*}
    \xymatrix{
    0 \ar[r] & \M_k^{+,1} \ar[r] & \M_k^+ \ar[r] & \M_k^{+,\geq2} \ar[r] & 0}
\end{equation*}
we see that $\M_k^+\cong \M_k^{+,\geq2}\oplus\M_k^{+,1} $ as a vector space. Let $a\oplus b:Cone(P)[1]\rightarrow\M_k^+$ be the linear map defined using $a$ and $b$ the decomposition of $\M_k^+$.
\begin{proposition}
    The map $a\oplus b$ is a chain map.
\end{proposition}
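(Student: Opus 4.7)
The plan is to verify the chain-map property $d \circ (a \oplus b) = (a \oplus b) \circ d_c$ by direct computation, closely paralleling the verification for $a$ given in the previous proposition. I will decompose the differential on $\M_k^+$ as $d = d_s + d_u$ and invoke three compatibilities: $d_u^2 = 0$, $d_s d_u = -d_u d_s$ (both inherited from $d^2 = 0$ together with the fact that $d_s$ preserves while $d_u$ strictly increases the univalent-vertex count), and the Leibniz-type relations $d_s\Gamma(\omega) = (d_s\Gamma)(\omega)$, $d_s\Gamma(\tfrac{\infty_1}{0}) = (d_s\Gamma)(\tfrac{\infty_1}{0})$, and $d_s\Gamma(\tfrac{0}{\infty_1}) = (d_s\Gamma)(\tfrac{0}{\infty_1})$ recorded at the start of Section~6.

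Unpacking $(a \oplus b)(\Gamma, (\Gamma_1, \Gamma_2)) = \Gamma(\omega) + d_u\bigl(\Gamma_1(\tfrac{0}{\infty_1}) + \Gamma_2(\tfrac{\infty_1}{0})\bigr)$ and applying $d$, the $d_u \circ d_u$ terms vanish and the $d_s \circ d_u$ term rewrites via $d_s d_u = -d_u d_s$, yielding
\[
d \circ (a \oplus b)(\Gamma, (\Gamma_1, \Gamma_2)) = (d_s\Gamma)(\omega) + d_u\Gamma(\omega) - d_u\bigl((d_s\Gamma_1)(\tfrac{0}{\infty_1}) + (d_s\Gamma_2)(\tfrac{\infty_1}{0})\bigr).
\]
On the other side, unpacking $d_c(\Gamma, (\Gamma_1, \Gamma_2)) = (d_s\Gamma,\, (-P(\Gamma) - d_s\Gamma_1,\, -P(\Gamma) - d_s\Gamma_2))$ and applying $a \oplus b$ gives
\[
(a \oplus b) \circ d_c(\Gamma, (\Gamma_1, \Gamma_2)) = (d_s\Gamma)(\omega) - d_u\bigl(P(\Gamma)(\tfrac{0}{\infty_1}) + P(\Gamma)(\tfrac{\infty_1}{0})\bigr) - d_u\bigl((d_s\Gamma_1)(\tfrac{0}{\infty_1}) + (d_s\Gamma_2)(\tfrac{\infty_1}{0})\bigr).
\]

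Matching the two, the chain-map condition collapses to the single identity $d_u\Gamma(\omega) = -d_u\bigl(\Gamma(\tfrac{0}{\infty_1}) + \Gamma(\tfrac{\infty_1}{0})\bigr)$, which is precisely the relation established at the start of Section~6. The identification $\Gamma(\tfrac{0}{\infty_1}) = P(\Gamma)(\tfrac{0}{\infty_1})$ (and symmetrically for $\tfrac{\infty_1}{0}$) is automatic from the vanishing statements recorded there, which is also why the projection $P$ is the correct bookkeeping device inside $d_c$. The main obstacle is the sign-tracking in the four subcases according to whether $\Gamma$ contains sources, targets, both, or neither; the case with both source and target forces both sides to vanish, while the remaining three cases follow from the explicit description of $d_u\Gamma(\omega)$ in terms of the sum of $d_u$ applied to the two singly-decorated graphs $\Gamma(\tfrac{0}{\infty_1})$ and $\Gamma(\tfrac{\infty_1}{0})$, each of which vanishes exactly when the corresponding obstruction (target, resp.\ source) is present. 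Since this case analysis is already carried out in Section~6, no further input is needed.
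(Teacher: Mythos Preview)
Your proof is correct and follows essentially the same approach as the paper's own argument: a direct computation using the decomposition $d=d_s+d_u$, the identities $d_u^2=0$ and $d_sd_u+d_ud_s=0$, and the key relation $d_u\Gamma(\omega)=-\bigl(d_u\Gamma(\tfrac{\infty_1}{0})+d_u\Gamma(\tfrac{0}{\infty_1})\bigr)$ from Section~6.2. The only cosmetic difference is that the paper starts from $(a\oplus b)\circ d_c$ and rewrites it step by step into $d\circ(a\oplus b)$, whereas you compute both sides separately and compare; also, your case discussion at the end is not strictly needed since the relation $d_u\Gamma(\omega)=-\bigl(d_u\Gamma(\tfrac{\infty_1}{0})+d_u\Gamma(\tfrac{0}{\infty_1})\bigr)$ already holds uniformly before the four-case breakdown.
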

\begin{proof}
    For abbreviation, set $g:=a\oplus b$.
    Let $(\Gamma,(\Gamma_1,\Gamma_2))\in Cone(P)[1]$. Then
    \begin{align*}
        g\circ d_c((\Gamma,(\Gamma_1,\Gamma_2)))&=g(d_s\Gamma,(-f(\Gamma)-d_s\Gamma_1,-f(\Gamma)-d_s\Gamma_2))\\
        &=d_s\Gamma(\omega)-d_u\Gamma(\frac{\infty_1}{0})-d_ud_s\Gamma_1(\frac{\infty_1}{0})-d_u\Gamma(\frac{0}{\infty_1})-d_ud_s\Gamma_2(\frac{0}{\infty_1})\\
        &=d_s\Gamma(\omega)+d_u\Gamma(\omega)+d_sd_u\Gamma_1(\frac{\infty_1}{0})+d_sd_u\Gamma_2(\frac{0}{\infty_1})\\
        &=d(\Gamma(\omega)+d_u\Gamma_1(\frac{\infty_1}{0})+d_u\Gamma_2(\frac{0}{\infty_1})\\
        &=d\circ g\big(\Gamma,(\Gamma_1,\Gamma_2)\big),
    \end{align*}
    showing that $g$ is a chain map.
\end{proof}
\subsection{A second long exact sequence with $H^\bullet(\M_k^+)$}\label{sec:dGCst}
We finish of this chapter by describing a second long exact sequence with $H^\bullet(\M_k^+)$.
Let us use the abbreviation $\mathsf{C}_k$ for $\dGC_k$, and similarly for other subcomplexes of $\dGC_k$. Further for two complexes $A,B$ (with an assumed natural morphism $A\rightarrow B$, we write $A\oplus_c B[1]$ for the corresponding suspended mapping cone.
We remark that $\C^{st}_k$ is the kernel of the map $P:\C_k\rightarrow\C_k/\C_k^s\oplus \C_k/\C_k^t$, and hence it is a subcomplex of $Cone(P)[1]$.
Further the mapping cone $\C^{s+t}_k\oplus_c(\C^{s+t}_k/\C^{st}_k)[1]$ is a subcomplex of $Cone(P)[1]$. We can check that the following diagram commutes
 \begin{equation*}
        \xymatrix{
        & 0 \ar[d] & 0 \ar[d] & 0 \ar[d] & \\
        0 \ar[r] & \mathsf{C}^{st}\oplus_c 0 \ar[r] \ar[d]^{id} & {\mathsf{C}^{s+t}_k\oplus_c (\mathsf{C}^{s+t}_k/\mathsf{C}^{st}_k)[1]} \ar[r] \ar[d] & {\mathsf{C}^{s+t}_k/\mathsf{C}^{st}_k\oplus_c (\mathsf{C}^{s+t}_k/\mathsf{C}^{st})[1]} \ar[d] \ar[r] & 0 \\
        0 \ar[r] & \mathsf{C}^{st}\oplus_c 0 \ar[r] \ar[d] & {\mathsf{C}_k\oplus_c (\mathsf{C}_k/\mathsf{C}^{s}_k\oplus\mathsf{C}_k/\mathsf{C}^{t}_k)[1]} \ar[r] \ar[d] & {\mathsf{C}_k/\C_k^{st}\oplus_c (\mathsf{C}_k/\mathsf{C}^{s}_k\oplus\mathsf{C}_k/\mathsf{C}^{t}_k)[1]} \ar[d] \ar[r] & 0 \\
        0 \ar[r] & 0 \ar[r] \ar[d] & {\mathsf{C}_k^\circlearrowleft\oplus_c(\mathsf{C}_k^\circlearrowleft\oplus\mathsf{C}_k^\circlearrowleft)[1]} \ar[r]^{id} \ar[d] & \mathsf{C}_k^\circlearrowleft\oplus_c(\mathsf{C}_k^\circlearrowleft\oplus\mathsf{C}_k^\circlearrowleft)[1] \ar[r]\ar[d] & 0\\
         & 0 & 0 & 0 & 
        }
\end{equation*}
where we recall that $\C^\circlearrowleft_k:=\C_k/\C_k^{s+t}$.
\begin{proposition}
    \hspace{1cm}
    \begin{enumerate}
    \item $\C^{st}_k\oplus_c 0 \cong \C^{st}$.
    \item The complex ${\mathsf{C}_k^\circlearrowleft\oplus_c(\mathsf{C}_k^\circlearrowleft\oplus\mathsf{C}_k^\circlearrowleft)[1]} $ is quasi-isomorphic to $\C_k^\circlearrowleft[1]$.
    \item The complex $\mathsf{C}^{s+t}_k/\mathsf{C}^{st}_k\oplus_c (\mathsf{C}^{s+t}_k/\mathsf{C}^{st})[1]$ is acyclic.
\end{enumerate}
\end{proposition}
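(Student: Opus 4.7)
The plan is to dispatch the three statements using standard properties of mapping cones; parts (1) and (3) are essentially tautological, and the real content lies in part (2).

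Part (1) is immediate. The complex $\C^{st}_k\oplus_c 0$ is, by definition, the mapping cone of the (necessarily zero) map $\C^{st}_k\to 0$. Unpacking the mapping cone formula, the second summand contributes nothing to either the underlying graded vector space or the differential, and we recover $\C^{st}_k$ with its original differential.

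Part (3) follows from the contractibility of the cone of the identity. Inspecting the left column of the commutative diagram, the top-right complex $\C^{s+t}_k/\C^{st}_k\oplus_c(\C^{s+t}_k/\C^{st}_k)[1]$ is the mapping cone of the map induced from the projection $\C^{s+t}_k\to \C^{s+t}_k/\C^{st}_k$ after quotienting both source and target by $\C^{st}_k$. Once the source is reduced to $\C^{s+t}_k/\C^{st}_k$, this projection descends to the identity map, and the mapping cone of the identity is always contractible.

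The main content is part (2). The map $P:\C_k\to \C_k/\C^s_k\oplus \C_k/\C^t_k$, after passing to the quotient by $\C^{s+t}_k$ at the source and by the images of $\C^{s+t}_k$ in each target factor, descends to the diagonal map
\[
\Delta:\C_k^\circlearrowleft\to \C_k^\circlearrowleft\oplus \C_k^\circlearrowleft,\qquad \Gamma\mapsto (\Gamma,\Gamma).
\]
I will exhibit a chain map $\pi: Cone(\Delta)[1]\to \C_k^\circlearrowleft[1]$ by $(a,(b_1,b_2))\mapsto b_1-b_2$. Compatibility with the differential is a short check: applying $\pi$ to $d_c(a,(b_1,b_2))=(d_s a,(-a-d_s b_1,-a-d_s b_2))$ gives $-d_s(b_1-b_2)$, which matches the suspended differential on the target. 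The map is visibly surjective, with kernel consisting of tuples $(a,(b,b))$; under the identification $(a,(b,b))\leftrightarrow (a,b)$, the induced differential on the kernel is $(a,b)\mapsto (d_s a,-a-d_s b)$, which is precisely the suspended mapping cone $Cone(\mathrm{id}_{\C_k^\circlearrowleft})[1]$ of the identity, hence acyclic. The resulting short exact sequence
\[
\xymatrix{0\ar[r] & Cone(\mathrm{id}_{\C_k^\circlearrowleft})[1] \ar[r] & Cone(\Delta)[1] \ar[r]^-\pi & \C_k^\circlearrowleft[1]\ar[r] & 0}
\]
together with acyclicity of the leftmost term yields the claimed quasi-isomorphism via the induced long exact sequence. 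The only nontrivial step is identifying $\mathrm{Ker}(\pi)$ with the cone of the identity; every other verification reduces to chasing signs in the mapping cone differential.
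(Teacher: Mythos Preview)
Your argument is correct. Parts (1) and (3) coincide with the paper's reasoning. For part (2), however, you and the paper take different short exact sequences. The paper uses the canonical one
\[
0 \to (\C_k^\circlearrowleft\oplus\C_k^\circlearrowleft)[1] \to Cone(\Delta)[1] \to \C_k^\circlearrowleft \to 0,
\]
observes that the connecting morphism on cohomology is the diagonal $H^\bullet(\C_k^\circlearrowleft)\to H^\bullet(\C_k^\circlearrowleft)^{\oplus 2}$, hence injective, and reads off the cohomology of the middle term as the cokernel of that diagonal. You instead construct the explicit chain map $\pi(a,(b_1,b_2))=b_1-b_2$ and identify its kernel with $Cone(\mathrm{id})[1]$, yielding
\[
0 \to Cone(\mathrm{id}_{\C_k^\circlearrowleft})[1] \to Cone(\Delta)[1] \xrightarrow{\pi} \C_k^\circlearrowleft[1] \to 0.
\]
Your approach has the modest advantage of producing an explicit chain-level quasi-isomorphism rather than only a cohomology isomorphism; the paper's approach stays closer to the standard mapping-cone long exact sequence and requires no auxiliary map. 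Either route is perfectly adequate here.
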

\begin{proof}
    (1) follows from direct inspection.
    (2) follows by first considering the short exact sequence
    \begin{equation*}
        \xymatrix{
        0 \ar[r] & \C^\circlearrowleft_k[1]\oplus \C^\circlearrowleft_k[1] \ar[r ] & \mathsf{C}_k^\circlearrowleft\oplus_c(\mathsf{C}_k^\circlearrowleft\oplus\mathsf{C}_k^\circlearrowleft)[1] \ar[r] & \C_k^\circlearrowleft \ar[r]& 0}
    \end{equation*}
    noting that the the connecting morphism is the diagonal map $H^\bullet(\C_k^\circlearrowleft)\rightarrow H^\bullet(\C_k^\circlearrowleft)\oplus H^\bullet(\C_k^\circlearrowleft)$, and thus it is injective. In particular, the induced map $H^\bullet(\mathsf{C}_k^\circlearrowleft\oplus_c(\mathsf{C}_k^\circlearrowleft\oplus\mathsf{C}_k^\circlearrowleft)[1])\rightarrow \C_k^\circlearrowleft$ is zero. By exactness of the long exact sequence, we get $H^\bullet(\mathsf{C}_k^\circlearrowleft\oplus_c(\mathsf{C}_k^\circlearrowleft\oplus\mathsf{C}_k^\circlearrowleft)[1])\cong H^\bullet(\C^\circlearrowleft_k[1]\oplus \C^\circlearrowleft_k[1])/H^\bullet(\C^\circlearrowleft_k[1])\cong\C^\circlearrowleft_k[1]$.
    (3) follows from the fact that a mapping cone $Cone(f)$ is acyclic if and only if $f$ is a quasi-isomorphism. This complex is the mapping cone of the identity morphism, and hence the cone is acyclic.
\end{proof}
By reading the diagram, we get the following corollary by considering the induced long exact sequence produced by the middle column or row.
\begin{corollary}\label{cor:dGCstM+}
    There is a long exact sequence on cohomology
    \begin{equation*}
    \xymatrix{
        \cdots \ar[r] & H^{\bullet-2}(\dGC^\circlearrowleft_k) \ar[r] & H^{\bullet}(\dGC^{st}_{k}) \ar[r] & H^\bullet(\M_k^+) \ar[r] & H^{\bullet-1}(\dGC_k^\circlearrowleft) \ar[r] & H^{\bullet+1}(\mathsf{dGC}^{st}_{k}) \ar[r] & \cdots}
    \end{equation*}
\end{corollary}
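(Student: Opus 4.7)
The proof is purely formal: I would read off the claimed long exact sequence from one of the short exact sequences of complexes sitting in the $3\times 3$ commutative diagram above, using the preceding proposition and Theorem 2 to identify the cohomology of the three terms involved.

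Specifically, my plan is to work with the middle column,
\begin{equation*}
0 \to \C^{s+t}_k \oplus_c (\C^{s+t}_k/\C^{st}_k)[1] \to Cone(P)[1] \to \C_k^\circlearrowleft \oplus_c (\C_k^\circlearrowleft \oplus \C_k^\circlearrowleft)[1] \to 0.
\end{equation*}
For the right-hand term, item (2) of the proposition provides a quasi-isomorphism with a suspension of $\dGC_k^\circlearrowleft$, whose cohomology in degree $\bullet$ therefore contributes $H^{\bullet-1}(\dGC_k^\circlearrowleft)$. For the middle term, Theorem 2 gives a quasi-isomorphism with $\M_k^+$. For the left-hand term I would peel off one more level of the diagram: applying item (3) to the top row shows that the top-right entry is acyclic, so the inclusion $\C^{st}_k \oplus_c 0 \hookrightarrow \C^{s+t}_k \oplus_c (\C^{s+t}_k/\C^{st}_k)[1]$ is a quasi-isomorphism, and by item (1) its source is just $\dGC^{st}_k$.

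Plugging these three identifications into the standard six-term long exact sequence
\begin{equation*}
\cdots \to H^n(\text{left}) \to H^n(\text{middle}) \to H^n(\text{right}) \to H^{n+1}(\text{left}) \to \cdots
\end{equation*}
produces precisely the assertion of the corollary. As a cross-check one may equally well argue along the middle row, where the quotient is the middle entry of the third column and item (3) again provides the quasi-isomorphism with the bottom entry of that column; both routes give the same answer.

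The only point requiring care is the degree bookkeeping through the shifts built into the mapping-cone functor $\oplus_c$, so that item (2) is read off as contributing $H^{\bullet-1}(\dGC_k^\circlearrowleft)$ rather than $H^{\bullet+1}$. Beyond this, no further combinatorial or vanishing input is required: the content of the corollary is entirely packaged in the preceding proposition together with Theorem 2, and the proof is a diagram chase.
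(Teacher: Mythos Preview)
Your proposal is correct and matches the paper's own argument. The paper's proof is literally one sentence---``by reading the diagram, we get the following corollary by considering the induced long exact sequence produced by the middle column or row''---and your plan spells out exactly this: take the long exact sequence of the middle column, use item (3) on the top row to identify the top-middle entry with $\dGC^{st}_k$ via item (1), use Theorem~2 for the middle entry, and item (2) for the bottom entry; the alternative route via the middle row you mention is also the one the paper offers.
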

By corollary \ref{cor:CohomDGC3}, $H^l(\dGC_3^\circlearrowleft)=0$ for $-2\leq l \leq 1$. In particular $H^0(\M_3^+)\cong H^0(\dGC^{st}_3)$. Finally $H^0(\dGC_3^{st})=\mathfrak{grt}\oplus\mathfrak{grt}$ by proposition \ref{prop:mGC}, giving us a second proof of the main theorem.

\newcommand{\Ed}[0]{
    \ \begin{tikzpicture}[shorten <=1pt,>=stealth,semithick,baseline={([yshift=-.5ex]current bounding box.center)}]
    \node[mblack] (r) at (0.7,0) {};
    \node[mblack] (l) at (0,0) {}
        edge [->] (r);
    \end{tikzpicture} \
}
\newcommand{\halfEd}[0]{
    \ \begin{tikzpicture}[semithick,baseline={([yshift=-.5ex]current bounding box.center)}]
    \node[mblack] (r) at (0.7,0) {};
    \node[mblack] (l) at (0,0) {};
    \draw[OneMark] (l) -- (r);
    \end{tikzpicture} \
}
\newcommand{\sEd}[0]{
    \ \begin{tikzpicture}[shorten <=1pt,>=stealth,semithick,baseline=0
    ]
    \node[mblack] (l) at (0,0) {};
    \node[mblack] (r) at (0.7,0) {};
    \draw[dotted,->] (l) to node[auto] {$\scriptstyle{s}$} (r);
    \end{tikzpicture} \
}
\newcommand{\sEdO}[0]{
    \ \begin{tikzpicture}[shorten <=1pt,>=stealth,semithick,baseline=0
    ]
    \node[mblack] (l) at (0,0) {};
    \node[mblack] (r) at (0.7,0) {};
    \draw[dotted,<-] (l) to node[auto] {$\scriptstyle{s}$} (r);
    \end{tikzpicture} \
}
\newcommand{\tEd}[0]{
    \ \begin{tikzpicture}[shorten <=1pt,>=stealth,semithick,baseline=0
    ]
    \node[mblack] (l) at (0,0) {};
    \node[mblack] (r) at (0.7,0) {};
    \draw[dotted,->] (l) to node[auto] {$\scriptstyle{t}$} (r);
    \end{tikzpicture} \
}
\newcommand{\tEdO}[0]{
    \ \begin{tikzpicture}[shorten <=1pt,>=stealth,semithick,baseline=0
    ]
    \node[mblack] (l) at (0,0) {};
    \node[mblack] (r) at (0.7,0) {};
    \draw[dotted,<-] (l) to node[auto] {$\scriptstyle{t}$} (r);
    \end{tikzpicture} \
}
\usetikzlibrary{snakes}
\newcommand{\wEd}[0]{
    \ \begin{tikzpicture}[shorten <=1pt,>=stealth,semithick,baseline=0
    ]
    \node[mblack] (l) at (0,0) {};
    \node[mblack] (r) at (0.7,0) {};
    \draw [->,snake=snake,
segment amplitude=0.6mm,
segment length=2.2mm,
line after snake=1.3mm
] (l) to (r);
    \end{tikzpicture} \
}
\newcommand{\wEdO}[0]{
    \ \begin{tikzpicture}[shorten <=1pt,>=stealth,semithick,baseline=0
    ]
    \node[mblack] (l) at (0,0) {};
    \node[mblack] (r) at (0.7,0) {};
    \draw [->,snake=snake,
segment amplitude=0.6mm,
segment length=2.2mm,
line after snake=1.3mm
] (r) to (l);
    \end{tikzpicture} \
}

\newcommand{\sBi}[0]{
    \ \begin{tikzpicture}[shorten <=1pt,>=stealth,semithick,baseline={([yshift=-.5ex]current bounding box.center)}]
        \node[mblack] (l) at (-0.6,0) {};
        \node[mblack] (r) at (0.6,0) {};
        \node[mblack] (m) at (0,0.1) {}
            edge [->] (r)
            edge [->] (l);
    \end{tikzpicture} \
}
\newcommand{\tBi}[0]{
    \ \begin{tikzpicture}[shorten <=1pt,>=stealth,semithick,baseline={([yshift=-.5ex]current bounding box.center)}]
        \node[mblack] (l) at (-0.6,0) {};
        \node[mblack] (r) at (0.6,0) {};
        \node[mblack] (m) at (0,-0.1) {}
            edge [<-] (r)
            edge [<-] (l);
    \end{tikzpicture} \
}

\newcommand{\stBi}[0]{
    \ \begin{tikzpicture}[shorten <=1pt,>=stealth,semithick,baseline={([yshift=-.5ex]current bounding box.center)}]
        \node[mblack] (v1) at (0,0) {};
        \node[mblack] (v2) at (0.6,-0.1) {};
        \node[mblack] (v3) at (1.2,0) {};
        \node[mblack] (v4) at (1.8,-0.1) {};
        \draw [->] (v1) to (v2);
        \draw [<-] (v2) to (v3);
        \draw [->] (v3) to (v4);
    \end{tikzpicture} \
    +(-1)^{k}
    \ \begin{tikzpicture}[shorten <=1pt,>=stealth,semithick,baseline={([yshift=-.5ex]current bounding box.center)}]
        \node[mblack] (v1) at (0,-0.1) {};
        \node[mblack] (v2) at (0.7,0) {};
        \node[mblack] (v3) at (1.4,-0.1) {};
        \node[mblack] (v4) at (2.1,0) {};
        \draw [<-] (v1) to (v2);
        \draw [->] (v2) to (v3);
        \draw [<-] (v3) to (v4);
    \end{tikzpicture} \
}

\section{Explicit example of two cohomology classes of $Der(\mathcal{H}olieb_{p,q})$}
One of the simplest cohomology classes in $H^0(\GC_2)=\mathfrak{grt}$ is given by the tetrahedron class
\begin{equation*}
    \begin{tikzpicture}[shorten >=1pt,node distance=1.2cm,auto,>=angle 90,scale=0.7,baseline={([yshift=-.5ex]current bounding box.center)}]
    \node[sblack] (t0) at (0,1) {};
    \node[sblack] (t1) at (0.87,-0.5) {};
    \node[sblack] (t2) at (-0.87,-0.5) {};
    \node[sblack] (c) at (0,0) {};
    \draw (t0) -- (t1) -- (t2) -- (t0) -- (c);
    \draw (t1) -- (c) -- (t2);
    \end{tikzpicture}
\end{equation*}
which has loop number three. In accordance with our Main Theorem, $H^0(Def(\mathcal{H}olieb_{1,1}^\circlearrowleft))=H^0(\GC_2)\oplus H^0(\GC_2)$, so the tetrahderon class should act on $\mathcal{H}olieb_{1,1}^\circlearrowleft$ in two homotopy inequivalent ways.

We have an explicit morphism of complexes $H^0(\dGC^{st}_3)\rightarrow H^0(Der(\mathcal{H}olieb_{1,1}^\circlearrowleft))$ given by attaching hairs to a cohomology class of $\dGC^{st}_3$ as described in \ref{cor:dGCstM+}. Therefore, to see the above mentioned two homotopy inequivalent actions of the tetrahedron class on $\mathcal{H}olieb_{1,1}^\circlearrowleft$, we have to find \textit{explicit} incarnations of the tetrahedron class in $\dGC^{st}_3$. In this section we explicitly describe these two incarnations denoted by $\alpha^s$ and $\alpha^t$, both having loop number three.

\subsection{A reduced version of $\dGC^{st}_k$}
Following S. Merkulovs paper \cite{M3}, we consider a "smaller" version $\widehat{\dGC}^{st}_k$ of the complex $\dGC^{st}_k$ which is quasi-isomorphic to it.
First, define the graph complex $\overline{\dGC}_k$ generated by graphs whose vertices are at least trivalent and which have four kind of edges:
\begin{enumerate}
    \item Solid edges of degree zero $\Ed$
    \item Dotted $s$-edges of degree one $\sEd$
    \item Dotted $t$-edges of degree one $\tEd$
    \item wavy edges of degree two $\wEd$
\end{enumerate}
Let $\Gamma$ be a graph and let $e$ be the total number of edges, $v$ the number of vertices, $e_1$ the number of $s$-edges and $t$-edges, and $e_2$ the number of wavy edges. Then the degree of $\Gamma$ is
\begin{equation*}
    |\Gamma|=(v-1)k+(1-k)e+e_1+2e_2.
\end{equation*}
The space is defined as the graphs invariant under permutations of vertices or edges depending on the parity of $k$, with the change that labels of non-solid edges can only be permuted with edges of the same type. Further permutations of dotted edges gives the sign of the permutation for $k$ odd.
Further the non-solid edges satisfy the relations
\begin{enumerate}
    \item $\sEd=(-1)^{k+1}\sEdO$
    \item $\tEd=(-1)^{k+1}\tEdO$
    \item $\wEd=(-1)^{k+1}\wEdO$
\end{enumerate}
The differential is defined as $d=d_V+d_E$, where $d_V$ acts by splitting vertices so that no univalent nor bivalent vertices are created, and with the same sign rule as in $\dGC_d$. The term $d_E$ acts on edges accordingly:
\begin{enumerate}
    \item $d_E \Ed=\tEd-\sEd$
    \item $d_E \sEd=\wEd$
    \item $d_E \tEd=\wEd$
    \item $d_E \wEd=0$
\end{enumerate}
We say that a vertex in such a graph is a \textit{solid source} if the attached edges are solid and outgoing or $t$-dotted. A vertex is a \textit{solid target} if the attached edges are solid and incoming or $s$-dotted. Consider the subcomplex $\overline{\dGC}^{st}_k$ of $\overline{\dGC}_k$ generated by graphs that either
\begin{enumerate}
    \item have at least one solid source and one solid target,
    \item have at least one dotted $s$-edge and one solid target,
    \item have at least one dotted $t$-edge and one solid source,
    \item have at least one dotted $s$-edge and one dotted $t$-edge,
    \item have at least one wavy edge.
\end{enumerate}
Consider the map $f:\overline{\dGC}^{st}_k\rightarrow\dGC^{st}_k$ where a graph is mapped to the graph in $\dGC^{st}_k$ where solid edges remain the same, but $s$-dotted, $t$-dotted and wavy edges are replaced by the following edges:
\begin{enumerate}
    \item $\sEd \mapsto \sBi$
    \item $\tEd \mapsto \tBi$
    \item $\wEd \mapsto \stBi$
\end{enumerate}
Then the map $f$ is a quasi-isomorphism \cite{M3}.
Consider the subcomplex $\mathsf{Z}_d$ of $\overline{\dGC}^{st}_k$ of graphs that either
\begin{enumerate}
    \item have at least one $s$-edge and one $t$-edge,
    \item have at least one $s$-edge and one wavy edge,
    \item have at least one $t$edge and one wavy edge,
    \item have at least two wavy edges.
\end{enumerate}
This subcomplex is acyclic, hence the projection  $\overline{\dGC}^{st}_k\rightarrow\overline{\dGC}^{st}_k/\mathsf{Z}_d$ is a quasi-isomorphism. Let $\widehat{\dGC}^{st}_k=\overline{\dGC}^{st}_k/\mathsf{Z}_d$. This complex consists of graphs that either
\begin{enumerate}
    \item have only solid edges such that there is at least one solid source and one solid target,
    \item have only solid and dotted $s$-edges such that there is at least one solid target and one dotted $s$-edge,
    \item have only solid and dotted $t$-edges such that there is at least one solid source and one dotted $t$-edge,
    \item have only solid and wavy edges such that there is at least one wavy edge.
\end{enumerate}
We will use this complex when finding the cohomology classes.
\subsection{Calculating the cohomology classes}
Let $\gamma^s$ and $\gamma^t$ be the graphs
\begin{equation*}
    \gamma^s=
    \begin{tikzpicture}[shorten <=1pt,>=stealth,semithick,baseline={([yshift=-.5ex]current bounding box.center)},scale=0.6]
        \node[sblack] (t) at (0,0) {};
        \node[sblack] (m) at (0,-1.5) {};
        \node[sblack] (l) at (-1.5,-2.4) {};
        \node[sblack] (r) at (1.5,-2.4) {};
        \draw[->] (t) to (m);
        \draw[->] (m) to (l);
        \draw[->] (r) to (m);
        \draw[->] (l) to (r);
        \draw[dotted,->] (l) to node[auto] {$\scriptstyle{s}$} (t);
        \draw[dotted,->] (r) to node[auto,swap] {$\scriptstyle{s}$} (t);
    \end{tikzpicture}
    \qquad
    \gamma^t=
    \begin{tikzpicture}[shorten <=1pt,>=stealth,semithick,baseline={([yshift=-.5ex]current bounding box.center)},scale=0.6]
        \node[sblack] (t) at (0,0) {};
        \node[sblack] (m) at (0,-1.5) {};
        \node[sblack] (l) at (-1.5,-2.4) {};
        \node[sblack] (r) at (1.5,-2.4) {};
        \draw[->] (m) to (t);
        \draw[->] (m) to (l);
        \draw[->] (r) to (m);
        \draw[->] (l) to (r);
        \draw[dotted,->] (l) to node[auto] {$\scriptstyle{t}$} (t);
        \draw[dotted,->] (r) to node[auto,swap] {$\scriptstyle{t}$} (t);
    \end{tikzpicture}
\end{equation*}
Note that $\gamma^s$ is in $\widehat{\dGC}^{s}_3$ but not in $\widehat{\dGC}^{st}_3$. Similarly $\gamma^t$ is in $\widehat{\dGC}^{t}_3$ but not in $\widehat{\dGC}^{st}_3$.
The vertices of $\gamma^s$ are all trivalent, and so the differential acts by only changing solid edges to $s$-edges. We have omitted the $s$ on $s$-edges for clarity in the following pictures. Hence
\begin{align*}
    \alpha^s:=d(\gamma^s)&=
    \begin{tikzpicture}[shorten <=1pt,>=stealth,semithick,baseline={([yshift=-.5ex]current bounding box.center)},scale=0.6]
        \node[sblack] (t) at (0,0) {};
        \node[sblack] (m) at (0,-1.5) {};
        \node[sblack] (l) at (-1.5,-2.4) {};
        \node[sblack] (r) at (1.5,-2.4) {};
        \draw[dotted,->] (t) to (m);
        \draw[->] (m) to (l);
        \draw[->] (r) to (m);
        \draw[->] (l) to (r);
        \draw[dotted,->] (l) to (t);
        \draw[dotted,->] (r) to (t);
    \end{tikzpicture}
    +
    \begin{tikzpicture}[shorten <=1pt,>=stealth,semithick,baseline={([yshift=-.5ex]current bounding box.center)},scale=0.6]
        \node[sblack] (t) at (0,0) {};
        \node[sblack] (m) at (0,-1.5) {};
        \node[sblack] (l) at (-1.5,-2.4) {};
        \node[sblack] (r) at (1.5,-2.4) {};
        \draw[->] (t) to (m);
        \draw[dotted,->] (m) to (l);
        \draw[->] (r) to (m);
        \draw[->] (l) to (r);
        \draw[dotted,->] (l) to (t);
        \draw[dotted,->] (r) to (t);
    \end{tikzpicture}
    +
    \begin{tikzpicture}[shorten <=1pt,>=stealth,semithick,baseline={([yshift=-.5ex]current bounding box.center)},scale=0.6]
        \node[sblack] (t) at (0,0) {};
        \node[sblack] (m) at (0,-1.5) {};
        \node[sblack] (l) at (-1.5,-2.4) {};
        \node[sblack] (r) at (1.5,-2.4) {};
        \draw[->] (t) to (m);
        \draw[->] (m) to (l);
        \draw[dotted,->] (r) to (m);
        \draw[->] (l) to (r);
        \draw[dotted,->] (l) to (t);
        \draw[dotted,->] (r) to (t);
    \end{tikzpicture}
    +
    \begin{tikzpicture}[shorten <=1pt,>=stealth,semithick,baseline={([yshift=-.5ex]current bounding box.center)},scale=0.6]
        \node[sblack] (t) at (0,0) {};
        \node[sblack] (m) at (0,-1.5) {};
        \node[sblack] (l) at (-1.5,-2.4) {};
        \node[sblack] (r) at (1.5,-2.4) {};
        \draw[->] (t) to (m);
        \draw[->] (m) to (l);
        \draw[->] (r) to (m);
        \draw[dotted,->] (l) to (r);
        \draw[dotted,->] (l) to (t);
        \draw[dotted,->] (r) to (t);
    \end{tikzpicture}\\
    \alpha^t:=d(\gamma^t)&=
    \begin{tikzpicture}[shorten <=1pt,>=stealth,semithick,baseline={([yshift=-.5ex]current bounding box.center)},scale=0.6]
        \node[sblack] (t) at (0,0) {};
        \node[sblack] (m) at (0,-1.5) {};
        \node[sblack] (l) at (-1.5,-2.4) {};
        \node[sblack] (r) at (1.5,-2.4) {};
        \draw[dotted,<-] (t) to (m);
        \draw[->] (m) to (l);
        \draw[->] (r) to (m);
        \draw[->] (l) to (r);
        \draw[dotted,->] (l) to (t);
        \draw[dotted,->] (r) to (t);
    \end{tikzpicture}
    +
    \begin{tikzpicture}[shorten <=1pt,>=stealth,semithick,baseline={([yshift=-.5ex]current bounding box.center)},scale=0.6]
        \node[sblack] (t) at (0,0) {};
        \node[sblack] (m) at (0,-1.5) {};
        \node[sblack] (l) at (-1.5,-2.4) {};
        \node[sblack] (r) at (1.5,-2.4) {};
        \draw[<-] (t) to (m);
        \draw[dotted,->] (m) to (l);
        \draw[->] (r) to (m);
        \draw[->] (l) to (r);
        \draw[dotted,->] (l) to (t);
        \draw[dotted,->] (r) to (t);
    \end{tikzpicture}
    +
    \begin{tikzpicture}[shorten <=1pt,>=stealth,semithick,baseline={([yshift=-.5ex]current bounding box.center)},scale=0.6]
        \node[sblack] (t) at (0,0) {};
        \node[sblack] (m) at (0,-1.5) {};
        \node[sblack] (l) at (-1.5,-2.4) {};
        \node[sblack] (r) at (1.5,-2.4) {};
        \draw[<-] (t) to (m);
        \draw[->] (m) to (l);
        \draw[dotted,->] (r) to (m);
        \draw[->] (l) to (r);
        \draw[dotted,->] (l) to (t);
        \draw[dotted,->] (r) to (t);
    \end{tikzpicture}
    +
    \begin{tikzpicture}[shorten <=1pt,>=stealth,semithick,baseline={([yshift=-.5ex]current bounding box.center)},scale=0.6]
        \node[sblack] (t) at (0,0) {};
        \node[sblack] (m) at (0,-1.5) {};
        \node[sblack] (l) at (-1.5,-2.4) {};
        \node[sblack] (r) at (1.5,-2.4) {};
        \draw[<-] (t) to (m);
        \draw[->] (m) to (l);
        \draw[->] (r) to (m);
        \draw[dotted,->] (l) to (r);
        \draw[dotted,->] (l) to (t);
        \draw[dotted,->] (r) to (t);
    \end{tikzpicture}
\end{align*}
Note that each graph contains a source and a target vertex, and hence $\alpha^s$ and $\alpha^t$ are 
cycles in $\widehat{\dGC}^{st}_3$.
\begin{theorem}\label{thm:tetra}
    The elements $\alpha^s$ and $\alpha^t$ are non-trivial cycles of $\widehat{\dGC}^{st}_3$. Furthermore, they represent two different cohomology classes in $H(\widehat{\dGC}^{st}_3)$.
\end{theorem}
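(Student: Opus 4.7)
The plan proceeds in three stages: verify the cycle condition, separate the $s$- and $t$-contributions via natural subcomplexes of $\widehat{\dGC}^{st}_3$, and identify the resulting cohomology classes in the decomposition $H^0(\widehat{\dGC}^{st}_3) \cong \mathfrak{grt}\oplus\mathfrak{grt}$ supplied by Corollary \ref{cor:st-grt}.

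For the cycle condition, every vertex of $\gamma^s$ is trivalent, so $d_V\gamma^s = 0$ and $d\gamma^s = d_E\gamma^s$ in the ambient complex $\overline{\dGC}_3$. Expanding $d_E(\Ed) = \tEd - \sEd$ on each of the four solid edges together with $d_E(\sEd) = \wEd$ on each of the two $s$-dotted edges yields a decomposition $d\gamma^s = \alpha^s + \mu^s$, in which $\alpha^s$ collects precisely the terms that replace solid edges by $s$-dotted edges only while leaving the $s$-dotted edges intact, and every summand of $\mu^s$ contains an $s$-dotted edge together with either a $t$-dotted or a wavy edge, hence belongs to the subcomplex $\mathsf{Z}_3$. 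Since $\mathsf{Z}_3$ is closed under $d$ (a quick check using the edge rules) and $d^2\gamma^s = 0$, we get $d\alpha^s = -d\mu^s \in \mathsf{Z}_3$, so $[d\alpha^s] = 0$ in $\widehat{\dGC}^{st}_3 = \overline{\dGC}^{st}_3/\mathsf{Z}_3$. The argument for $\alpha^t$ is symmetric.

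For the separation, introduce the subcomplexes $A_s, A_t \subset \widehat{\dGC}^{st}_3$ spanned by classes of graphs whose non-solid edges are, respectively, all $s$-dotted or all $t$-dotted; closure under the induced differential again follows from the landing-in-$\mathsf{Z}_3$ argument. We have $\alpha^s \in A_s$ and $\alpha^t \in A_t$. Composing the quasi-isomorphism $\widehat{\dGC}^{st}_3 \xrightarrow{\sim} \dGC^{st}_3$ from \cite{M3} with the diagonal inclusion $\dGC^{st}_3 \hookrightarrow \dGC^s_3 \oplus \dGC^t_3$ (a quasi-isomorphism in degree $0$), each class $[\alpha^\bullet]$ corresponds to a pair in $H^0(\dGC^s_3)\oplus H^0(\dGC^t_3) \cong \mathfrak{grt}\oplus\mathfrak{grt}$. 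To compute these pairs, extend $f$ to a chain map $f_0: \overline{\dGC}_3 \to \dGC_3$ acting by the same edge-replacement rules, and use the identity $df_0(\gamma^s) = f(\alpha^s) + f_0(\mu^s)$; since $f_0(\gamma^s)$ lies in $\dGC^s_3$ (it carries two bivalent source vertices coming from the $\sBi$-pieces, but no target), this exhibits $f(\alpha^s) + f_0(\mu^s)$ as a boundary in $\dGC^s_3$ and therefore yields $[f(\alpha^s)]_s = -[f_0(\mu^s)]_s$. The $s\leftrightarrow t$ symmetric construction gives $[f(\alpha^t)]_t = -[f_0(\mu^t)]_t$.

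The main obstacle is the final concrete identification: showing that $[f_0(\mu^s)]_s$ equals minus the tetrahedron class in $H^0(\dGC^s_3)\cong\mathfrak{grt}$ and that $[f(\alpha^s)]_t = 0$ in $H^0(\dGC^t_3)$, together with the analogous statements for $\alpha^t$. The first step is a bookkeeping calculation, expanding $f_0(\mu^s)$ into its six constituent graphs (four from the $+\tEd$ substitutions on solid edges and two from the $\wEd$ substitutions on the $s$-dotted edges) and reducing modulo $\dGC^s_3$-boundaries to the canonical tetrahedron representative. The second step is more delicate because $f_0(\gamma^s) \notin \dGC^t_3$, so the same primitive is unavailable; the approach is either to construct an alternative primitive in $\dGC^t_3$ by attaching an auxiliary target configuration to $f_0(\gamma^s)$ and cobounding the resulting correction terms, or to invoke the long exact sequence associated to $0 \to \dGC^{st}_3 \to \dGC^s_3 \oplus \dGC^t_3 \to \dGC^{s+t}_3 \to 0$ (together with $H^{\leq 1}(\dGC^{s+t}_3) = 0$) to deduce vanishing indirectly. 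Once these identifications are completed, $[\alpha^s] \leftrightarrow (\text{tetra}, 0)$ and $[\alpha^t] \leftrightarrow (0, \text{tetra})$ in $\mathfrak{grt}\oplus\mathfrak{grt}$, which are manifestly non-trivial and distinct.
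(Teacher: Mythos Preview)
Your cycle verification is correct and coincides with the paper's. The rest of the argument, however, has genuine gaps.

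First, the ``subcomplexes'' $A_s,A_t\subset\widehat{\dGC}^{st}_3$ are not closed under the differential as you claim. Take a type-(2) graph with exactly one $s$-dotted edge: applying $d_E$ to that edge produces a graph with a single wavy edge and no dotted edges at all. This is a legitimate type-(4) element of $\widehat{\dGC}^{st}_3$, not an element of $\mathsf{Z}_3$, so your landing-in-$\mathsf{Z}_3$ justification breaks down. (If you instead include type-(1) graphs in $A_s$, then $d_E$ on a solid edge produces a $t$-dotted term that again escapes $A_s$.) This is repairable with extra care, but it is not as automatic as you suggest.

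Second, and more fundamentally, your argument is not a proof but a programme: you explicitly flag the identification of $[\alpha^s]$ with $(\text{tetra},0)$ as ``the main obstacle'' and then offer only sketches---reduce $f_0(\mu^s)$ modulo boundaries, build an auxiliary primitive in $\dGC^t_3$, or invoke a long exact sequence. None of this is carried out. In particular, the extension $f_0:\overline{\dGC}_3\to\dGC_3$ is asserted to be a chain map without checking that the edge-replacement rules intertwine $d_E$ on the source with pure vertex splitting on the target; this is precisely the content of the cited result for $f$ on $\overline{\dGC}^{st}_3$ and does not extend for free.

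The paper takes a completely different, elementary route that sidesteps all of this. It isolates the ten-dimensional space $A^s$ of loop-number-three tetrahedra with exactly three $s$-edges, identifies the eleven degree-$(-1)$ graphs whose differential can hit $A^s$, writes down the resulting $10\times 11$ matrix, and checks by Gaussian elimination that $\alpha^s$ is not in the image. A short reduction lemma guarantees this local computation suffices: any primitive for $\alpha^s$ in the full complex would already have to furnish one in this eleven-dimensional slice. Distinctness of $[\alpha^s]$ and $[\alpha^t]$ then follows by running the same mechanism on $A^s\oplus A^t$. Your strategy, if completed, would yield more information (the explicit coordinates in $\mathfrak{grt}\oplus\mathfrak{grt}$), but as it stands it stops well short of a proof.
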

Let $X$ and $A$ be the spaces of graphs of loop number three with at least one source and target vertex of degree $-1$ and $0$ respectively, and the differential is a linear map $d:X\rightarrow A$. We want to show that there is no $\beta^s\in X$ so that $d(\beta^s)=\alpha^s$. To prove this at this stage, we would need to compute the differential of hundreds of graphs in $X$ and check that $\alpha^s$ is linearly independent from them. But we can reduce this problem to give us only a handful of graphs to study.
Let $A^s$ be the subspace of $A$ of graphs spanned by tetrahedron graphs with three $s$-edges. Note that $\alpha^s\in A^s$. The graphs $a_1...a_{10}$ in figure \ref{fig:A} is basis for $A^s$.

Let $\overline{A}$ be the orthogonal complement of $A^s$ so that $A=A^s\oplus\overline{A}$ and let $p_s:A^s\oplus\overline{A}\rightarrow A^s$ be the projection. Let $\overline{X}$ be the kernel of the map $p_s\circ d:X\rightarrow A^s$ and $X^s$ its orthogonal complement so that $X=X^s\oplus X\overline{X}$. The space $X^s$ is spanned by the graphs $x_1,...,x_{11}$ in figure \ref{fig:X}.
Let $\iota^s:X^s\rightarrow X^s\oplus\overline{X}$ be the inclusion map. We get the following commutative diagram:
\begin{equation*}
    \xymatrix{
        X^s\oplus\overline{X} \ar[r]^d & A^s\oplus \overline{A} \ar[d]^{p_s} \\
        X^s \ar[u]^{\iota_s} \ar[r]^{p_a\circ \ d\ \circ\iota_s} & A^s } 
    \end{equation*}
\begin{lemma}\label{lemma:tetra}
    Let $\alpha^s\in A^s$. If there is an element $\beta\in X=X^s\oplus\overline{X}$ such that $d(\beta)=\alpha^s$, then there is $\beta^s\in X^s$ such that $\iota_s\circ d\circ p_s(\beta^s)=\alpha^s$.
\end{lemma}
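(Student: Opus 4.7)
The plan is straightforward: Lemma \ref{lemma:tetra} is essentially a formal consequence of how the decomposition $X = X^s \oplus \overline{X}$ is set up, and the proof reduces to a one-line chase through the commutative diagram. The point of the lemma, however, is strategic: it reduces the question of whether $\alpha^s$ is a boundary in the full (and very large) space $X$ to a finite linear algebra question on the 11-dimensional subspace $X^s$ with basis $x_1,\ldots,x_{11}$ mapping into the 10-dimensional subspace $A^s$ with basis $a_1,\ldots,a_{10}$.

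Concretely, suppose $\beta \in X = X^s \oplus \overline{X}$ satisfies $d(\beta) = \alpha^s$. I would write $\beta = \iota_s(\beta^s) + \overline{\beta}$ according to this orthogonal decomposition, with $\beta^s \in X^s$ and $\overline{\beta} \in \overline{X}$. Applying the projection $p_s : A^s \oplus \overline{A} \to A^s$ to both sides of $d(\beta) = \alpha^s$ gives
\begin{equation*}
    p_s\bigl(d(\iota_s(\beta^s))\bigr) + p_s\bigl(d(\overline{\beta})\bigr) = p_s(\alpha^s).
\end{equation*}
By the very definition of $\overline{X}$ as the kernel of $p_s \circ d$, the middle term vanishes; and since $\alpha^s$ lies in $A^s$ by construction, we have $p_s(\alpha^s) = \alpha^s$. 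Hence $(p_s \circ d \circ \iota_s)(\beta^s) = \alpha^s$, which is precisely the asserted factorization through $X^s$.

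There is no genuine obstacle in this step — the only ingredient needed is that $\overline{X}$ was defined as the kernel of $p_s \circ d$, which is the whole point of the orthogonal splitting. The real work, which lies ahead in the paper, is the explicit computation: one must list the basis $x_1,\ldots,x_{11}$ of $X^s$ (the graphs depicted in figure \ref{fig:X}), compute the differential of each, project onto $A^s$ using the basis $a_1,\ldots,a_{10}$ of figure \ref{fig:A}, and then check that $\alpha^s$ does not lie in the image of the resulting $11 \times 10$ matrix. The lemma guarantees that this finite calculation is in fact equivalent to the a priori infinite problem of checking non-exactness in all of $X$, and analogous statements will then yield the non-triviality and inequivalence of $\alpha^s$ and $\alpha^t$ claimed in Theorem \ref{thm:tetra}.
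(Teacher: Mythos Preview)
Your proof is correct and essentially identical to the paper's own argument: decompose $\beta=\iota_s(\beta^s)+\overline{\beta}$, apply $p_s$ to $d(\beta)=\alpha^s$, use that $\overline{X}=\ker(p_s\circ d)$ to kill the $\overline{\beta}$ contribution, and conclude $(p_s\circ d\circ\iota_s)(\beta^s)=\alpha^s$. You also correctly noticed and silently fixed the typo in the lemma statement (the paper writes $\iota_s\circ d\circ p_s$ where $p_s\circ d\circ\iota_s$ is meant, as is clear from the commutative diagram and the paper's own proof).
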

\begin{proof}
    Suppose that such an element $\beta$ exists. Then $\beta=\beta^s+\overline{\beta}$ where $\beta^s\in X^s$ and $\overline{\beta}\in\overline{X}$. Now $d(\beta^s+\overline{\beta})=\alpha$. The space $\overline{X}$ is the kernel space of $p_s\circ d$, and so $p_s\circ d(\overline{\beta})=0$. Hence $p_s\circ d(\beta)=p_s\circ d(\beta^s)=\alpha^s$. Finally $\beta^s\in X^s$ and so $p_s\circ d\circ\iota_s(\beta^s)=\alpha$.
\end{proof}
\begin{proof}[Proof of theorem \ref{thm:tetra}]
By the contrapositive statement of lemma \ref{lemma:tetra}, it is sufficient for us to show that there is no $\beta^s\in X^s$ so that $p_s\circ d\circ\iota_s(\beta^s)=\alpha^s$. We have already established the basis $a_1...a_{10}$ of $A^s$ and $x_1...x_{11}$ of $X^s$. In this basis $\alpha^s=a_1+a_5+a_7-a_{10}$. We compute the image of each vector $x_1...x_{11}$ under the differential and get a matrix representation of the map $d_s\circ d\circ p_s:X^s\rightarrow A^s$ as seen in figure \ref{fig:matrix}.
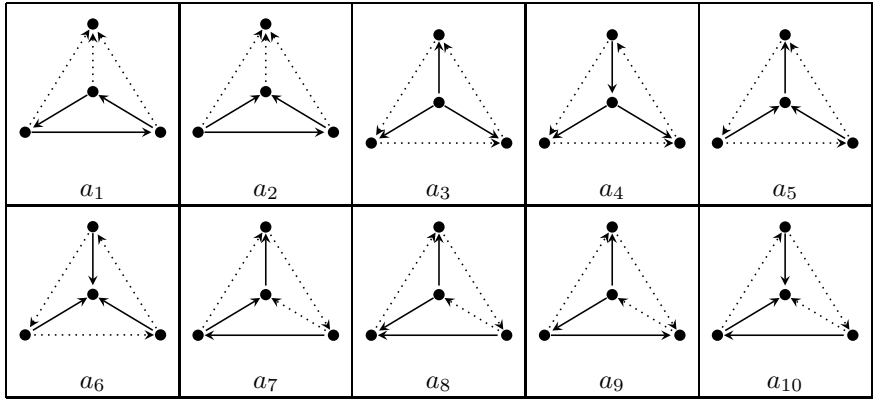
\begin{figure}
\begin{center}
    \begin{tabular}{|c|c|c|c|c|}
        \hline
\begin{tikzpicture}[
shorten <=1pt,>=stealth,semithick,
baseline=-18.6mm,
scale=0.6]
        \node[sblack] (t) at (0,0) {};
        \node[sblack] (m) at (0,-1.5) {};
        \node[sblack] (l) at (-1.5,-2.4) {}
            edge [<-] (m);
        \node[sblack] (r) at (1.5,-2.4) {}
            edge [->] (m)
            edge [<-] (l);
        \draw (t)[dotted,<-] -- (m);
        \draw (l)[dotted,->] -- (t);
        \draw (t)[dotted,<-] -- (r);
        \addvmargin{2mm}
    \end{tikzpicture} & 
\begin{tikzpicture}[
shorten <=1pt,>=stealth,semithick,
baseline=-18.6mm,
scale=0.6]
        \node[sblack] (t) at (0,0) {};
        \node[sblack] (m) at (0,-1.5) {};        \node[sblack] (l) at (-1.5,-2.4) {}
            edge [->] (m);
        \node[sblack] (r) at (1.5,-2.4) {}
            edge [->] (m)
            edge [<-] (l);
        \draw (t)[dotted,<-] -- (m);
        \draw (l)[dotted,->] -- (t);
        \draw (t)[dotted,<-] -- (r);
        \addvmargin{2mm}

    \end{tikzpicture} &
\begin{tikzpicture}[
shorten <=1pt,>=stealth,semithick,
scale=0.6]
        \node[sblack] (t) at (0,0) {};
        \node[sblack] (m) at (0,-1.5) {}
            edge [->] (t);
        \node[sblack] (l) at (-1.5,-2.4) {}
            edge [<-] (m);
        \node[sblack] (r) at (1.5,-2.4) {}
            edge [<-] (m);
        \draw (l)[dotted,->] -- (r);
        \draw (l)[dotted,<-] -- (t);
        \draw (t)[dotted,<-] -- (r);
        \addvmargin{2mm}
    \end{tikzpicture} &
\begin{tikzpicture}[
shorten <=1pt,>=stealth,semithick,
scale=0.6]
        \node[sblack] (t) at (0,0) {};
        \node[sblack] (m) at (0,-1.5) {}
            edge [<-] (t);
        \node[sblack] (l) at (-1.5,-2.4) {}
            edge [<-] (m);
        \node[sblack] (r) at (1.5,-2.4) {}
            edge [<-] (m);
        \draw (l)[dotted,->] -- (r);
        \draw (l)[dotted,<-] -- (t);
        \draw (t)[dotted,<-] -- (r);
        \addvmargin{2mm}
    \end{tikzpicture} &
\begin{tikzpicture}[
shorten <=1pt,>=stealth,semithick,
scale=0.6]
        \node[sblack] (t) at (0,0) {};
        \node[sblack] (m) at (0,-1.5) {}
            edge [->] (t);
        \node[sblack] (l) at (-1.5,-2.4) {}
            edge [->] (m);
        \node[sblack] (r) at (1.5,-2.4) {}
            edge [->] (m);
        \draw (l)[dotted,->] -- (r);
        \draw (l)[dotted,<-] -- (t);
        \draw (t)[dotted,<-] -- (r);
        \addvmargin{2mm}
    \end{tikzpicture} \\
        $a_1$ & $a_2$ & $a_3$ & $a_4$ & $a_5$ \\
        \hline
\begin{tikzpicture}[
shorten <=1pt,>=stealth,semithick,
baseline=-17.2mm,
scale=0.6]
        \node[sblack] (t) at (0,0) {};
        \node[sblack] (m) at (0,-1.5) {}
            edge [<-] (t);
        \node[sblack] (l) at (-1.5,-2.4) {}
            edge [->] (m);
        \node[sblack] (r) at (1.5,-2.4) {}
            edge [->] (m);
        \draw (l)[dotted,->] -- (r);
        \draw (l)[dotted,<-] -- (t);
        \draw (t)[dotted,<-] -- (r);
        \addvmargin{2mm}
    \end{tikzpicture} &
\begin{tikzpicture}[
shorten <=1pt,>=stealth,semithick,
scale=0.6]
        \node[sblack] (t) at (0,0) {};
        \node[sblack] (m) at (0,-1.5) {}
            edge [->] (t);
        \node[sblack] (l) at (-1.5,-2.4) {}
            edge [->] (m);
        \node[sblack] (r) at (1.5,-2.4) {}
            edge [->] (l);
        \draw (r)[dotted,->] -- (m);
        \draw (l)[dotted,->] -- (t);
        \draw (t)[dotted,->] -- (r);
        \addvmargin{2mm}
    \end{tikzpicture} &
\begin{tikzpicture}[
shorten <=1pt,>=stealth,semithick,
scale=0.6]
        \node[sblack] (t) at (0,0) {};
        \node[sblack] (m) at (0,-1.5) {}
            edge [->] (t);
        \node[sblack] (l) at (-1.5,-2.4) {}
            edge [<-] (m);
        \node[sblack] (r) at (1.5,-2.4) {}
            edge [->] (l);
        \draw (r)[dotted,->] -- (m);
        \draw (l)[dotted,->] -- (t);
        \draw (t)[dotted,->] -- (r);
        \addvmargin{2mm}
    \end{tikzpicture} &
\begin{tikzpicture}[
shorten <=1pt,>=stealth,semithick,
scale=0.6]
        \node[sblack] (t) at (0,0) {};
        \node[sblack] (m) at (0,-1.5) {}
            edge [->] (t);
        \node[sblack] (l) at (-1.5,-2.4) {}
            edge [<-] (m);
        \node[sblack] (r) at (1.5,-2.4) {}
            edge [<-] (l);
        \draw (r)[dotted,->] -- (m);
        \draw (l)[dotted,->] -- (t);
        \draw (t)[dotted,->] -- (r);
        \addvmargin{2mm}
    \end{tikzpicture} &
\begin{tikzpicture}[
shorten <=1pt,>=stealth,semithick,
scale=0.6]
        \node[sblack] (t) at (0,0) {};
        \node[sblack] (m) at (0,-1.5) {}
            edge [<-] (t);
        \node[sblack] (l) at (-1.5,-2.4) {}
            edge [->] (m);
        \node[sblack] (r) at (1.5,-2.4) {}
            edge [->] (l);
        \draw (r)[dotted,->] -- (m);
        \draw (l)[dotted,->] -- (t);
        \draw (t)[dotted,->] -- (r);
        \addvmargin{2mm}
    \end{tikzpicture} \\
        $a_6$ & $a_7$ & $a_8$ & $a_9$ & $a_{10}$ \\
        \hline
    \end{tabular}
\end{center}
    \caption{Basis of $A^s$}
    \label{fig:A}
\end{figure}
\begin{figure}
\begin{center}
    \begin{tabular}{|c|c|c|c|c|c|}
        \hline
\begin{tikzpicture}[
shorten <=1pt,>=stealth,semithick,
baseline={([yshift=-.5ex]current bounding box.center)},scale=0.6]
        \node[sblack] (t) at (0,0) {};
        \node[sblack] (m) at (0,-1.5) {}
            edge [->] (t);
        \node[sblack] (l) at (-1.5,-2.4) {}
            edge [<-] (m);
        \node[sblack] (r) at (1.5,-2.4) {}
            edge [->] (m)
            edge [<-] (l);
        \draw (l)[dotted,->] -- (t);
        \draw (t)[dotted,<-] -- (r);
        \addvmargin{2mm}
    \end{tikzpicture} &
\begin{tikzpicture}[shorten <=1pt,>=stealth,semithick,baseline={([yshift=-.5ex]current bounding box.center)},scale=0.6]
        \node[sblack] (t) at (0,0) {};
        \node[sblack] (m) at (0,-1.5) {}
            edge [->] (t);
        \node[sblack] (l) at (-1.5,-2.4) {}
            edge [->] (m);
        \node[sblack] (r) at (1.5,-2.4) {}
            edge [<-] (m)
            edge [<-] (l);
        \draw (l)[dotted,->] -- (t);
        \draw (t)[dotted,<-] -- (r);
    \end{tikzpicture} &
\begin{tikzpicture}[shorten <=1pt,>=stealth,semithick,
baseline={([yshift=-.5ex]current bounding box.center)},
scale=0.6]
        \node[sblack] (t) at (0,0) {};
        \node[sblack] (m) at (0,-1.5) {}
            edge [->] (t);
        \node[sblack] (l) at (-1.5,-2.4) {}
            edge [<-] (m);
        \node[sblack] (r) at (1.5,-2.4) {}
            edge [<-] (m)
            edge [<-] (l);
        \draw (l)[dotted,->] -- (t);
        \draw (t)[dotted,<-] -- (r);
    \end{tikzpicture} &
\begin{tikzpicture}[shorten <=1pt,>=stealth,semithick,baseline={([yshift=-.5ex]current bounding box.center)},scale=0.6]
        \node[sblack] (t) at (0,0) {};
        \node[sblack] (m) at (0,-1.5) {}
            edge [->] (t);
        \node[sblack] (l) at (-1.5,-2.4) {}
            edge [->] (m);
        \node[sblack] (r) at (1.5,-2.4) {}
            edge [->] (m)
            edge [<-] (l);
        \draw (l)[dotted,->] -- (t);
        \draw (t)[dotted,<-] -- (r);
    \end{tikzpicture} &
\begin{tikzpicture}[shorten <=1pt,>=stealth,semithick,baseline={([yshift=-.5ex]current bounding box.center)},scale=0.6]
        \node[sblack] (t) at (0,0) {};
        \node[sblack] (m) at (0,-1.5) {}
            edge [<-] (t);
        \node[sblack] (l) at (-1.5,-2.4) {}
            edge [->] (m);
        \node[sblack] (r) at (1.5,-2.4) {}
            edge [->] (m)
            edge [<-] (l);
        \draw (l)[dotted,->] -- (t);
        \draw (t)[dotted,<-] -- (r);
    \end{tikzpicture} &
\begin{tikzpicture}[shorten <=1pt,>=stealth,semithick,baseline={([yshift=-.5ex]current bounding box.center)},scale=0.6]
        \node[sblack] (t) at (0,0) {};
        \node[sblack] (m) at (0,-1.5) {}
            edge [<-] (t);
        \node[sblack] (l) at (-1.5,-2.4) {}
            edge [->] (m);
        \node[sblack] (r) at (1.5,-2.4) {}
            edge [<-] (m)
            edge [<-] (l);
        \draw (l)[dotted,->] -- (t);
        \draw (t)[dotted,<-] -- (r);
    \end{tikzpicture} \\
        $x_1$ & $x_2$ & $x_3$ & $x_4$ & $x_5$ & $x_6$ \\
        \hline
\begin{tikzpicture}[shorten <=1pt,>=stealth,semithick
,baseline=-16.7mm
,scale=0.6]
        \node[sblack] (t) at (0,0) {};
        \node[sblack] (m) at (0,-1.5) {}
            edge [<-] (t);
        \node[sblack] (l) at (-1.5,-2.4) {}
            edge [<-] (m);
        \node[sblack] (r) at (1.5,-2.4) {}
            edge [<-] (m)
            edge [<-] (l);
        \draw (l)[dotted,->] -- (t);
        \draw (t)[dotted,<-] -- (r);
        \addvmargin{2mm}
    \end{tikzpicture} &
\begin{tikzpicture}[shorten <=1pt,>=stealth,semithick
,scale=0.6]
        \node[sblack] (t) at (0,0) {};
        \node[sblack] (m) at (0,-1.5) {};
        \node[sblack] (l) at (-1.5,-2.4) {}
            edge [->] (t)
            edge [->] (m);
        \node[sblack] (r) at (1.5,-2.4) {}
            edge [->] (t)
            edge [<-] (m);
        \draw (l)[dotted,->] -- (r);
        \draw (t)[dotted,<-] -- (m);
    \end{tikzpicture} &
\begin{tikzpicture}[shorten <=1pt,>=stealth,semithick
,scale=0.6]
        \node[sblack] (l) at (-1.5,0) {};
        \node[sblack] (r) at (1.5,0) {};
        \node[sblack] (m) at (0,0) {};
        \draw[dotted,->] (l) .. controls (-1.2,2) and (1.2,2) .. (r);
        \draw[->] (l) .. controls (-0.90,0.5) and (-0.60,0.5) ..  (m);
        \draw[dotted,->] (l) .. controls (-0.90,-0.5) and (-0.60,-0.5) ..  (m);
        \draw[->] (m) .. controls (0.60,0.5) and (0.90,0.5) ..  (r);
        \draw[dotted,->] (m) .. controls (0.60,-0.5) and (0.90,-0.5) ..  (r);
    \end{tikzpicture} &
\begin{tikzpicture}[shorten <=1pt,>=stealth,semithick
,scale=0.6]
        \node[sblack] (l) at (-1.5,0) {};
        \node[sblack] (r) at (1.5,0) {};
        \node[sblack] (m) at (0,0) {};
        \draw[dotted,->] (l) .. controls (-1.2,2) and (1.2,2) .. (r);
        \draw[->] (l) .. controls (-0.90,0.5) and (-0.60,0.5) ..  (m);
        \draw[dotted,->] (l) .. controls (-0.90,-0.5) and (-0.60,-0.5) ..  (m);
        \draw[<-] (m) .. controls (0.60,0.5) and (0.90,0.5) ..  (r);
        \draw[dotted,->] (m) .. controls (0.60,-0.5) and (0.90,-0.5) ..  (r);
    \end{tikzpicture}&
\begin{tikzpicture}[shorten <=1pt,>=stealth,semithick
,scale=0.6]
        \node[sblack] (l) at (-1.5,0) {};
        \node[sblack] (r) at (1.5,0) {};
        \node[sblack] (m) at (0,0) {};
        \draw[dotted,->] (l) .. controls (-1.2,2) and (1.2,2) .. (r);
        \draw[<-] (l) .. controls (-0.90,0.5) and (-0.60,0.5) ..  (m);
        \draw[dotted,->] (l) .. controls (-0.90,-0.5) and (-0.60,-0.5) ..  (m);
        \draw[->] (m) .. controls (0.60,0.5) and (0.90,0.5) ..  (r);
        \draw[dotted,->] (m) .. controls (0.60,-0.5) and (0.90,-0.5) ..  (r);
    \end{tikzpicture}& \\
        $x_7$ & $x_8$ & $x_9$ & $x_{10}$ & $x_{11}$ & \\
        \hline
    \end{tabular}
\end{center}
    \caption{Basis of $X^s$}
    \label{fig:X}
\end{figure}
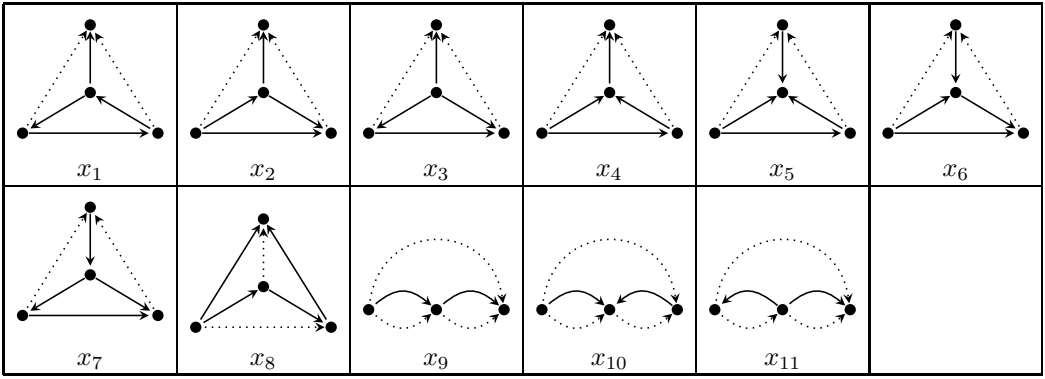
\begin{figure}
\begin{equation*}
    \begin{cases}
        d_s(x_1)&=a_1+a_4-a_7-a_9\\
        d_s(x_2)&=a_2+a_4-a_8+a_9\\
        d_s(x_3)&=a_2+a_3-a_8-a_9\\
        d_s(x_4)&=a_2+a_5-a_7+a_9\\
        d_s(x_5)&=a_2+a_6+a_8-a_{10}\\
        d_s(x_6)&=a_2+a_5+a_8+a_{10}\\
        d_s(x_7)&=a_2+a_4+a_7+a_{10}\\
        d_s(x_8)&=a_7+a_8-a_9+a_{10}\\
        d_s(x_9)&=-a_4+a_5-a_7+a_8\\ 
        d_s(x_{10})&=-a_5+a_6-2a_{10}\\ 
        d_s(x_{11})&=-a_3+a_4+2a_9\\ 
    \end{cases}
    \qquad
    \setcounter{MaxMatrixCols}{20}
    \begin{pmatrix}
        1 & 0 & 0 & 0 & 0 & 0 & 0 & 0 & 0 & 0 & 0 \\
        0 & 1 & 1 & 1 & 1 & 1 & 1 & 0 & 0 & 0 & 0 \\
        0 & 0 & 1 & 0 & 0 & 0 & 0 & 0 & 0 & 0 & -1 \\
        1 & 1 & 0 & 0 & 0 & 0 & 1 & 0 & -1 & 0 & 1 \\
        0 & 0 & 0 & 1 & 0 & 1 & 0 & 0 & 1 & -1 & 0 \\
        0 & 0 & 0 & 0 & 1 & 0 & 0 & 0 & 0 & 1 & 0 \\
        -1 & 0 & 0 & -1 & 0 & 0 & 1 & 1 & -1 & 0 & 0 \\
        0 & -1 & -1 & 0 & 1 & 1 & 0 & 1 & 1 & 0 & 0 \\
        -1 & 1 & -1 & 1 & 0 & 0 & 0 & -1 & 0 & 0 & 2 \\
        0 & 0 & 0 & 0 & -1 & 1 & 1 & 1 & 0 & -2 & 0 \\
        0 & 0 & 0 & 0 & 0 & 0 & 0 & 0 & -1 & 0 & 0 
    \end{pmatrix}
\end{equation*}
    \caption{Matrix representation of the map $d_s=p_s\circ d\circ\iota_s:X^s\rightarrow A^s.$}
    \label{fig:matrix}
\end{figure}
By using the application wolfram-alpha, we can with Gaussian elmimination see that $\alpha^s$ is not in the image of $d_s\circ d\circ p_s$. Hence we conclude that $\alpha^s$ is a non-trivial cycle of $\widehat{\dGC}^{st}_3$.
The argument to show that $\alpha^t$ is a non-trivial cycle is analogous by considering the same graphs as for $\alpha^t$ but with all solid edges reversed and all $s$-edges turned into $t$-edges.
Finally lets show that $\alpha^s$ and $\alpha^t$ represent different equivalence classes in $H(\widehat{\dGC}^{st}_3)$. For a contraction, suppose that there exists $\beta\in X$ such that $d(\beta)=\alpha^s-\alpha^t$. Let $A^t$ and $X^t$ be "targeted" versions of the spaces $A^s$ and $X^s$. More concretely, they are the vector spaces generated by the graphs in figure 4 and 5 but where the solid edges have the opposite direction and the $s$-edges have been replaced by $t$-edges. It is clear that $A^s\cap A^t=X^s\cap X^t=\{0\}$. Let $\widehat{A}$ be the obvious (say orthogonal) complement of the subspace $A^s\oplus A^t$ of $A$, and let and $\widehat{X}$ be the similar complement of the subspace $X^s\oplus X^t$ in $X$ so that $A=A^s\oplus A^t\oplus\widehat{A}$ and $X=X^s\oplus X^t\oplus \widehat{X}$. Let $p:A^s\oplus A^t\oplus\widehat{A}\rightarrow A^s\oplus A^t$ be the projection map. We note that $\widehat{X}$ is the kernel of the map $p\circ d:X\rightarrow A^s\oplus A^t$. By lemma \ref{lemma:tetra}, there is a $\beta'\in X^s\oplus X^t$ so that $p\circ d(\beta')=\alpha^s-\alpha^t$. Now $\beta'=\beta^s+\beta^t$ for some $\beta^s\in X^s$ and $\beta^t\in X^t$. Further $X^t$ is a subspace of $\overline{X}$, the kernel of the map $p_s\circ d:X\rightarrow A^s$. Hence $p\circ d(\beta^s)=\alpha^s$. But this contradicts $\alpha^s$ being a non-trivial cycle, finishing the proof.
\end{proof}
The epimorphism $\dGC^{st}_3\rightarrow\widehat{\dGC}^{st}_3$ is a quasi-isomorphism, so it remains to find a lift of $\alpha^s$ and $\alpha^t$ to cycles in $\dGC^{st}_3$.
Let us introduce a new kind of edge, defined as a linear combination of an $s$-edge and a $t$-edge $\halfEd=\sEd-\tEd$. The differential split as $d=d_V+d_E$ where $d_V$ act on vertices by vertex splitting and $d_E$ act on edges accordingly:
\begin{equation*}
    d(\Ed)=\halfEd,\qquad d(\halfEd)=0
\end{equation*}
Consider the graph
\begin{equation*}
    \Gamma^s=
    \begin{tikzpicture}[shorten <=1pt,>=stealth,semithick,baseline={([yshift=-.5ex]current bounding box.center)},scale=0.6]
        \node[sblack] (t) at (0,0) {};
        \node[sblack] (m) at (0,-1.5) {};
        \node[sblack] (l) at (-1.5,-2.4) {};
        \node[sblack] (r) at (1.5,-2.4) {};
        \draw[<-] (t) to (m);
        \draw[->] (m) to (l);
        \draw[->] (r) to (m);
        \draw[->] (l) to (r);
        \draw[OneMark] (l) -- (t);
        \draw[OneMark] (r) -- (t);
    \end{tikzpicture}
    =
    \begin{tikzpicture}[shorten <=1pt,>=stealth,semithick,baseline={([yshift=-.5ex]current bounding box.center)},scale=0.6]
        \node[sblack] (t) at (0,0) {};
        \node[sblack] (m) at (0,-1.5) {};
        \node[sblack] (l) at (-1.5,-2.4) {};
        \node[sblack] (r) at (1.5,-2.4) {};
        \draw[<-] (t) to (m);
        \draw[->] (m) to (l);
        \draw[->] (r) to (m);
        \draw[->] (l) to (r);
        \draw[dotted,->] (l) to node[auto] {$\scriptstyle{s}$} (t);
        \draw[dotted,->] (r) to node[auto,swap] {$\scriptstyle{s}$} (t);
    \end{tikzpicture}
    -
    \begin{tikzpicture}[shorten <=1pt,>=stealth,semithick,baseline={([yshift=-.5ex]current bounding box.center)},scale=0.6]
        \node[sblack] (t) at (0,0) {};
        \node[sblack] (m) at (0,-1.5) {};
        \node[sblack] (l) at (-1.5,-2.4) {};
        \node[sblack] (r) at (1.5,-2.4) {};
        \draw[<-] (t) to (m);
        \draw[->] (m) to (l);
        \draw[->] (r) to (m);
        \draw[->] (l) to (r);
        \draw[dotted,->] (l) to node[auto] {$\scriptstyle{t}$} (t);
        \draw[dotted,->] (r) to node[auto,swap] {$\scriptstyle{s}$} (t);
    \end{tikzpicture}
    -
    \begin{tikzpicture}[shorten <=1pt,>=stealth,semithick,baseline={([yshift=-.5ex]current bounding box.center)},scale=0.6]
        \node[sblack] (t) at (0,0) {};
        \node[sblack] (m) at (0,-1.5) {};
        \node[sblack] (l) at (-1.5,-2.4) {};
        \node[sblack] (r) at (1.5,-2.4) {};
        \draw[<-] (t) to (m);
        \draw[->] (m) to (l);
        \draw[->] (r) to (m);
        \draw[->] (l) to (r);
        \draw[dotted,->] (l) to node[auto] {$\scriptstyle{s}$} (t);
        \draw[dotted,->] (r) to node[auto,swap] {$\scriptstyle{t}$} (t);
    \end{tikzpicture}
    +
    \begin{tikzpicture}[shorten <=1pt,>=stealth,semithick,baseline={([yshift=-.5ex]current bounding box.center)},scale=0.6]
        \node[sblack] (t) at (0,0) {};
        \node[sblack] (m) at (0,-1.5) {};
        \node[sblack] (l) at (-1.5,-2.4) {};
        \node[sblack] (r) at (1.5,-2.4) {};
        \draw[<-] (t) to (m);
        \draw[->] (m) to (l);
        \draw[->] (r) to (m);
        \draw[->] (l) to (r);
        \draw[dotted,->] (l) to node[auto] {$\scriptstyle{t}$} (t);
        \draw[dotted,->] (r) to node[auto,swap] {$\scriptstyle{t}$} (t);
    \end{tikzpicture}
\end{equation*}
Note that $\gamma^s$ is the first term, while the other terms $\overline{\gamma}^s$ are in $\dGC^{st}_3$. Hence $d(\Gamma^s)=d(\gamma^s)+d(\overline{\gamma}^s)=\alpha^s+d(\overline{\gamma}^s)$ is a cycle, and it is a lift of $\alpha^{s}$.
The corresponding lift of $\alpha^t$ is $d(\Gamma^t)$ where
\begin{equation*}
    \Gamma^t=
    \begin{tikzpicture}[shorten <=1pt,>=stealth,semithick,baseline={([yshift=-.5ex]current bounding box.center)},scale=0.6]
        \node[sblack] (t) at (0,0) {};
        \node[sblack] (m) at (0,-1.5) {};
        \node[sblack] (l) at (-1.5,-2.4) {};
        \node[sblack] (r) at (1.5,-2.4) {};
        \draw[->] (t) to (m);
        \draw[->] (m) to (l);
        \draw[->] (r) to (m);
        \draw[->] (l) to (r);
        \draw[OneMark] (l) -- (t);
        \draw[OneMark] (r) -- (t);
    \end{tikzpicture}
    =
    \begin{tikzpicture}[shorten <=1pt,>=stealth,semithick,baseline={([yshift=-.5ex]current bounding box.center)},scale=0.6]
        \node[sblack] (t) at (0,0) {};
        \node[sblack] (m) at (0,-1.5) {};
        \node[sblack] (l) at (-1.5,-2.4) {};
        \node[sblack] (r) at (1.5,-2.4) {};
        \draw[->] (t) to (m);
        \draw[->] (m) to (l);
        \draw[->] (r) to (m);
        \draw[->] (l) to (r);
        \draw[dotted,->] (l) to node[auto] {$\scriptstyle{s}$} (t);
        \draw[dotted,->] (r) to node[auto,swap] {$\scriptstyle{s}$} (t);
    \end{tikzpicture}
    -
    \begin{tikzpicture}[shorten <=1pt,>=stealth,semithick,baseline={([yshift=-.5ex]current bounding box.center)},scale=0.6]
        \node[sblack] (t) at (0,0) {};
        \node[sblack] (m) at (0,-1.5) {};
        \node[sblack] (l) at (-1.5,-2.4) {};
        \node[sblack] (r) at (1.5,-2.4) {};
        \draw[->] (t) to (m);
        \draw[->] (m) to (l);
        \draw[->] (r) to (m);
        \draw[->] (l) to (r);
        \draw[dotted,->] (l) to node[auto] {$\scriptstyle{t}$} (t);
        \draw[dotted,->] (r) to node[auto,swap] {$\scriptstyle{s}$} (t);
    \end{tikzpicture}
    -
    \begin{tikzpicture}[shorten <=1pt,>=stealth,semithick,baseline={([yshift=-.5ex]current bounding box.center)},scale=0.6]
        \node[sblack] (t) at (0,0) {};
        \node[sblack] (m) at (0,-1.5) {};
        \node[sblack] (l) at (-1.5,-2.4) {};
        \node[sblack] (r) at (1.5,-2.4) {};
        \draw[->] (t) to (m);
        \draw[->] (m) to (l);
        \draw[->] (r) to (m);
        \draw[->] (l) to (r);
        \draw[dotted,->] (l) to node[auto] {$\scriptstyle{s}$} (t);
        \draw[dotted,->] (r) to node[auto,swap] {$\scriptstyle{t}$} (t);
    \end{tikzpicture}
    +
    \begin{tikzpicture}[shorten <=1pt,>=stealth,semithick,baseline={([yshift=-.5ex]current bounding box.center)},scale=0.6]
        \node[sblack] (t) at (0,0) {};
        \node[sblack] (m) at (0,-1.5) {};
        \node[sblack] (l) at (-1.5,-2.4) {};
        \node[sblack] (r) at (1.5,-2.4) {};
        \draw[->] (t) to (m);
        \draw[->] (m) to (l);
        \draw[->] (r) to (m);
        \draw[->] (l) to (r);
        \draw[dotted,->] (l) to node[auto] {$\scriptstyle{t}$} (t);
        \draw[dotted,->] (r) to node[auto,swap] {$\scriptstyle{t}$} (t);
    \end{tikzpicture}
\end{equation*}
Now we get an explicit action of these classes on $\mathcal{H}olieb_{1,1}^{\circlearrowleft}$ as the derivations $D_1$ and $D_2$ which respectively act on $(m,n)$ corollas as
\begin{equation*}
    D_1\Bigg(\
\begin{tikzpicture}[shorten <=1pt,
>=stealth,semithick,
baseline={([yshift=-.5ex]current bounding box.center)},scale=0.6]
    \node[sblack] (c) at (0,0) {};
    \node[] at (-1.4,1.2) {$\scriptstyle{1}$}
        edge [<-] (c);
    \node[] at (-0.7,1.2) {$\scriptstyle{2}$}
        edge [<-] (c);
    \node[] at (0.2,0.9) {$\scriptstyle{\cdots}$};
    \node[] at (1.5,1.2) {$\scriptstyle{m}$}
        edge [<-] (c);
    \node[] at (-1.5,-1.2) {$\scriptstyle{1}$}
        edge [->] (c);
    \node[] at (-0.7,-1.2) {$\scriptstyle{2}$}
        edge [->] (c);
    \node[] at (0.2,-0.9) {$\scriptstyle{\cdots}$};
    \node[] at (1.4,-1.2) {$\scriptstyle{n}$}
        edge [->] (c);
\end{tikzpicture}
\Bigg)
=
\sum\ 
\begin{tikzpicture}[shorten <=1pt,>=stealth,semithick,
baseline={([yshift=-.5ex]current bounding box.center)},
scale=0.5]
        \node[ellipse,
            draw = black,
            minimum width = 2.5cm, 
            minimum height = 2.3cm,
            dotted] (e) at (0,-1.5) {};
        \node[] (v1) at (-1.7,2.3) {$\scriptstyle{1}$}
            edge [<-] (e);
        \node[] (v2) at (-0.7,2.4) {$\scriptstyle{2}$}
            edge [<-] (e);
        \node[] at (0.4,1.2) {$\cdots$};
        \node[label] (v3) at (1.7,2.3) {$\scriptstyle{m}$}
            edge [<-] (e);
        \node[] (v4) at (-1.7,-5.3) {$\scriptstyle{1}$}
            edge [->] (e);
        \node[] (v5) at (-0.7,-5.4) {$\scriptstyle{2}$}
            edge [->] (e);
        \node[] at (0.4,-4.2) {$\cdots$};
        \node[label] (v6) at (1.7,-5.3) {$\scriptstyle{n}$}
            edge [->] (e);
        \node[sblack] (t) at (0,0) {};
        \node[sblack] (m) at (0,-1.5) {};
        \node[sblack] (l) at (-1.5,-2.4) {};
        \node[sblack] (r) at (1.5,-2.4) {};
        \draw[OneMark] (t) to (m);
        \draw[->] (m) to (l);
        \draw[->] (r) to (m);
        \draw[->] (l) to (r);
        \draw[OneMark] (l) -- (t);
        \draw[OneMark] (r) -- (t);
\end{tikzpicture}
\ +\
\sum\ 
\begin{tikzpicture}[shorten <=1pt,>=stealth,semithick,
baseline={([yshift=-.5ex]current bounding box.center)},
scale=0.5]
        \node[ellipse,
            draw = black,
            minimum width = 2.5cm, 
            minimum height = 2.3cm,
            dotted] (e) at (0,-1.5) {};
        \node[] (v1) at (-1.7,2.3) {$\scriptstyle{1}$}
            edge [<-] (e);
        \node[] (v2) at (-0.7,2.4) {$\scriptstyle{2}$}
            edge [<-] (e);
        \node[] at (0.4,1.2) {$\cdots$};
        \node[label] (v3) at (1.7,2.3) {$\scriptstyle{m}$}
            edge [<-] (e);
        \node[] (v4) at (-1.7,-5.3) {$\scriptstyle{1}$}
            edge [->] (e);
        \node[] (v5) at (-0.7,-5.4) {$\scriptstyle{2}$}
            edge [->] (e);
        \node[] at (0.4,-4.2) {$\cdots$};
        \node[label] (v6) at (1.7,-5.3) {$\scriptstyle{n}$}
            edge [->] (e);
        \node[sblack] (t) at (0,0) {};
        \node[sblack] (m) at (0,-1.5) {};
        \node[sblack] (l) at (-1.5,-2.4) {};
        \node[sblack] (r) at (1.5,-2.4) {};
        \draw[<-] (t) to (m);
        \draw[OneMark] (m) to (l);
        \draw[->] (r) to (m);
        \draw[->] (l) to (r);
        \draw[OneMark] (l) -- (t);
        \draw[OneMark] (r) -- (t);
\end{tikzpicture}
\ +\
\sum\ 
\begin{tikzpicture}[shorten <=1pt,>=stealth,semithick,
baseline={([yshift=-.5ex]current bounding box.center)},
scale=0.5]
        \node[ellipse,
            draw = black,
            minimum width = 2.5cm, 
            minimum height = 2.3cm,
            dotted] (e) at (0,-1.5) {};
        \node[] (v1) at (-1.7,2.3) {$\scriptstyle{1}$}
            edge [<-] (e);
        \node[] (v2) at (-0.7,2.4) {$\scriptstyle{2}$}
            edge [<-] (e);
        \node[] at (0.4,1.2) {$\cdots$};
        \node[label] (v3) at (1.7,2.3) {$\scriptstyle{m}$}
            edge [<-] (e);
        \node[] (v4) at (-1.7,-5.3) {$\scriptstyle{1}$}
            edge [->] (e);
        \node[] (v5) at (-0.7,-5.4) {$\scriptstyle{2}$}
            edge [->] (e);
        \node[] at (0.4,-4.2) {$\cdots$};
        \node[label] (v6) at (1.7,-5.3) {$\scriptstyle{n}$}
            edge [->] (e);
        \node[sblack] (t) at (0,0) {};
        \node[sblack] (m) at (0,-1.5) {};
        \node[sblack] (l) at (-1.5,-2.4) {};
        \node[sblack] (r) at (1.5,-2.4) {};
        \draw[<-] (t) to (m);
        \draw[->] (m) to (l);
        \draw[OneMark] (r) to (m);
        \draw[->] (l) to (r);
        \draw[OneMark] (l) -- (t);
        \draw[OneMark] (r) -- (t);
\end{tikzpicture}
\ +\
\sum\ 
\begin{tikzpicture}[shorten <=1pt,>=stealth,semithick,
baseline={([yshift=-.5ex]current bounding box.center)},
scale=0.5]
        \node[ellipse,
            draw = black,
            minimum width = 2.5cm, 
            minimum height = 2.3cm,
            dotted] (e) at (0,-1.5) {};
        \node[] (v1) at (-1.7,2.3) {$\scriptstyle{1}$}
            edge [<-] (e);
        \node[] (v2) at (-0.7,2.4) {$\scriptstyle{2}$}
            edge [<-] (e);
        \node[] at (0.4,1.2) {$\cdots$};
        \node[label] (v3) at (1.7,2.3) {$\scriptstyle{m}$}
            edge [<-] (e);
        \node[] (v4) at (-1.7,-5.3) {$\scriptstyle{1}$}
            edge [->] (e);
        \node[] (v5) at (-0.7,-5.4) {$\scriptstyle{2}$}
            edge [->] (e);
        \node[] at (0.4,-4.2) {$\cdots$};
        \node[label] (v6) at (1.7,-5.3) {$\scriptstyle{n}$}
            edge [->] (e);
        \node[sblack] (t) at (0,0) {};
        \node[sblack] (m) at (0,-1.5) {};
        \node[sblack] (l) at (-1.5,-2.4) {};
        \node[sblack] (r) at (1.5,-2.4) {};
        \draw[<-] (t) to (m);
        \draw[->] (m) to (l);
        \draw[->] (r) to (m);
        \draw[OneMark] (l) to (r);
        \draw[OneMark] (l) -- (t);
        \draw[OneMark] (r) -- (t);
\end{tikzpicture}
\end{equation*}
\begin{equation*}
    D_2\Bigg(\
\begin{tikzpicture}[shorten <=1pt,
>=stealth,semithick,
baseline={([yshift=-.5ex]current bounding box.center)},scale=0.6]
    \node[sblack] (c) at (0,0) {};
    \node[] at (-1.4,1.2) {$\scriptstyle{1}$}
        edge [<-] (c);
    \node[] at (-0.7,1.2) {$\scriptstyle{2}$}
        edge [<-] (c);
    \node[] at (0.2,0.9) {$\scriptstyle{\cdots}$};
    \node[] at (1.5,1.2) {$\scriptstyle{m}$}
        edge [<-] (c);
    \node[] at (-1.5,-1.2) {$\scriptstyle{1}$}
        edge [->] (c);
    \node[] at (-0.7,-1.2) {$\scriptstyle{2}$}
        edge [->] (c);
    \node[] at (0.2,-0.9) {$\scriptstyle{\cdots}$};
    \node[] at (1.4,-1.2) {$\scriptstyle{n}$}
        edge [->] (c);
\end{tikzpicture}
\Bigg)
=
\sum\ 
\begin{tikzpicture}[shorten <=1pt,>=stealth,semithick,
baseline={([yshift=-.5ex]current bounding box.center)},
scale=0.5]
        \node[ellipse,
            draw = black,
            minimum width = 2.5cm, 
            minimum height = 2.3cm,
            dotted] (e) at (0,-1.5) {};
        \node[] (v1) at (-1.7,2.3) {$\scriptstyle{1}$}
            edge [<-] (e);
        \node[] (v2) at (-0.7,2.4) {$\scriptstyle{2}$}
            edge [<-] (e);
        \node[] at (0.4,1.2) {$\cdots$};
        \node[label] (v3) at (1.7,2.3) {$\scriptstyle{m}$}
            edge [<-] (e);
        \node[] (v4) at (-1.7,-5.3) {$\scriptstyle{1}$}
            edge [->] (e);
        \node[] (v5) at (-0.7,-5.4) {$\scriptstyle{2}$}
            edge [->] (e);
        \node[] at (0.4,-4.2) {$\cdots$};
        \node[label] (v6) at (1.7,-5.3) {$\scriptstyle{n}$}
            edge [->] (e);
        \node[sblack] (t) at (0,0) {};
        \node[sblack] (m) at (0,-1.5) {};
        \node[sblack] (l) at (-1.5,-2.4) {};
        \node[sblack] (r) at (1.5,-2.4) {};
        \draw[OneMark] (t) to (m);
        \draw[->] (m) to (l);
        \draw[->] (r) to (m);
        \draw[->] (l) to (r);
        \draw[OneMark] (l) -- (t);
        \draw[OneMark] (r) -- (t);
\end{tikzpicture}
\ +\
\sum\ 
\begin{tikzpicture}[shorten <=1pt,>=stealth,semithick,
baseline={([yshift=-.5ex]current bounding box.center)},
scale=0.5]
        \node[ellipse,
            draw = black,
            minimum width = 2.5cm, 
            minimum height = 2.3cm,
            dotted] (e) at (0,-1.5) {};
        \node[] (v1) at (-1.7,2.3) {$\scriptstyle{1}$}
            edge [<-] (e);
        \node[] (v2) at (-0.7,2.4) {$\scriptstyle{2}$}
            edge [<-] (e);
        \node[] at (0.4,1.2) {$\cdots$};
        \node[label] (v3) at (1.7,2.3) {$\scriptstyle{m}$}
            edge [<-] (e);
        \node[] (v4) at (-1.7,-5.3) {$\scriptstyle{1}$}
            edge [->] (e);
        \node[] (v5) at (-0.7,-5.4) {$\scriptstyle{2}$}
            edge [->] (e);
        \node[] at (0.4,-4.2) {$\cdots$};
        \node[label] (v6) at (1.7,-5.3) {$\scriptstyle{n}$}
            edge [->] (e);
        \node[sblack] (t) at (0,0) {};
        \node[sblack] (m) at (0,-1.5) {};
        \node[sblack] (l) at (-1.5,-2.4) {};
        \node[sblack] (r) at (1.5,-2.4) {};
        \draw[->] (t) to (m);
        \draw[OneMark] (m) to (l);
        \draw[->] (r) to (m);
        \draw[->] (l) to (r);
        \draw[OneMark] (l) -- (t);
        \draw[OneMark] (r) -- (t);
\end{tikzpicture}
\ +\
\sum\ 
\begin{tikzpicture}[shorten <=1pt,>=stealth,semithick,
baseline={([yshift=-.5ex]current bounding box.center)},
scale=0.5]
        \node[ellipse,
            draw = black,
            minimum width = 2.5cm, 
            minimum height = 2.3cm,
            dotted] (e) at (0,-1.5) {};
        \node[] (v1) at (-1.7,2.3) {$\scriptstyle{1}$}
            edge [<-] (e);
        \node[] (v2) at (-0.7,2.4) {$\scriptstyle{2}$}
            edge [<-] (e);
        \node[] at (0.4,1.2) {$\cdots$};
        \node[label] (v3) at (1.7,2.3) {$\scriptstyle{m}$}
            edge [<-] (e);
        \node[] (v4) at (-1.7,-5.3) {$\scriptstyle{1}$}
            edge [->] (e);
        \node[] (v5) at (-0.7,-5.4) {$\scriptstyle{2}$}
            edge [->] (e);
        \node[] at (0.4,-4.2) {$\cdots$};
        \node[label] (v6) at (1.7,-5.3) {$\scriptstyle{n}$}
            edge [->] (e);
        \node[sblack] (t) at (0,0) {};
        \node[sblack] (m) at (0,-1.5) {};
        \node[sblack] (l) at (-1.5,-2.4) {};
        \node[sblack] (r) at (1.5,-2.4) {};
        \draw[->] (t) to (m);
        \draw[->] (m) to (l);
        \draw[OneMark] (r) to (m);
        \draw[->] (l) to (r);
        \draw[OneMark] (l) -- (t);
        \draw[OneMark] (r) -- (t);
\end{tikzpicture}
\ +\
\sum\ 
\begin{tikzpicture}[shorten <=1pt,>=stealth,semithick,
baseline={([yshift=-.5ex]current bounding box.center)},
scale=0.5]
        \node[ellipse,
            draw = black,
            minimum width = 2.5cm, 
            minimum height = 2.3cm,
            dotted] (e) at (0,-1.5) {};
        \node[] (v1) at (-1.7,2.3) {$\scriptstyle{1}$}
            edge [<-] (e);
        \node[] (v2) at (-0.7,2.4) {$\scriptstyle{2}$}
            edge [<-] (e);
        \node[] at (0.4,1.2) {$\cdots$};
        \node[label] (v3) at (1.7,2.3) {$\scriptstyle{m}$}
            edge [<-] (e);
        \node[] (v4) at (-1.7,-5.3) {$\scriptstyle{1}$}
            edge [->] (e);
        \node[] (v5) at (-0.7,-5.4) {$\scriptstyle{2}$}
            edge [->] (e);
        \node[] at (0.4,-4.2) {$\cdots$};
        \node[label] (v6) at (1.7,-5.3) {$\scriptstyle{n}$}
            edge [->] (e);
        \node[sblack] (t) at (0,0) {};
        \node[sblack] (m) at (0,-1.5) {};
        \node[sblack] (l) at (-1.5,-2.4) {};
        \node[sblack] (r) at (1.5,-2.4) {};
        \draw[->] (t) to (m);
        \draw[->] (m) to (l);
        \draw[->] (r) to (m);
        \draw[OneMark] (l) to (r);
        \draw[OneMark] (l) -- (t);
        \draw[OneMark] (r) -- (t);
\end{tikzpicture}.
\end{equation*}
These formulae give us the required explicit homotopy inequivalent actions of the tetrahedron class in the Kontsevich graph complex $\GC_2$ as derivations of the wheeled properad $\mathcal{H}olieb^\circlearrowleft_{1,1}$.

\end{document}